\DeclareMathOperator{\A}{\mathcal{A}}
\DeclareMathOperator{\Aut}{Aut}
\DeclareMathOperator{\D}{\mathcal{D}}
\DeclareMathOperator{\Ext}{Ext}
\DeclareMathOperator{\gr}{gr}
\DeclareMathOperator{\Hdet}{Hdet}
\DeclareMathOperator{\Hom}{Hom}
\DeclareMathOperator{\id}{id}
\DeclareMathOperator{\Ker}{Ker}
\DeclareMathOperator{\kk}{\Bbbk}
\DeclareMathOperator{\lhu}{\leftharpoonup}
\DeclareMathOperator{\Lt}{{}^{\textbf{L}}\otimes}
\DeclareMathOperator{\M}{\mathcal{M}}
\DeclareMathOperator{\N}{\mathbb{N}}
\DeclareMathOperator{\RHom}{\textbf{R}Hom}
\DeclareMathOperator{\rhu}{\rightharpoonup}
\DeclareMathOperator{\tr}{tr}
\DeclareMathOperator{\vep}{\varepsilon}
\DeclareMathOperator{\vtr}{\vartriangleright}
\DeclareMathOperator{\YD}{\mathcal{YD}}
\DeclareMathOperator{\Z}{\mathbb{Z}}
\newcommand{\To}{\longrightarrow}
\numberwithin{equation}{section}
\theoremstyle{definition}
\newtheorem{thm}{Theorem}[section]
\newtheorem{prop}[thm]{Proposition}
\newtheorem{lem}[thm]{Lemma}
\newtheorem{cor}[thm]{Corollary}
\newtheorem{defn}[thm]{Definition}
\newtheorem{rk}[thm]{Remark}
\newtheorem{ques}[thm]{Question}
\newtheorem{ex}[thm]{Example}
\begin{document}

\title{Skew Calabi--Yau property of faithfully flat Hopf Galois extensions}


\author{Ruipeng Zhu}
\address{School of Mathematics, Shanghai University of Finance and Economics, Shanghai 200433, China}
\email{zhuruipeng@sufe.edu.cn}

\begin{abstract}
	This paper shows that if $H$ is a Hopf algebra and $A \subseteq B$ is a faithfully flat $H$-Galois extension, then $B$ is skew Calabi--Yau provided $A$ and $H$ are.
	Specifically, for cleft extensions $A \subseteq B$, the Nakayama automorphism of $B$ can be derived from those of $A$ and $H$, along with the homological determinant of the $H$-action on $A$.
	This finding is based on the study of the Hopf bimodule structure on $\Ext^i_{A^e}(A, B^e)$.
\end{abstract}

\subjclass[2020]{
	16E65, 
	16S35, 
	16S38. 
}

\keywords{Skew Calabi--Yau algebra, Nakayama automorphism, Hopf Galois extension}


\maketitle



\section*{Introduction}

Calabi--Yau algebras, introduced by Ginzburg \cite{Gin2007}, serve as the noncommutative counterparts of coordinate rings for smooth affine Calabi--Yau varieties.
Their study has been motivated by applications in algebraic geometry, representation theory, and mathematical physics.
Skew Calabi--Yau algebras encompass Artin–Schelter regular algebras \cite{AS1987} and noetherian rigid Gorenstein Hopf algebras of finite global dimension \cite{BZ2008}, sharing homological properties akin to Calabi--Yau algebras.
Each skew Calabi--Yau algebra is associated with a Nakayama automorphism, which is unique up to composition with an inner automorphism.
A skew Calabi–Yau algebra is Calabi--Yau if and only if its Nakayama automorphism is inner.

Numerous studies have explored the Nakayama automorphism and its utilities, see \cite{CWZ2014, LMZ2017} for example.
Informally, \cite{CWZ2014} shows that the Nakayama automorphism governs certain characteristics of the class of Hopf algebras that act on a specified connected graded skew Calabi--Yau algebra.
As established in \cite[Theorem 4.2]{LMZ2017}, the Nakayama automorphism plays a pivotal role as a central element in the Picard group. 
Consequently, gaining a comprehensive understanding and providing an explicit description of the Nakayama automorphism is crucial, as noted in \cite{LMZ2017}.
Simultaneously, the Nakayama automorphism is a subtle invariant that can be challenging to determine.

Identifying a skew Calabi--Yau algebra can also be difficult. To address this, we propose a somewhat more practical criterion to facilitate its determination.

\begin{prop}\label{criterion-hs->sCY} (Proposition \ref{VdB-dual-module-prop})
	Let $A$ be a homologically smooth algebra.
	If
	\begin{enumerate}
		\item[(1)] $\Ext^{i}_{A^e}(A, A^e) = 0$ for all $i \neq d$, and
		\item[(2)] $\Ext^{d}_{A^e}(A, A^e)$ is a finitely generated projective right $A$-module, and
		\item[(3)] $\Ext^{d}_{A^e}(A, A^e)$ is isomorphic to $A$ as a left $A$-module,
	\end{enumerate}
	then $A$ is a skew Calabi--Yau algebra of dimension $d$.
\end{prop}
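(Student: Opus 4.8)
The plan is to verify the definition of skew Calabi--Yau directly. Since $A$ is assumed homologically smooth and hypothesis (1) already supplies the vanishing $\Ext^i_{A^e}(A,A^e)=0$ for $i\neq d$, everything reduces to producing an algebra automorphism $\mu\in\Aut(A)$ together with a bimodule isomorphism $M:=\Ext^d_{A^e}(A,A^e)\cong{}^{1}A^{\mu}$, where ${}^{1}A^{\mu}$ denotes $A$ with its standard left action and right action twisted through $\mu$.

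First I would manufacture an algebra endomorphism out of hypothesis (3). Fixing a left $A$-module isomorphism $A\xrightarrow{\sim}M$ and letting $m_0$ be the image of $1$, the element $m_0$ is a free left generator, so every element of $M$ is uniquely of the form $a\,m_0$. Writing $m_0 a=\mu(a)\,m_0$ defines a map $\mu\colon A\to A$; a short check shows $\mu$ is a unital algebra endomorphism and that $a\mapsto a\,m_0$ is an isomorphism of $A$-bimodules ${}^{1}A^{\mu}\xrightarrow{\sim}M$. At this stage $M\cong{}^{1}A^{\mu}$ for an \emph{endomorphism} $\mu$, using only (3). This much is genuinely insufficient: the endomorphism $x\mapsto x^2$ on $\Bbbk[x]$ also yields a bimodule that is left free of rank one and right projective yet is not of the required form, so homological smoothness must enter to rule such cases out.

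The hard part is upgrading $\mu$ to an automorphism, and this is where homological smoothness, (1), and (2) combine. Because $A$ is perfect over $A^e$ and its dual complex $\RHom_{A^e}(A,A^e)$ is concentrated in degree $d$ by (1), reflexivity of perfect complexes gives $\RHom_{A^e}(M,A^e)\cong A[-d]$; in particular $\Ext^d_{A^e}(M,A^e)\cong A$ as $A$-bimodules and the other $\Ext$ groups vanish. On the other hand, hypothesis (2) says $M$ is finitely generated projective as a right $A$-module, so $M$ admits an $A^e$-projective resolution obtained by tensoring the bar resolution of $A$ with $M$ over $A$: its terms are induced from right-projective modules, hence are $A^e$-projective, and the complex stays exact since the bar resolution splits over $A$ on the right. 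Feeding this resolution into $\Hom_{A^e}(-,A^e)$ and using the standard identification $M\otimes_A M^{\vee}\cong\End_A(M_A)$, where $M^{\vee}=\Hom_A(M_A,A_A)$ is the right dual, I expect to compute $\Ext^d_{A^e}(M,A^e)\cong\End_A(M_A)$. Comparing with the biduality isomorphism then yields $\End_A(M_A)\cong A$. I regard this comparison as the main obstacle: the two computations of the same $\Ext$ group must be matched compatibly with the bimodule structures, and one must confirm that the resulting isomorphism $A\xrightarrow{\sim}\End_A(M_A)$ is the left-multiplication map.

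Once $\End_A(M_A)\cong A$ is in hand, $M$ is finitely generated projective on both sides (right by (2), left free by (3)) and the coevaluation realizes $M\otimes_A M^{\vee}\cong A$, so standard dualizability arguments give that $M$ is an invertible bimodule with inverse $M^{\vee}$. Finally, writing the inverse as $M^{\vee}\cong{}^{1}A^{\nu}$ by the same construction and computing ${}^{1}A^{\mu}\otimes_A{}^{1}A^{\nu}\cong A\cong{}^{1}A^{\nu}\otimes_A{}^{1}A^{\mu}$ forces $\mu\nu=\nu\mu=\id$, so $\mu\in\Aut(A)$. Together with homological smoothness and the vanishing from (1), the isomorphism $\Ext^d_{A^e}(A,A^e)\cong{}^{1}A^{\mu}$ exhibits $A$ as skew Calabi--Yau of dimension $d$ with Nakayama automorphism $\mu$.
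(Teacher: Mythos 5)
Your first two steps are sound and match the paper's strategy: constructing the endomorphism $\mu$ from hypothesis (3) is exactly the first half of Lemma \ref{invertible to outer automorphism}, and the reflexivity isomorphism $\RHom_{A^e}(M,A^e)\cong A[-d]$ (where $M:=\Ext^{d}_{A^e}(A,A^e)$) is the same use of homological smoothness that opens the proof of Lemma \ref{VdB-dual-module-lem}. The gap is in how you pass from there to invertibility of $M$. First, the identification you yourself flag as the main obstacle, $\Ext^d_{A^e}(M,A^e)\cong\End_A(M_A)$, is not what your computation gives: feeding the resolution $\mathrm{Bar}(A)\otimes_A M$ into $\Hom_{A^e}(-,A^e)$ and using perfectness of $A$ over $A^e$ lands on the \emph{evaluation-side} object $\Hom_A(M,A)\otimes_A M$ --- this is precisely the paper's chain in Lemma \ref{VdB-dual-module-lem}, whose endpoint is $\RHom_A(\Omega,A)\Lt_A\Omega$ --- and not on the \emph{coevaluation-side} object $M\otimes_A\Hom_A(M,A)\cong\End_A(M_A)$; these are genuinely different bimodules. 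Second, and more fundamentally, even granting $\End_A(M_A)\cong A$ via left multiplication, ``standard dualizability arguments'' cannot finish the proof. Every finitely generated projective right module is dualizable (the triangle identities hold), so dualizability is free and carries no extra information; invertibility requires \emph{both} canonical maps, the evaluation $\Hom_A(M,A)\otimes_A M\to A$ and the coevaluation $A\to M\otimes_A\Hom_A(M,A)$, to be isomorphisms. By Morita theory this is equivalent to requiring that $M_A$ be a \emph{generator} in addition to being finitely generated projective with $\End_A(M_A)\cong A$, and the generator condition is exactly what a one-sided argument cannot supply: if coevaluation is invertible, the triangle identity only tells you that $1_M\otimes\mathrm{ev}$ is invertible, and deducing that $\mathrm{ev}$ itself is invertible would need $M\otimes_A-$ to be conservative, which is the generator property you are trying to prove.

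The paper closes this gap by running the duality computation on \emph{both} sides: hypothesis (2) makes $\Omega=\RHom_{A^e}(A,A^e)$ perfect in $\D(A)$ and yields $\RHom_A(\Omega,A)\Lt_A\Omega\cong A$, while hypothesis (3) (free of rank one, hence finitely generated projective, as a left module) makes $\Omega$ perfect in $\D(A^{op})$ and yields the second isomorphism $\Omega\Lt_A\RHom_{A^{op}}(\Omega,A)\cong A$. With a left tensor-inverse and a right tensor-inverse in hand, the usual argument identifies them, so $\Omega$ is a two-sided tilting complex, $\Ext^{d}_{A^e}(A,A^e)$ is an invertible bimodule, and Lemma \ref{invertible to outer automorphism} upgrades $\mu$ to an automorphism. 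The tell-tale symptom in your write-up is that hypothesis (3) never enters your homological computations --- it only serves to define $\mu$ and is merely listed at the dualizability step --- whereas the proof genuinely needs the left-sided finiteness of $\Ext^{d}_{A^e}(A,A^e)$ as input to a second duality computation. (A smaller unjustified step: writing the inverse as $M^{\vee}\cong A_{\nu}$ presupposes that $M^{\vee}$ is free of rank one as a left module, which is not known before invertibility is established.)
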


Calabi--Yau algebras also arise from noncommutative crepant resolutions of Gorenstein algebras \cite{VdB2004}.
For instance, $\mathbb{C}[x_1, \dots, x_n] \# G$, where $G$ is a finite subgroup of $\mathrm{SL}_n(\mathbb{C})$, serves as a non-commutative crepant resolution of $\mathbb{C}[x_1, \dots, x_n]^G$.
Skew group algebras of polynomial rings by finite groups are not uniformly Calabi--Yau. Nonetheless, they consistently exhibit the skew Calabi--Yau property.
The (skew) Calabi--Yau property of the smash product of a skew Calabi--Yau algbera by a Hopf algebra has been examined in \cite{WZ2011}, \cite{LWZ2012}, \cite{YZ2013}, \cite{RRZ2014}, \cite{LMeu2019}.
Let $H$ be a Hopf algebra and $A$ be an $H$-module algebra.
Le Meur established in \cite{LMeu2019} that when both $A$ and $H$ are skew Calabi--Yau, the smash product $A\#H$ is also skew Calabi--Yau, and its Nakayama automorphism can be expressed in terms of those of  $A$ and $H$.
Additionally, Yu and Zhang investigate the skew Calabi--Yau property of the crossed product $A\#_{\sigma}H$, with $\sigma: H \otimes H \to \kk$ a $2$-cocycle, in \cite{YZ2016}.

Smash products and more general crossed products are inherently Hopf Galois extensions.
Let $H$ be a Hopf algebra.
A right $H$-Galois extension is a right $H$-comodule algebra $B$ for which the canonical map $B \otimes_A B \to B \otimes H$ is bijective, where $A = B^{co H}$ is the coinvariant subalgebra.
In noncommutative geometry, faithfully flat Hopf Galois extensions can be viewed as noncommutative principal bundles.
This paper concentrates on examining the skew Calabi--Yau property of such extensions.
Given a faithfully flat Hopf Galois extension $A \subseteq B$ over a Hopf algebra $H$, Stefan’s spectral sequence arises as
\[ \Ext^{p}_{H}(\kk, \Ext^q_{A^e}(A, B^e)) \Longrightarrow \Ext^{p+q}_{B^e}(B, B^e) \]
It is not easy to clarify both the right $H$-module and left $B^e$-module structures on $\Ext^n_{A^e}(A, B^e)$ concurrently.
However, we discover that $\Ext_{A^e}^i(A, B^e)$ admits an $H^e$-comodule structure, thereby endowing it with the structure of a Hopf bimodule in the category $_{B^e}\!\M^{H^e}_{H}$.
This enables us to easily verify the conditions stated in Proposition \ref{criterion-hs->sCY}, which in turn allows us to establish our main theorem.

\begin{thm}\label{main-thm-1} (Theorem \ref{sCY-for-ff-H-ext})
	Let $H$ be a skew Calabi--Yau Hopf algebra of dimension $n$, and $A \subseteq B$ be a faithfully flat $H$-Galois extension.
	If $A$ is skew Calabi--Yau of dimension $d$, then $B$ is skew Calabi--Yau of dimension $(d+n)$.
\end{thm}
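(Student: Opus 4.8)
The plan is to apply Proposition~\ref{criterion-hs->sCY} to $B$ with $d$ replaced by $d+n$. Thus I must check that $B$ is homologically smooth and that $\Ext^i_{B^e}(B,B^e)$ satisfies the three listed conditions. For homological smoothness, note that $A$ and $H$ are homologically smooth (being skew Calabi--Yau), so $A$ admits a finite resolution by finitely generated projective $A^e$-modules and $\kk$ admits one over $H$; assembling these through the Hopf--Galois structure and faithful flatness produces a finite resolution of $B$ by finitely generated projective $B^e$-modules. (Finiteness of $\pd_{B^e}B$ alone is immediate from Stefan's spectral sequence, since for any $B^e$-module $N$ one has $\Ext^q_{A^e}(A,N)=0$ for $q>d$ and $\Ext^p_H(\kk,-)=0$ for $p>n$, whence $\Ext^{i}_{B^e}(B,N)=0$ for $i>d+n$.)

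Next I would compute $\Ext^q_{A^e}(A,B^e)$ by flat base change along $A^e\to B^e$. Faithful flatness of $A\subseteq B$ makes $B^e$ flat over $A^e$, and homological smoothness of $A$ gives a finite finitely generated projective resolution $P_\bullet\to A$ over $A^e$; applying $\Hom_{A^e}(-,B^e)\cong\Hom_{A^e}(-,A^e)\otimes_{A^e}B^e$ termwise and commuting cohomology past the flat base change yields
\[ \Ext^q_{A^e}(A,B^e)\cong\Ext^q_{A^e}(A,A^e)\otimes_{A^e}B^e. \]
Since $A$ is skew Calabi--Yau of dimension $d$, the right-hand side vanishes for $q\neq d$, so only $M:=\Ext^d_{A^e}(A,B^e)$ survives; this is exactly the object carrying the Hopf bimodule structure in ${}_{B^e}\M^{H^e}_H$ supplied before the theorem, and as a $B$-bimodule it is obtained from the invertible bimodule $\Ext^d_{A^e}(A,A^e)\cong{}^{1}\!A^{\nu_A}$.

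Because $\Ext^q_{A^e}(A,B^e)$ is concentrated in the single row $q=d$, Stefan's spectral sequence degenerates and gives
\[ \Ext^{p+d}_{B^e}(B,B^e)\cong\Ext^p_H(\kk,M). \]
Here the Hopf bimodule structure is what makes the right-hand side computable: extracting a right $H$-comodule structure from the $H^e$-coaction and invoking the fundamental theorem of Hopf modules shows that $M$ is free as a right $H$-module, say $M\cong N\otimes H$ with $N$ its space of coinvariants. Since $H$ has finite global dimension, $\kk$ has a finite finitely generated projective resolution over $H$, so $\Ext^p_H(\kk,M)\cong N\otimes\Ext^p_H(\kk,H)$. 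As $H$ is skew Calabi--Yau of dimension $n$, the factor $\Ext^p_H(\kk,H)$ vanishes for $p\neq n$ and is one-dimensional for $p=n$. Hence $\Ext^i_{B^e}(B,B^e)=0$ for all $i\neq d+n$, which is condition~(1), and $\Ext^{d+n}_{B^e}(B,B^e)\cong N$ up to a one-dimensional twist.

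It remains to verify conditions~(2) and~(3), which I would do by tracking the left and right $B$-module structures through the isomorphisms above. Combining the description $M\cong{}^{1}\!A^{\nu_A}\otimes_{A^e}B^e$, the invertibility of this bimodule, and the one-dimensionality of $\Ext^n_H(\kk,H)$, the coinvariants $N$ should turn out to be an invertible $B$-bimodule: finitely generated projective of rank one as a right $B$-module, and free of rank one---hence isomorphic to $B$---as a left $B$-module. Proposition~\ref{criterion-hs->sCY} then yields that $B$ is skew Calabi--Yau of dimension $d+n$. I expect the main obstacle to be precisely this final bookkeeping: controlling the left $B^e$-module action and the right $H$-action on $M$ simultaneously through the degeneration, and confirming the freeness of $M$ over $H$ together with the identification of $N$. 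This is exactly the difficulty the $H^e$-comodule (Hopf bimodule) structure is introduced to overcome, and it is the technical heart of the argument.
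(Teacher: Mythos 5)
Your proposal follows the paper's own strategy for most of its length: the criterion you invoke is Proposition \ref{VdB-dual-module-prop} (= Proposition \ref{criterion-hs->sCY}), homological smoothness of $B$ is Proposition \ref{homological-smooth-prop}, the flat base change $\RHom_{A^e}(A,B^e)\cong B^e\Lt_{A^e}\RHom_{A^e}(A,A^e)$ is Proposition \ref{RHom-prop}(2), and the collapse of Stefan's spectral sequence (Corollary \ref{H-Galois-spectral-sequence-for-B^e}) combined with the Hopf-module trivialization $M\cong N\otimes H$ (Lemma \ref{Hopf-bimod-cat-str-lem}) is precisely the content of Proposition \ref{key-prop}. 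Every isomorphism in this chain is right $B$-linear and right $H$-linear, so it does deliver condition (1), and also condition (2): $N\cong\Ext^d_{A^e}(A,A^e)\otimes_A B\cong A_{\mu_A}\otimes_A B\cong B$ as right $B$-modules.

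The genuine gap is condition (3), and the repair you sketch (``tracking the left and right $B$-module structures through the isomorphisms above'') would fail. The trivialization $M\cong N\otimes H$ is an isomorphism in ${_{A\otimes B^{op}}\!\M^{H}_H}$ only: the coinvariants $N=M^{co H}$ form a module over $(B^e)^{co H}=A\otimes B^{op}$, so $N$ retains the right $B$-action but merely a left $A$-action; $N\otimes H$ carries no natural left $B$-action, and the isomorphism is not left $B$-linear. Hence the left $B$-module structure of $\Ext^{d+n}_{B^e}(B,B^e)$ cannot be read off from this single chain, and your closing claim that $N$ is ``free of rank one --- hence isomorphic to $B$ --- as a left $B$-module'' does not even typecheck, since $N\cong\omega\otimes_A B$ is an $A$-$B$-bimodule, not a $B$-bimodule. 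The paper does not attempt to control both structures in one computation (that simultaneous control, via $\Lambda_1$ and Schauenburg's theorem, appears only in Section \ref{Hopf-bimod-cats-section} and is needed for the Nakayama automorphism, not for this theorem); instead it proves a second, mirrored one-sided statement, Proposition \ref{key-prop'}, obtained by exchanging the roles of the two tensor factors of $B^e$ and of $H^e$, which identifies $\RHom_{B^e}(B,B^e)\cong B\otimes_A\omega\otimes\RHom_{H}(\kk,H)$ in $\D({_B\!\M})$ and thus yields condition (3) via $B\otimes_A\omega\cong B$. Proposition \ref{VdB-dual-module-prop} is formulated exactly so that these two independent one-sided identifications suffice; your argument becomes a complete proof once the final ``bookkeeping'' is replaced by this second, symmetric computation rather than by tracking both structures through one chain.
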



The proof of Theorem \ref{main-thm-1} does not yield a concrete characterization of the Nakayama automorphism.
However, by utilizing Schauenburg’s structure theorem for relative Hopf bimodules, it is demonstrated that the category $_{B^e}\!\M^{H^e}_{H}$ is equivalent to a left module category over $\Lambda_1$, where $\Lambda_1$ is a subalgebra of $B^e$.
In the case of a smash product $B= A\#H$, $\Lambda_1$ is isomorphic to $\Delta_1$, as discussed in \cite{Kay2007, LMeu2019}, which has been instrumental in studying the Calabi--Yau property of such products.
For a crossed product $A \#_{\sigma} H$, there exists a natural embedding $H \hookrightarrow \Lambda_1$, allowing the homological determinant to be defined.
Consequently, the Nakayama automorphisms of $A \#_{\sigma} H$  are governed by those of $A$ and $H$, as well as the homological determinant of the $H$-action on $A$.

\begin{thm}\label{main-thm-2} (Theorem \ref{Nak-of-cros-prod})
	Let $H$ be a skew Calabi--Yau algebra of dimension $n$, $B = A \#_{\sigma} H$ be a crossed product of $A$ with $H$.
	If $A$ is skew Calabi--Yau of dimension $d$ with a Nakayama automorphism $\mu_A$, then $B$ is also a skew Calabi--Yau algebra of dimension $n+d$ with a Nakayama automorphism $\mu_B$ which is defined by
	$$\mu_B(a\#h) = \sum \mu_A(a) \Hdet(S^{-2}h_1) \# S^{-2}h_{2} \chi(Sh_{3}),$$
	where $\Hdet$ is the homological determinant with respect to a $\mu_A$-twisted volume as defined in Definition \ref{Hdet-defn} and $\chi: H \to \kk$ is given by $\Ext^n_H(\kk, H) \cong {_{\chi}\!\kk}$.
\end{thm}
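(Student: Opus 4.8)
The plan is to make Theorem~\ref{main-thm-1} quantitative. Since $B = A\#_\sigma H$ is already known to be skew Calabi--Yau of dimension $d+n$, its Nakayama automorphism is recorded in the single nonvanishing bimodule $\Ext^{d+n}_{B^e}(B,B^e)$, which by Proposition~\ref{criterion-hs->sCY} must be isomorphic to $B$ with its left action untwisted and its right action twisted by the sought-after $\mu_B$. The entire task is therefore to compute this bimodule, and in particular the right $B$-action, explicitly.

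First I would run Stefan's spectral sequence $\Ext^p_H(\kk,\Ext^q_{A^e}(A,B^e))\Rightarrow \Ext^{p+q}_{B^e}(B,B^e)$. Because $A$ is skew Calabi--Yau of dimension $d$ and $B$ is faithfully flat over $A$, the inner terms $\Ext^q_{A^e}(A,B^e)$ concentrate in $q=d$; because $H$ is skew Calabi--Yau of dimension $n$, the functor $\Ext^p_H(\kk,-)$ contributes only in $p=n$. The spectral sequence thus collapses to a $B^e$-module isomorphism $\Ext^{d+n}_{B^e}(B,B^e)\cong \Ext^n_H(\kk,M)$, where $M:=\Ext^d_{A^e}(A,B^e)$. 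Everything now hinges on describing $M$ and then the outer $\Ext$, keeping all of their $B$-bimodule, $H$-module and $H^e$-comodule structures intact.

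Next I would identify $M$ as a rank-one Hopf bimodule in ${}_{B^e}\!\M^{H^e}_H$. Using faithful flatness together with the hypothesis that $\Ext^d_{A^e}(A,A^e)$ is isomorphic to $A$ with right action twisted by $\mu_A$, the underlying $A$-bimodule of $M$ is $B$-extended from this $\mu_A$-twisted volume. Transporting $M$ across Schauenburg's equivalence ${}_{B^e}\!\M^{H^e}_H\simeq \Lambda_1\text{-}\mathrm{Mod}$ and using the embedding $H\hookrightarrow\Lambda_1$ available for the crossed product, the $H$-action on the volume part is, by the very definition of the homological determinant with respect to the $\mu_A$-twisted volume (Definition~\ref{Hdet-defn}), given by the character $\Hdet$; this accounts for the factor $\Hdet(S^{-2}h_1)$ once the comodule-to-module conversion is performed.

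Finally I would compute $\Ext^n_H(\kk,M)$. The identification $\Ext^n_H(\kk,H)\cong {}_\chi\kk$ feeds the character $\chi$ into the $H$-slot and, together with the standard passage between left and right $H$-module structures governed by $S^2$, produces the factor $S^{-2}h_2\,\chi(Sh_3)$, which is precisely the Nakayama automorphism of $H$ applied to $h_2$. Reassembling the three contributions along the coproduct of $h$ yields the asserted formula for $\mu_B$. The main obstacle is the bookkeeping in the last two steps: one must track the left/right $H$-module and $H^e$-comodule data on $M$ through Schauenburg's equivalence and the spectral sequence so that the antipode powers combine to exactly $S^{-2}$, and confirm that the $H$-action on the $\mu_A$-twisted volume is read off as $\Hdet\circ S^{-2}$ after these identifications rather than $\Hdet$ itself. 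This is a question of conventions and compatibilities rather than of new homological machinery.
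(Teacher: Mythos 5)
Your overall architecture coincides with the paper's: obtain the skew Calabi--Yau property from Theorem \ref{main-thm-1}, collapse Stefan's spectral sequence, identify $\Ext^d_{A^e}(A,B^e)$ as a rank-one Hopf bimodule via Schauenburg's equivalence with $\Lambda_1$-modules, and read $\mu_B$ off a generator. However, there are two genuine gaps. First, the collapse of the outer $\Ext$: it is \emph{not} true that $\Ext^p_H(\kk,-)$ concentrates in $p=n$ for arbitrary coefficients merely because $H$ is skew Calabi--Yau of dimension $n$ (for $H=\mathcal{U}(\mathfrak{g})$ and trivial coefficients, $\Ext^p_H(\kk,\kk)$ is Lie algebra cohomology, spread over many degrees). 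The concentration holds here only because $\Ext^d_{A^e}(A,B^e)$ is free as a right $H$-module; in the paper this is precisely what Proposition \ref{key-prop}, Proposition \ref{B^e-otimes_{A^e}-Ext^i_{A^e}(M, A^e)-prop} and Lemma \ref{omega_B=Botimes_AU} establish, by exhibiting it as $B\otimes_A\Ext^d_{A^e}(A,A^e)\otimes H$. That freeness must be proved \emph{before} you may write $\Ext^n_H(\kk,M)\cong M\otimes_H\Ext^n_H(\kk,H)$; it is not a formal consequence of the hypotheses you cite.

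Second, and more seriously, your assertion that the $H$-action on the $\mu_A$-twisted volume is ``by the very definition of the homological determinant \dots given by the character $\Hdet$'' inverts the actual definition. By Definition \ref{Hdet-defn}, $\Hdet$ is the convolution \emph{inverse} of the map $\varphi$ determined by $h\cdot e=\varphi(h)e$ (and it is $A$-valued, not a character, for general $\sigma$). The replacement of $\varphi$ by $\Hdet$ in the final formula is not a matter of conventions or of an extra $S^{-2}$: it occurs because the right $B$-action on the generator $1\otimes_A e$ of $B\otimes_A\Ext^d_{A^e}(A,A^e)_{\chi S^{-1}}$, transported through the isomorphism $F$ of Lemma \ref{omega_B=Botimes_AU}, reads
\[
(1\otimes e)\,b=\sum \chi(Sb_4)\,\gamma^{-1}(S^{-3}b_3)\,\varphi(S^{-3}b_2)\,\mu_A\big(\gamma(S^{-1}b_1)b_0\big)\otimes_A e ,
\]
and recognizing this coefficient as $\sum\mu_A(a)\Hdet(S^{-2}h_1)\#S^{-2}h_2\,\chi(Sh_3)$ requires expanding $\gamma^{-1}$ via \eqref{inverse-of-gamma} and invoking the twisted-module and cocycle identities \eqref{h-k-a}, \eqref{varphi-eq1}, so that the combination $\sigma^{-1}(\cdots)\,\big(h\cdot(\varphi(\cdots)\mu_A(\sigma(\cdots)))\big)$ matches the defining formula \eqref{Hdet-definition} of $\Hdet$. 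This computation is the core of the paper's proof; without it your plan produces $\varphi$ (the action on the volume) rather than its convolution inverse $\Hdet$, and hence not the asserted formula for $\mu_B$.
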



The organization of the paper is as follows.

In Section \ref{pre-section}, we review essential notation and recap preliminary findings.
Theorem \ref{criterion-hs->sCY} presents a criterion for a homologically smooth algebra to be skew Calabi--Yau, and its proof is provided in this section.
Section \ref{sCY-HG-ext-section} outlines the basic definitions and properties of faithfully flat Hopf Galois extensions.
We endow $\Ext_{A^e}^i(A, B^e)$ with a Hopf module structure and, leveraging this framework, establish Theorem \ref{main-thm-1} using Stefan’s spectral sequence.
We then consider the Nakayama automorphism for specific Hopf Galois extension cases.
To delve into the Nakayama automorphism for faithfully flat Hopf Galois extensions $A\subseteq B$, we revisit Schauenburg’s structure theorem for relative Hopf bimodules in Section \ref{Hopf-bimod-cats-section}, followed by an examination of the Hopf module structure on $\Ext_{A^e}^i(A, B^e)$.
In Section \ref{Nak-aut-cleft-ext-section}, we describe an $H$-action on $\Ext_{A^e}^i(A, A^e)$ when $B$ is the crossed product $A\#_{\sigma}H$. We compute the homological determinant for the crossed product $A\#_{\sigma}H$. Subsequently, Theorem \ref{main-thm-2} is proven, which expresses the Nakayama automorphism of $A\#_{\sigma}H$ in terms of the Nakayama automorphisms of $A$ and $H$, along with the homological determinant of the $H$-action on $A$.

\section{Preliminaries}\label{pre-section} 

Throughout this paper, $\kk$ is a base field, and all vector spaces and algebras are over $\kk$. Unadorned $\otimes$ means $\otimes_{\kk}$ and $\Hom$ means $\Hom_{\kk}$.
Let $A$ be an algebra over the base field $\kk$.
We denote the opposite algebra of $A$ by $A^{op}$, and denote the enveloping algebra $A \otimes A^{op}$ of $A$ by $A^e$.
Note that the switching operation $a \otimes b \mapsto b \otimes a$ extends to an anti-automorphism of the algebra $A^e$.
We always assume that $\kk$ acts centrally on all bimodules, and then we can identify the right (or left) $A^e$-modules with the $A$-$A$-bimodules.
Usually we work with right modules, and a left $A$-module is viewed as a right $A^{op}$-module.


Given an abelian category $\A$, let $\D(\A)$ denote the derived category of $\A$, and for $* = -, +, b$ let $\D^{*}(\A)$ be the appropriate full subcategory (conventions as in \cite{Wei1994}).
Given an algebra $A$, let $\D(A)$ denote the derived category of (right) $A$-modules. 
A complex $X \in \D^{-}(A)$ is called {\it pseudo-coherent} if $L$ has a bounded-above free resolution $P^{\bullet} = (\cdots \to P^i \to P^{i+1} \to \cdots)$ such that each component $P^i$ is a finitely generated free $A$-module \cite{YZ2006}.
An $A$-module $M$ is called {\it pseudo-coherent} if it is pseudo-coherent as a complex.
A complex is called {\it perfect}, if it is quasi-isomorphic to a bounded complex of finitely generated projective $A$-modules.
Let $\D_{perf}(A^e)$ denote the full subcategory of $\D^{b}(A^e)$ consisting of perfect complexes.

For complexes $M, N \in \D(A)$ with either $M \in \D^{-}(A)$ or $N \in \D^{+}(A)$, let $\RHom_{A}(M, N) \in \D(\kk)$ denote the right derived functor of the Hom functor.
For complexes $M \in \D(A)$ and $N \in \D(A^{op})$ with $M \in \D^{-}(A)$ or $N \in \D^{-}(A^{op})$, let $M \Lt_A N \in \D(\kk)$ denote the left derived functor of the tensor functor.
For the notations of derived categories and derived functors we refer the reader to \cite{Wei1994,Yek2020}.

\subsection{Skew Calabi--Yau algebras and Nakayama automorphism}

An algebra is called {\it homologically smooth}, if $A$ is perfect as a complex of $A^e$-modules.
For any $A^e$-module $M$, the left $A^e$-module structure on $A^e$ induces a $A$-$A$-bimodule structure on $\Hom_{A^e} (M, A^e)$.
For any $f \in \Hom_{A^e}(M, A^e)$, $a, a' \in A$ and $m \in M$,
$$(a f a')(x) = (a \otimes a') f(m) \in A^e.$$
Recall that a $\kk$-algebra $A$ has {\it Van den Bergh duality} (or {\it VdB duality}, for short), if it is homologically smooth, and there exists some invertible $A$-$A$-bimodule $\omega$ and $d \in \mathbb{N}$, such that
$$\RHom_{A^e}(A, A^e) \cong \omega[-d]$$
in $\D^b(A^e)$, and $\omega$ is called the {\it VdB dual module} of $A$.

Let $M$ be an $A$-$A$-bimodule.
If $\mu$ is an algebra endomorphism of $A$, then we let $M_{\mu}$ denote the bimodule $M$ with normal left action, but right action twisted by $\mu$.
That is, $a \cdot m \cdot a' = am\mu(a')$ for all $a, a' \in A$ and $m \in M$.

\begin{defn}\cite[Definition 0.1]{RRZ2014}
	A homologically smooth algebra $A$ is called {\it skew Calabi--Yau} of dimension $d$ for some integer $d \geq 0$, if there exists an algebra automorphism $\mu \in \Aut(A)$ such that
	$$\RHom_{A^e}(A, A^e) \cong A_{\mu}[-d]$$
	in $\D^b(A^e)$.
	Then $\mu$ is called a {\it Nakayama automorphism} of $A$.
	If $\mu$ is an inner automorphism, that is, $\Ext^d_{A^e}(A, A^e) \cong A$ as $A^e$-module, then $A$ is called a Calabi--Yau algebra of dimension $d$.
\end{defn}

The coordinate ring of a smooth affine Calabi--Yau variety is the original and most prototypical example of a Calabi--Yau algebra.
Skew Calabi--Yau algebras are commonly exemplified by Artin--Schelter regular and Hopf algebras, as discussed in the following subsection.

\subsection{Artin--Schelter regular Hopf algebras}

Throughout this paper, $H$ will stand for a Hopf algebra $(H, \Delta, \varepsilon)$ with a bijective antipode $S$.
We use the Sweedler notation $\Delta(h) = \sum h_1 \otimes h_2$ for all $h \in H$.
We recommend \cite{Mon1993} as a basic reference for the theory of Hopf algebras.
We will always assume that the antipode $S$ is bijective. This assumption on $S$ is automatic if $H$ is noetherian Artin--Schelter Gorenstein \cite[Theorem 0.2]{LOWY2018} or skew Calabi--Yau \cite[Proposition 3.4.2]{LMeu2019}.

\begin{defn}
	A Hopf algebra $H$ is called {\it Artin--Schelter} ({\it AS}) {\it Gorenstein} of dimension $d$, if
	\begin{enumerate}
		\item $H$ has left and right injective dimension $d < + \infty$,
		\item $\Ext^i_{H}(\kk,H) \cong \Ext^i_{H^{op}}(\kk, H) \cong
		\begin{cases}
			0, & i \neq d\\
			\kk, & i = d
		\end{cases}$, where $\kk$ is the trivial $H$-module $H/\Ker\varepsilon$.
	\end{enumerate}
	If in addition, $H$ has finite global dimension $d$, then $H$ is called {\it Artin--Schelter} ({\it AS}) {\it regular}.
\end{defn}

Let $H$ be an AS-Gorenstein algebra of dimension $d$.
The {\it right homological integral} \cite{LWZ2007} of $H$ is defined to be the $1$-dimensional $H$-$H$-bimodule $\Ext^d_{H}(\kk, H)$.
Then there exists an algebra homomorphism $\chi: H \to \kk$ such that $\Ext^d_{H}(\kk, H)$ is isomorphic to ${_{\chi}\!\kk}$ as $H$-$H$-bimodules, that is, for any $x \in \Ext^d_{H}(\kk, H)$, $hx = \chi(h)x$ and $xh = \vep(h)x$.
One may define the left homological integral $\Ext^d_{H^{op}}(\kk, H)$ similarly.

As is well known, the skew Calabi--Yau property of a Hopf algebra is also equivalent to the AS regular property. 

\begin{thm}\cite[Lemma 3.3]{LMeu2019}
	The following statements are equivalent for a Hopf algebra $H$.
	\begin{enumerate}
		\item $H$ is homologically smooth.
		\item $\kk$ is a perfect $H$-complex.
	\end{enumerate}
\end{thm}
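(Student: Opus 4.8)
The plan is to reduce both conditions to a single statement about a perfect complex transported across an exact equivalence of abelian categories. Concretely, I would produce an exact equivalence
$$\Phi \colon {}_H\M \xrightarrow{\ \sim\ } {}_{H^e}\M = {}_H\M_H$$
between left $H$-modules and $H$-bimodules that carries the trivial module $\kk = H/\Ker\vep$ to the regular bimodule $H$. Granting such a $\Phi$, the theorem becomes formal: an exact equivalence preserves finitely generated projective objects, hence induces an equivalence of triangulated categories $\D_{perf}(H) \simeq \D_{perf}(H^e)$ that identifies $\kk$ with $H$. Thus $\kk$ lies in $\D_{perf}(H)$, i.e. $\kk$ is a perfect $H$-complex, if and only if $H$ lies in $\D_{perf}(H^e)$, i.e. $H$ is homologically smooth.

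The equivalence I have in mind is $\Phi(V) = H \otimes V$, equipped with the bimodule structure
$$a \cdot (x \otimes v) = ax \otimes v, \qquad (x \otimes v)\cdot b = \sum x b_1 \otimes S(b_2)\, v,$$
for $a, b, x \in H$ and $v \in V$; one checks directly that the two actions commute and that the right action is associative precisely because $S$ is an antipode. A short computation using $\sum b_1 \vep(b_2) = b$ then shows $\Phi(\kk) \cong H$ as bimodules. The quasi-inverse is $\Psi(M) = \kk \otimes_H M$ formed via the left action, where the residual right action of $M$ descends and is converted back to a left action through $S$; the natural isomorphisms $\Psi\Phi \cong \id$ and $\Phi\Psi \cong \id$ are exactly the content of the fundamental theorem of Hopf modules, and this is the step where bijectivity of the antipode is used. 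Since $\Phi = H \otimes -$ is visibly exact over the field $\kk$, it preserves bounded resolutions and finitely generated projectives.

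With $\Phi$ in hand the two implications read as follows. If $\kk$ is a perfect $H$-complex, I would choose a bounded resolution $P_\bullet \to \kk$ by finitely generated projective $H$-modules; applying the exact functor $\Phi$ yields a bounded resolution $\Phi(P_\bullet) \to H$ by finitely generated projective $H^e$-modules, so $H$ is homologically smooth. Conversely, applying the quasi-inverse $\Psi$ to a bounded resolution of $H$ by finitely generated projective $H^e$-modules produces such a resolution of $\kk$ over $H$.

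The main obstacle is the construction and verification of the equivalence $\Phi$, and in particular the check that $\Psi$ is a genuine quasi-inverse; this is precisely where the bijective-antipode hypothesis enters, via the fundamental theorem of Hopf modules. Once that equivalence is secured, the passage to perfect complexes and the equivalence of the two stated conditions are purely formal.
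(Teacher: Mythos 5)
Your argument hinges on the claim that $\Phi = H \otimes -\colon {}_H\M \to {}_{H^e}\M$ and $\Psi = \kk \otimes_H -$ are quasi-inverse equivalences of abelian categories, justified by ``the fundamental theorem of Hopf modules.'' This claim is false, and the appeal to that theorem is a misapplication: it asserts an equivalence ${}^H_H\M \simeq \mathrm{Vect}_\kk$ for Hopf modules (objects carrying a compatible $H$-action \emph{and} $H$-coaction), and says nothing about the category of $H$-bimodules. In fact ${}_H\M$ and ${}_{H^e}\M$ are not equivalent in general: for $H = \mathbb{C}G$ with $G$ a nontrivial finite abelian group of order $n$, the first category has $n$ isomorphism classes of simple objects while the second has $n^2$. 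Concretely, while your computation $\Psi\Phi \cong \id$ is correct (provided the descended right action is converted to a left action via $S^{-1}$, not $S$), the other composite fails: for the trivial bimodule $M = {}_{\vep}\kk_{\vep}$ one gets $\Psi(M) \cong \kk$ and hence $\Phi\Psi(M) \cong H \not\cong M$ whenever $\dim H > 1$. So the ``purely formal'' transport of perfect complexes across an equivalence collapses, and with it your justification that $\Phi$ preserves finitely generated projectives (exact functors alone do not preserve projectivity).

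What survives — and what the proof of the cited result, \cite[Lemma 3.3]{LMeu2019}, actually does — is to use your two functors as plain induction-type functors rather than as an equivalence. Your $\Phi$ is naturally isomorphic to $H^e \otimes_H -$, induction along the algebra map $h \mapsto \sum h_1 \otimes Sh_2$; it is exact, sends $\kk$ to $H$, and sends $H$ to $H^e$ (this last point must be checked directly, and it is here that bijectivity of $S$ enters, via the bijection $x \otimes y \mapsto \sum xy_1 \otimes Sy_2$ of $H \otimes H$). That yields (2) $\Rightarrow$ (1). For (1) $\Rightarrow$ (2) you cannot ``apply the quasi-inverse'': $\Psi = \kk \otimes_H -$ is only right exact. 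The repair is to observe that $H^e$ is free as a left $H$-module, so every projective $H^e$-module is projective as a left $H$-module; hence a quasi-isomorphism $Q^{\bullet} \to H$ from a bounded complex of finitely generated projective $H^e$-modules is a homotopy equivalence of bounded complexes of projective left $H$-modules, and applying the additive functor $\Psi$ gives a homotopy equivalence $\Psi(Q^{\bullet}) \to \Psi(H) \cong \kk$; finally $\Psi(H^e) \cong H$ as left $H$-modules (again using bijectivity of $S$, since the action is twisted by $S^{-1}$), so each $\Psi(Q^i)$ is finitely generated projective over $H$. With these two repairs your functors do prove the theorem, but the equivalence framing is a genuine error rather than a shortcut.
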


\begin{thm}\cite[Theorem 1]{LMeu2019}\label{Hopf-VdB<->sCY-thm}
	The following statements are equivalent for a Hopf algebra $H$ with a bijective antipode.
	\begin{enumerate}
		\item $H$ has VdB duality.
		\item $H$ is AS regular, and the trivial $H$-module $\kk$ is pseudo-coherent.
		\item $H$ is skew Calabi--Yau.
	\end{enumerate}
	In each case, $\mu_H$ serves as a Nakayama automorphism of $H$, given by $\mu_H(h) = \sum S^{-2}h_1 \chi(Sh_2)$ for all $h \in H$, where $\Ext^d_{H}(\kk, H) \cong {_{\chi}\!\kk}$.
\end{thm}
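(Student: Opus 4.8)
The plan is to reduce the two-sided homological algebra over $H^e$ to the one-sided homological algebra of the trivial module $\kk$ over $H$, and then read off all three conditions, together with the Nakayama automorphism, from a single computation of $\RHom_H(\kk, H)$. The key tool is the algebra homomorphism $\iota \colon H \to H^e$, $\iota(h) = \sum h_1 \otimes S(h_2)$; one checks $\iota(hg) = \iota(h)\iota(g)$ using the anti-multiplicativity of $S$ and the multiplication rule of $H^e = H \otimes H^{\op}$. Restriction along $\iota$ turns the regular bimodule into the trivial module: the multiplication map induces an isomorphism of left $H^e$-modules $H^e \otimes_{\iota(H)} \kk \xrightarrow{\sim} H$, where $\kk = {}_{\vep}\kk$. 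Since $H^e$ is free, hence flat, as a right $\iota(H)$-module (a cleaving/Hopf-module argument), the tensor--hom adjunction upgrades to the derived level and gives
\[ \RHom_{H^e}(H, H^e) \cong \RHom_{\iota(H)}(\kk, H^e). \]
A second Hopf-module argument shows $H^e$ is also free as a left $\iota(H)$-module, $H^e \cong H \otimes W$ with $H$ acting on the first factor, so the right-hand side becomes $\RHom_H(\kk, H) \otimes W$. Tracking the residual right $H^e$-action through these identifications yields a derived isomorphism of $H$-bimodules, and in particular $\Ext^i_{H^e}(H, H^e) \cong \Ext^i_H(\kk, H) \otimes H$ for every $i$.

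With this isomorphism in hand, the equivalences follow by matching degrees and ranks. First I would record the homological dictionary: by the preceding theorem, $H$ is homologically smooth iff $\kk$ is perfect over $H$; and $\kk$ is perfect iff it is pseudo-coherent and of finite projective dimension, the latter being equivalent to $\gld H < \infty$ since $\gld H = \pd_H \kk$ for a Hopf algebra. For (2) $\Rightarrow$ (1): AS regularity supplies $\gld H = d < \infty$ and $\Ext^i_H(\kk, H) = \delta_{i,d}\kk$, while pseudo-coherence of $\kk$ then makes $\kk$ perfect and $H$ homologically smooth; the displayed isomorphism shows $\RHom_{H^e}(H, H^e)$ is concentrated in degree $d$ with $\Ext^d_{H^e}(H, H^e) \cong \kk \otimes H \cong H$ as a right module, an invertible bimodule, so $H$ has VdB duality. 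For (3) $\Rightarrow$ (2): skew Calabi--Yau gives homological smoothness, hence $\kk$ perfect, hence $\kk$ pseudo-coherent and $\gld H = d < \infty$; the isomorphism forces $\Ext^i_H(\kk, H)$ to vanish for $i \neq d$ and to be one-dimensional for $i = d$, and the symmetric statement over $H^{\op}$ follows by applying the same argument to the Hopf algebra $H^{\op}$, giving AS regularity.

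For (1) $\Rightarrow$ (3) I would argue that the VdB dual module $\omega = \Ext^d_{H^e}(H, H^e)$ is automatically of the form $H_\mu$: being invertible it is finitely generated projective on each side, and via $\omega \cong \Ext^d_H(\kk, H) \otimes H$ this forces $\Ext^d_H(\kk, H)$ to be one-dimensional, whence $\omega \cong H$ as a right module; an invertible bimodule free of rank one on the right is necessarily $H_\mu$ for a unique $\mu \in \Aut(H)$, the Nakayama automorphism. The remaining task, and the genuine computational heart of the proof, is to identify $\mu$ explicitly as $\mu_H(h) = \sum S^{-2}(h_1)\chi(Sh_2)$. This requires transporting both the left and the right $H$-actions on ${}_\chi\kk \otimes H$ back through the two antipode-twisted freeness isomorphisms and comparing the two resulting one-sided structures on $\omega$. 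The main obstacle is precisely this bookkeeping: each freeness isomorphism contributes one factor of the antipode, and their interaction with the right $H^e$-module structure on the target is where the iterated $S^{-2}$ and the character $\chi$ (coming from $\Ext^d_H(\kk, H) \cong {}_\chi\kk$) emerge. I expect everything else to be formal once the fundamental isomorphism and its bimodule-equivariance are established with care.
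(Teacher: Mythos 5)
First, a point of reference: the paper does not prove this theorem at all --- it is imported verbatim as \cite[Theorem 1]{LMeu2019} --- so the only meaningful benchmark is the argument of Le Meur (going back to Brown--Zhang). Your skeleton is essentially that argument: the algebra embedding $\iota(h)=\sum h_1\otimes S(h_2)$, the isomorphism $H\cong H^e\otimes_{\iota(H)}\kk$ of left $H^e$-modules, the Hopf-module freeness of $H^e$ over $\iota(H)$ on both sides, and the resulting reduction $\RHom_{H^e}(H,H^e)\cong \RHom_{H}(\kk,H^e)\cong \RHom_H(\kk,H)\otimes W$ are exactly the standard steps, and your organization of the three implications around this reduction is correct.

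There are, however, genuine gaps. A smaller one: the identification $\Ext^i_H(\kk,H\otimes W)\cong \Ext^i_H(\kk,H)\otimes W$ is not unconditional --- $\Ext$ does not commute with the infinite-dimensional tensor factor $W$ unless $\kk$ admits a resolution by finitely generated projectives --- so your ``fundamental isomorphism, for every $i$'' must be invoked separately in each direction \emph{after} pseudo-coherence (or perfectness) of $\kk$ has been extracted from that direction's hypotheses; the ingredients are present in your text but the logical order as written is not justified. Similarly, in (1) $\Rightarrow$ (3) the claim that invertibility of $\omega$ forces $\dim_\kk\Ext^d_H(\kk,H)=1$ needs an argument: if $\omega\cong H^n$ as right $H$-modules, Morita theory gives $H\cong\End_H(\omega_H)\cong M_n(H)$, and composing with the counit and restricting to $M_n(\kk)$ produces an algebra map $M_n(\kk)\to\kk$, impossible for $n\geq 2$. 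The serious gap is the last assertion of the theorem: the explicit formula $\mu_H(h)=\sum S^{-2}h_1\,\chi(Sh_2)$ is part of the statement, and your proposal does not derive it. You correctly locate where the $S^{-2}$ and $\chi$ must arise (the two antipode-twisted freeness isomorphisms and the homological integral), but transporting the right $H^e$-structure through those isomorphisms is precisely the non-formal computational core of the proof, not routine bookkeeping; until it is carried out, the theorem as stated --- automorphism formula included --- is not proved.
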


Here, we present several examples of skew Calabi--Yau Hopf algebras.

\begin{ex}
	\begin{enumerate}
		\item If $H$ is a finite-dimensional Hopf algebra, it is skew Calabi--Yau if and only if it is semisimple.
		\item Any noetherian affine PI Hopf algebra with finite global dimension $d$ is inherently a $d$-dimensional skew Calabi--Yau algebra \cite[6.2]{BZ2008}.
		In particular, the coordinate ring $\mathcal{O}(G)$ of an affine algebraic group $G$ over a field $\kk$ of characteristic zero is always a Calabi--Yau algebra.
		\item According to \cite[Proposition 6.6]{Zho2024}, any connected Hopf algebra with finite Gelfand--Kirillov dimension $d$ over a field of characteristic 0 is guaranteed to be a skew Calabi--Yau algebra of dimension $d$.
	\end{enumerate}
\end{ex}

\subsection{A criterion for homologically smooth algebras to be skew Calabi--Yau}

In this subsection, we give a useful criterion for homologically smooth algebras to be skew Calabi--Yau.

An object $X$ in $\D^b(A^e)$ is termed a {\it two-side tilting complex} over $A$, if there exists an object $X^{\vee} \in \D^b(A^e)$ satisfying $X \Lt_A X^{\vee} \cong A \cong X^{\vee} \Lt_A X$ in $\D^b(A^e)$.
If $X$ is a two-side tilting complex over $A$, then $X^{\vee} \cong \RHom_A(X, A) \cong \RHom_{A^{op}}(X, A)$ in $\D(A^e)$, and $X$ is a perfect complex in both $\D(A)$ and $\D(A^{op})$.

\begin{lem}\label{VdB-dual-module-lem}
	Let $A$ be a homologically smooth algebra.
	If $\Omega:= \RHom_{A^e}(A, A^e)$ is perfect in both $\D(A)$ and $\D(A^{op})$, then $\Omega$ is a two-side tilting complex over $A$.
\end{lem}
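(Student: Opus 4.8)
The plan is to exhibit an explicit candidate inverse for $\Omega$ under $\Lt_A$ and to verify the two defining isomorphisms $\Omega \Lt_A \Omega^{\vee} \cong A \cong \Omega^{\vee} \Lt_A \Omega$; note that the point is \emph{not} to identify $\Omega$ with $A$, but to realize it as an invertible object. First, since $A$ is homologically smooth it is perfect, hence reflexive, as an object of $\D(A^e)$; therefore $\Omega = \RHom_{A^e}(A, A^e)$ again lies in $\D_{perf}(A^e) \subseteq \D^b(A^e)$, and the biduality morphism $A \To \RHom_{A^e}(\Omega, A^e)$ is an isomorphism in $\D^b(A^e)$. By hypothesis $\Omega$ is perfect in both $\D(A)$ and $\D(A^{op})$, so the one-sided duals $\RHom_A(\Omega, A)$ and $\RHom_{A^{op}}(\Omega, A)$ are themselves objects of $\D^b(A^e)$, perfect over $A^{op}$ and over $A$ respectively. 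These are the candidates for the inverse $\Omega^{\vee}$ foreshadowed by the identity $X^{\vee} \cong \RHom_A(X, A) \cong \RHom_{A^{op}}(X, A)$ recorded just before the statement.

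Next I would reduce the two defining isomorphisms to an endomorphism computation. Taking $\Omega^{\vee} = \RHom_A(\Omega, A)$, perfectness of $\Omega$ as a right $A$-module makes the tensor--evaluation morphism $\RHom_A(\Omega, A) \Lt_A \Omega \To \RHom_A(\Omega, \Omega)$ an isomorphism; dually, taking $\Omega^{\vee} = \RHom_{A^{op}}(\Omega, A)$ and using perfectness over $A^{op}$ yields $\Omega \Lt_A \RHom_{A^{op}}(\Omega, A) \cong \RHom_{A^{op}}(\Omega, \Omega)$. It therefore suffices to prove that the canonical maps $A \To \RHom_A(\Omega, \Omega)$ and $A \To \RHom_{A^{op}}(\Omega, \Omega)$ are isomorphisms in $\D^b(A^e)$. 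Granting this, both composites equal $A$, the right-dual $\RHom_A(\Omega, A)$ and the left-dual $\RHom_{A^{op}}(\Omega, A)$ are forced to coincide into a single $\Omega^{\vee}$ (in agreement with the recorded identity), and $\Omega$ is a two-side tilting complex by definition.

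Finally, to compute $\RHom_{A^{op}}(\Omega, \Omega)$ I would substitute $\Omega = \RHom_{A^e}(A, A^e)$ into the target, move the inner $\RHom_{A^e}(A, -)$ outward by adjunction, and collapse the resulting expression down to $\RHom_{A^e}(\Omega, A^e)$, which is $A$ by the reflexivity established in the first step; the symmetric manipulation handles $\RHom_A(\Omega, \Omega)$. The manipulation is licensed by homological smoothness: since $A$ is compact in $\D(A^e)$, the functor $\RHom_{A^e}(A, -)$ commutes with the relevant derived tensor products, which is exactly what converts the two-sided datum into these one-sided endomorphism identities. I expect the main obstacle to be this last step, namely keeping track of the four commuting $A$-actions as they pass through the chain of adjunctions, checking that the abstract isomorphism is induced by the canonical (unit) morphism rather than by some twisting automorphism, and confirming that the $A$-side and $A^{op}$-side computations are genuinely compatible so that the two one-sided duals agree. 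Homological smoothness thus enters twice and essentially: once to make $\Omega$ reflexive over $A^e$, and once to commute $\RHom_{A^e}(A, -)$ past $\Lt_A$.
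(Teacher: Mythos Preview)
Your proposal is correct and takes essentially the same approach as the paper: both use reflexivity $A \cong \RHom_{A^e}(\Omega, A^e)$ from homological smoothness, the adjunction identifying $\RHom_{A^e}(\Omega, A^e)$ with $\RHom_{A^e}(A, \RHom_A(\Omega, A^e))$, and perfectness of $\Omega$ over $A$ and $A^{op}$ to pass to derived tensor products. The paper arranges these into a single direct chain $A \cong \RHom_{A^e}(\Omega, A^e) \cong \cdots \cong \RHom_A(\Omega, A) \Lt_A \Omega$ (using homological smoothness once more to rewrite $\RHom_{A^e}(A, -)$ as a tensor with $\Omega$), whereas you factor through the endomorphism complex $\RHom_A(\Omega, \Omega)$; once your ``collapse'' step is unwound it is exactly the paper's adjunction read backwards, so the bookkeeping you flag is the same in both arguments.
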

\begin{proof}
	Given that $A$ is homologically smooth, there is a bounded complex $P$ of finitely generated projective $A^e$-modules such that $A \cong P$ in $\D^b(A^e)$.
	Thus, in $\D(A^e)$, $\Omega \cong \Hom_{A^e}(P, A^e)$ and $A \cong P \cong \Hom_{A^e}(\Hom_{A^e}(P, A^e), A^e) \cong \RHom_{A^e}(\Omega, A^e)$.
	This implies
	\begin{align*}
		A & \cong \RHom_{A^e}(\Omega, A^e) & \\
		& \cong \RHom_{A^e}(A, \RHom_{A}(\Omega, A^e)) & \\
		& \cong \RHom_{A^e}(A, A^e \otimes_A \RHom_{A}(\Omega, A)) &  \text{since } \Omega \in \D_{perf}(A) \\
		& \cong \RHom_{A^e}(A, A \otimes \RHom_{A}(\Omega, A)) & \\
		& \cong (A \otimes \RHom_{A}(\Omega, A)) \Lt_{A^e} \RHom_{A^e}(A, A^e) & \text{since } A \in \D_{perf}(A^e) \\
		& \cong \RHom_{A}(\Omega, A) \Lt_A \RHom_{A^e}(A, A^e) & \\
		& \cong \RHom_{A}(\Omega, A) \Lt_A \Omega . &
	\end{align*}
	Similarly, it can be shown that $A \cong \Omega \Lt_A \RHom_{A^{op}}(\Omega, A)$ in $\D(A^e)$.
	Consequently, $\Omega$ a two-side tilting complex over $A$.
\end{proof}

The following result is well known.

\begin{lem}\label{invertible to outer automorphism}
	Let $R$ be a ring, and $\omega$ be an $R$-$R$-bimodule.
	If $\omega$ is a free $R$-module of rank one, then there exists an algebra endmorphism $\mu$ of $R$ for which $\omega$ is isomorphic to $R_{\mu}$ as an $R$-$R$-bimodules.
	Furthermore, if $\omega$ is an invertible $R$-$R$-bimodule, then $\mu$ is an automorphism.
\end{lem}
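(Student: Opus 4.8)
The plan is to reduce everything to a computation with a free generator. For the first statement, take $\omega$ to be free of rank one as a left $R$-module (the right-handed case is symmetric), and fix a free generator $e$, so that every element of $\omega$ is uniquely of the form $re$ with $r \in R$. Since $\omega$ is an $R$-$R$-bimodule, for each $a \in R$ the element $e \cdot a$ again lies in $\omega = Re$, so there is a unique $\mu(a) \in R$ with $e \cdot a = \mu(a)\,e$. I would then check that $\mu$ is a ring (indeed $\kk$-algebra) endomorphism of $R$: unitality comes from $e \cdot 1 = e$; additivity and $\kk$-linearity come from those of the right action together with the uniqueness of left coordinates; and multiplicativity comes from comparing $e\cdot(ab) = (e\cdot a)\cdot b = (\mu(a)e)\cdot b = \mu(a)(e\cdot b) = \mu(a)\mu(b)\,e$ with $e\cdot(ab) = \mu(ab)\,e$, using that the left and right actions commute. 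Finally the map $\phi\colon R_\mu \to \omega$, $r \mapsto re$, is a left-module isomorphism because $e$ is a free generator, and it is right-linear since $\phi(r \cdot a) = \phi(r\mu(a)) = r\mu(a)\,e = r(e\cdot a) = (re)\cdot a = \phi(r)\cdot a$; hence $\phi$ is the desired bimodule isomorphism $\omega \cong R_\mu$.

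For the second statement I would invoke the structural features of invertible bimodules. An invertible $R$-$R$-bimodule $\omega$ is a finitely generated projective generator on each side, and Morita theory provides that right multiplication gives a bijection (in fact an (anti)isomorphism of rings) $R \xrightarrow{\sim} \End_R({}_R\omega)$. Transporting this through the identification $\omega \cong R_\mu = Re$ of the first part, right multiplication by $a$ sends $re \mapsto r\mu(a)e$, so under the identification $\End_R({}_R\omega) \cong \End_R({}_RR)$ (the latter being exactly right multiplications, a copy of $R$) it corresponds to right multiplication by $\mu(a)$. Thus the bijection $R \to \End_R({}_R\omega) \cong R$ is precisely $a \mapsto \mu(a)$, so $\mu$ is bijective, i.e. an automorphism.

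If one prefers to avoid quoting Morita theory, there is a more self-contained route, at the cost of one extra step. The assignment $a \otimes b \mapsto a\mu(b)$ defines, for any ring endomorphisms $\mu,\nu$, a bimodule isomorphism $R_\mu \otimes_R R_\nu \cong R_{\mu\nu}$: it is well defined by the tensor relation $a\mu(x)\otimes b = a\otimes xb$, surjective by taking $b=1$, and injective because $a\otimes b = a\mu(b)\otimes 1$. Moreover a bimodule isomorphism $R_\lambda \cong R$ forces $\lambda$ to be the inner automorphism $a \mapsto uau^{-1}$, where $u$ is the image of $1$: such an isomorphism is right multiplication by a unit $u$ on the underlying left module, and right-linearity reads $\lambda(a)u = ua$. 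The remaining and genuinely delicate step, which I expect to be the main obstacle, is to show that the inverse bimodule $\omega^{-1}$ is again free of rank one as a left $R$-module, say $\omega^{-1} \cong R_\nu$. Granting this, the invertibility isomorphisms become $R_{\mu\nu} \cong R \cong R_{\nu\mu}$, so both $\mu\nu$ and $\nu\mu$ are inner automorphisms, hence bijective; surjectivity of $\mu\nu$ and injectivity of $\nu\mu$ then yield that $\mu$ is bijective. Since establishing the rank-one freeness of $\omega^{-1}$ is exactly where the projectivity and generator properties of invertible bimodules must be used, the Morita-theoretic first route is the cleaner one.
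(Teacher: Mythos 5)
Your proof is correct. Note that the paper itself offers no argument for this lemma --- it is stated with the remark ``the following result is well known'' --- so there is no in-paper proof to compare against; your write-up supplies exactly the kind of standard argument the author is implicitly invoking. The first half is the routine computation: with $e$ a free generator of ${}_R\omega$, defining $\mu(a)$ by $e\cdot a=\mu(a)e$ and checking that $\mu$ is a (unital, multiplicative) endomorphism with $r\mapsto re$ a bimodule isomorphism $R_\mu\cong\omega$ is precisely right, including the order check that makes $\mu$ a homomorphism rather than an anti-homomorphism. For the second half, your Morita-theoretic route is complete and clean: an invertible bimodule is faithfully balanced, so the right action gives a bijection $R\to\End_R({}_R\omega)$, and transporting along $\omega\cong R_\mu$ identifies this bijection with $a\mapsto\mu(a)$ composed with the standard identification $\End_R({}_RR)\cong R$; only bijectivity is needed, so the isomorphism-versus-anti-isomorphism convention you flag is harmless. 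Your alternative ``self-contained'' route is, as you yourself observe, genuinely incomplete: showing that $\omega^{-1}$ is again free of rank one as a left module is equivalent to the bijectivity of $\mu$ (indeed $\omega^{-1}\cong\Hom_R({}_R R_\mu,{}_RR)\cong{}_\mu R$, which is left-free of rank one only when $\mu$ is onto), so that route is circular as sketched; but since you correctly identify this obstacle and rest the proof on the Morita argument instead, the proposal as a whole stands.
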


As a consequence of Lemma \ref{VdB-dual-module-lem} and \ref{invertible to outer automorphism}, we establish the following criterion for a homologically smooth algebra to be skew Calabi--Yau.

\begin{prop}\label{VdB-dual-module-prop}
	Let $A$ be a homologically smooth algebra.
	Suppose that there exists an $A$-$A$-bimodule $\omega$ and an integer $d \in \N$ such that $\RHom_{A^e}(A, A^e) \cong \omega[-d]$ in $\D(A^e)$.
	If $\omega$ is finitely generated and projective as both a left and right $A$-module, then $\omega$ is an invertible $A$-$A$-bimodule.
	Moreover, if $\omega$ is isomorphic to $A$ as either a left or right $A$-module, then $A$ is a skew Calabi--Yau algebra of dimension $d$.
\end{prop}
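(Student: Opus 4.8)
The plan is to deduce the statement from the two preceding lemmas. Write $\Omega := \RHom_{A^e}(A, A^e)$, so that by hypothesis $\Omega \cong \omega[-d]$ in $\D(A^e)$. The first thing I would check is that $\omega[-d]$ is perfect both in $\D(A)$ and in $\D(A^{op})$: since $\omega$ is finitely generated projective as a right $A$-module and as a left $A$-module, it is a perfect complex on each side, and shifting by $[-d]$ preserves perfectness. Thus $\Omega$ meets the hypothesis of Lemma \ref{VdB-dual-module-lem}, whence $\Omega$ is a two-side tilting complex over $A$, with dual $\Omega^{\vee} \cong \RHom_A(\Omega, A) \cong \RHom_{A^{op}}(\Omega, A)$.

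Next I would identify $\Omega^{\vee}$ explicitly and collapse the derived functors. Because $\omega$ is finitely generated projective as a right $A$-module, we get $\Omega^{\vee} \cong \RHom_A(\omega[-d], A) \cong \omega^{*}[d]$, where $\omega^{*} := \Hom_A(\omega, A)$ carries its natural $A$-$A$-bimodule structure. Again by projectivity of $\omega$ on the relevant side, the derived tensor product $\Lt_A$ agrees with the ordinary tensor product, so the defining tilting isomorphisms $\Omega \Lt_A \Omega^{\vee} \cong A \cong \Omega^{\vee} \Lt_A \Omega$ are concentrated in cohomological degree $0$ and read, after passing to $H^0$, as bimodule isomorphisms $\omega \otimes_A \omega^{*} \cong A \cong \omega^{*} \otimes_A \omega$. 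This exhibits $\omega$ as an invertible $A$-$A$-bimodule with inverse $\omega^{*}$, which is the first assertion.

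For the ``moreover'' part, suppose in addition $\omega \cong A$ as, say, a left $A$-module (the right-module case being entirely symmetric). Then $\omega$ is a free left $A$-module of rank one, so Lemma \ref{invertible to outer automorphism} supplies an algebra endomorphism $\mu$ of $A$ with $\omega \cong A_{\mu}$ as $A$-$A$-bimodules; and since $\omega$ has just been shown to be invertible, the same lemma upgrades $\mu$ to an element of $\Aut(A)$. Substituting back yields $\RHom_{A^e}(A, A^e) \cong \omega[-d] \cong A_{\mu}[-d]$ in $\D^b(A^e)$, which is exactly the defining condition for $A$ to be skew Calabi--Yau of dimension $d$ with Nakayama automorphism $\mu$.

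I expect the main obstacle to be the bookkeeping in the second step: one must track carefully on which side $\omega$ is projective in order both to justify collapsing the derived Hom into $\omega^{*}$ and the derived tensor into an ordinary tensor, and to ensure the resulting identities $\omega \otimes_A \omega^{*} \cong A$ are isomorphisms of \emph{bimodules} rather than of one-sided modules only; it is precisely the bimodule-level invertibility that is needed to invoke Lemma \ref{invertible to outer automorphism}. Once invertibility is secured at the bimodule level, the passage to the Nakayama automorphism is formal.
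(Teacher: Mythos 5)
Your proposal is correct and follows exactly the route the paper intends: the paper states Proposition \ref{VdB-dual-module-prop} as a direct consequence of Lemma \ref{VdB-dual-module-lem} and Lemma \ref{invertible to outer automorphism}, and your argument — using two-sided projectivity of $\omega$ to verify the perfectness hypothesis of Lemma \ref{VdB-dual-module-lem}, collapsing the derived functors to get bimodule-level invertibility, then invoking Lemma \ref{invertible to outer automorphism} to produce the Nakayama automorphism — fills in precisely those details. No gaps; the bookkeeping you flag (tracking which side carries the projectivity) is handled correctly.
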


\section{The skew Calabi--Yau property of flat Hopf Galois extensions}\label{sCY-HG-ext-section}

\subsection{Hopf Galois extensions}

In this subsection we recall the definition and basic properties of an $H$-Galois extension. 
Let $B$ be a right $H$-comodule algebra via an algebra homomorphism $\rho: B \to B \otimes H$.
The {\it coinvariant subalgebra} of the $H$-coaction on $B$ is $\{ b \in B \mid \rho(b) = b \otimes 1 \}$, denoted by $B^{co H}$.
If $(V, \rho_V )$ is a right $H$-comodule, then we use the Sweedler notation $\rho(v) = \sum v_0 \otimes v_1$ for all $v \in V$.

\begin{defn}
	With the notation established above, the extension $A(:= B^{co H}) \subset B$ is referred to as a (right) $H$-{\it Galois extension} if the canonical map
	\[ \beta: B \otimes_A B \to B \otimes H, \quad b' \otimes_A b \mapsto b'\rho(b) = \sum b'b_0 \otimes b_1\]
	is bijective.
	If so, the map $\beta$ is termed the {\it Galois map}.
	A Hopf Galois extension is defined as an $H$-Galois extension where $H$ is a Hopf algebra.
\end{defn}

Let $A$ be an algebra, and $\M_A$ (resp., $_A\!\M$) be the category of right (resp., left) $A$-modules.
Let $B$ be a right $H$-comodule algebra. A relative right-right $(B, H)$-Hopf module is a vector space $M$ with a right $B$-action and a right $H$-coaction $\rho_M$ such that
\[ \rho_M(mb) = \rho_M(m)\rho(b) = \sum m_0b_0 \otimes m_1b_1\]
for all $m \in M$ and $b \in B$.
A relative left-right $(B, H)$-Hopf module is defined similarly.
The category of relative right-right (resp., left-right) $(B, H)$-Hopf modules is denoted by $\M^H_B$ (resp., $_B\!\M^H$).

We require some known facts about Hopf Galois extensions.


\begin{thm}\cite[Theorem I]{Sch1990}\cite[Theorem 5.6]{SS2005}\label{faithful-flat-Hopf-Galois-extension-thm}
	Let $H$ be a Hopf algebra with a bijective antipode, $B$ be a right $H$-comodule algebra and $A := B^{co H}$. Then the following are equivalent.
	
	(1) $B$ is injective as right $H$-comodule, and $\beta: B \otimes_A B \to B \otimes H$ is surjective.
	
	(2) $(- \otimes_A B, (-)^{co H})$ is an equivalence between $\M_A$ and $\M^H_B$. 
	
	(3) $(B \otimes_A -, (-)^{co H})$ is an equivalence between $_A\!\M$ and $_B\!\M^H$. 
	
	(4) $B$ is a faithfully flat left $A$-module, and $A \subseteq B$ is an $H$-Galois extension.
	
	(5) $B$ is a faithfully flat right $A$-module, and $A \subseteq B$ is an $H$-Galois extension.
	
	(6) $A \subseteq B$ is a Hopf Galois extension, and the comodule algebra $B$ is right $H$-equivariantly projective as a left $A$-module, i.e., there exists a right $H$-colinear and left $A$-linear splitting of the multiplication $A \otimes B \to B$.
\end{thm}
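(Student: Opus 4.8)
The plan is to pivot the entire cycle of equivalences on the adjoint pair
\[ F = (- \otimes_A B) : \M_A \To \M^H_B, \qquad G = (-)^{co H} : \M^H_B \To \M_A, \]
in which $F$ is left adjoint to $G$: a $B$-linear, $H$-colinear map $f : M \otimes_A B \To N$ corresponds to the $A$-linear map $m \mapsto f(m \otimes 1)$, whose image lands in $N^{co H}$ because $\rho(1) = 1 \otimes 1$. The unit is $\eta_M : M \To (M \otimes_A B)^{co H}$, $m \mapsto m \otimes 1$, and the counit is $\epsilon_N : N^{co H} \otimes_A B \To N$, $n \otimes b \mapsto nb$. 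I would first observe that conditions (2) and (3) are exactly the assertions that $\eta$ and $\epsilon$ are natural isomorphisms (for right, resp.\ left, modules), so the whole theorem reduces to reading the unit and counit isomorphisms off the Galois and flatness data, with (3) handled as the left-handed mirror of (2).

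The unit is the benign half. Writing $B^{co H}$ as the equalizer of $\rho$ and $(-\otimes 1) : B \rightrightarrows B \otimes H$, flatness of $B$ lets $M \otimes_A (-)$ commute with this equalizer, so $(M \otimes_A B)^{co H} \cong M \otimes_A B^{co H} = M \otimes_A A = M$; hence $\eta$ is an isomorphism as soon as $B$ is flat, which by the triangle identities already makes $F$ fully faithful and shows $G(\epsilon_N)$ is an isomorphism for every $N$. The substantial half is to promote this to $\epsilon_N$ itself being invertible. Here the canonical map enters directly: equipping $B \otimes H$ with the coaction $\id_B \otimes \Delta$ and the $B$-action $(b \otimes h) \cdot b' = \sum bb'_0 \otimes h b'_1$ makes it a relative Hopf module with $(B \otimes H)^{co H} \cong B$, and one checks that $\epsilon_{B \otimes H}$ is precisely the Galois map $\beta$. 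Thus in the direction ``equivalence $\Rightarrow$ (4),(5)'' the argument is formal: an equivalence $F$ is exact and faithful, so $B$ is faithfully flat, and $\epsilon_{B\otimes H} = \beta$ is an isomorphism, giving the Galois condition. The reverse implication, that the Galois property together with faithful flatness forces $\epsilon_N$ to be an isomorphism for \emph{all} $N$, is the main obstacle: I would prove it by resolving an arbitrary $N$ by cofree objects $B \otimes H$, on which $\epsilon$ is $\beta$ and hence invertible, and propagating along the resolution using that $-\otimes_A B$ reflects isomorphisms. This propagation is exactly Schneider's structure theorem and is the only step that genuinely needs the Galois and faithful-flatness hypotheses simultaneously.

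To close the cycle I would dispatch the remaining conditions as follows. The equivalence of (4) and (5) rests on the bijective antipode $S$, which converts the right $H$-comodule algebra $B$ into a left comodule structure and interchanges the left and right Galois maps, transporting faithful flatness from one side to the other. For condition (1) the point is that surjectivity of $\beta$ is strictly weaker than bijectivity, and injectivity of $B$ as an $H$-comodule is what compensates: by Doi's total integral theorem, injectivity is equivalent to the existence of a right $H$-colinear retraction of $\rho$, which renders every object of $\M^H_B$ relatively injective and supplies the colinear splittings needed to upgrade a surjective canonical map to a bijective one and to force faithful flatness. Finally, condition (6) is the relative-projectivity counterpart of this splitting: a right $H$-colinear, left $A$-linear section of the multiplication $A \otimes B \To B$ is exactly the datum dual to the integral, and I would match (6) against (1) via the standard duality between relative injectives and relative projectives in $\M^H_B$. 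Throughout, the single delicate point is the counit/structure-theorem step; once the adjunction, the equalizer description of coinvariants, and the antipode symmetry are in hand, the remaining implications are formal.
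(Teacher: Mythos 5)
This theorem is not proved in the paper at all: it is quoted from Schneider \cite{Sch1990} and Schauenburg--Schneider \cite{SS2005}, so your proposal has to be measured against the standard literature proof. Its skeleton (the adjunction $-\otimes_A B \dashv (-)^{co H}$, the identification of the counit at the cofree object $B \otimes H$ with the Galois map $\beta$, cofree resolutions to propagate invertibility of the counit, the antipode symmetry for $(4)\Leftrightarrow(5)$) is the right one, but it contains a genuine, fatal gap in what you call the ``benign half.'' You claim that flatness of $B$ lets $M \otimes_A (-)$ commute with the equalizer $0 \to A \to B \to B \otimes H$, so that the unit $\eta_M$ is an isomorphism ``as soon as $B$ is flat.'' This is false: flatness of $B$ as a left $A$-module means $- \otimes_A B$ is exact; it says nothing about the functor $M \otimes_A (-)$ preserving kernels, which would require $M$ to be flat or the sequence to be pure in ${_A\!\M}$. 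The failure is not hypothetical: if $B$ is flat but not faithfully flat over $A$, there exists $0 \neq M \in \M_A$ with $M \otimes_A B = 0$, and then $\eta_M : M \to (M \otimes_A B)^{co H} = 0$ is not even injective --- and the paper itself reproduces exactly such an extension (Example 2.4 of \cite{Bel2000}, $\kk[c,d] \subseteq \mathcal{O}(SL(2))$ over $\kk[x]$, flat, Galois, not faithfully flat). The correct argument for the unit is where faithful flatness must enter: by faithfully flat descent the Amitsur sequence $0 \to M \to M \otimes_A B \rightrightarrows M \otimes_A B \otimes_A B$ is an equalizer, and one transports it through $\id_M \otimes \beta$ to the equalizer defining $(M \otimes_A B)^{co H}$. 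In fact you have distributed the hypotheses exactly backwards: the counit half can be done with flatness and the Galois condition alone (the counit at a cofree object $N \otimes H$ is $N \otimes_B \beta$, hence invertible; then left exactness of $(-)^{co H}$, exactness of $- \otimes_A B$, and the five lemma applied to the coaction resolution $0 \to N \to N \otimes H \to N \otimes H \otimes H$ finish it), whereas it is the unit half that genuinely needs faithful flatness together with Galois.

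Two further defects. First, saying the counit propagation ``is exactly Schneider's structure theorem'' is circular, since Schneider's structure theorem is precisely the statement being proved; as noted above this step is honestly repairable, so the circularity is cosmetic but should be removed. Second, your treatment of conditions (1) and (6) --- ``Doi's total integral theorem'' plus an unspecified ``standard duality between relative injectives and relative projectives'' --- is far too thin to count as a proof: the implication from (1) (comodule injectivity plus surjectivity of $\beta$) to the category equivalences is the genuinely difficult content of Schneider's Theorem I, and the equivalence with (6) (equivariant projectivity) is the main theorem of \cite{SS2005}; neither follows from an off-the-shelf duality between relative injectives and projectives. As it stands, after repairing the unit step your proposal covers the cycle $(2)\Leftrightarrow(3)\Leftrightarrow(4)\Leftrightarrow(5)$, and leaves (1) and (6) essentially unproved.
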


Then we say a Hopf Galois extension $A \subseteq B$ is faithfully flat if $B$ is faithfully flat as a left or right $A$-module. A faithfully flat $H$-Galois extension is also called a {\it principal homogeneous space} in \cite{Sch1990} or {\it principal $H$-extension} in \cite{BH2004}.

\subsection{Ulbrich-Miyashita actions on $M^A$ and $M_A$}

Suppose now that $A \subset B$ is an $H$-Galois extension.
Following \cite{Brz1996} we shall call the $\kk$-linear map
\[ \kappa: H \to B \otimes_A B, \quad h \mapsto \beta^{-1}(1 \otimes h)\]
the {\it translation map} associated to the $H$-Galois extension $A \subseteq B$.
For any $h \in H$ we shall use the notation
$$\kappa(h) = \sum \kappa^1(h) \otimes_A \kappa^2(h).$$

The following are key properties of $\kappa$ as outlined in \cite{Sch1990}:
\begin{equation}\label{kappa(hk)}
	\sum\kappa^1(hk) \otimes_A \kappa^2(hk) = \sum \kappa^1(k)\kappa^1(h) \otimes_A \kappa^2(h)\kappa^2(k) \text{ in } B \otimes_A B,
\end{equation}
\begin{equation}\label{akappa(h)=kappa(h)a}
	\sum a\kappa^1(h) \otimes_A \kappa^2(h) = \sum \kappa^1(h) \otimes_A \kappa^2(h)a \text{ in } B \otimes_A B,
\end{equation}
\begin{equation}\label{kappa^1-kappa^2_0-kappa^2_1}
	\sum \kappa^1(h) \otimes_A \kappa^2(h)_0 \otimes \kappa^2(h)_1 = \sum \kappa^1(h_1) \otimes_A \kappa^2(h_1) \otimes h_2 \text{ in } B \otimes_A B \otimes H,
\end{equation}
\begin{equation}\label{kappa^1_0-kappa^2-kappa^1_1}
	\sum \kappa^1(h)_0 \otimes_A \kappa^2(h) \otimes \kappa^1(h)_1 = \sum \kappa^1(h_2) \otimes_A \kappa^2(h_2) \otimes Sh_1 \text{ in } B \otimes_A B \otimes H,
\end{equation}
\begin{equation}\label{b_0-kappa(b_1)}
	\sum b_0 \kappa^1(b_1) \otimes_A \kappa^2(b_1) = 1 \otimes_A b \text{ in } B \otimes_A B,
\end{equation}
\begin{equation}\label{m-kappa=vep}
	\sum \kappa^1(h)\kappa^2(h) = \vep(h)  \text{ in } B,
\end{equation}

Given an $H$-Galois extension $A \subseteq B$, $B \otimes_A B$ is a left $H$-module induced by the module structure on $B \otimes H$.
More precisely, for all $h \in H$ and $x, y \in B$, the action is defined as
\[ h \rhu (x \otimes_{A} y) := \beta^{-1}(h\beta(x\otimes_{A} y)) = \beta^{-1}(h\sum xy_0 \otimes y_1) = \beta^{-1}(\sum xy_0 \otimes hy_1).\]
Using equations \eqref{kappa^1-kappa^2_0-kappa^2_1} and \eqref{m-kappa=vep}, we find
\[\beta\big( \sum x\kappa^1(h) \otimes_A \kappa^2(h)y \big) = \sum x\kappa^1(h)\kappa^2(h)_0y_0 \otimes_A \kappa^2(h)_1y_1 = \sum xy_0 \otimes hy_1, \]
The bijectiveness of $\beta$ then yields
\[ h \rhu (x \otimes_{A} y) = \beta^{-1}(\sum xy_0 \otimes hy_1) = \sum x\kappa^1(h) \otimes_A \kappa^2(h)y. \]
Thus, the Galois map $\beta: B \otimes_A B \to B \otimes H$ is a morphism of $H$-$B^e$-bimodules, where the right $B^e$ module structure on $B \otimes H$ is given by
\[ (x \otimes h) (b \otimes b') = \sum b'xb_0 \otimes hb_1 \]
for all $h \in H$ and $x, b, b' \in B$.

For any $A$-$A$-bimodule $M$, we set 
\[ M^A := \{ m \in M \mid am = ma, \; \forall \, a \in A \}, \;\; \text{ and } \;\; M_A := M/[A, M],\]
where $[A, M]$ is the addtive subgroup of $M$ generated by the elements $am - ma$ for all $a \in A, m \in M$.
Then one can easy to check that
\[ M^A \cong \Hom_{A^e}(A, M), \quad M_A \cong A \otimes_{A^e} M \cong M \otimes_{A^e} A. \]

For any left $B^e$-module $M$, we have the following isomorphisms:
\begin{eqnarray}\label{isoms-of-M^A}
	\qquad M^A = \Hom_{A^e}(A, M) \cong \Hom_{B^e}(A \otimes_{A^e} B^e, M) \cong \Hom_{B^e}(B \otimes_A B, M).
\end{eqnarray}
Thus $M^A$ is a right $H$-module via the left $H$-module structure on $B \otimes_A B$.
More precisely, for all $h \in H$ and $x \in M^A$,
\begin{eqnarray}\label{H-mod-on-M^A}
	x \lhu h = \sum \kappa^1(h) x \kappa^2(h).
\end{eqnarray}
Similarly, we also have the following isomorphisms:
\begin{eqnarray}
	M_A \cong A \otimes_{A^e} M \cong (A \otimes_{A^e} B^e) \otimes_{B^e} M \cong (B\otimes_A B) \otimes_{B^e} M.
\end{eqnarray}
Thus $M_A$ is a left $H$-module via the left $H$-module structure on $B \otimes_A B$.
More precisely, for all $h \in H$ and $x \in M$,
\begin{eqnarray}\label{H-mod-on-M_A}
	h \rhu (x +  [A, M]) = \sum \kappa^2(h) x \kappa^1(h) + [A, M].
\end{eqnarray}

%

These $H$-module structures, known as Ulbrich-Miyashita actions, were previously introduced in \cite{DT1989} and \cite{Ste1995}.
It is worth mentioning that the Ulbrich-Miyashita actions are natural in the sense that, for any $B^e$-modules $M$ and $N$, and any map $f \in \Hom_{B^e}(M, N)$, the induced maps
$$f^0: M^A \To N^A, \;\; \text{ and } \;\; f_0: M_A \To N_A$$
are both $H$-linear.
Therefore, the assignment $M \mapsto M^A$ constitutes a functor from ${_{B^e}\!\M}$ (which is equivalent to ${_B\!\M_B}$) to $\M_H$.

\subsection{Stefan's spectral sequence for flat Hopf Galois extensions}

A Hopf Galois extension $A \subseteq B$ is termed flat if $B$ is flat as both a left and right $A$-module.

For any right $H$-module $V$, we have 
\begin{eqnarray}\label{isoms-of-B-otimes-V}
	B \otimes V \cong V \otimes_H (B \otimes H) \cong V \otimes_H (B \otimes_A B).
\end{eqnarray}
As $B \otimes_A B$ is an $H$-$B^e$-bimodule, $B \otimes V$ becomes a $B^e$-module with the action defined by
\[ (x \otimes v)(b \otimes b') = \sum b'xb_0 \otimes vb_1, \]
for any $b, b', x \in B$ and $v \in V$.
The assignment $V \mapsto B \otimes V$ gives a functor from $\M_H$ to ${_{B^e}\!\M}$, which is left adjoint to $(-)^A$.
Moreover, we have the natural isomorphisms
\begin{equation}\label{adjoint-iso-eq}
	\begin{split}
		\Hom_{H}(V, M^A) \stackrel{\eqref{isoms-of-M^A}}{\cong} & \Hom_{H}(V, \Hom_{B^e}(B \otimes_A B, M)) \\
		\cong \;\; & \Hom_{B^e}(V \otimes_H (B \otimes_A B), M) \\
		\stackrel{\eqref{isoms-of-B-otimes-V}}{\cong} & \Hom_{B^e}(B \otimes V, M),
	\end{split}
\end{equation}
for any left $B^e$-module $M$ and right $H$-module $V$.

For any injective $B^e$-module $I$, the adjoint isomorphism \eqref{adjoint-iso-eq} implies that $I^A$ is an injective $H$-module as the functor $B \otimes - : \M_H \to {_{B^e}\!\M}$ is exact.
Furthermore, since
\[ \Hom_{B^e}(B, -) \cong \Hom_{H}(\kk, \Hom_{A^e}(A, -)) = \Hom_{H}(\kk, -) \circ \Hom_{A^e}(A, -), \]
we derive the following Grothendieck spectral sequence associated with a flat Hopf Galois extension, as established by Stefan.

\begin{thm}\cite[Theorem 3.3]{Ste1995}\label{H-Galois-spectral-sequence}
	Let $A \subseteq B$ be a flat $H$-Galois extension. For any $B^e$-module $M$, we have that
	$$\RHom_{B^e}(B, M) \cong \RHom_{H}(\kk, \RHom_{A^e}(A, M)).$$
\end{thm}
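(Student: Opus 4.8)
The plan is to recognize the statement as an instance of the Grothendieck spectral sequence for a composite of left exact functors, so that the assertion becomes the corresponding isomorphism of total derived functors in $\D(\kk)$. Set
\[ F := \Hom_{H}(\kk, -) \colon \M_H \To \Mod\kk, \qquad G := (-)^A = \Hom_{A^e}(A, -) \colon {_{B^e}\!\M} \To \M_H. \]
Both functors are additive and left exact, being Hom functors, and each of the categories ${_{B^e}\!\M}$, $\M_H$, and $\Mod\kk$ has enough injectives; hence the right derived functors $\mathbf{R}F$, $\mathbf{R}G$, and $\mathbf{R}(F \circ G)$ all exist.

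Two hypotheses must then be checked. The first is the factorization $F \circ G \cong \Hom_{B^e}(B, -)$, which is precisely the natural isomorphism displayed immediately before the statement. The second is that $G$ sends injective objects to $F$-acyclic ones. I would obtain this from the adjunction \eqref{adjoint-iso-eq}: since $B \otimes - \colon \M_H \to {_{B^e}\!\M}$ is exact and is left adjoint to $(-)^A$, the right adjoint $(-)^A$ preserves injectives. Thus for an injective $B^e$-module $I$ the $H$-module $G(I) = I^A$ is injective, and in particular $F$-acyclic, so that $R^q F(I^A) = 0$ for all $q > 0$.

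With both hypotheses verified, the composition-of-derived-functors theorem (\cite{Wei1994}) gives $\mathbf{R}(F \circ G) \cong \mathbf{R}F \circ \mathbf{R}G$ in $\D(\kk)$; unwinding the three functors, this reads
\[ \RHom_{B^e}(B, M) \cong \RHom_{H}(\kk, \RHom_{A^e}(A, M)), \]
as desired. The associated Grothendieck spectral sequence is exactly Stefan's spectral sequence $\Ext^p_H(\kk, \Ext^q_{A^e}(A, M)) \Longrightarrow \Ext^{p+q}_{B^e}(B, M)$ quoted in the introduction.

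I do not anticipate a genuine obstacle here: the substantive input — the explicit adjunction \eqref{adjoint-iso-eq} and the factorization $\Hom_{B^e}(B, -) \cong \Hom_H(\kk, -) \circ (-)^A$ — has already been established before the statement, and the acyclicity condition has thereby been reduced to the formal fact that a right adjoint of an exact functor preserves injectives. The only point deserving a line of care is simply confirming left exactness and the availability of enough injectives in each category, both of which are routine for these module and comodule categories; once these are in place the argument is a clean application of the derived-functor formalism with no computational content remaining.
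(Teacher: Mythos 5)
Your overall strategy --- factor $\Hom_{B^e}(B,-) \cong \Hom_{H}(\kk,-)\circ (-)^A$, check via the adjunction \eqref{adjoint-iso-eq} and the exactness of $B\otimes - $ that $(-)^A$ sends injective $B^e$-modules to injective $H$-modules, and then invoke the composition-of-derived-functors theorem --- is exactly the argument the paper sketches in the paragraph preceding the theorem (the theorem itself is quoted from Stefan). So the decomposition and the two hypotheses you verify are the right ones.

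There is, however, a genuine gap at the step you call ``unwinding the three functors.'' What the composition theorem gives you is $\RHom_{B^e}(B,M) \cong \mathbf{R}F(\mathbf{R}G(M))$, where $\mathbf{R}G$ is the derived functor of $G=(-)^A\colon {_{B^e}\!\M}\to \M_H$ computed with injective resolutions \emph{in the category of $B^e$-modules}. The statement instead involves $\RHom_{A^e}(A,M)$, the derived Hom over $A^e$, computed with injective resolutions of $M$ as an $A^e$-module. These two objects coincide only if injective $B^e$-modules are $\Hom_{A^e}(A,-)$-acyclic (for instance, injective as $A^e$-modules), and this is precisely where the flatness hypothesis enters: since $B$ is flat as a left and a right $A$-module, $B^e$ is flat over $A^e$, so the adjunction $\Hom_{A^e}(N,I)\cong \Hom_{B^e}(N\otimes_{A^e}B^e, I)$ shows that every injective $B^e$-module restricts to an injective $A^e$-module. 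The paper carries out exactly this step in the proof of Corollary \ref{H-Galois-spectral-sequence-for-B^e}. Your proposal never uses the flatness hypothesis at all --- which is the warning sign --- and without this identification the right-hand side of your isomorphism is not the $\RHom_{A^e}(A,M)$ appearing in the statement.
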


To see whether a Hopf Galois extension $B$ is skew Calabi--Yau, we need to study the $B$-$B$-bimodule structure on the cohomological groups $\Ext^n_{B^e}(B, B^e)$ for all $n \geq 0$.
Stefan's spectral sequenceallows us to focus on the $B^e$-$H$ bimodule structure on $\Ext^n_{A^e}(A, B^e)$ for all $n \geq 0$, see the following corollary which is immediate consequence of Theorem \ref{H-Galois-spectral-sequence}.
For clarity, we provide a proof here.
For the reader's convenience, we include a proof here.

\begin{cor}\label{H-Galois-spectral-sequence-for-B^e}
	Let $A \subseteq B$ be a flat $H$-Galois extension. For any $B^e$-module $M$, in $\D(B^e)$
	\[ \RHom_{B^e}(B, B^e) \cong \RHom_{H}(\kk, \RHom_{A^e}(A, B^e)). \]
\end{cor}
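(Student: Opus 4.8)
The plan is to obtain the statement as the special case $M = B^e$ of Theorem \ref{H-Galois-spectral-sequence}, with the extra work consisting entirely of promoting the isomorphism there from $\D(\kk)$ to $\D(B^e)$; no homological smoothness of $B$ is needed. Theorem \ref{H-Galois-spectral-sequence} already furnishes, for every right $B^e$-module $M$, an isomorphism
\[ \eta_M \colon \RHom_{B^e}(B, M) \stackrel{\cong}{\To} \RHom_{H}(\kk, \RHom_{A^e}(A, M)) \]
in $\D(\kk)$, and this isomorphism is natural in $M$ with respect to morphisms of $B^e$-modules: it is assembled from the natural isomorphism of underived functors $\Hom_{B^e}(B, -) \cong \Hom_{H}(\kk, -) \circ \Hom_{A^e}(A, -)$ recorded just before the theorem, together with the derived-functor (Grothendieck spectral sequence) machinery, both of which are functorial. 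So the only point to check is that, for $M = B^e$, the map $\eta_{B^e}$ respects the $B^e$-module structures carried by the two sides.

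First I would locate those structures. View $B^e$ as the regular bimodule ${}_{B^e}(B^e)_{B^e}$. In forming $\RHom_{B^e}(B, B^e)$ one copy of the $B^e$-action is consumed as the module structure over which the $\Hom$ is computed (the inner, right action), while the commuting left action survives and descends, exactly as in the convention fixed in Section \ref{pre-section}, to the $A$-$A$-bimodule structure on the cohomology; this is precisely what places $\RHom_{B^e}(B, B^e)$ in $\D(B^e)$. The same left action is inherited by the right-hand complex through $(-)^A = \Hom_{A^e}(A, -)$ and then through $\RHom_{H}(\kk, -)$, because the left $B^e$-action on $B^e$ commutes with the $A^e$-action used to form $(-)^A$ and with the Ulbrich--Miyashita $H$-action.

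Then I would exploit naturality. For each $r \in B^e$, left multiplication $\lambda_r \colon B^e \to B^e$ is an endomorphism of $B^e$ as a right $B^e$-module, since the left and right actions commute. Applying the natural transformation $\eta$ to the morphism $\lambda_r$ produces a commuting square whose two horizontal arrows are $\eta_{B^e}$ and whose vertical arrows are the maps induced by $\lambda_r$ on the two functors. By the previous paragraph these induced maps are exactly the actions of $r$ defining the right $B^e$-module structures on $\RHom_{B^e}(B, B^e)$ and on $\RHom_{H}(\kk, \RHom_{A^e}(A, B^e))$. Hence $\eta_{B^e}$ intertwines the two actions and is an isomorphism in $\D(B^e)$, as claimed.

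The step I expect to be the main obstacle is the bookkeeping underlying the second and third paragraphs: verifying that the surviving left $B^e$-action passes through $\Hom_{A^e}(A, -)$ and $\RHom_{H}(\kk, -)$ intact, and that the $B^e$-module structure it yields on the right-hand side coincides with the one transported across $\eta$ via naturality, so that the square's vertical maps really are the structure maps. This is a compatibility check rather than a computation, and it ultimately rests on the fact that, working over the field $\kk$, all the relevant $\Hom$ and tensor functors are exact in the auxiliary variable, so the left action can be handled uniformly as a family of module endomorphisms to which the naturality of $\eta$ applies.
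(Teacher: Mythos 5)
Your reduction to Theorem \ref{H-Galois-spectral-sequence} plus naturality has a genuine gap at the very last step: from ``$\eta_{B^e}$ commutes with the maps induced by each $\lambda_r$'' you conclude that $\eta_{B^e}$ ``is an isomorphism in $\D(B^e)$,'' and this inference is invalid. Naturality applied to the right-$B^e$-linear endomorphisms $\lambda_r$ only shows that the morphism $\eta_{B^e}$ \emph{of $\D(\kk)$} commutes with the images in $\D(\kk)$ of the two $B^e$-actions. A $\D(\kk)$-morphism commuting with every multiplication endomorphism is not thereby a morphism in $\D(B^e)$, and such equivariance does not even imply that the two objects are abstractly isomorphic in $\D(B^e)$: an object of $\D(B^e)$ is not determined by its underlying complex of vector spaces together with the homotopy classes of its action maps. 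Concretely, for $R = \kk[x]/(x^2)$ the complex $X = (R \xrightarrow{x} R)$ and its cohomology complex $H^{*}(X)$ (zero differential) are isomorphic in $\D(\kk)$ by a map commuting with the $x$-actions, since both actions vanish on cohomology and $\D(\kk)$ is semisimple; yet $X \not\cong H^{*}(X)$ in $\D(R)$, as one sees by computing $\Hom_{\D(R)}(-, \kk[1])$ on both sides ($1$-dimensional versus $2$-dimensional). Indeed, if your argument pattern were valid, it would show that every object of $\D(B^e)$ is determined by its cohomology $B^e$-modules, which is false. What your naturality argument genuinely proves is only the cohomology-level statement that each $H^n(\eta_{B^e})$ is a $B^e$-module isomorphism; that is strictly weaker than the Corollary, which is later invoked as an isomorphism of complexes in a derived category (for instance in Proposition \ref{key-prop}).

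The paper closes exactly this gap by carrying the extra $B^e$-structure along at the chain level rather than recovering it a posteriori: it takes an injective resolution $E^{\bullet}$ of $B^e$ in the category of $B^e \otimes (B^e)^{op}$-modules. Flatness of $B$ over $A$ on both sides makes each $E^n$ injective as an $A^e$-module, so $\Hom_{A^e}(A, E^{\bullet})$ computes $\RHom_{A^e}(A, B^e)$ while remaining a complex of $H$-$B^e$-bimodules; the discussion preceding Theorem \ref{H-Galois-spectral-sequence} makes each $\Hom_{A^e}(A, E^n)$ injective as an $H$-module; and then the adjoint isomorphism \eqref{adjoint-iso-eq} gives an isomorphism of complexes of $B^e$-modules $\Hom_{B^e}(B, E^{\bullet}) \cong \Hom_{H}(\kk, \Hom_{A^e}(A, E^{\bullet}))$, which is precisely the asserted isomorphism in $\D(B^e)$. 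This is the form your flagged ``bookkeeping'' must take: the identification has to be realized by an actual chain map between complexes of $B^e$-modules, not deduced from equivariance in $\D(\kk)$.
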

\begin{proof}
	Take an injective $B^e \otimes (B^e)^{op}$-module resolution $E^{\bullet}$ of $B^e$.
	Since $B$ is flat both as a left and right $A$-modules, each component $E^n$ is an injective left $A^e$-module by the natural adjoint isomorphism
	$$\Hom_{A^e}(M, E^n) = \Hom_{A^e}(M, \Hom_{B^e}(B^e, E^n)) \cong \Hom_{B^e}(M \otimes_{A^e} B^e, E^n)$$
	for any left $A^e$-module $M$.
	Then $\RHom_{A^e}(A, B^e)$ is isomorphic to $\Hom_{A^e}(A, E^{\bullet})$ in $\D((H \otimes (B^e)^{op})$.
	Given that $\Hom_{A^e}(A, E^n)$ is an injective $H$-module, we have
	\[ \RHom_{H}(\kk, \RHom_{A^e}(A, B^e)) \cong \Hom_{H}(\kk, \Hom_{A^e}(A, E^{\bullet})). \]
	This is isomorphic to $\Hom_{B^e}(B, E^{\bullet})$ due to the adjoint isomorphism \eqref{adjoint-iso-eq}, thus completing the proof.
\end{proof}

On one hand, taking an injective $B^e \otimes (B^e)^{op}$-module resolution $E^{\bullet}$ of $B^e$ allows us to understand the $H$-module structure on $\Ext^n_{A^e}(A, B^e) = \mathrm{H}^n(\Hom_{A^e}(A, E^{\bullet}))$, yet it does not expose the $B^e$-module structure.
On the other hand, a projective $A^e$-module resolution $P_{\bullet}$ of $A$ provides the opposite insight.
Thus, it is challenging to concurrently clarify the right $H$-module and left $B^e$-module structures on $\Ext^n_{A^e}(A, B^e)$.
Subsequently, we utilize the $H$-comodule structure of $\Ext^n_{A^e}(A, B^e)$ to investigate its bimodule structures.

\subsection{Hopf module structure on $M^A$}

Let $H^* := \Hom(H, \kk)$ be the dual algebra of $H$.
A left $H^*$-module $(V, \rhu)$ is called rational if $H^* \rhu v$ is finite dimensional for any $v \in V$.
A left $H^*$-module $V$ is rational if and only if its $H^*$-action comes from a right $H$-coaction, meaning $V$ is an $H$-comodule satisfying
\[ f \rhu v = \sum_{(v)} v_0 f(v_1) \]
for any $v \in V$ and $f \in H^*$.

Let $N$ be a vector space and $M$ be a right $H$-comodule.
The space $\Hom(N, M)$ forms a left $H^*$-module via the action
$$(h^* \rhu f) (n) = h^* \rhu (f(n))$$
for all $h^* \in H^*$, $f \in \Hom(N, M)$ and $n \in N$.
If $N \in {\M_{A^e}}$ and $M \in {\M^{H^e}_{A^e}}$, then $\Hom_{A^e}(N, M)$ is an $H^*$-submodule of $\Hom(N, M)$.
Moreover, if $N$ is finitely generated as an $A^e$-module, then $\Hom_{A^e}(N, M)$ is an $H^*$-submodule of a direct sum of copies of $M$.
Consequently, $\Hom_{A^e}(N, M)$ is a rational $H^*$-module, that is, implying it is an $H$-comodule.
This leads to the following direct corollary.

\begin{lem}\label{lem-0}
	If $N$ is a finitely generated $A^e$-module, then for any $M \in {_{B^e}\!\M^{H^e}_{B^e}}$, $\Hom_{A^e}(N, M)$ is inherently a Hopf module in ${\M^{H^e}_{B^e}}$.
\end{lem}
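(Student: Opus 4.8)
The plan is to build the two structures on $\Hom_{A^e}(N,M)$ separately from the data carried by $M$, and then to verify that together they satisfy the relative Hopf module axiom.

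For the $H^e$-comodule structure I would invoke the rationality discussion immediately preceding the statement, applied verbatim with the Hopf algebra $H^e$ in place of $H$. Restricting $M$ along $A^e \hookrightarrow B^e$ makes it an object of $\M^{H^e}_{A^e}$, so $\Hom_{A^e}(N,M)$ is an $(H^e)^*$-submodule of $\Hom(N,M)$; finite generation of $N$ over $A^e$ then embeds $\Hom_{A^e}(N,M)$ into a finite direct sum of copies of $M$ as an $(H^e)^*$-module, whence it is rational and acquires a right $H^e$-coaction $\rho$. The hypothesis that $N$ is finitely generated is used exactly here, and it is also what guarantees that $\rho$ takes values in $\Hom_{A^e}(N,M)\otimes H^e$ rather than merely in $\Hom(N,M)\otimes H^e$.

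For the $B^e$-module structure I would exploit that $M$ carries two commuting $B^e$-actions. The left $B^e$-action, restricted along $A^e\hookrightarrow B^e$, is the one with respect to which $\Hom_{A^e}(N,M)$ is formed; the right $B^e$-action then commutes with it and so descends pointwise, by $(f\cdot r)(n):=f(n)\cdot r$ for $r\in B^e$, to a right $B^e$-action on $\Hom_{A^e}(N,M)$ that preserves $A^e$-linearity. This is the structure making $\Hom_{A^e}(N,M)$ an object of $\M_{B^e}$, and it agrees with the functoriality of $\Hom_{A^e}(N,-)$.

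The substantive step is the relative Hopf module compatibility $\rho(f\cdot r)=\rho(f)\,\rho(r)$ for $f\in\Hom_{A^e}(N,M)$ and $r\in B^e$. Since $\rho$ is produced abstractly through rationality rather than written down explicitly, I would not attempt to check this by direct manipulation of $\rho$; instead I would transport the identity to the dual algebra $(H^e)^*$, where both the coaction and the module action are computed pointwise on $M$ via $(g\rhu f)(n)=g\rhu f(n)$, and reduce it to the fact that $M$ is itself a relative Hopf bimodule, i.e.\ that its coaction $\rho_M$ is compatible with both $B^e$-actions in the sense defining ${_{B^e}\!\M^{H^e}_{B^e}}$. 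Passing back through the dictionary between rational $(H^e)^*$-modules and $H^e$-comodules then yields the claim. I expect this last reconciliation to be the main obstacle: the coaction on $\Hom_{A^e}(N,M)$ is characterized only implicitly, so the compatibility must be argued at the level of the $(H^e)^*$-action and only afterwards translated into comodule language, all while keeping careful track of which of the two $B^e$-actions of $M$ is consumed in forming the $\Hom$ and which survives on it.
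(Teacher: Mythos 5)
Your proposal is correct and takes essentially the same route as the paper: the paper treats this lemma as a direct corollary of the rationality discussion immediately preceding it (pointwise $(H^e)^*$-action on $\Hom(N,M)$, preservation of $A^e$-linearity, finite generation of $N$ embedding $\Hom_{A^e}(N, M)$ into a finite direct sum of copies of $M$, hence rationality and an $H^e$-coaction), with the remaining $B^e$-action of $M$ surviving pointwise on the Hom space. Your explicit plan for verifying the relative Hopf module compatibility by transporting it to the $(H^e)^*$-side, where both structures are computed pointwise and the claim reduces to $M$ being an object of ${_{B^e}\!\M^{H^e}_{B^e}}$, is precisely the detail the paper leaves implicit, and it is the correct way to fill it in.
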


Let $M$ be a module and comodule over $H$. We call $M$ a (right-right) Yetter-Drinfeld module if and only if the action and coaction are compatible in the following sence:
\begin{eqnarray}\label{YD-condition}
	\rho(m \lhu h) = \sum (m \lhu h)_0 \otimes (m \lhu h)_1 = \sum m_0 \lhu h_2 \otimes (Sh_1)m_1h_3,
\end{eqnarray}
for all $m \in M$ and $h \in H$.
We denote $\YD^H_H$ as the category of right-right Yetter-Drinfeld modules, with morphisms that are both linear and colinear.

For any $M \in {_B\!\M^H_B}$, it is evident that $M^A$ is an $H$-subcomodule of $M$.
We recall that $M^A$ carries a right $H$-module as per \eqref{H-mod-on-M^A}.
This action and coaction fulfill the Yetter-Drinfeld compatibility condition \eqref{YD-condition}, which means  $(-)^A$ can be promoted to a functor from ${_B\!\M^H_B}$ to $\YD^H_H$, as shown in \cite[Proposition 3.10]{DT1989}.

As $B^e$ cannot be inherently viewed as a Hopf bimodule in ${_B\!\M^H_B}$, we must instead examine the category ${_{B^e}\!\M^{H^e}}$.
Since the antipode $S$ is bijective as we assumed, $H^{op}$ also constitutes a Hopf algebra with antipode $S^{-1}$.
For an $H$-Galois extension $A \subseteq B$, $B^{op}$ is a right $H^{op}$-comodule algebra, and $B^{e}$ is a right $H^{e}$-comodule algebra.
Consequently, $A^{op} \subseteq B^{op}$ is an $H^{op}$-Galois extension.
Since there is an isomorphism
\[ B \otimes_A B \stackrel{\cong}{\To} B^{op} \otimes_{A^{op}} B^{op}, \qquad x \otimes_A y \mapsto y \otimes_{A^{op}} x, \]
the element $\sum \kappa^2(k) \otimes_{A^{op}} \kappa^1(k)$ in $B^{op} \otimes_{A^{op}} B^{op}$ is well defined for any $k \in H$.
Furthermore, we have
\begin{align*}
	\beta \big( \sum \kappa^2(S^{-1}h) \otimes_{A^{op}} \kappa^1(S^{-1}h) \big) 
	& = \sum \kappa^1(S^{-1}h)_0\kappa^2(S^{-1}h) \otimes \kappa^1(S^{-1}h)_1 \\
	& \xlongequal{\eqref{kappa^1_0-kappa^2-kappa^1_1}} \sum \kappa^1(S^{-1}h_1)\kappa^2(S^{-1}h_1) \otimes h_2 \\
	& \xlongequal{\eqref{m-kappa=vep}} 1 \otimes h.
\end{align*}
Thus, the translation map $\kappa_{B^{op}}$ for the $H^{op}$-Galois extension $A^{op} \subseteq B^{op}$ satisfies
\[ \kappa_{B^{op}}(h) = \sum \kappa_{B^{op}}^1(h) \otimes_{A^{op}} \kappa_{B^{op}}^2(h) = \sum \kappa^2(S^{-1}h) \otimes_{A^{op}} \kappa^1(S^{-1}h) \in B^{op} \otimes_{A^{op}} B^{op}. \]
Then $A^{e} \subseteq B^{e}$ is an $H^{e}$-Galois extension, with the translation map $\kappa_{B^{e}}$ defined as
\begin{equation}\label{translation-map-B^e-eq}
	\begin{split}
		\kappa_{B^{e}}(h \otimes k) = & \sum \kappa_{B^{e}}^1(h \otimes k) \otimes \kappa_{B^{e}}^2(h \otimes k) \\
		= & \sum \kappa^1(h) \otimes \kappa_{B^{op}}^1(k) \otimes_{A^e} \kappa^2(h) \otimes \kappa_{B^{op}}^2(k) \\
		= & \sum \kappa^1(h) \otimes \kappa^2(S^{-1}k) \otimes_{A^e} \kappa^2(h) \otimes \kappa^1(S^{-1}k) \in B^{e} \otimes_{A^{e}} B^{e}.
	\end{split}
\end{equation}

In the subsequent discussion, $H$ is considered a right $H^e$-comodule algebra through the coaction
\begin{equation}\label{H-right-H^e-comod-alg}
	\rho_H : H \To H \otimes H^{e}, \qquad h \mapsto \sum h_2 \otimes (h_3 \otimes Sh_1).
\end{equation}

\begin{lem}\label{B-otimes-H-mod-str}
	The mapping $M \mapsto M^A$ constitutes a functor from ${\M^{H^e}_{B^{e}}}$ to ${\M^{H^e}_H}$.
	Furthermore, $(-)^A: M \mapsto M^A$ extends to a functor form ${_{B^{e}}\!\M^{H^e}_{B^e}}$ to ${_{B^e}\!\M^{H^e}_H}$.
\end{lem}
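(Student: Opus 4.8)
The plan is to endow $M^A$ with the Ulbrich--Miyashita right $H$-action together with the right $H^e$-coaction inherited from $M$, and then to check that these two structures satisfy the relative Hopf module axiom over the $H^e$-comodule algebra $(H, \rho_H)$ of \eqref{H-right-H^e-comod-alg}; the functoriality and the bimodule refinement will then follow formally.

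First I would record the two structures on $M^A$. The right $H$-module structure is the Ulbrich--Miyashita action $x \lhu h = \sum \kappa^1(h)\,x\,\kappa^2(h)$ as in \eqref{H-mod-on-M^A}, read off from the $B$-$B$-bimodule underlying the right $B^e$-module $M$. For the coaction, I would check that $M^A$ is an $H^e$-subcomodule of $M$: since $A^e = (B^e)^{co H^e}$, for $a \in A$ the elements $a \otimes 1$ and $1 \otimes a$ of $B^e$ are $H^e$-coinvariant, so for $x \in M^A$ with $\rho_M(x) = \sum x_0 \otimes x_1$ the Hopf module identity $\rho_M(x\cdot z) = \rho_M(x)\rho_{B^e}(z)$ yields $\sum (a x_0 - x_0 a) \otimes x_1 = 0$ in $M \otimes H^e$; choosing a $\kk$-basis of $H^e$ forces every $x_0$ into $M^A$, so $\rho_M$ restricts to a coaction $\rho_{M^A}: M^A \to M^A \otimes H^e$.

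The core of the proof is the compatibility of these structures, i.e.\ the identity
\[ \rho_{M^A}(x \lhu h) = \sum (x_0 \lhu h_2)\otimes x_1\,(h_3 \otimes Sh_1), \]
which is exactly the defining axiom of $\M^{H^e}_H$ once $H$ is regarded as a right $H^e$-comodule algebra via $\rho_H$. I would prove it by a direct Sweedler computation: apply $\rho_M$ to $x \lhu h = \sum \kappa^1(h)\,x\,\kappa^2(h)$ and use the Hopf module condition to split it into $\rho_M(x)$ multiplied by the $H^e$-coactions of $\kappa^1(h)$ and $\kappa^2(h)$. By \eqref{kappa^1-kappa^2_0-kappa^2_1} the coaction on $\kappa^2(h)$ contributes the factor landing in the $H$-slot $h_3$, and by \eqref{kappa^1_0-kappa^2-kappa^1_1} the coaction on $\kappa^1(h)$ contributes the $H^{op}$-slot $Sh_1$; the surviving translation elements recombine, via \eqref{kappa(hk)}--\eqref{m-kappa=vep}, into $\kappa^1(h_2)\,x_0\,\kappa^2(h_2) = x_0 \lhu h_2$. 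This is the $H^e$-theoretic counterpart of the Yetter--Drinfeld identity \eqref{YD-condition}, and I expect the main obstacle to be the bookkeeping: tracking the three Sweedler legs of $h$ simultaneously across the $H$- and $H^{op}$-slots of $H^e$ and through the $\otimes_A$-balanced tensors.

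For functoriality, any morphism $f$ in $\M^{H^e}_{B^e}$ restricts to $f^0 = f|_{M^A}$, which is $H$-linear by the naturality of the Ulbrich--Miyashita action recorded after \eqref{H-mod-on-M_A} and $H^e$-colinear because $f$ is, hence a morphism of $\M^{H^e}_H$; this gives the first functor. For the refinement ${_{B^e}\!\M^{H^e}_{B^e}} \to {_{B^e}\!\M^{H^e}_H}$, I would note that an additional left $B^e$-action commuting with the right one preserves the centralizer $M^A$ and commutes with $x \mapsto x \lhu h$ (since left and right $B^e$-actions commute), while its compatibility with $\rho_{M^A}$ is inherited by restriction from the Hopf bimodule structure of $M$; thus $M^A$ lies in ${_{B^e}\!\M^{H^e}_H}$ and $(-)^A$ is the claimed functor.
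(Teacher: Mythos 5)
Your proposal is correct and follows essentially the same route as the paper: the Ulbrich--Miyashita action \eqref{H-mod-on-M^A} together with the restricted $H^e$-coaction, the compatibility verified by applying $\rho_M$ to $\sum \kappa^1(h)\,x\,\kappa^2(h)$ and invoking \eqref{kappa^1-kappa^2_0-kappa^2_1} and \eqref{kappa^1_0-kappa^2-kappa^1_1} to produce $\sum (x_0 \lhu h_2) \otimes x_1(h_3 \otimes Sh_1)$, and the bimodule case handled by commutation of the left and right $B^e$-actions. Your basis argument that $M^A$ is an $H^e$-subcomodule is a welcome detail the paper merely asserts; the only slight imprecision is citing \eqref{kappa(hk)}--\eqref{m-kappa=vep} for the recombination step, which in fact needs only Sweedler re-indexing after the two displayed identities.
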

\begin{proof}
	For any $M \in {\M^{H^e}_{B^{e}}}$, $M$ is naturally a $B$-$B$-bimodule, and $M^A$ is an $H^e$-subcomodule of $M$.
	Recall that $M^A$ is endowed with a right $H$-module structure indicated in \eqref{H-mod-on-M^A}.
	For any $m \in M^A$, observe that $\rho(m) = \sum m_0 \otimes m_1 \in M \otimes H^e$.
	For any $h \in H$, it follows that
	\begin{align*}
		\rho (m \lhu h) & \xlongequal[]{\eqref{H-mod-on-M^A}} \rho (\sum \kappa^1(h) m \kappa^2(h)) \\
		& = \sum \kappa^1(h)_0 m_0 \kappa^2(h)_0 \otimes \kappa^1(h)_1 m_1 \kappa^2(h)_1 \\
		& \xlongequal[]{\eqref{kappa^1-kappa^2_0-kappa^2_1}, \eqref{kappa^1_0-kappa^2-kappa^1_1}} \sum \kappa^1(h_2) m_0 \kappa^2(h_2)  \otimes (Sh_1) m_1 h_3 \\
		& = \sum (m_0 \lhu h_2) \otimes (m_1(h_3 \otimes Sh_1) ) \\
		& = \rho_M(m) \rho_{H^e}(h),
	\end{align*}
	confirming that $M^A$ belongs to ${\M^{H^e}_H}$.
	Moreover, for any $M \in {_{B^e}\!\M^{H^e}_{B^e}}$, $M^A$ is also a left $B^e$-module and the $B^e$-action is compatible with the $H^e$-coaction.
	Therefore, $M^A$ is a Hopf module in ${_{B^e}}\!\M^{H^e}_H$.
\end{proof}

\begin{lem}\label{H-otimes-B-mod-str}
	The assignment $M \mapsto M_A = M/[A, M]$ becomes a functor form ${_{B^{e}}\!\M^{H^e}}$ to ${_H\!\M^{H^e}}$.
	Furthermore, $(-)_A: M \mapsto M_A$ extends to a functor form ${_{B^{e}}\!\M^{H^e}_{B^e}}$ to ${_H\!\M^{H^e}_{B^e}}$.
\end{lem}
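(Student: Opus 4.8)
The plan is to run the argument in close parallel to Lemma \ref{B-otimes-H-mod-str}, transposing the roles of $\kappa^1$ and $\kappa^2$. First I would check that the right $H^e$-coaction descends to the quotient. For $M \in {_{B^e}\!\M^{H^e}}$ the elements $a \otimes 1$ and $1 \otimes a$ of $B^e$ attached to $a \in A = B^{coH}$ are $H^e$-coinvariant, so the relative Hopf module axiom $\rho_M(b\cdot m) = \rho_{B^e}(b)\,\rho_M(m)$ yields
\[ \rho_M(am - ma) = \sum (a m_0 - m_0 a) \otimes m_1 \in [A, M] \otimes H^e \]
for all $a \in A$, $m \in M$. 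Hence $[A, M]$ is an $H^e$-subcomodule, and $M_A = M/[A,M]$ inherits a right $H^e$-comodule structure $\bar\rho(\bar x) = \sum \bar x_0 \otimes x_1$. Together with the left $H$-module structure \eqref{H-mod-on-M_A}, this supplies the two pieces of data needed for membership in ${_H\!\M^{H^e}}$.

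The key step is the compatibility of these two structures. Writing $\bar x$ for the class of $x$ and $\rho_M(x) = \sum x_0 \otimes x_1$ with $x_1 \in H^e$, I would compute, for $h \in H$,
\begin{align*}
	\rho(h \rhu \bar x)
	&\xlongequal{\eqref{H-mod-on-M_A}} \sum \overline{\kappa^2(h)_0\, x_0\, \kappa^1(h)_0} \otimes \kappa^2(h)_1\, x_1\, \kappa^1(h)_1 \\
	&= \sum \overline{\kappa^2(h_2)\, x_0\, \kappa^1(h_2)} \otimes h_3\, x_1\, Sh_1 \\
	&= \sum (h_2 \rhu \bar x_0) \otimes (h_3 \otimes Sh_1)\, x_1 .
\end{align*}
The middle equality applies \eqref{kappa^1-kappa^2_0-kappa^2_1} to the coaction on the $\kappa^2$-leg and \eqref{kappa^1_0-kappa^2-kappa^1_1} to that on the $\kappa^1$-leg, i.e.\ the combined identity $\sum \kappa^1(h)_0 \otimes_A \kappa^2(h)_0 \otimes \kappa^1(h)_1 \otimes \kappa^2(h)_1 = \sum \kappa^1(h_2) \otimes_A \kappa^2(h_2) \otimes Sh_1 \otimes h_3$ used already in Lemma \ref{B-otimes-H-mod-str}. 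Reading $h_3\, x_1\, Sh_1$ as the left $H^e$-multiplication $(h_3 \otimes Sh_1)\,x_1$ and recognizing $\sum h_2 \otimes (h_3 \otimes Sh_1) = \rho_H(h)$ from \eqref{H-right-H^e-comod-alg}, the right-hand side is precisely $\rho_H(h)\,\bar\rho(\bar x)$, the relative Hopf module compatibility for $H$ as an $H^e$-comodule algebra. This shows $M_A \in {_H\!\M^{H^e}}$. Functoriality is then immediate: for a morphism $f$ in ${_{B^e}\!\M^{H^e}}$ the induced $f_0 \colon M_A \to N_A$ is $H$-linear by the naturality of the Ulbrich--Miyashita action recorded earlier, and $H^e$-colinear since it is induced from the colinear map $f$.

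For the extension to bimodules, let $M \in {_{B^e}\!\M^{H^e}_{B^e}}$. As the right $B^e$-action commutes with the left $A$-action inside the bimodule, $(am-ma)\cdot\beta = a(m\beta) - (m\beta)a \in [A,M]$, so $[A,M]$ is a right $B^e$-submodule and $M_A$ acquires a right $B^e$-action. This action is $H^e$-colinear because it descends from the colinear right $B^e$-action on $M$, and it commutes with the left $H$-action because the latter is implemented by the multiplications $\kappa^2(h)(-)\kappa^1(h)$ coming from the left $B^e$-module structure, which commute with the right $B^e$-action. Hence $M_A$ is a Hopf bimodule in ${_H\!\M^{H^e}_{B^e}}$ and $(-)_A$ is the asserted functor. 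I expect the only genuine obstacle to be bookkeeping: correctly transposing the positions of $\kappa^1(h)$ and $\kappa^2(h)$ when invoking \eqref{kappa^1-kappa^2_0-kappa^2_1} and \eqref{kappa^1_0-kappa^2-kappa^1_1}, and re-expressing $h_3\, x_1\, Sh_1$ as $(h_3 \otimes Sh_1)\,x_1$ so that the output matches the comodule-algebra structure \eqref{H-right-H^e-comod-alg} of $H$; everything else is dual to the already-established Lemma \ref{B-otimes-H-mod-str}.
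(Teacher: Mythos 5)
Your proof is correct and follows essentially the same route as the paper: descend the $H^e$-coaction to $M_A$, then verify the compatibility of the action \eqref{H-mod-on-M_A} with the coaction using the translation-map identities \eqref{kappa^1-kappa^2_0-kappa^2_1} and \eqref{kappa^1_0-kappa^2-kappa^1_1}, matching the result against the comodule-algebra structure \eqref{H-right-H^e-comod-alg} of $H$, and finally note that the right $B^e$-action descends compatibly. In fact your write-up is slightly more careful than the paper's (you justify that $[A,M]$ is an $H^e$-subcomodule and keep $\kappa^1$, $\kappa^2$ in consistent positions, where the paper's displayed computation contains transposition typos), but the underlying argument is identical.
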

\begin{proof}
	For any $M \in {_{B^{e}}\!\M^{H^e}}$, $M_A$ is a quotient $H^e$-comodule of $M$.
	Recall that $M^A$ is a right $H$-module via \eqref{H-mod-on-M^A}.
	For all $m \in M$ and $h \in H$, we have
	\begin{align*}
		\rho (h \rhu (m + [A, M])) & \xlongequal[]{\eqref{H-mod-on-M_A}} \rho (\sum \kappa^2(h) m \kappa^1(h) + [A, M]) \\
		& = \sum (\kappa^1(h)_0 m_0 \kappa^2(h)_0 + [A, M]) \otimes \kappa^1(h)_1 m_1 \kappa^2(h)_1 \\
		& \xlongequal[]{\eqref{kappa^1-kappa^2_0-kappa^2_1}, \eqref{kappa^1_0-kappa^2-kappa^1_1}} \sum (\kappa^1(h_2) m_0 \kappa^2(h_2) + [A, M]) \otimes (Sh_1) m_1 h_3 \\
		& = \sum (h_2 \rhu (m_0 + [A, M]) ) \otimes ((h_3 \otimes Sh_1) m_1) \\
		& = \rho_{H^e}(h) \rho_M(m + [A, M]).
	\end{align*}
	showing that $M_A \in {_H\!\M^{H^e}}$.
	Moreover, for any $M \in {_{B^e}\!\M^{H^e}_{B^e}}$, $M_A$ is also a right $B^e$-module and the $B^e$-action satisfies the corresponding compatibility condition. Thus $M_A$ is a Hopf module in ${_H\!\M^{H^e}_{B^e}}$.
\end{proof}

Further, consider the following commutative diagram of forgetful functors:
\[ \xymatrix{{_{B^e}\!\M^{H^e}_H} \ar[r] & {_{B^{e}}\!\M^{H \otimes \kk}_H} \ar@{=}[r] & {_{B^{e}}\!\M^{H}_H} \ar[rr] \ar[d] && {_{A \otimes B^{op}}\!\M^{H}_H} \ar[d] \\ && {_{B^{e}}\!\M^{H}} \ar[rr] && {_{A \otimes B^{op}}\!\M^{H}}. } \]
Notice that $A \otimes B^{op}$ is a trivial $H(= H \otimes \kk)$-comodule algebra.
Thus, for any $M \in {_{A \otimes B^{op}}\!\M^{H}_H}$, $M^{co H}$ constitutes an $A \otimes B^{op}$-submodule of $M$.
For any $N \in {_{A \otimes B^{op}}\!\M}$, $N \otimes H$ forms a Hopf bimodule in ${_{A \otimes B^{op}}\!\M^{H}_H}$, defined by
\[ \rho_H(n \otimes h) = \sum_{(h)} n \otimes h_1 \otimes h_2, \quad (a \otimes b)(n \otimes h) h' = (a \otimes b) n \otimes hh', \]
where $a \in A$, $b \in B$, $h, h' \in H$ and $n \in N$.

Then we have the following lemma.

\begin{lem}\label{Hopf-bimod-cat-str-lem}
	Let $H$ be a Hopf algebra and $B$ a right faithfully flat $H$-Galois extension of $A := B^{co H}$.
	Then we have the following category equivalences
	\begin{enumerate}
		\item ${_{A \otimes B^{op}}\!\M^{H}_H} \cong {_{A \otimes B^{op}}\!\M}$ via $M \mapsto M^{co H}$ and $N \otimes H \mapsfrom N$,
		\item ${_{B^e}\!\M^{H}} \cong {_{A \otimes B^{op}}\!\M}$ via $X \mapsto {X}^{co H}$ and $B \otimes_A N \mapsfrom N$,
	\end{enumerate}
\end{lem}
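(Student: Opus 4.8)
The plan is to reduce each equivalence to a structure theorem already available: statement (1) to the fundamental theorem of Hopf modules, and statement (2) to the faithfully flat Hopf--Galois descent of Theorem \ref{faithful-flat-Hopf-Galois-extension-thm}(3).

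For (1), the point is that $A \otimes B^{op}$ is a \emph{trivial} $H$-comodule algebra, so an object $M \in {_{A \otimes B^{op}}\!\M^{H}_H}$ is exactly a right-right $H$-Hopf module carrying in addition a left $A \otimes B^{op}$-action that commutes with the right $H$-action and is colinear for the trivial coaction. The fundamental theorem of Hopf modules (see \cite{Mon1993}) provides mutually inverse functors $V \mapsto V \otimes H$ and $M \mapsto M^{co H}$ between $\kk$-spaces and ${\M^{H}_H}$, realized by the natural isomorphism $M \cong M^{co H} \otimes H$, $m \mapsto \sum m_0 \cdot S(m_1) \otimes m_2$, with coinvariant projection $m \mapsto \sum m_0 \cdot S(m_1)$. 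Since the left $A \otimes B^{op}$-action commutes with the right $H$-structure and $A \otimes B^{op}$ coacts trivially, both maps are $A \otimes B^{op}$-linear, so the equivalence lifts to ${_{A \otimes B^{op}}\!\M^{H}_H} \cong {_{A \otimes B^{op}}\!\M}$; here $M^{co H}$ inherits the $A \otimes B^{op}$-action as a submodule, and $N \otimes H$ carries the action on its left tensor factor as specified before the lemma.

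For (2), I would first unravel the $H$-coaction on $B^e$ obtained by restricting the $H^e$-coaction along $\mathrm{id} \otimes \varepsilon : H^{e} = H \otimes H^{op} \twoheadrightarrow H$. A direct computation gives $\rho(b \otimes b') = \sum (b_0 \otimes b') \otimes b_1$, so the right tensor factor $B^{op}$ is coinvariant. Hence the condition defining $X \in {_{B^e}\!\M^{H}}$, namely $\rho_X(bxb') = \sum b_0 x_0 b' \otimes b_1 x_1$, decouples into: (i) $X$ lies in ${_B\!\M^{H}}$ via its left $B$-action, and (ii) the right $B$-action is colinear for the trivial coaction and commutes with the left $B$-action. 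Applying the equivalence ${_A\!\M} \cong {_B\!\M^{H}}$ of Theorem \ref{faithful-flat-Hopf-Galois-extension-thm}(3), given by $(B \otimes_A -,\ (-)^{co H})$, I would carry the right $B$-action along: for $X \in {_{B^e}\!\M^{H}}$ the right $B$-action preserves $X^{co H}$ because $\rho_X(xb') = \sum x_0 b' \otimes x_1 = xb' \otimes 1$ when $x$ is coinvariant, making $X^{co H}$ a left $A \otimes B^{op}$-module; conversely $B \otimes_A N$ acquires the left $B$-action, the coaction $\rho(b \otimes_A n) = \sum b_0 \otimes_A n \otimes b_1$ on the $B$-factor, and the right $B$-action $(b \otimes_A n)b' = b \otimes_A nb'$ from $N$, landing in ${_{B^e}\!\M^{H}}$. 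That the two functors are mutually inverse is inherited from the underlying equivalence.

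I expect the main obstacle to be bookkeeping rather than any genuine difficulty: confirming that the restricted coaction really makes the $B^{op}$-factor coinvariant, so that the defining conditions of ${_{B^e}\!\M^{H}}$ decouple as claimed; checking that $\rho(b \otimes_A n) = \sum b_0 \otimes_A n \otimes b_1$ is well defined over the balanced tensor product, which uses $A = B^{co H}$ so that elements of $A$ are coinvariant; and verifying that the transported right $B$-action commutes with the left $A$-action on coinvariants. Each of these is routine but demands careful tracking of the Sweedler indices and of which copy of $B$ in $B^e$ carries the comodule structure.
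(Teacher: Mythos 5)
Your proof is correct. For part (1) it coincides with the paper's argument: both reduce to the fundamental theorem of Hopf modules, i.e.\ the equivalence $\M^{H}_{H} \simeq \M_{\kk}$, and observe that all structure maps are $A \otimes B^{op}$-linear because that algebra coacts trivially. For part (2), however, you take a genuinely different route. The paper's proof is a one-line reduction: it notes that $A \otimes B^{op} \subseteq B \otimes B^{op} = B^e$ is itself a faithfully flat $H$-Galois extension, where $H$ coacts only through the first tensor factor (this is the meaning of ``$H = H \otimes \kk$''), and applies Theorem \ref{faithful-flat-Hopf-Galois-extension-thm} once to that product extension; the stated functor arises from the identification $B^e \otimes_{A \otimes B^{op}} N \cong B \otimes_A N$. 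You instead stay with the original extension $A \subseteq B$: you decouple an object of ${_{B^e}\!\M^{H}}$ into an object of ${_B\!\M^{H}}$ equipped with a commuting, colinear right $B$-action, apply Theorem \ref{faithful-flat-Hopf-Galois-extension-thm}(3) to $A \subseteq B$, and transport the right $B$-action across the equivalence. The trade-off is clear: the paper's argument implicitly requires verifying that the product extension is again faithfully flat $H$-Galois (faithful flatness of $B \otimes B^{op}$ over $A \otimes B^{op}$ and bijectivity of its Galois map, both inherited from $A \subseteq B$), whereas yours replaces that verification by transport-of-structure bookkeeping — stability of coinvariants under the right action, balancedness of the coaction on $B \otimes_A N$ over $A$, and right $B$-linearity of the unit and counit of the equivalence — all of which you correctly flag and which are indeed routine. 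Both arguments rest on the same descent theorem applied to different extensions, so the conclusions match; yours is somewhat more hands-on, the paper's more compressed.
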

\begin{proof}
	(1) The result is straightforward, given the equivalence ${\M^{H}_H} \cong {\M_{\kk}}$ with the mappings $M \mapsto M^{co H}$ and $N \otimes H \mapsfrom N$.
	
	(2) As $A \otimes B^{op} \subseteq B \otimes B^{op}$ is a faithfully flat $H$-Galois extension (with $H = H \otimes \kk$), the assertion is a direct consequence of Theorem \ref{faithful-flat-Hopf-Galois-extension-thm}.
	
\end{proof}

\subsection{Hopf module structure on $\Ext^n_{A^e}(A, B^e)$}

\begin{lem}\label{Hopf-bimod-cat-is-Grothendieck}
	Let $H$ be a Hopf algebra, $R$ and $T$ be two $H$-comodule algebras.
	Then ${_R\!\M^H_T}$ has enough injective objects.
\end{lem}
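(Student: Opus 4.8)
The plan is to avoid checking the Grothendieck axioms directly and instead to transfer enough injectives from an honest module category along an exact forgetful functor that admits a right adjoint. Let $U\colon {_R\!\M^H_T}\to {_R\!\M_T}$ be the functor that forgets the $H$-coaction. Since kernels, cokernels and the underlying bimodule structure in ${_R\!\M^H_T}$ are all computed at the level of $\kk$-vector spaces, $U$ is exact and faithful. The category ${_R\!\M_T}$ of $R$-$T$-bimodules is equivalent to a module category over a ring (namely $R\otimes T^{\op}$), so it has enough injective objects. I will produce a right adjoint $G$ of $U$ and then invoke the standard fact that a right adjoint of an exact functor preserves injectives.

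First I construct $G$. For a bimodule $N\in{_R\!\M_T}$ set $G(N)=N\otimes H$ as a vector space, with the cofree coaction $\rho(n\otimes h)=\sum n\otimes h_1\otimes h_2$ and the diagonal bimodule structure
$$r\cdot(n\otimes h)\cdot t=\sum r_0\,n\,t_0\otimes r_1\,h\,t_1,$$
where $\rho_R(r)=\sum r_0\otimes r_1$ and $\rho_T(t)=\sum t_0\otimes t_1$ are the comodule-algebra coactions. A routine check, using coassociativity of $\Delta$ together with the fact that $\rho_R$ and $\rho_T$ are algebra maps, shows that $G(N)$ lies in ${_R\!\M^H_T}$; this is the only place where the comodule-algebra axioms enter. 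The adjunction $\Hom_{{_R\!\M^H_T}}(M,G(N))\cong\Hom_{{_R\!\M_T}}(U(M),N)$ is given by $\phi\mapsto(\id_N\otimes\vep)\circ\phi$, with inverse $\psi\mapsto\bigl(m\mapsto\sum\psi(m_0)\otimes m_1\bigr)$; left $R$- and right $T$-linearity of this inverse map is exactly what forces the diagonal structure above, so the two constructions are compatible by design.

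With the adjunction in hand the proof concludes formally. The unit $\eta_M\colon M\to GU(M)=M\otimes H$ is the coaction $\rho_M$ itself, and it is a monomorphism in ${_R\!\M^H_T}$ because $(\id_M\otimes\vep)\circ\rho_M=\id_M$ by the counit axiom. Now fix $M\in{_R\!\M^H_T}$ and choose a monomorphism $U(M)\hookrightarrow I$ into an injective object $I$ of ${_R\!\M_T}$. As a right adjoint, $G$ is left exact and hence preserves monomorphisms, so $G(U(M))\hookrightarrow G(I)$ is mono; composing with $\eta_M$ yields a monomorphism $M\hookrightarrow G(I)$ in ${_R\!\M^H_T}$. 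Finally $G(I)$ is injective in ${_R\!\M^H_T}$: since $U$ is exact, the contravariant functor $\Hom_{{_R\!\M^H_T}}(-,G(I))\cong\Hom_{{_R\!\M_T}}(U(-),I)$ is a composite of exact functors, hence exact. This exhibits an embedding of $M$ into an injective object, proving that ${_R\!\M^H_T}$ has enough injectives.

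The only genuine work is the construction of $G$ and the verification that $G(N)\in{_R\!\M^H_T}$ (the compatibility of the diagonal action with the cofree coaction); everything else is formal. I would regard this route as preferable to the alternative of checking that ${_R\!\M^H_T}$ is a Grothendieck category and invoking Grothendieck's theorem, since the latter requires exhibiting a generator, which in the relative Hopf-bimodule setting is delicate: one must build subobjects that are simultaneously sub-bimodules and subcomodules, whereas the fundamental theorem of comodules alone only supplies finite-dimensional subcomodules.
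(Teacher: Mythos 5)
Your proposal is correct and follows essentially the same route as the paper: the paper also constructs the cofree object $N\otimes H$ with the diagonal bimodule structure $r(n\otimes h)t=\sum r_0nt_0\otimes r_1ht_1$, establishes the adjunction isomorphism $\Hom_{{_R\!\M_T}}(U(X),N)\cong\Hom_{{_R\!\M^H_T}}(X,N\otimes H)$, embeds the underlying bimodule into an injective $R\otimes T^{op}$-module $I$, and concludes that $X\hookrightarrow I\otimes H$ is a monomorphism into an injective object of ${_R\!\M^H_T}$. Your composite $G(\iota)\circ\eta_M$ is exactly the paper's map $\Phi(\iota)\colon x\mapsto\sum\iota(x_0)\otimes x_1$, so the two arguments differ only in packaging (unit-of-adjunction language versus an explicit retraction check).
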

\begin{proof}
	For any $R$-$T$-bimodule $M$, $M \otimes H$ is a Hopf bimodule in ${_R\!\M^H_T}$, where the $R$-$T$-bimodule structure defined by $r(m \otimes h)t = \sum r_0mt_0 \otimes r_1ht_1$, and the right $H$-comodule is defined by $\rho(m \otimes h) = \sum m \otimes h_1 \otimes h_2$.
	For any $M \in {_R\!\M_T}$ and $X \in {_R\!\M^H_T}$, there is a natural isomorphism
	\begin{equation}\label{adjoint-Hopf-modules}
		\Phi: \Hom_{_R\!\M_T}(X, M) \stackrel{\cong}{\To} \Hom_{_R\!\M^H_T}(X, M \otimes H), \;\; f \mapsto \big( x \mapsto \sum f(x_0) \otimes x_1 \big),
	\end{equation}
	and the inverse map of $\Phi$ is given by $g \mapsto (\id_M \otimes \varepsilon) \circ g$.
	For any $X \in {_R\!\M^H_T}$, there is an injective $R \otimes T^{op}$-module $I$ with an $R\otimes T^{op}$-linear imbedding $\iota: X \hookrightarrow I$.
	Given that $(\id_M \otimes \vep) \circ \Phi (\iota) = \Phi^{-1} \Phi (\iota) = \iota$, the map $\Phi(\iota): X \hookrightarrow I \otimes H$ is also injective.
	As $I \otimes H$ is an injective object in ${_R\!\M^H_T}$ by the natural isomorphism \eqref{adjoint-Hopf-modules}, we conclude that ${_R\!\M^H_T}$ has enough injective objects.
\end{proof}

\begin{rk}
	Recall that an abelian category $\A$ is {\it Grothendieck}, if it has exact direct limits 
	and a generator, and that any Grothendieck category has enough injective objects.
	Since $\M^H$ is a subcategory of $_{H^*}\!\M$ which is closed under subobjects, quotient objects and coproducts, $\M^H$ is a Grothendieck category (see \cite{DNR2001} for example).
	By analogy, ${_R\!\M^H_T}$ is also classified as a Grothendieck category.
\end{rk}

\begin{prop}\label{RHom-prop}
	Let $H$ be a Hopf algebra with a bijective antipode, $B$ be a flat $H$-Galois extension of $A$.
	\begin{enumerate}
		\item Then the right derived functor $F$ of $(-)^A = \Hom_{A^e}(A, -): {_{B^{e}}\!\M^{H^e}_{B^e}} \to {_{B^e}\!\M^{H^e}_H}$ exists, such that the following diagram 
		\[ \xymatrix{\D^{+}({_{B^{e}}\!\M^{H^e}_{B^e}}) \ar[rr]^{F} \ar[d] && \D({_{B^e}\!\M^{H^e}_H}) \ar[d] \\ \D^{+}(\M_{A^e}) \ar[rr]^{\RHom_{A^e}(A, -)} && \D(\M_{\kk}) } \]
		commutes up to natural isomorphism.
		So the derived functor from $\D^{+}({_{B^{e}}\!\M^{H^e}_{B^e}})$ to $\D({_{B^e}\!\M^{H^e}_H})$ is also denoted by $\RHom_{A^e}(A, -)$.
		\item For any $X \in {_{B^e}\!\M^{H^e}_{A^e}}$, $\RHom_{A^e}(-, X)$ is isomorphic to $X \Lt_{A^e} \RHom_{A^e}(-, A^e)$ as a functor from $\D_{perf}(A^e)^{op}$ to $\D({_{B^e}\!\M^{H^e}})$.
		\item The derived functor $\RHom_{A^e}(-, -)$ is a bifunctor 
		\[\RHom_{A^e}(-, -): \D_{perf}(A^e)^{op} \times \D^{+}({_{B^{e}}\!\M^{H^e}_{B^e}}) \To \D({_{B^e}\!\M^{H^e}}), \]
		and $\Ext_{A^e}^n(Y, X) \cong \mathrm{H}^n(\RHom_{A^e}(Y, X))$.
	\end{enumerate}
\end{prop}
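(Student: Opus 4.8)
The plan is to establish the three parts in order, with each part building on the previous. The conceptual heart of the proposition is that all the Hopf-module structure comes along automatically from functoriality, so the main work is to check that the standard derived-functor machinery restricts cleanly to the Hopf-bimodule categories. I will rely throughout on Lemma \ref{Hopf-bimod-cat-is-Grothendieck}, which guarantees that ${_{B^e}\!\M^{H^e}_{B^e}}$ (and its variants) has enough injectives, and on Lemma \ref{B-otimes-H-mod-str}, which says $(-)^A$ genuinely lands in ${_{B^e}\!\M^{H^e}_H}$.

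\textbf{Part (1).} First I would construct the right derived functor of $(-)^A \colon {_{B^e}\!\M^{H^e}_{B^e}} \to {_{B^e}\!\M^{H^e}_H}$. Since the source category is Grothendieck with enough injectives by Lemma \ref{Hopf-bimod-cat-is-Grothendieck}, the right derived functor $F$ exists on $\D^{+}$ and is computed by injective resolutions in ${_{B^e}\!\M^{H^e}_{B^e}}$. The commutativity of the square up to natural isomorphism then reduces to a single compatibility: an injective object of ${_{B^e}\!\M^{H^e}_{B^e}}$, after forgetting structure down to $\M_{A^e}$, must be $(-)^A$-acyclic, i.e.\ $\Ext^{>0}_{A^e}(A, -)$ must vanish on it. By the explicit description in Lemma \ref{Hopf-bimod-cat-is-Grothendieck}, every injective of the Hopf-bimodule category is a direct summand of one of the form $I \otimes H^e$ with $I$ an injective $B^e \otimes (B^e)^{op}$-module; the flatness of $B$ over $A$ (used exactly as in the proof of Corollary \ref{H-Galois-spectral-sequence-for-B^e}) shows that such $I$ is injective as a left $A^e$-module, hence $(-)^A = \Hom_{A^e}(A,-)$-acyclic, and tensoring with the flat $\kk$-space $H^e$ preserves this. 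Therefore forgetting the injective resolution to $\M_{A^e}$ yields an $\RHom_{A^e}(A,-)$-computing resolution, which is precisely the asserted commutativity.

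\textbf{Part (2) and (3).} For part (2), I would fix $X \in {_{B^e}\!\M^{H^e}_{A^e}}$ and compare the two functors on $\D_{perf}(A^e)^{op}$. Both are exact functors out of a category of perfect complexes, so by a standard way-out/bounded-complex argument it suffices to produce a natural isomorphism on a single finitely generated projective $A^e$-module $P$, where one checks $\Hom_{A^e}(P, X) \cong X \otimes_{A^e} \Hom_{A^e}(P, A^e)$ directly, compatibly with all the structures. The point to verify is that this classical tensor-hom identity for finitely generated projectives is a morphism in ${_{B^e}\!\M^{H^e}}$, i.e.\ that the left $B^e$-action (coming from $X$) and the $H^e$-coaction (coming from $X$ via the rationality argument of Lemma \ref{lem-0}) are respected; this is a naturality check on the isomorphism, not a new computation. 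Part (3) is then formal: combining (1) with a projective-resolution presentation of the first variable gives the bifunctor on $\D_{perf}(A^e)^{op} \times \D^{+}({_{B^e}\!\M^{H^e}_{B^e}})$, and $\Ext^n_{A^e}(Y,X) \cong \mathrm{H}^n(\RHom_{A^e}(Y,X))$ follows by definition once we know the underlying complex in $\D(\M_\kk)$ agrees with the ordinary $\RHom_{A^e}$, which is exactly what (1) secured.

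\textbf{Main obstacle.} I expect the principal difficulty to lie in part (1), specifically in pinning down that injectives of ${_{B^e}\!\M^{H^e}_{B^e}}$ are acyclic for $(-)^A$ after forgetting to $A^e$-modules. The subtlety is bookkeeping: one must track the full $H^e$-coaction and both-sided $B^e$-action through the adjunction \eqref{adjoint-Hopf-modules} and verify that the acyclicity statement proved at the level of $\M_{A^e}$ genuinely lifts the comparison to $\D({_{B^e}\!\M^{H^e}_H})$ rather than merely to $\D(\M_\kk)$. The cleanest route is to observe that the forgetful functor ${_{B^e}\!\M^{H^e}_{B^e}} \to \M_{A^e}$ sends enough injectives to $(-)^A$-acyclics, which is the precise hypothesis needed for the derived functors to commute with the forgetful functors; everything else is then formal.
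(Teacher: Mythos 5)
Your proposal is correct and takes essentially the same route as the paper: for (1) the paper likewise invokes Lemma \ref{Hopf-bimod-cat-is-Grothendieck} for enough injectives and uses flatness of $B$ over $A$ to conclude that injective objects of ${_{B^{e}}\!\M^{H^e}_{B^e}}$ stay injective (hence $(-)^A$-acyclic, your phrasing) after forgetting to $\M_{A^e}$, so the square commutes by uniqueness of the derived functor. For (2)--(3) the paper does exactly your reduction—replace the perfect complex by a bounded complex of finitely generated projective $A^e$-modules and apply the tensor--hom isomorphism $\Hom_{A^e}(P,X)\cong X\otimes_{A^e}\Hom_{A^e}(P,A^e)$—and then cites Weibel's bifunctor theorem for part (3).
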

\begin{proof}
	(1) Since ${_{B^{e}}\!\M^{H^e}_{B^e}}$ has enough injective objects by Lemma \ref{Hopf-bimod-cat-is-Grothendieck}, the right derived functor
	\[\RHom_{A^e}(A, -): \D^{+}({_{B^{e}}\!\M^{H^e}_{B^e}}) \To \D({_{B^e}\!\M^{H^e}_H})\]
	exists as per Theorem \cite[Theorem 10.5.6]{Wei1994}.
	Given that $B$ is flat as both a right and a left $A$-module, the forgetful functor from ${_{B^{e}}\!\M^{H^e}_{B^e}}$ to $\M_{A^e}$ maintains injective objects.
	Then the diagram's commutativity follows from the uniqueness of the derived functor.
	
	(2) For any $Y \in \D_{perf}(A^e)$, it is quasi-isomorphic to a bounded complex $P$ of finitely generated projective $A^e$-modules.
	The conclusion follows from
	\begin{align*}
		\RHom_{A^e}(Y, X) & \cong \Hom_{A^e}(P, X) \\
		& \cong X \otimes_{A^e} \Hom_{A^e}(P, A^e) \\
		& \cong X \Lt_{A^e} \Hom_{A^e}(P, A^e) \\
		& \cong X \Lt_{A^e} \RHom_{A^e}(Y, A^e).
	\end{align*}
	
	(3) This follows from a similar proof of \cite[Theorem 10.7.4]{Wei1994}.
\end{proof}

\subsection{Skew Calabi--Yau property of flat Hopf Galois extensions}

In this subsection, let $H$ be a Hopf algebra with a bijective antipode, $B$ be a faithfully flat $H$-Galois extension of $A$.

\begin{prop}\label{key-prop}
	If $A$ is homologically smooth, then
	\[ \RHom_{A^e}(A, B^e) \cong \RHom_{A^e}(A, A^e) \otimes_A B \otimes H \text{ in } \D({\M_{B \otimes H}}).\]
	Further, if $H$ is also homologically smooth, then
	\[ \RHom_{B^e}(B, B^e) \cong \RHom_{A^e}(A, A^e) \otimes_A B \otimes \RHom_{H}(\kk, H) \text{ in } \D({\M_B}).\]
\end{prop}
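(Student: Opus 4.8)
The plan is to treat $\RHom_{A^e}(A,B^e)$ as a complex of relative Hopf modules, show it is \emph{cofree} over $H$, and then feed this into Stefan's spectral sequence; throughout write $\Omega := \RHom_{A^e}(A,A^e)$. For the first isomorphism, I would start from Proposition \ref{RHom-prop}(1), by which $\RHom_{A^e}(A,B^e)$ lives in $\D({_{B^e}\!\M^{H^e}_H})$. Composing with the forgetful functors in the commutative diagram preceding Lemma \ref{Hopf-bimod-cat-str-lem}, I regard it in $\D({_{A\otimes B^{op}}\!\M^{H}_H})$, where the $H$-comodule structure is the one coming from the left tensor factor $B$ of $B^e$ and the $H$-module structure is the Ulbrich-Miyashita action. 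Then I invoke Lemma \ref{Hopf-bimod-cat-str-lem}(1), the fundamental theorem of Hopf modules in this setting, which, being an exact equivalence, passes to derived categories and yields $\RHom_{A^e}(A,B^e)\cong \big(\RHom_{A^e}(A,B^e)\big)^{co H}\otimes H$ in $\D({_{A\otimes B^{op}}\!\M^{H}_H})$. Crucially this equips the cofactor $H$ with the \emph{regular} right $H$-action, which is exactly what is required for the second statement.

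Next I would compute the coinvariants. Since $A\otimes B^{op}\subseteq B\otimes B^{op}=B^e$ is a faithfully flat $H$-Galois extension (with $H=H\otimes\kk$), the functor $(-)^{co H}$ is an exact equivalence by Lemma \ref{Hopf-bimod-cat-str-lem}(2); being exact and injective-preserving it commutes with $\RHom_{A^e}(A,-)$, and it sends $B^e$ to $(B^e)^{co H}=A\otimes B^{op}$. Hence $\big(\RHom_{A^e}(A,B^e)\big)^{co H}\cong \RHom_{A^e}(A,A\otimes B^{op})\cong \Omega\otimes_A B$, the last step by the computation in Proposition \ref{RHom-prop}(2) together with the base-change identity $(A\otimes B^{op})\Lt_{A^e}\Omega\cong \Omega\otimes_A B$. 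Combining the two paragraphs gives $\RHom_{A^e}(A,B^e)\cong \Omega\otimes_A B\otimes H$, with right $B$-action on the factor $B$ and regular right $H$-action on $H$, that is, in $\D(\M_{B\otimes H})$ as claimed.

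I expect the crux to be the legitimacy of applying Lemma \ref{Hopf-bimod-cat-str-lem}(1): one must check that the restriction of the $H^e$-coaction to its left-$H$ leg, paired with the Ulbrich-Miyashita action, satisfies the genuine Hopf-module compatibility $\rho(x\lhu h)=\sum x_0\lhu h_1\otimes x_1 h_2$ rather than merely the Yetter-Drinfeld condition \eqref{YD-condition}. This is what forces the Ulbrich-Miyashita action to become regular on the cofactor $H$, and it is a direct computation from \eqref{kappa^1-kappa^2_0-kappa^2_1}, \eqref{kappa^1_0-kappa^2-kappa^1_1} and the coaction \eqref{H-right-H^e-comod-alg}, obtained by applying $\mathrm{id}\otimes\vep$ to the identity established inside the proof of Lemma \ref{B-otimes-H-mod-str}. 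Some care is also needed to confirm that $(-)^{co H}$ genuinely commutes with the derived functor $\RHom_{A^e}(A,-)$, for which I would use its exactness and preservation of injectives.

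For the second statement I would apply Stefan's isomorphism, Corollary \ref{H-Galois-spectral-sequence-for-B^e}, to get $\RHom_{B^e}(B,B^e)\cong\RHom_{H}(\kk,\RHom_{A^e}(A,B^e))$, and substitute the first statement to reach $\RHom_{H}(\kk,(\Omega\otimes_A B)\otimes H)$, where $H$ carries the regular right $H$-action. Because $H$ is homologically smooth, $\kk$ is a perfect $H$-complex \cite[Lemma 3.3]{LMeu2019}, hence admits a bounded resolution by finitely generated projective right $H$-modules; applying $\Hom_H(-,(\Omega\otimes_A B)\otimes H)$ to such a resolution lets me pull the $\kk$-complex $\Omega\otimes_A B$ out of the Hom, giving $\RHom_{H}(\kk,(\Omega\otimes_A B)\otimes H)\cong(\Omega\otimes_A B)\otimes\RHom_{H}(\kk,H)$. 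Tracking the right $B$-action, which sits throughout on the factor $B$, yields $\RHom_{B^e}(B,B^e)\cong\Omega\otimes_A B\otimes\RHom_{H}(\kk,H)$ in $\D(\M_B)$; perfectness of $\kk$ over $H$ is precisely where the hypothesis that $H$ be homologically smooth is used.
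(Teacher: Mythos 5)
Your proposal is correct and is built from the same ingredients as the paper's proof: Proposition \ref{RHom-prop}, the two equivalences of Lemma \ref{Hopf-bimod-cat-str-lem}, Corollary \ref{H-Galois-spectral-sequence-for-B^e}, and perfectness of $\kk$ over $H$ (homological smoothness) for the last step; moreover, your identification of the crux — that $M^A$ satisfies the genuine relative Hopf module compatibility over the comodule algebra $H$, so that the cofactor $H$ carries the regular action, rather than merely the Yetter--Drinfeld condition \eqref{YD-condition} — is exactly what the paper establishes in Lemma \ref{B-otimes-H-mod-str} and packages into Proposition \ref{RHom-prop}(1). The one real divergence is the mechanism for computing the coinvariants. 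The paper first applies Proposition \ref{RHom-prop}(2) to rewrite $\RHom_{A^e}(A,B^e) \cong B^e \Lt_{A^e} \RHom_{A^e}(A,A^e) \cong B \otimes_A \big( \RHom_{A^e}(A,A^e) \otimes_A B \big)$ in $\D({_{B^e}\!\M^{H^e}})$; this complex is visibly induced from ${_{A \otimes B^{op}}\!\M}$ via $B \otimes_A (-)$, so Lemma \ref{Hopf-bimod-cat-str-lem}(2) reads off its coinvariants as $\RHom_{A^e}(A,A^e) \otimes_A B$ with no commutation of functors needed, and only then is Lemma \ref{Hopf-bimod-cat-str-lem}(1) invoked. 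You instead decompose first and then commute $(-)^{co H}$ past $\RHom_{A^e}(A,-)$; as you yourself flag, this step needs more than exactness and preservation of injectives of the equivalence: one must also know that the relevant injective resolutions remain acyclic for $\Hom_{A^e}(A,-)$ after the passage, which follows from flatness of $B$ over $A$ exactly as in the proof of Corollary \ref{H-Galois-spectral-sequence-for-B^e}, or, more simply, one can compute with a bounded resolution of $A$ by finitely generated projective $A^e$-modules and note that $(-)^{co H}$ commutes with $\Hom_{A^e}(P,-)$ for such $P$ by additivity (both functors on an injective or on $B^e$ itself just carve out the subspace $M^A \cap M^{co H}$, so the underived functors agree). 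Either patch is routine, so your route is sound; the paper's ordering simply sidesteps the commutation question. The second halves of the two arguments are identical: Stefan's isomorphism plus pulling $\RHom_H(\kk,H)$ out using a bounded finitely generated projective resolution of $\kk$ over $H$.
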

\begin{proof}
	Since $B$ is faithfully flat both as a left and right $A$-modules, it follows that $B^e$ is faithfully flat as an $A^e$-module.
	$\RHom_{A^e}(A, B^e)$ is a complex in $\D({_{B^e}\!\M^{H^e}_H})$ by Proposition \ref{RHom-prop} (1).
	Given that $A$ is homologically smooth, Proposition \ref{RHom-prop} (2), yields, in $\D({_{B^e}\!\M^{H^e}})$,
	\begin{equation}\label{RHom_A^e-A-B^e-0}
		\begin{split}
			\RHom_{A^e}(A, B^e) & \cong B^e \Lt_{A^e} \RHom_{A^e}(A, A^e) \\
			& \cong B^e \otimes_{A^e} \RHom_{A^e}(A, A^e) \\
			& \cong B \otimes_A (\RHom_{A^e}(A, A^e) \otimes_A B). \\
		\end{split}
	\end{equation}
	Viewing $\RHom_{A^e}(A, B^e)$ as a complex in $\D({_{B^{e}}\!\M^{H}})$, Lemma \ref{Hopf-bimod-cat-str-lem} (2) gives
	\begin{equation}\label{RHom_A^e-A-B^e^coH}
		\begin{split}
			\RHom_{A^e}(A, B^e)^{co H} & \stackrel{\eqref{RHom_A^e-A-B^e-0}}{\cong} \big(B \otimes_{A} \RHom_{A^e}(A, A^e) \otimes_A B \big)^{co H} \\
			& \cong \RHom_{A^e}(A, A^e) \otimes_A B
		\end{split}
	\end{equation}
	in $\D({_{A \otimes B^{op}}\!\M})$.
	Further, by Lemma \ref{Hopf-bimod-cat-str-lem} (1), in $\D({_{A \otimes B^{op}}\!\M^{H}_{H}})$, we have
	\begin{equation}\label{RHom_A^e-A-B^e}
		\begin{split}
			\RHom_{A^e}(A, B^e) & \cong \RHom_{A^e}(A, B^e)^{co H} \otimes H \\
			& \stackrel{\eqref{RHom_A^e-A-B^e^coH}}{\cong}  \RHom_{A^e}(A, A^e) \otimes_A B \otimes H.
		\end{split}
	\end{equation}
	Considering $\RHom_{A^e}(A, B^e)$ in $\D({_{B^{op}}\!\M_H}) (= \D(\M_{B \otimes H}))$, Corollary \ref{H-Galois-spectral-sequence-for-B^e} yields
	\begin{align*}
		\RHom_{B^e}(B, B^e) \cong \;\; & \RHom_{H}(\kk, \RHom_{A^e}(A, B^e)) \\
		\stackrel{\eqref{RHom_A^e-A-B^e}}{\cong} & \RHom_{H}(\kk, \RHom_{A^e}(A, A^e) \otimes_A B \otimes H) \\
		& (\text{since $H$ is homologically smooth.}) \\
		\cong \;\; & (\RHom_{A^e}(A, A^e) \otimes_{A} B \otimes H) \Lt_H \RHom_{H}(\kk, H) \\
		\cong \;\; & \RHom_{A^e}(A, A^e) \otimes_{A} B \otimes \RHom_{H}(\kk, H) & 
	\end{align*}
	in $\D(B)$.
	This completes the proof.
\end{proof}

Similarly, we obtain the following result.

\begin{prop}\label{key-prop'}
	If $A$ is homologically smooth, then
	\[ \RHom_{A^e}(A, B^e) \cong B \otimes_A \RHom_{A^e}(A, A^e) \otimes H \text{ in } \D({_B\!\M_{H}}).\]
	Further, if $H$ is also homologically smooth, then
	\[ \RHom_{B^e}(B, B^e) \cong B \otimes_A \RHom_{A^e}(A, A^e) \otimes \RHom_{H}(\kk, H) \text{ in } \D({_B\!\M}).\]
\end{prop}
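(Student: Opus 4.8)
The plan is to mirror the proof of Proposition \ref{key-prop}, interchanging the roles of the two tensor factors of $B^e = B \otimes B^{op}$. The starting point is identical: faithful flatness of $B$ on both sides makes $B^e$ faithfully flat over $A^e$, Proposition \ref{RHom-prop}(1) places $\RHom_{A^e}(A, B^e)$ in $\D({_{B^e}\!\M^{H^e}_H})$, and homological smoothness of $A$ together with Proposition \ref{RHom-prop}(2) gives, in $\D({_{B^e}\!\M^{H^e}})$,
\[ \RHom_{A^e}(A, B^e) \cong B^e \otimes_{A^e} \RHom_{A^e}(A, A^e) \cong B \otimes_A \RHom_{A^e}(A, A^e) \otimes_A B . \]
The only structural difference is which copy of $B$ is eliminated when passing to coinvariants.

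Whereas Proposition \ref{key-prop} takes coinvariants along the $H = H \otimes \kk$ component of $H^e$ and collapses the left $B$-factor via Lemma \ref{Hopf-bimod-cat-str-lem}(2), here I would take coinvariants along the $\kk \otimes H^{op}$ component, using the opposite faithfully flat $H^{op}$-Galois extension $A^{op} \subseteq B^{op}$ (equivalently $B \otimes A^{op} \subseteq B \otimes B^{op}$), whose translation map $\kappa_{B^{op}}$ was recorded in \eqref{translation-map-B^e-eq}. The mirror forms of Lemma \ref{Hopf-bimod-cat-str-lem}, obtained by the same argument applied to the second factor, then collapse the \emph{right} $B$-factor and yield $\RHom_{A^e}(A, B^e)^{co H^{op}} \cong B \otimes_A \RHom_{A^e}(A, A^e)$ in $\D({_B\!\M_A})$. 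Reconstructing the cofree comodule factor and transporting the Ulbrich--Miyashita action exactly as in Proposition \ref{key-prop} produces the first assertion,
\[ \RHom_{A^e}(A, B^e) \cong B \otimes_A \RHom_{A^e}(A, A^e) \otimes H \quad \text{in } \D({_B\!\M_H}). \]

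For the second assertion I would then apply Corollary \ref{H-Galois-spectral-sequence-for-B^e} and the homological smoothness of $H$ verbatim: applying $\RHom_H(\kk, -)$ with respect to the right $H$-module just obtained and moving the derived functor past the outer tensor factors gives
\[ \RHom_{B^e}(B, B^e) \cong \RHom_H(\kk, B \otimes_A \RHom_{A^e}(A, A^e) \otimes H) \cong B \otimes_A \RHom_{A^e}(A, A^e) \otimes \RHom_H(\kk, H) \]
in $\D({_B\!\M})$.

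The main obstacle I anticipate is bookkeeping rather than conceptual: one must check that the Ulbrich--Miyashita action, the Schauenburg-type equivalences of Lemma \ref{Hopf-bimod-cat-str-lem}, and the compatibility \eqref{H-right-H^e-comod-alg} between the single right $H$-action and the $H^e$-coaction all transport faithfully to the $B^{op}$-side, with the antipode replaced by $S^{-1}$ and $\kappa$ by $\kappa_{B^{op}}$. In particular one must verify that the $\kk \otimes H^{op}$-coinvariants genuinely peel off the right copy of $B$ while leaving the surviving left $B$-action and the right $H$-action of \eqref{H-mod-on-M^A} intact, so that the reconstructed object lands in the stated category $\D({_B\!\M_H})$ without an unexpected antipode twist on the $H$-module structure.
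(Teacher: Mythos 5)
Your proposal is correct and is essentially the paper's own argument: the paper offers no separate proof of this proposition, saying only ``Similarly, we obtain the following result,'' which means exactly the mirror of the proof of Proposition \ref{key-prop} with the roles of the two tensor factors of $B^e$ interchanged. Collapsing the right $B$-factor by taking coinvariants along the $\kk \otimes H^{op}$ component (via the opposite faithfully flat $H^{op}$-Galois extension and the mirrored forms of Lemma \ref{Hopf-bimod-cat-str-lem}), and then applying Corollary \ref{H-Galois-spectral-sequence-for-B^e} together with the homological smoothness of $H$, is precisely that intended mirrored argument.
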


Liu, Wu, and Zhu demonstrated in \cite{LWZ2012} that for any $H$-module algebra $R$, the smash product $R\#H$ is homologically smooth if both $R$ and $H$ are homologically smooth. 
Their proof is directly applicable to Hopf Galois extensions with no modifications. Consequently, we can assert the following analogous result.

\begin{prop}\label{homological-smooth-prop} \cite[Proposition 2.11]{LWZ2012}
	Let $A \subseteq B$ be a flat $H$-Galois extension.
	If $A$ and $H$ are homologically smooth, then so is $B$.
\end{prop}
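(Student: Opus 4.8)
The plan is to establish homological smoothness of $B$, namely that $B$ is perfect as a $B^e$-module, by reducing the question to the known smoothness of $A$ and $H$ through the machinery already assembled in this section. The key observation is that Corollary \ref{H-Galois-spectral-sequence-for-B^e} and the computation in Proposition \ref{key-prop} show how $\RHom_{B^e}(B, B^e)$ decomposes in terms of $\RHom_{A^e}(A, A^e)$ and $\RHom_{H}(\kk, H)$. Since \cite{LWZ2012} proves the result for smash products $R \# H$, and the author asserts that the argument transfers verbatim to Hopf Galois extensions, I would first recall the strategy of that proof and then check that each step only uses properties shared by faithfully flat (or merely flat) Hopf Galois extensions.

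First I would take a bounded resolution $P_\bullet$ of $A$ by finitely generated projective $A^e$-modules, which exists because $A$ is homologically smooth, together with a bounded resolution $Q_\bullet$ of $\kk$ by finitely generated projective $H$-modules, which exists because $H$ is homologically smooth. The goal is to manufacture from these a bounded resolution of $B$ by finitely generated projective $B^e$-modules. The natural candidate is built by tensoring: using the adjunction $\Hom_{B^e}(B, -) \cong \Hom_{H}(\kk, -) \circ \Hom_{A^e}(A, -)$ recorded just before Theorem \ref{H-Galois-spectral-sequence}, the left adjoint to the composite $(-)^A$ followed by $\Hom_H(\kk, -)$ produces, from a free $H$-module and a free $A^e$-module, a free $B^e$-module. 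Concretely, applying $B \otimes_A - \otimes_A B$ to $P_\bullet$ and then tensoring appropriately with $Q_\bullet$ over $H$ should yield a complex of finitely generated projective $B^e$-modules; the flatness of $B$ over $A$ on both sides guarantees that $B \otimes_A P_\bullet \otimes_A B$ stays exact and resolves $B \otimes_A A \otimes_A B = B \otimes_A B$, while the $H$-Galois identification $B \otimes_A B \cong B \otimes H$ links the remaining $H$-factor to the resolution $Q_\bullet$ of $\kk$.

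The main step is to verify that the total complex assembled from $P_\bullet$ and $Q_\bullet$ is quasi-isomorphic to $B$ as a $B^e$-module and that its terms are finitely generated projective over $B^e$. Finite generation and projectivity of each term follow because $B \otimes_A (A^e) \otimes_A B$-type modules and $B \otimes H$-type modules are finitely generated projective $B^e$-modules, and finite tensor products of such remain so; the grading is bounded because both $P_\bullet$ and $Q_\bullet$ are bounded. The quasi-isomorphism to $B$ is where flatness of $B$ over $A$ does the real work: it ensures that tensoring the resolutions together does not introduce spurious homology, so that the homology of the total complex computes $\Tor$-type groups that collapse to $B$ in degree zero. I expect the main obstacle to be bookkeeping the bimodule and comodule structures so that the tensor product $(B \otimes_A P_\bullet \otimes_A B) \otimes_H Q_\bullet$ is genuinely a complex of $B^e$-modules resolving $B$, rather than merely computing the correct cohomology after applying $\RHom_{B^e}(-, B^e)$; one must confirm that the Galois isomorphism $B \otimes_A B \cong B \otimes H$ is compatible with the $B^e$-actions throughout, which is precisely the content already verified in the discussion surrounding the Ulbrich--Miyashita actions. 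Since \cite[Proposition 2.11]{LWZ2012} carries out exactly this construction for $R \# H$ and the author states that no modifications are needed, the proof reduces to invoking that citation once the structural compatibilities above have been noted.
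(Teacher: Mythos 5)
There is a genuine gap at the central step of your sketch. Your plan is to totalize a double complex $(B \otimes_A P_\bullet \otimes_A B) \otimes_H Q_\bullet$, which presupposes that each term $B \otimes_A P_j \otimes_A B \cong B^e \otimes_{A^e} P_j$ carries an $H$-module structure compatible with the differentials. No such structure exists for a general $A^e$-module $P_j$: the Ulbrich--Miyashita action is defined on $M^A$ and $M_A$ for $B^e$-modules $M$, and on $B \otimes_A B = B^e \otimes_{A^e} A$ itself, but the would-be formula $h \rhu (x \otimes_A p \otimes_A y) = \sum x\kappa^1(h) \otimes_A p \otimes_A \kappa^2(h) y$ is ill defined: the translation map is only determined up to the middle relations $\sum \kappa^1(h) a \otimes_A \kappa^2(h) = \sum \kappa^1(h) \otimes_A a\kappa^2(h)$, and moving $a$ across $p$ would require $ap = pa$ in $P_j$. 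This is exactly the difficulty the paper emphasizes (a projective $A^e$-resolution of $A$ exposes the $B^e$-structure on $\Ext^{\ast}_{A^e}(A,B^e)$ but not the $H$-structure), and in the smash-product literature it is circumvented only by using $\Delta_1$-equivariant (here: $\Lambda_1$-equivariant) resolutions, not arbitrary ones; so it is not ``precisely the content already verified'' by the Ulbrich--Miyashita discussion. A second, related error: your claim that ``$B \otimes H$-type modules are finitely generated projective $B^e$-modules'' is false in general; $B \otimes H \cong B \otimes_A B \cong B^e \otimes_{A^e} A$ is perfect over $B^e$ once $A$ is perfect over $A^e$, but it is projective only in special situations (e.g.\ $A$ separable), so your total complex could not consist of projectives even if it were defined.

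The argument is repaired by decoupling the two resolutions rather than totalizing them. First, since $B$ is flat over $A$ on both sides, $B^e$ is flat as an $A^e$-module, so $B^e \otimes_{A^e} P_\bullet \to B^e \otimes_{A^e} A \cong B \otimes_A B$ is a quasi-isomorphism with finitely generated projective $B^e$-terms; hence $B \otimes_A B$ is perfect in $\D(B^e)$. Second, the Galois map identifies $B \otimes_A B$ with $B \otimes H$ as $H$-$B^e$-bimodules, and $B \otimes H$ is free as a left $H$-module; therefore $\kk \otimes_H (B \otimes_A B) \cong B$ as $B^e$-modules, and $Q_\bullet \otimes_H (B \otimes_A B) \to \kk \otimes_H (B \otimes_A B) \cong B$ is a quasi-isomorphism of complexes of $B^e$-modules. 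Each term $Q_i \otimes_H (B \otimes_A B)$ is a direct summand of a finite direct sum of copies of $B \otimes_A B$, hence perfect (though typically not projective) over $B^e$; since perfect complexes form a thick subcategory of $\D(B^e)$, the bounded complex $Q_\bullet \otimes_H (B \otimes_A B)$, and therefore $B$, is perfect over $B^e$. Note that the paper itself gives no proof beyond citing \cite[Proposition 2.11]{LWZ2012} and asserting the smash-product argument transfers; that transferred argument is, in essence, the decoupled one just described, not the totalization you propose, so your write-up needs this correction before the appeal to the citation is justified.
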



Now we can prove our main result in this paper.

\begin{thm}\label{sCY-for-ff-H-ext}
	Let $H$ be a $n$-dim skew Calabi--Yau Hopf algebra, and $A \subseteq B$ be a faithfully flat $H$-Galois extension.
	
	(1) If $A$ has $d$-dim VdB duality, then $B$ has $(d+n)$-dim VdB duality.
	
	(2) If $A$ is $d$-dim skew Calabi--Yau, then $B$ is $(d+n)$-dim skew Calabi--Yau.
\end{thm}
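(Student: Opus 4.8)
The plan is to reduce the skew Calabi--Yau property of $B$ to the structural results already assembled, and to verify the three conditions of Proposition \ref{VdB-dual-module-prop} (equivalently \ref{criterion-hs->sCY}) for $B$. First I would establish homological smoothness of $B$: since $A$ and $H$ are both homologically smooth (the latter because a skew Calabi--Yau algebra is homologically smooth by definition) and $A \subseteq B$ is a flat $H$-Galois extension, Proposition \ref{homological-smooth-prop} gives that $B$ is homologically smooth. This disposes of the standing hypothesis needed to invoke the criterion.

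Next I would compute $\RHom_{B^e}(B, B^e)$ and identify it as a shifted invertible bimodule. By Proposition \ref{key-prop}, in $\D(\M_B)$ we have
\[ \RHom_{B^e}(B, B^e) \cong \RHom_{A^e}(A, A^e) \otimes_A B \otimes \RHom_H(\kk, H), \]
and symmetrically by Proposition \ref{key-prop'} the same object computed in $\D({}_B\!\M)$ equals $B \otimes_A \RHom_{A^e}(A, A^e) \otimes \RHom_H(\kk, H)$. Now I would feed in the VdB/skew-CY hypotheses on the two factors. Since $A$ has $d$-dimensional VdB duality, $\RHom_{A^e}(A, A^e) \cong \omega_A[-d]$ with $\omega_A$ an invertible $A$-$A$-bimodule; since $H$ is $n$-dimensional skew Calabi--Yau, Theorem \ref{Hopf-VdB<->sCY-thm} shows $H$ has $n$-dimensional VdB duality, so $\RHom_H(\kk, H) \cong {}_\chi\!\kk[-n]$ is concentrated in degree $n$ and one-dimensional over $\kk$. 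Substituting, the spectral-sequence output collapses: $\RHom_{B^e}(B, B^e)$ is concentrated in a single degree $d+n$, giving condition (1) of the criterion ($\Ext^i_{B^e}(B, B^e) = 0$ for $i \neq d+n$), and the single nonzero cohomology is $\omega := \omega_A \otimes_A B \otimes {}_\chi\!\kk$ as a right $B$-module, respectively $B \otimes_A \omega_A \otimes {}_\chi\!\kk$ as a left $B$-module.

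It then remains to check that this $\omega$ is finitely generated and projective over $B$ on each side and isomorphic to $B$ on one side. For projectivity and finite generation: $\omega_A$ is invertible over $A$, hence finitely generated projective on each side; $B$ is flat (indeed an equivalence-inducing bimodule) over $A$; and ${}_\chi\!\kk$ twists only by a character, so $\omega$ is an invertible $B$-$B$-bimodule, giving conditions (1) and (2) of Proposition \ref{criterion-hs->sCY}. This establishes part (1), that $B$ has $(d+n)$-dimensional VdB duality. For part (2), under the stronger hypothesis that $A$ is skew Calabi--Yau we have $\omega_A \cong A_{\mu_A}$, so $\omega_A \otimes_A B \cong B$ as a left $A$-module and the right-$B$ structure is a twist of $B$ by an automorphism; combined with the character twist from ${}_\chi\!\kk$, Lemma \ref{invertible to outer automorphism} yields $\omega \cong B_{\mu_B}$ for an automorphism $\mu_B$, so $B$ is skew Calabi--Yau of dimension $d+n$ by the last clause of Proposition \ref{VdB-dual-module-prop}. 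The main obstacle I anticipate is not the degreewise vanishing, which is immediate once both factors are concentrated, but rather checking carefully that the bimodule $\omega$ really is free of rank one on one side --- one must track the $A$-bimodule twist $\mu_A$ through the tensor with $B$ and confirm that the resulting right (or left) $B$-module is genuinely free of rank one rather than merely invertible; the precise automorphism is deliberately left unidentified here (Theorem \ref{main-thm-1} only asserts existence) and is pinned down only later in the cleft/crossed-product setting.
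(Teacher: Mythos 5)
Your proposal is correct and follows essentially the same route as the paper: establish homological smoothness via Proposition \ref{homological-smooth-prop}, apply Propositions \ref{key-prop} and \ref{key-prop'} to identify $\RHom_{B^e}(B,B^e)$ with $\omega_A\otimes_A B[-d-n]$ (resp.\ $B\otimes_A\omega_A[-d-n]$) using the collapse $\RHom_H(\kk,H)\cong{_{\chi}\!\kk}[-n]$, and then invoke the criterion of Proposition \ref{VdB-dual-module-prop}. The paper's own proof is exactly this, stated more tersely; your extra care in checking that $A_{\mu_A}\otimes_A B$ is free of rank one as a one-sided $B$-module is precisely the verification the paper delegates to Proposition \ref{VdB-dual-module-prop}.
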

\begin{proof}
	Let $\omega$ be a VdB dual module of $A$. Hence $\RHom_{A^e}(A, A^e) \cong \omega [-d]$.
	By Proposition \ref{key-prop}, in $\D({\M_B})$,
	\begin{align*}
		\RHom_{B^e}(B, B^e) & \cong \RHom_{A^e}(A, A^e) \otimes_A B \otimes \RHom_{H}(\kk, H) \\
		& \cong \omega [-d] \otimes_A B \otimes \RHom_{H}(\kk, H) \\
		& \cong \omega \otimes_A B [-d-n].
	\end{align*}
	Similarly, we can prove that in $\D({_B\!\M})$,
	\[\RHom_{B^e}(B, B^e) \cong B \otimes_A \omega [-d-n].\]
	Then the conclusion follows from Proposition \ref{VdB-dual-module-prop}.
\end{proof}

In the aforementioned theorem, we did not answer the following question.

\begin{ques}
	What is the precise form of the Nakayama automorphism of $B$ in Theorem \ref{sCY-for-ff-H-ext} (2) ?
\end{ques}

Note that the Nakayama automorphism of any commutative skew Calabi--Yau algebra is the identity map, meaning such algebras are always Calabi--Yau.
Hence we have the following result.

\begin{cor}
	Let $H$ be a $n$-dim skew Calabi--Yau Hopf algebra with a bijective antipode, and $A \subseteq B$ be a faithfully flat $H$-Galois extension of commutative algebras.
	If $A$ is a Calabi--Yau algebra, then so is $B$.
\end{cor}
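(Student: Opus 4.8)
The plan is to deduce this corollary directly from Theorem~\ref{sCY-for-ff-H-ext}(2) together with the observation, recorded just above the statement, that a commutative skew Calabi--Yau algebra is automatically Calabi--Yau. First I would invoke Theorem~\ref{sCY-for-ff-H-ext}(2): since $H$ is $n$-dimensional skew Calabi--Yau and $A \subseteq B$ is a faithfully flat $H$-Galois extension with $A$ being Calabi--Yau (hence in particular $d$-dimensional skew Calabi--Yau for some $d$), the theorem guarantees that $B$ is $(d+n)$-dimensional skew Calabi--Yau. This immediately gives the skew Calabi--Yau property of $B$; the only remaining task is to upgrade ``skew Calabi--Yau'' to ``Calabi--Yau'' for $B$.

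The key step is then to use commutativity of $B$. By hypothesis the extension is one of commutative algebras, so $B$ is commutative. Any Nakayama automorphism $\mu_B \in \Aut(B)$ enters the definition through the twisted bimodule $B_{\mu_B}$, where the right action is $b \cdot x \cdot b' = b x \mu_B(b')$. I would argue that for a commutative algebra the Nakayama automorphism must be trivial up to inner automorphism, and since inner automorphisms of a commutative algebra are themselves trivial, $\mu_B = \id_B$. Concretely, the remark preceding the corollary asserts precisely that the Nakayama automorphism of any commutative skew Calabi--Yau algebra is the identity, so such an algebra is Calabi--Yau; applying this to the skew Calabi--Yau algebra $B$ yields that $B$ is Calabi--Yau of dimension $d+n$.

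The main (and essentially only) obstacle is justifying that a commutative skew Calabi--Yau algebra is Calabi--Yau, i.e.\ that its Nakayama automorphism is the identity. This is the content of the remark and follows from the fact that in the defining isomorphism $\RHom_{B^e}(B, B^e) \cong B_{\mu_B}[-(d+n)]$ the bimodule $B_{\mu_B}$ is symmetric when $B$ is commutative: the left and right $B$-actions agree, forcing $b\,x = x\,\mu_B(b)$ for all $b,x \in B$, and taking $x = 1$ gives $\mu_B = \id_B$. Since I may assume the remark as stated, the proof reduces to the two sentences: apply Theorem~\ref{sCY-for-ff-H-ext}(2) to conclude $B$ is skew Calabi--Yau, then apply the commutativity remark to conclude $B$ is Calabi--Yau.
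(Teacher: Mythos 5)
Your proposal is correct and follows exactly the paper's (implicit) proof: the paper derives this corollary by applying Theorem \ref{sCY-for-ff-H-ext}(2) and then invoking the remark immediately preceding the statement, namely that the Nakayama automorphism of a commutative skew Calabi--Yau algebra is the identity, so such algebras are Calabi--Yau. Your additional sketch of why that remark holds (the left and right $B$-actions on $\Ext^{d+n}_{B^e}(B,B^e)$ coincide when $B$ is commutative, forcing $\mu_B=\id_B$) is a reasonable filling-in of a fact the paper itself states without proof.
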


The geometric version of this corollary states that any principal fiber bundle over a Calabi--Yau variety with a smooth affine algebraic group is also Calabi--Yau.
However, the noncommutative Hopf Galois extension does not preserve the Calabi--Yau property, even when the coinvariant subalgebra is commutative. See the following example.

\begin{ex}
	Let $\mathfrak{g} = \kk x \oplus \kk y$ be a Lie algebra with $[y,x] = x$, $B$ be the enveloping algebra $\mathcal{U}(\mathfrak{g})$ of $\mathfrak{g}$.
	The quotient Hopf algebra $B/(x)$ is denoted by $H$.
	Then $B$ is a right $H$-Galois extension of coinvariant subalgebra $A = \kk[x]$.
	The Nakayama automorphism $\mu$ of $B$ is given by $\mu(x) = x, \mu(y) = y+1$ (see \cite{Yek2000} or \cite{BZ2008}), which is not a inner automorphism.
	Although both $H$ and $A$ are Calabi--Yau algebras, $B$ is not a Calabi--Yau algebra.
\end{ex}

A flat (but not necessarily faithfully flat) Hopf Galois extension may maintain the skew Calabi--Yau property, as demonstrated by the following example.

\begin{ex}\cite[Example 2.4]{Bel2000}
	Consider the Hopf algebra $H = \kk[x]$, where $x$ a primitive element. Define
	$$B = \mathcal{O}(SL(2)) = \kk[a, b, c, d]/(ad-bc-1),$$
	which is a $3$-dim Calabi--Yau algebra.
	$B$ is an $H$-comodule algebra structure via the coaction $\rho: B \to B \otimes H$ given on generators by
	$$\rho(a) = a \otimes 1 + c \otimes x, \quad \rho(b) = b \otimes 1 + d \otimes x, \quad \rho(c) = c \otimes 1, \quad \rho(d) = d \otimes 1.$$
	Note that $B^{co H} = A = \kk[c, d]$.
	Then $A \subseteq B$ is a flat not faithfully flat $H$-Galois extension, as detailed in \cite[Example 2.4]{Bel2000}.
\end{ex}

We do not know whether Theorem \ref{sCY-for-ff-H-ext} still holds in the flat case.

\begin{ques}
	Let $H$ be a skew Calabi--Yau Hopf algebra with a bijective antipode, and $A \subseteq B$ be a flat $H$-Galois extension.
	If $A$ is skew Calabi--Yau, must $B$ also be skew Calabi--Yau ?
\end{ques}

\subsection{Nakayama automorphisms for certain Hopf Galois extensions}

We now present the Nakayama automorphism for specific instances of Hopf Galois extensions.

\subsubsection{Nakayama automorphisms of $\alpha$-Frobenius extensions}

Kreimer and Takeuchi demonstrated that every Hopf Galois extension over a finite-dimensional Hopf algebra is always a Frobenius extension  \cite[Theorem 1.7]{KT1981}.
Recall the notion of $\alpha$-Frobenius extensions, introduced by Nakayama and Tsuzuku \cite{NT1960, NT1961}, which are also referred to as Frobenius extensions of the second kind.
These extensions represent a generalization of the classical Frobenius extensions.

\begin{defn}
	Consider an algebra extension $A \subseteq B$ and an algebra automorphism $\alpha: A \to A$. The extension $A \subseteq B$ is termed an {\it $\alpha$-Frobenius extension} if it satisfies
	\begin{enumerate}
		\item $B$ is a finitely generated projective right $A$-module, and
		\item ${B}$ is isomorphic to $_{\alpha}\!\Hom_A(B, A)$ as $A$-$B$-bimodules.
	\end{enumerate}
\end{defn}

This is equivalent to saying that $B$ is a projective left $A$-module, and ${B}$ is isomorphic to $\Hom_{A^{op}}(B, A)_{\alpha^{-1}}$ as a $B$-$A$-bimodule.
If $\alpha$ is the identity map on $A$, then $A \subseteq B$ is referred to as a {\it Frobenius extension}.

\begin{lem}\label{Frob-Goren-lem}
	Let $A$ be a subalgebra of an algebra $B$.
	Suppose that both $A$ and $B$ are homologically smooth. 
	\begin{enumerate}
		\item If $B$ is a pseudo-coherent right $A$-module, then in $\D(B \otimes A^{op})$,
		\[ \RHom_{A}(B, A) \Lt_{B} \RHom_{B^e}(B, B^e) \cong \RHom_{A^e}(A, A^e) \Lt_A B. \]
		\item If $B$ is a pseudo-coherent left $A$-module, then in $\D(A \otimes B^{op})$,
		\[ \RHom_{B^e}(B, B^e) \Lt_{B} \RHom_{A^{op}}(B, A) \cong B \Lt_A \RHom_{A^e}(A, A^e). \]
	\end{enumerate}
\end{lem}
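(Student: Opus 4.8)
The plan is to prove part (1) in detail and then to deduce part (2) by applying (1) to the opposite extension $A^{op}\subseteq B^{op}$. The guiding idea for (1) is to compute the single mixed object $\RHom_{A^e}(A,A\otimes B^{op})$ in two different ways: one computation, using Van den Bergh duality for $A$, yields the right-hand side $\RHom_{A^e}(A,A^e)\Lt_A B$, while the other, using Van den Bergh duality for $B$ together with the relative dualizing bimodule $\RHom_A(B,A)$, yields the left-hand side. Throughout I would keep every tensor derived, so that no flatness of $B$ over $A$ is needed — only the pseudo-coherence hypothesis.

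First I would establish the right-hand side. Since $A$ is homologically smooth, $A$ is perfect over $A^e$, so the base-change isomorphism $\RHom_{A^e}(A,X)\cong X\Lt_{A^e}\RHom_{A^e}(A,A^e)$ (the Hopf-free content of Proposition \ref{RHom-prop}(2), cf. the proof of Lemma \ref{VdB-dual-module-lem}) applies to $X=A\otimes B^{op}$ and gives
\[ \RHom_{A^e}(A,A\otimes B^{op})\;\cong\;(A\otimes B^{op})\Lt_{A^e}\RHom_{A^e}(A,A^e)\;\cong\;\RHom_{A^e}(A,A^e)\Lt_A B, \]
where the last step splits the derived tensor over $A^e=A\otimes A^{op}$ into its two one-sided factors, the left $A$-factor being free and therefore contributing no higher homology. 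This already matches the right-hand side of (1).

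The core step is to identify the same object with $\RHom_A(B,A)\Lt_B\RHom_{B^e}(B,B^e)$. The key observation is that the coefficient $A\otimes B^{op}$ is \emph{not} a $B^e$-module: it differs from $B^e=B\otimes B^{op}$ only in the left tensor factor, along the inclusion $A\hookrightarrow B$, and it is precisely the relative dualizing bimodule $\RHom_A(B,A)$ that mediates this change of coefficients. I would change rings along $A\to B$ and invoke Van den Bergh duality for $B$ (via $\RHom_{B^e}(B,Z)\cong Z\Lt_{B^e}\RHom_{B^e}(B,B^e)$) to introduce $\RHom_{B^e}(B,B^e)$, and then pass from $B^e$-coefficients back to $A\otimes B^{op}$-coefficients using the finiteness (projection-formula) isomorphism
\[ M\Lt_A\RHom_A(B,A)\;\xrightarrow{\ \sim\ }\;\RHom_A(B,M), \]
which holds because $B$ is pseudo-coherent as a right $A$-module: a bounded-above resolution of $B$ by finitely generated free right $A$-modules makes this a termwise isomorphism that totalizes correctly. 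This is the one place where the pseudo-coherence hypothesis is used.

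The hard part will be the bookkeeping in this core step, since one must track simultaneously the left/right module structures and whether each derived tensor is taken over $A$ or over $B$, so that $\RHom_A(B,A)$ is inserted on the correct side and with the correct variance — this is exactly why (1) has $\RHom_A(B,A)$ on the left together with $\Lt_B$, whereas the finiteness isomorphism above is naturally one-sided over $A$. A second technical point is justifying the interchange of $\RHom_{B^e}(B,-)$ with the derived tensor $\RHom_A(B,A)\Lt_B(-)$, for which one checks that $\RHom_A(B,A)$ is perfect as a right $B$-module (a consequence of homological smoothness of $A$ together with pseudo-coherence of $B$ over $A$) and that all complexes stay suitably bounded so that $\RHom$ and $\Lt$ commute. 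Once (1) is proven, part (2) follows by applying it to $A^{op}\subseteq B^{op}$: here $B$ is pseudo-coherent as a left $A$-module if and only if $B^{op}$ is pseudo-coherent as a right $A^{op}$-module, and $\RHom_{(A^{op})^e}(A^{op},(A^{op})^e)$, $\RHom_{(B^{op})^e}(B^{op},(B^{op})^e)$ are obtained from $\RHom_{A^e}(A,A^e)$, $\RHom_{B^e}(B,B^e)$ by interchanging the two sides, which converts the statement of (1) for the opposite extension into the statement of (2).
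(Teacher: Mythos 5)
Your proposal is correct and is essentially the paper's own proof: the paper's argument is precisely your ``compute the mixed object two ways'' plan written as a single linear chain, pivoting through $\RHom_{A\otimes B^{op}}(B, A\otimes B)\cong\RHom_{A^e}(A, A\otimes B)$ and using exactly your three ingredients --- perfectness of $A$ over $A^e$ on one side, and perfectness of $B$ over $B^e$, the pseudo-coherence isomorphism $\RHom_{A}(B,A)\otimes B\cong\RHom_{A}(B,A\otimes B)$, and hom-tensor adjunctions on the other. The only cosmetic differences are that the paper handles (2) by a mirrored analogous chain rather than your opposite-algebra reduction, and that your flagged technical worry (commuting $\RHom_{B^e}(B,-)$ with $\Lt_B$, via perfectness of $\RHom_A(B,A)$ over $B$) never arises, since the base-change isomorphism $\RHom_{B^e}(B,Z)\cong Z\Lt_{B^e}\RHom_{B^e}(B,B^e)$ that you already cite does all the work.
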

\begin{proof}
	(1) In $\D(B\otimes A^{op})$, we have the following isomorphisms
	\begin{align*}
		& \RHom_{A}(B, A) \Lt_{B} \RHom_{B^e}(B, B^e) & \\
		\cong & \RHom_{B^e}(B, \RHom_{A}(B, A) \otimes B) & \text{since } B \text{ is homologically smooth} \\
		\cong & \RHom_{B^e}(B, \RHom_{A} (B, A \otimes B)) & \text{since } {B} \text{ is a pseudo-coherent $A$-module} \\
		\cong & \RHom_{A\otimes B^{op}}(B, A \otimes B) & \\
		\cong & \RHom_{A^e}(A, \RHom_{B^{op}}(B, A \otimes B)) & \\
		\cong & \RHom_{A^e}(A, A \otimes B) & \\
		\cong & \RHom_{A^e}(A, A^e) \Lt_A B & \text{since }A \text{ is homologically smooth.}
	\end{align*}
	(2) The proof for the second part is analogous to the first.
\end{proof}

We will show that any homologically smooth $\alpha$-Frobenius extension $B$ of a skew Calabi--Yau algebra $A$ inherently possesses the skew Calabi--Yau property.
Furthermore, we will establish that the restriction of a Nakayama automorphism of $B$ to $A$ constitutes a Nakayama automorphism of $A$.

\begin{prop}\label{Frob->sCY}
	Let $A \subseteq B$ be an $\alpha$-Frobenius extension.
	Suppose that $B$ is homologically smooth.
	If $A$ is a skew Calabi--Yau algebra of dimension $d$, then $B$ is also a skew Calabi--Yau algebra of dimension $d$.
	Moreover, there exists a Nakayama automorphism $\mu$ of $B$ such that $(\mu \circ \alpha)|_A$ is a Nakayama automorphism of $A$.
\end{prop}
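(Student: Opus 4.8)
The plan is to feed the $\alpha$-Frobenius structure into Lemma \ref{Frob-Goren-lem} and then read off the conclusion from the skew Calabi--Yau form of $A$. First I would record that an $\alpha$-Frobenius extension is finitely generated projective as a right $A$-module and projective as a left $A$-module, so in particular $B$ is pseudo-coherent on both sides and both clauses of Lemma \ref{Frob-Goren-lem} are available. Since $B$ is projective over $A$ we have $\RHom_A(B,A) = \Hom_A(B,A)$ concentrated in degree $0$, and the defining isomorphism $B \cong {}_{\alpha}\!\Hom_A(B,A)$ rewrites as $\Hom_A(B,A) \cong {}_{\alpha^{-1}}B$, an invertible $A$-$B$-bimodule whose inverse is supplied by the equivalent description ${B}\cong\Hom_{A^{op}}(B,A)_{\alpha^{-1}}$. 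Substituting the skew Calabi--Yau datum $\RHom_{A^e}(A,A^e)\cong A_{\mu_A}[-d]$ into Lemma \ref{Frob-Goren-lem}(1) then yields, in $\D(B\otimes A^{op})$,
\[ {}_{\alpha^{-1}}B \Lt_{B} \RHom_{B^e}(B,B^e) \cong A_{\mu_A}\otimes_A B \, [-d]. \]

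Next I would extract the homological content. Because ${}_{\alpha^{-1}}B$ is free of rank one as a right $B$-module, the functor ${}_{\alpha^{-1}}B\Lt_B(-)$ is exact and faithful and merely reindexes cohomology; the right-hand side is concentrated in degree $d$, so $\Ext^i_{B^e}(B,B^e)=0$ for all $i\neq d$. Writing $\omega_B := \Ext^d_{B^e}(B,B^e)$ and passing to $\mathrm{H}^d$ gives the bimodule isomorphism ${}_{\alpha^{-1}}B\otimes_B\omega_B \cong A_{\mu_A}\otimes_A B$. Regarding both sides as right $B$-modules, the left tensor factors are free of rank one and the isomorphism collapses to $\omega_B\cong B$; the symmetric argument via Lemma \ref{Frob-Goren-lem}(2), using that $B$ is projective as a left $A$-module, shows $\omega_B\cong B$ as a left $B$-module as well. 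Thus $\omega_B$ is finitely generated projective on both sides and $\RHom_{B^e}(B,B^e)\cong\omega_B[-d]$, so Proposition \ref{VdB-dual-module-prop} applies and $B$ is skew Calabi--Yau of dimension $d$, with a Nakayama automorphism $\mu$ characterised by $\omega_B\cong B_\mu$. This settles the first assertion.

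It remains to relate $\mu$ to $\mu_A$. I would substitute $\omega_B\cong B_\mu$ into the degree-$d$ isomorphism, reducing the problem to comparing the two bimodules ${}_{\alpha^{-1}}B_\mu$ and $A_{\mu_A}\otimes_A B$. Using the standard twist identity $A_{\mu_A}\otimes_A B\cong{}_{\mu_A^{-1}}B$, any bimodule isomorphism $f$ between them is right $B$-linear, hence determined by the unit $u:=f(1)\in B$; imposing left $A$-linearity then forces a relation of the form $u\,\mu(\alpha(a)) = \mu_A(a)\,u$ for all $a\in A$. Since a Nakayama automorphism of $B$ is determined only up to an inner automorphism, I may replace $\mu$ by $\mathrm{ad}_u\circ\mu$ to arrange $u=1$, whereupon $\mu(\alpha(a))=\mu_A(a)$ for all $a\in A$. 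In particular $\mu\circ\alpha$ carries $A$ into $A$ and $(\mu\circ\alpha)|_A=\mu_A$ is a Nakayama automorphism of $A$, as required.

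The hard part will be none of the homological algebra but rather the bookkeeping of left/right twists in the final paragraph: the conventions for $M_\mu$ versus ${}_{\sigma}M$, the direction of the identification $\Hom_A(B,A)\cong{}_{\alpha^{-1}}B$, the twist identity $A_{\mu_A}\otimes_A B\cong{}_{\mu_A^{-1}}B$, and the side on which the target category $\D(B\otimes A^{op})$ places each module all feed into the precise order of composition in $(\mu\circ\alpha)|_A$. The inner-automorphism ambiguity in both $\mu$ and $\mu_A$ supplies exactly the slack needed to absorb the unit $u$ and to make the identification $(\mu\circ\alpha)|_A=\mu_A$ convention-independent.
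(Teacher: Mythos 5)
Your proof follows the paper's own route: both arguments feed the $\alpha$-Frobenius data into Lemma \ref{Frob-Goren-lem}, deduce that $\Ext^i_{B^e}(B,B^e)$ is concentrated in degree $d$ with $\omega_B := \Ext^d_{B^e}(B,B^e)$ free of rank one on each side, apply Proposition \ref{VdB-dual-module-prop}, and finally adjust a Nakayama automorphism of $B$ by an inner automorphism. Everything up to and including the conclusion that $B$ is skew Calabi--Yau of dimension $d$ is correct.

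The last paragraph, however, has a genuine error, and it sits exactly at the spot you flagged as delicate. From the comparison you set up, namely an isomorphism $f: {}_{\alpha^{-1}}(B_\mu) \to {}_{\mu_A^{-1}}B$ of $A$-$B$-bimodules, right $B$-linearity reads $f(z\mu(b)) = f(z)b$, hence $f(w) = u\,\mu^{-1}(w)$ with $u = f(1)$, and left $A$-linearity then yields
\[ u\,\mu^{-1}\big(\alpha^{-1}(a)\big) = \mu_A^{-1}(a)\,u, \qquad a \in A, \]
equivalently $\mu_A(c) = \alpha\big(\mu(u^{-1}cu)\big)$ for all $c \in A$. So what the side of Lemma \ref{Frob-Goren-lem} you chose actually produces is $(\alpha\circ\mu')|_A = \mu_A$ for the Nakayama automorphism $\mu' = \mu\circ\mathrm{ad}_{u^{-1}}$ (an inner twist of $\mu$), with $\alpha$ on the \emph{outside} --- not your claimed relation $u\,\mu(\alpha(a)) = \mu_A(a)\,u$. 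Contrary to your closing remark, the inner-automorphism ambiguity cannot repair this: it lets you normalize $u=1$, but it cannot swap the order of composition. From $\mu'|_A = \alpha^{-1}\circ\mu_A$ you only get $(\mu'\circ\alpha)|_A = \alpha^{-1}\mu_A\alpha$, and to conclude that this conjugate is again a Nakayama automorphism of $A$ you would need an additional fact (stability of the Nakayama class under conjugation by algebra automorphisms, up to inner) which you neither state nor prove. The clean repair --- and what the paper does --- is to run the final comparison on the $B$-$A$-bimodule side instead: Lemma \ref{Frob-Goren-lem}(2) together with $\Hom_{A^{op}}(B,A)_{\alpha^{-1}} \cong B$ gives $\Ext^d_{B^e}(B,B^e) \cong B_{\mu_A\alpha^{-1}}$ as $B$-$A$-bimodules; comparing this with $B_{\mu_B}$ restricted to a $B$-$A$-bimodule, an isomorphism $g$ is determined by left $B$-linearity, $g(z) = zu$, and right $A$-linearity now gives $\mu_B(\alpha(a))\,u = u\,\mu_A(a)$, so that $\mu := u^{-1}\mu_B(-)\,u$ satisfies $(\mu\circ\alpha)|_A = \mu_A$, which is exactly the assertion.
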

\begin{proof}
	Since $A \subseteq B$ is an $\alpha$-Frobenius extension for some $\alpha \in \Aut(A)$, we have $_{\alpha}\!\Hom_A(B, A) \cong {B}$ as $A$-$B$-bimodules and $\Hom_{A^{op}}(B, A)_{\alpha^{-1}} \cong B$ as $B$-$A$-bimodules.
	By Lemma \ref{Frob-Goren-lem}, we obtain the following isomorphisms of $A$-$B$-bimodules
	\[ \Ext^i_{B^e}(B, B^e) \cong {_{\alpha}\!\Hom_A(B, A)} \otimes_B \Ext^i_{B^e}(B, B^e) \cong {_{\alpha}\!\Ext^i_{A^e}(A, A^e)} \otimes_A B. \]
	 Given that $A$ is a skew Calabi--Yau algebra of dimension $d$ with a Nakayama automorphism $\mu_A$, we find $\Ext^i_{B^e}(B, B^e) \cong \begin{cases}
	 	0, & i \neq d \\
	 	{_{\alpha \mu_A^{-1}} B}, & i = d \\
	 \end{cases}$ as $A$-$B$-bimodules.
	Similarly, $\Ext^i_{B^e}(B, B^e) = \begin{cases}
		0, & i \neq d \\
		{B_{\mu_A\alpha^{-1}} }, & i = d \\
	\end{cases}$ as $B$-$A$-bimodules.
	This implies that $B$ is a skew Calabi--Yau algebra by Proposition \ref{VdB-dual-module-prop}. 
	Let $\mu_B$ be a Nakayama automorphism of $B$.
	The $B$-$A$-bimodule $B_{\mu_B\alpha}$ is isomorphic to $B_{\mu_A}$.
	Thus, there exists an invertible element $u \in B$ for which $u^{-1} \mu_B(\alpha(a))u = \mu_A(a)$ holds for all $a \in A$.
	Defining $\mu$ by $b \mapsto u^{-1} \mu_B(b) u$, we confirm that $\mu$ is also a Nakayama automorphism of $B$, and $(\mu \circ \alpha) |_A$ is a Nakayama automorphism of $A$.
\end{proof}

Applying the previous proposition to a Hopf Galois extension over a finite dimensional Hopf algebra, we obtain the following results.

\begin{cor}
	Let $H$ be a finite dimensional semisimple Hopf algebra, and $A \subseteq B$ be an $H$-Galois extension.
	If $A$ is a skew Calabi--Yau algebra of dimension $d$, then $B$ is also a skew Calabi--Yau algebra of dimension $d$, equipped with a Nakayama automorphism $\mu$ such that $\mu|_A$ is a Nakayama automorphism of $A$.
\end{cor}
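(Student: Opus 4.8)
The plan is to derive this corollary by specializing Proposition \ref{Frob->sCY} to the case of a Hopf Galois extension over a finite dimensional Hopf algebra. First I would invoke the Kreimer--Takeuchi theorem \cite[Theorem 1.7]{KT1981}, cited in the text above, which guarantees that every $H$-Galois extension $A \subseteq B$ over a finite dimensional Hopf algebra $H$ is a $\beta$-Frobenius extension for some algebra automorphism $\beta$ of $A$. This immediately places us in the hypotheses of Proposition \ref{Frob->sCY}, modulo verifying that $B$ is homologically smooth. For the homological smoothness of $B$, I would observe that $H$ is assumed semisimple, hence finite dimensional and skew Calabi--Yau of dimension $0$ by Example (1); in particular $H$ is homologically smooth. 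Since $A$ is skew Calabi--Yau it is homologically smooth as well, and a finite dimensional Hopf algebra makes $B$ a finitely generated (hence flat, indeed faithfully flat) module over $A$, so Proposition \ref{homological-smooth-prop} applies to conclude that $B$ is homologically smooth.

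With both ingredients in place, the conclusion is essentially a direct application. Concretely, I would state that $A \subseteq B$ is a $\beta$-Frobenius extension with $B$ homologically smooth, and $A$ is skew Calabi--Yau of dimension $d$, so Proposition \ref{Frob->sCY} yields that $B$ is skew Calabi--Yau of the same dimension $d$, together with a Nakayama automorphism $\mu$ of $B$ for which $(\mu \circ \beta)|_A$ is a Nakayama automorphism of $A$. To obtain the sharper statement in the corollary, namely that $\mu|_A$ itself (rather than $(\mu \circ \beta)|_A$) is a Nakayama automorphism, I would use the semisimplicity of $H$. The key point is that for a semisimple (hence separable, and in characteristic considerations cosemisimple) Hopf algebra the Frobenius automorphism $\beta$ from Kreimer--Takeuchi can be taken to be the identity, i.e. the extension is an honest Frobenius extension. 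This reduces $(\mu \circ \beta)|_A = \mu|_A$ to a Nakayama automorphism of $A$, matching the statement.

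The main obstacle I anticipate is precisely this last reduction: showing that the Kreimer--Takeuchi automorphism $\beta$ is trivial (or can be absorbed) when $H$ is semisimple. The automorphism $\beta$ in the Kreimer--Takeuchi theorem is governed by the modular function of $H$, and for a semisimple Hopf algebra the distinguished grouplike element and its dual counterpart are trivial, which forces $\beta$ to be inner or the identity on $A$. I would make this precise by tracking how $\beta$ is determined by the integral of $H$ and its modular data, invoking the fact that semisimple Hopf algebras over a field are unimodular with trivial distinguished grouplike (so the relevant modular character is $\varepsilon$). If absorbing $\beta$ into an inner automorphism is more convenient than showing $\beta = \mathrm{id}$, then since Nakayama automorphisms are only well defined up to inner automorphisms, it suffices to show $\beta$ differs from the identity by an inner automorphism of $A$, after which one replaces $\mu$ by a suitable composition with an inner automorphism of $B$ exactly as in the final paragraph of the proof of Proposition \ref{Frob->sCY}.
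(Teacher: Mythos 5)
Your proposal is correct and takes essentially the same route as the paper: the paper's proof likewise invokes the Kreimer--Takeuchi theorem to see that the extension is Frobenius, uses Proposition \ref{homological-smooth-prop} (via semisimplicity, hence homological smoothness, of $H$) to conclude $B$ is homologically smooth, and then applies Proposition \ref{Frob->sCY}. The only divergence is that the paper reads \cite[Theorem 1.7]{KT1981} as giving an untwisted Frobenius extension (i.e.\ $\alpha=\mathrm{id}$), so that $\mu|_A$ being a Nakayama automorphism of $A$ is immediate; your closing step trivializing the twist $\beta$ through unimodularity of semisimple Hopf algebras is therefore an extra precaution rather than a required argument, though it is sound and would be the right thing to check if one only grants the $\beta$-Frobenius form of Kreimer--Takeuchi.
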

\begin{proof}
	Given that the $H$-Galois extension $A \subseteq B$ is a Frobenius extension and $B$ is homologically smooth by Proposition \ref{homological-smooth-prop}, the result is a direct consequence of Proposition \ref{Frob->sCY}.
\end{proof}

Additionally, we establish the following connection between an $\alpha$-Frobenius extension and the possession of a VdB dual module.

\begin{prop}
	Let $A \subseteq B$ be an $\alpha$-Frobenius extension.
	Suppose that $A$ is homologically smooth, and $B$ is a faithfully flat $A$-module on both sides.
	If $B$ has a VdB dual module of dimension $d$, then $A$ also has one.
\end{prop}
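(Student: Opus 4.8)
The plan is to leverage Lemma~\ref{Frob-Goren-lem}, which relates the two VdB complexes, and then descend along the faithfully flat extension $A\hookrightarrow B$. First I would record the standing hypotheses in usable form: having a VdB dual module means $B$ is homologically smooth and $\RHom_{B^e}(B,B^e)\cong\omega_B[-d]$ for an invertible $B$-$B$-bimodule $\omega_B$, while $A$ is homologically smooth by assumption. From the definition of an $\alpha$-Frobenius extension, $B$ is finitely generated projective as a right $A$-module, and dualizing the isomorphism ${}_{\alpha}\Hom_A(B,A)\cong B$ shows $B$ is finitely generated projective as a left $A$-module as well; in particular $B$ is pseudo-coherent on both sides, so both parts of Lemma~\ref{Frob-Goren-lem} are available.

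Next I would compute the left-hand sides of Lemma~\ref{Frob-Goren-lem}. Since $B$ is finitely generated projective as a right $A$-module, $\RHom_A(B,A)=\Hom_A(B,A)$ is concentrated in degree $0$, and the $\alpha$-Frobenius identity gives $\Hom_A(B,A)\cong{}_{\alpha^{-1}}B$ as $A$-$B$-bimodules, which is free of rank one as a right $B$-module. Feeding this into Lemma~\ref{Frob-Goren-lem}(1) yields $\RHom_{A^e}(A,A^e)\Lt_A B\cong{}_{\alpha^{-1}}\omega_B[-d]$, a complex concentrated in the single degree $d$; write $N:={}_{\alpha^{-1}}\omega_B$, which is finitely generated projective as a left $A$-module and as a right $B$-module because $\omega_B$ is an invertible $B$-$B$-bimodule and $B$ is finitely generated projective on both sides over $A$. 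Because $B$ is flat over $A$, cohomology commutes with $-\Lt_A B$, so $\Ext^i_{A^e}(A,A^e)\otimes_A B=0$ for $i\neq d$ and $\Ext^d_{A^e}(A,A^e)\otimes_A B\cong N$; faithful flatness of $B$ then forces $\Ext^i_{A^e}(A,A^e)=0$ for $i\neq d$, giving $\RHom_{A^e}(A,A^e)\cong\omega_A[-d]$ with $\omega_A:=\Ext^d_{A^e}(A,A^e)$ and $\omega_A\otimes_A B\cong N$.

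It remains to prove $\omega_A$ is invertible, for which it suffices by Proposition~\ref{VdB-dual-module-prop} to show it is finitely generated projective on both sides. Here I would invoke faithfully flat descent: $N\cong\omega_A\otimes_A B$ is finitely presented and flat as a right $B$-module, so descent of finite presentation and of flatness along $A\hookrightarrow B$ (using that $B$ is faithfully flat as a left $A$-module) shows $\omega_A$ is finitely presented and flat, hence finitely generated projective, as a right $A$-module; running Lemma~\ref{Frob-Goren-lem}(2) symmetrically and descending along $B\otimes_A-$ (using that $B$ is faithfully flat as a right $A$-module) gives $\omega_A$ finitely generated projective as a left $A$-module. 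Proposition~\ref{VdB-dual-module-prop} then produces the required invertible bimodule, so $A$ has a VdB dual module of dimension $d$. The main obstacle is this final descent: one must verify that finite generation/presentation, flatness, and hence projectivity of the one-sided module $\omega_A$ genuinely descend from $B$ to $A$, being careful to use left-sided faithful flatness for the right-module statement coming from Lemma~\ref{Frob-Goren-lem}(1) and right-sided faithful flatness for the left-module statement coming from part~(2); the concentration-in-degree-$d$ step, by contrast, is the routine consequence of flatness plus faithfulness.
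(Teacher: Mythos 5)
Your argument agrees with the paper's proof up through the concentration step: invoking Lemma~\ref{Frob-Goren-lem} together with the Frobenius identification $\Hom_A(B,A)\cong{}_{\alpha^{-1}}B$, and then using faithful flatness to conclude $\Ext^i_{A^e}(A,A^e)=0$ for $i\neq d$ and $\Ext^d_{A^e}(A,A^e)\otimes_A B\cong{}_{\alpha^{-1}}\omega_B$, is exactly what the paper does. The gap is in the final step, where you obtain projectivity of $\omega_A$ over $A$ by ``faithfully flat descent'' of finite presentation and flatness along $A\hookrightarrow B$. Descent of finite generation and of finite presentation does hold here, but descent of flatness (equivalently, of projectivity for finitely presented modules) is \emph{false} for noncommutative faithfully flat ring extensions, so it cannot be quoted as a standard principle. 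A counterexample: let $A=\kk[x]/(x^2)\subseteq B=M_2(\kk)$ with $x$ mapped to the matrix unit $e_{12}$. Then $B$ is free of rank two as a left and as a right $A$-module, hence faithfully flat on both sides; since $B$ is semisimple, $M\otimes_A B$ is projective over $B$ for \emph{every} right $A$-module $M$, yet $M=\kk=A/xA$ is not flat over $A$. So the principle your proof leans on fails in precisely the way that matters, and nothing in your write-up brings in the hypotheses (homological smoothness of $A$, or the Frobenius structure) that would exclude this failure.

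The statement you want is nevertheless true, but it must be extracted from the standing hypotheses rather than from a descent axiom. The paper's route avoids descent entirely: since $B$ is faithfully flat and finitely generated projective as a left $A$-module, $A$ is a direct summand of $B$ as a left $A$-module (a pure monomorphism with finitely presented cokernel splits; the paper cites \cite[Proposition 2.11.29]{Row1988}). Hence $\omega_A=\omega_A\otimes_A A$ is a left $A$-module direct summand of $\omega_A\otimes_A B\cong{}_{\alpha^{-1}}\omega_B$, and the latter is already finitely generated projective as a left $A$-module, because $\omega_B$ is so over $B$ and $B$ is so over $A$; this yields left projectivity of $\omega_A$ from Lemma~\ref{Frob-Goren-lem}(1), and part (2) gives right projectivity symmetrically. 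Note the sides: from the isomorphism of $A$-$B$-bimodules the paper reads off the one-sided $A$-module structure that the splitting of $B$ respects, whereas you descend along the $B$-module side, which is exactly where an (invalid) general descent is forced on you. Alternatively, your step can be repaired using homological smoothness of $A$: then $A$ has finite global dimension, and a downward induction on $i$, using the embedding $N\hookrightarrow B\otimes_A N$ and the isomorphism $\Tor_i^A(\omega_A,X)\cong\Tor_i^B(\omega_A\otimes_A B,X)$ for left $B$-modules $X$, shows $\Tor_i^A(\omega_A,N)=0$ for all $i\geq1$ and all $N$, whence $\omega_A$ is flat and, being finitely presented, projective. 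Either way, this verification is the actual mathematical content of the step you deferred, and without it the proposal is incomplete.
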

\begin{proof}
	Given that $B$ is faithfully flat as a right $A$-module, Lemma \ref{Frob-Goren-lem} yields
	\[ \Ext^i_{B^e}(B, B^e) \cong {}_{\alpha} \! \Hom_{A}(B, A) \otimes_B \Ext^i_{B^e}(B, B^e) \cong {}_{\alpha} \! \Ext^i_{A^e}(A, A^e) \otimes_A B \]
	as $A$-$B$-bimodules.
	This implies $\Ext^i_{A^e}(A, A^e) = 0$ for all $i \neq d$ and $\Ext^d_{A^e}(A, A^e) \otimes_A B \cong {_{\alpha^{-1}} \! \Ext^d_{B^e}(B, B^e)}$.
	Write $\omega = \Ext^d_{A^e}(A, A^e)$.
	Since $A$ is a direct summand of $B$ as a left $A$-module by \cite[Proposition 2.11.29]{Row1988}, $\omega$ is a direct summand of the projective left $A$-module ${_{\alpha^{-1}} \! \Ext^d_{B^e}(B, B^e)}$.
	Thus, $\omega$ is a finitely generated projective left $A$-module.
	Analogously, $\omega$ is a finitely generated projective right $A$-module.
	According to Proposition \ref{VdB-dual-module-prop}, $A$ has a VdB dual module $\omega$ of dimension $d$.
\end{proof}

We now proceed to demonstrate that an algebra extension between skew Calabi--Yau algebras is nearly an $\alpha$-Frobenius extension.

\begin{prop}\label{Frob<-sCY}
	Let $A \subseteq B$ be an algebra extension.
	Suppose that $A$ and $B$ are both skew Calabi--Yau algebras of dimension $d$.
	If $B$ is a pseudo-coherent right $A$-module, then $B$ is a projective right $A$-module, and there exists an algebra morphism $\alpha: A \to B$ such that $_{\alpha}\!\Hom_A(B, A) \cong B$ as $A$-$B$-bimodules.
\end{prop}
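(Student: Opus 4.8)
The plan is to substitute the skew Calabi--Yau data of $A$ and $B$ into Lemma~\ref{Frob-Goren-lem}(1) and then upgrade the resulting derived isomorphism to a statement about honest modules. Both $A$ and $B$ are skew Calabi--Yau, hence homologically smooth, and $B$ is pseudo-coherent as a right $A$-module, so Lemma~\ref{Frob-Goren-lem}(1) applies and yields
\[ \RHom_{A}(B, A) \Lt_{B} \RHom_{B^e}(B, B^e) \cong \RHom_{A^e}(A, A^e) \Lt_A B. \]
I would substitute $\RHom_{B^e}(B, B^e) \cong B_{\mu_B}[-d]$ and $\RHom_{A^e}(A, A^e) \cong A_{\mu_A}[-d]$, where $\mu_A, \mu_B$ are Nakayama automorphisms. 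Since $A_{\mu_A}$ is free of rank one as a right $A$-module, the right-hand side is $A_{\mu_A} \otimes_A B [-d]$, concentrated in cohomological degree $d$; since $B_{\mu_B}$ is an invertible $B$-bimodule, the left-hand side is $\big(\RHom_{A}(B,A)\big)_{\mu_B}[-d]$, which is concentrated in degree $d$ if and only if $\RHom_{A}(B,A)$ is concentrated in degree $0$. Hence $\Ext^i_A(B,A) = 0$ for $i \neq 0$, and there is an isomorphism of $A$-$B$-bimodules $\Hom_A(B,A)_{\mu_B} \cong A_{\mu_A} \otimes_A B$.

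Next I would establish projectivity, which I expect to be the crux. Homological smoothness of $A$ forces $\gld A < +\infty$ (for instance via $\Ext^n_A(M,N) \cong \HH^n(A, \Hom(M,N)) = \Ext^n_{A^e}(A, \Hom(M,N))$, which vanishes for $n > \pd_{A^e} A$), so $\pd_A B < +\infty$. Truncating a bounded-above resolution of $B$ by finitely generated free right $A$-modules at the top, the highest syzygy is projective and splits off as a direct summand of a finitely generated free module, hence is itself finitely generated; propagating finite generation downward makes $B$ a perfect complex of right $A$-modules even without any noetherian hypothesis. Consequently $\RHom_A(B,A)$ is perfect over $A^{op}$, and being concentrated in degree $0$ it is a finitely generated projective left $A$-module $Q := \Hom_A(B,A)$. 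Biduality for perfect complexes then gives $B \cong \RHom_{A^{op}}(Q, A) = \Hom_{A^{op}}(Q, A)$, a finitely generated projective right $A$-module.

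Finally I would identify the twist. A direct check gives $A_{\mu_A} \otimes_A B \cong {}_{\mu_A^{-1}}\!B$ as $A$-$B$-bimodules, with left $A$-action $a \cdot b = \mu_A^{-1}(a) b$ and standard right $B$-action, so that $\Hom_A(B,A) \cong {}_{\mu_A^{-1}}\!B_{\mu_B^{-1}}$. Applying the right $B$-linear isomorphism $b \mapsto \mu_B(b)$ to cancel the $\mu_B^{-1}$-twist identifies $\Hom_A(B,A)$, as a right $B$-module, with $B$, the left $A$-action being left multiplication through the algebra morphism $\alpha := \mu_B \circ (\mu_A^{-1}|_A) : A \to B$. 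This realizes the asserted isomorphism ${}_{\alpha}\!\Hom_A(B,A) \cong B$ of $A$-$B$-bimodules.

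The main obstacle is the projectivity step: the isomorphism coming from Lemma~\ref{Frob-Goren-lem}(1) only records the vanishing of the higher $\Ext_A(B,A)$ and the shape of $\Hom_A(B,A)$, so passing to genuine finitely generated projectivity requires the separate inputs that $A$ has finite global dimension, that pseudo-coherence together with finite projective dimension yields perfectness without noetherianity, and that biduality holds for perfect complexes. By comparison, the bookkeeping of the left and right twists by $\mu_A$ and $\mu_B$ in the last paragraph is routine.
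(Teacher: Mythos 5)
Your first and third steps coincide with the paper's own proof: the paper likewise feeds the skew Calabi--Yau data into Lemma \ref{Frob-Goren-lem}(1) to get the vanishing of $\Ext^{i}_{A}(B,A)$ for $i>0$ together with the twisted bimodule isomorphism, and your untwisting bookkeeping is correct under the paper's conventions (your $\alpha=\mu_B\circ\mu_A^{-1}|_A$ versus the paper's $\mu_B^{-1}\mu_A$ is a composition-order/notation discrepancy that does not affect the existential statement, since Nakayama automorphisms are in any case only determined up to inner automorphisms).

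The genuine gap is in your projectivity step, exactly where you predicted the crux would be. The reduction of pseudo-coherence plus finite projective dimension to perfectness is fine: once $n:=\pd_A B$ is reached, the $n$-th syzygy of a resolution by finitely generated free modules is projective and is finitely generated (being an image of a term of the resolution). But the next inference, that $\RHom_A(B,A)$ being perfect over $A^{op}$ and concentrated in degree $0$ makes $Q:=\Hom_A(B,A)$ a finitely generated \emph{projective} module, is false as stated: a perfect complex whose cohomology sits in degree $0$ need not have projective $H^0$. For instance, over $A=\kk[x]$ the perfect complex $A\xrightarrow{\ x\ }A$ has cohomology $\kk[x]/(x)$ concentrated in degree $0$, which is not projective. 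What is true, and what the paper actually proves, uses the Ext-vanishing in an essential way: if $n=\pd_A B>0$, choose $N$ with $\Ext^n_A(B,N)\neq 0$ and a free cover $F\twoheadrightarrow N$; the long exact sequence (using $\Ext^{n+1}_A(B,-)=0$) gives $\Ext^n_A(B,F)\neq 0$, while pseudo-coherence identifies $\Ext^n_A(B,F)\cong \Ext^n_A(B,A)\otimes_A F$, because at the top degree $\Ext^n_A(B,-)$ is a cokernel of maps $\Hom_A(P_{n-1},-)\to\Hom_A(P_n,-)$ with $P_i$ finitely generated projective, and $-\otimes_A F$ is right exact. Since step one gave $\Ext^{n}_A(B,A)=0$ for $n>0$, this is a contradiction, so $n=0$. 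Your detour through $Q$ and biduality can be repaired --- biduality yields $\Ext^{>0}_{A^{op}}(Q,A)=0$, after which the same top-degree argument applies to $Q$ --- but as written the claim that concentration in degree $0$ alone yields projectivity of $Q$ begs the question, and none of the three inputs you list (finite global dimension, perfectness without noetherianity, biduality for perfect complexes) supplies this missing argument.
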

\begin{proof}
	Let $\mu_A$ and $\mu_B$ be Nakayama automorphisms of $A$ and $B$, respectively.
	By Lemma \ref{Frob-Goren-lem}, in $\D(B \otimes A^{op})$,
	$$\RHom_{A}(B, A) \Lt_{B} B_{\mu_B}[-d] \cong A_{\mu_A}[-d] \Lt_A B.$$
	This yields ${_{\mu_B^{-1} \mu_A} \! \Hom_{A}(B, A)} \cong B$ as $A$-$B$-bimodules and $\Ext^i_{A}(B, A) = 0$ for any $i > 0$.
	Let $n$ be the projective dimension of the right $A$-module $B$.
	Since $A$ has finite global dimension by our assumption, there exists a right $A$-module $N$, such that $\Ext^n_A(B, N) \neq 0$.
	Let $F$ be a free $A$-module with a surjective $A$-module morphism $F \twoheadrightarrow N$.
	This induces a short exact sequence $\Ext^n_A(B, F) \to \Ext^n_A(B, N) \to 0$, leading to $\Ext^n_A(B, F) \neq 0$.
	As $B$ is a pseudo-coherent right $A$-module,
	we have $\Ext^n_A(B, A) \otimes_A F \cong \Ext^n_A(B, F) \neq 0$.
	Consequently, $n = 0$, indicating that $B$ is a projective right $A$-module.
\end{proof}

\subsubsection{Nakayama automotphisms of Hopf Galois extensions of strongly separable algebras}

In the rest of this section, we maintain the assumption that the coinvariant ring $A:=B^{coH}$ is a strongly separable algebra over $\kk$, that is, there exists an element $e:= \sum\limits_{i=1}^n e'_i \otimes e''_i \in A^e$ satisfying
$$\sum_{i=1}^n ae'_i \otimes e''_i = \sum_{i=1}^n e'_i \otimes e''_ia, \qquad \sum_{i=1}^n e'_ie''_i = 1, \qquad \sum_{i=1}^n e''_i \otimes e'_i = e.$$
Such an $e$ is referred to as a symmetric separability idempotent of $A^e$.

It is evident that $\kk$ is strongly separable over itself.
Any Hopf Galois extension of the base field $\kk$ is commonly termed a Hopf Galois object of $H$.
In \cite{Yu2016}, Yu demonstrated that Hopf Galois objects of a skew Calabi--Yau Hopf algebra retain their skew Calabi--Yau nature and explicitly provided their Nakayama automorphism.

We will use the shorthand notation $x \otimes y$ and $x \otimes_A y$ to represent arbitrary elements in $B \otimes B$ and $B \otimes_A B$, respectively.
Before introducing the Nakayama automorphisms of Hopf Galois extensions of strongly separable algebras, we require the following result. 

\begin{lem}\cite[Theorem 4.1]{Agu2000}\label{strongly-separable-lem}
	The map $\tau: (B \otimes B)^A \to B \otimes_A B, \;\; x \otimes y \mapsto y \otimes_A x$ is bijective.
\end{lem}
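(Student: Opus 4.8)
The plan is to construct an explicit two-sided inverse to $\tau$ directly from the symmetric separability idempotent $e = \sum_i e'_i \otimes e''_i$; no Hopf-theoretic input is needed, only the three defining identities of $e$. Concretely, I would define
$$\sigma: B \otimes_A B \To B \otimes B, \qquad u \otimes_A v \mapsto \sum_i e'_i v \otimes u e''_i,$$
and show that $\sigma$ and $\tau$ are mutually inverse bijections between $B \otimes_A B$ and $(B \otimes B)^A$, where the latter carries the $A$-bimodule structure inherited from the outer $B$-bimodule structure $b(x \otimes y)b' = bx \otimes y b'$, so that invariance of $z = \sum_j x_j \otimes y_j$ reads $\sum_j a x_j \otimes y_j = \sum_j x_j \otimes y_j a$ for all $a \in A$.

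Before anything else I would record an auxiliary centrality relation. The idempotent satisfies $\sum_i a e'_i \otimes e''_i = \sum_i e'_i \otimes e''_i a$ (call it (C1)) and the symmetry $\sum_i e''_i \otimes e'_i = \sum_i e'_i \otimes e''_i$. Applying (C1) to the flipped presentation $\sum_i e''_i \otimes e'_i$ of $e$ (legitimate by symmetry) and then flipping back yields the companion relation
$$\sum_i e'_i a \otimes e''_i = \sum_i e'_i \otimes a e''_i \qquad \text{(C2)}.$$
Both (C1) and (C2) will be used repeatedly, together with $\sum_i e'_i e''_i = 1$ (and its symmetric partner $\sum_i e''_i e'_i = 1$).

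The verification then proceeds in four short steps. First, $\sigma$ is well defined: the identity $\sigma(ua \otimes_A v) = \sigma(u \otimes_A av)$ is exactly (C2) transported along the map $p \otimes q \mapsto pv \otimes uq$, so $\sigma$ factors through $\otimes_A$. Second, the image of $\sigma$ lies in $(B \otimes B)^A$: the required equality $\sum_i a e'_i v \otimes u e''_i = \sum_i e'_i v \otimes u e''_i a$ is (C1) transported along the same map. Third, $\tau \circ \sigma = \mathrm{id}$: computing $\tau(\sigma(u \otimes_A v)) = \sum_i u e''_i \otimes_A e'_i v = \sum_i u \otimes_A e''_i e'_i v$ collapses to $u \otimes_A v$ because $\sum_i e''_i e'_i = 1$. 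Fourth, $\sigma \circ \tau = \mathrm{id}$ on $(B \otimes B)^A$: for invariant $z = \sum_j x_j \otimes y_j$, setting $a = e''_i$ in the invariance relation gives $\sum_j x_j \otimes y_j e''_i = \sum_j e''_i x_j \otimes y_j$, whence $\sigma(\tau(z)) = \sum_{i,j} e'_i x_j \otimes y_j e''_i = \sum_{i,j} e'_i e''_i x_j \otimes y_j = z$ again by $\sum_i e'_i e''_i = 1$.

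I expect the only real subtlety to be bookkeeping: keeping straight which of the two centrality relations (C1) or (C2) applies at each step, and remembering that the definition of $(B \otimes B)^A$ uses the outer $A$-action while the balancing of $\sigma$ over $A$ is a logically separate condition. Establishing (C2) from symmetry at the outset isolates this difficulty; once both relations are available, each of the four verifications reduces to a one-line manipulation of the idempotent, and bijectivity of $\tau$ follows.
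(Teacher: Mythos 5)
Your proposal is correct. There is no internal proof in the paper to compare against: the lemma is quoted from \cite[Theorem 4.1]{Agu2000} without proof, so what you have written is a self-contained reconstruction of the cited result. All the steps check out. Your reading of $(B \otimes B)^A$ via the outer actions is the intended one (it is $M^A = \Hom_{A^e}(A, M)$ for the left regular $B^e$-module $M = B^e$); the companion relation (C2) does follow from (C1) applied to the flipped presentation of $e$ and flipping back; well-definedness of $\sigma$ and the $A$-invariance of its image are exactly (C2), respectively (C1), pushed forward along $p \otimes q \mapsto pv \otimes uq$; and the two composite computations are valid, the second using the invariance relation at $a = e''_i$ for each fixed $i$ before summing over $i$. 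It is worth flagging that the symmetry of $e$ (the ``strongly'' in strongly separable) is what makes both (C2) and $\sum_i e''_i e'_i = 1$ available, and it is indispensable: for a merely separable algebra the analogous map can fail to be bijective (e.g.\ $A = M_n(\kk)$ in characteristic dividing $n$, where the identity matrix is a sum of commutators). Compared with the bare citation, your argument has two payoffs: it produces an explicit formula for $\tau^{-1}$, namely $\tau^{-1}(u \otimes_A v) = \sum_i e'_i v \otimes u e''_i$, which the paper actually uses concretely (the action \eqref{B-otimes_A-B-right-H-module} and the element $\sum \kappa^2(h) \otimes \kappa^1(h) = \tau^{-1}\beta^{-1}(1 \otimes h)$ are phrased in terms of $\tau^{-1}$); and it makes clear that no structure of $B$ beyond its $A$-bimodule structure is used, so the same computation proves that for any $A$-bimodule $M$ the canonical map $M^A \to M_A$ is bijective with inverse $[m] \mapsto \sum_i e'_i m e''_i$ --- the lemma being this general statement for $M = B \otimes B$ combined with the flip identification of $(B \otimes B)_A$ with $B \otimes_A B$.
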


Observe that $B \otimes_A B$ is also equipped with a right $H$-module defined by
\begin{equation}\label{B-otimes_A-B-right-H-module}
	\begin{split}
		(x \otimes_A y) \lhu h & := \tau\big( \tau^{-1}(x \otimes_A y) \lhu h \big) = \tau\big( (y \otimes x) \lhu h \big) \\
		& \xlongequal{\eqref{H-mod-on-M^A}} \tau\big( \sum \kappa^1(h) y \otimes x \kappa^2(h) \big) \\
		& = \sum x \kappa^2(h) \otimes_A \kappa^1(h) y.
	\end{split}
\end{equation}
Thus, $\tau$ preserves the $H$-module structure.

In the subsequent discussion, we posit that
\[ \sum \kappa^2(h) \otimes \kappa^1(h) = \tau^{-1} \beta^{-1}(1 \otimes h) \in (B \otimes B)^A. \]
The following are useful properties of $\kappa$:
\begin{equation}\label{kappa(hk)'}
	\sum\kappa^2(hk) \otimes \kappa^1(hk) = \sum \kappa^2(h)\kappa^2(k) \otimes \kappa^1(k)\kappa^1(h),
\end{equation}
\begin{equation}\label{kappa^1-kappa^2_0-kappa^2_1'}
	\sum \kappa^2(h)_0 \otimes \kappa^1(h) \otimes \kappa^2(h)_1 = \sum \kappa^2(h_1) \otimes \kappa^1(h_1) \otimes h_2,
\end{equation}
\begin{equation}\label{kappa^1_0-kappa^2-kappa^1_1'}
	\sum \kappa^2(h) \otimes \kappa^1(h)_0 \otimes \kappa^1(h)_1 = \sum \kappa^2(h_2) \otimes \kappa^1(h_2) \otimes Sh_1.
\end{equation}

\begin{lem}\label{B-B-V-mod-str}
	If $V$ is a left $H$-module, then $B \otimes V$ is a $B$-$B$-bimodule which is defined by
	$$b' (x \otimes v) b = \sum b'x\kappa^2(S^{-2}b_1)\kappa^1(S^{-2}b_1) b_0  \otimes S^{-1}b_2 \rhu v.$$
	There is a natural isomorphism $\Phi : (B \otimes_A B) \otimes_H V \to B \otimes V$ in ${_B\!\M_B}$, given by
	$$\Phi(x \otimes_A y \otimes_H v) = \sum x \kappa^2(S^{-2}y_1) \kappa^1(S^{-2}y_1) y_0  \otimes S^{-1}y_2 \rhu v.$$
\end{lem}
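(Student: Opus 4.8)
The plan is to obtain both assertions at once by \emph{transporting} the evident bimodule structure on $(B \otimes_A B) \otimes_H V$ across $\Phi$. First note that $B \otimes_A B$ is a $B$-$B$-bimodule via $b'(x \otimes_A y)b = b'x \otimes_A yb$, and that this structure commutes with the right $H$-action \eqref{B-otimes_A-B-right-H-module}, since $(b'x \otimes_A yb)\lhu h = \sum b'x\kappa^2(h) \otimes_A \kappa^1(h)yb = b'\big((x \otimes_A y)\lhu h\big)b$. Hence $(B \otimes_A B) \otimes_H V$ inherits a $B$-$B$-bimodule structure with $b'(x \otimes_A y \otimes_H v)b = b'x \otimes_A yb \otimes_H v$.

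Next I would introduce the candidate inverse $\Psi : B \otimes V \to (B \otimes_A B) \otimes_H V$, $\Psi(x \otimes v) = x \otimes_A 1 \otimes_H v$, which is patently linear. Granting that $\Phi$ is a well-defined linear isomorphism with inverse $\Psi$, the bimodule structure on $B \otimes V$ obtained by transport is automatically a bimodule structure making $\Phi$ a bimodule isomorphism, and it agrees with the stated one: by the very definition of $\Phi$,
\[ (x \otimes v)\cdot b = \Phi\big(\Psi(x \otimes v)\cdot b\big) = \Phi(x \otimes_A b \otimes_H v) = \sum x\kappa^2(S^{-2}b_1)\kappa^1(S^{-2}b_1)b_0 \otimes S^{-1}b_2 \rhu v, \]
while the left action is $b'\cdot\Phi(x \otimes_A 1 \otimes_H v) = \Phi(b'x \otimes_A 1 \otimes_H v) = b'x \otimes v$; combining these recovers the displayed formula for $b'(x\otimes v)b$. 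Naturality in $V$ is clear from the formulas. Thus everything reduces to checking that $\Phi$ descends to the relative tensor product and that $\Psi$ is a two-sided inverse.

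The balancing over $A$ is easy: writing $\kappa^2(h)\kappa^1(h)$ for the product of the two legs of $\sum \kappa^2(h) \otimes \kappa^1(h) \in (B \otimes B)^A$, applying the multiplication map to the defining relation of $(B\otimes B)^A$ shows $\kappa^2(h)\kappa^1(h)$ centralizes $A$; since $\rho(a) = a \otimes 1$ for $a \in A$ this yields $\Phi(xa \otimes_A y \otimes_H v) = \Phi(x \otimes_A ay \otimes_H v)$. The identity $\Phi \circ \Psi = \id$ is immediate from $\kappa(1) = 1 \otimes_A 1$ and $\rho(1) = 1 \otimes 1$.

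The genuine work is (i) the balancing over $H$, i.e. $\Phi\big((x \otimes_A y)\lhu h \otimes_H v\big) = \Phi\big(x \otimes_A y \otimes_H h \rhu v\big)$, and (ii) the identity $\Psi \circ \Phi = \id$, which after moving $S^{-1}y_2$ across $\otimes_H$ via \eqref{B-otimes_A-B-right-H-module} amounts to
\[ \sum \kappa^2(S^{-2}y_1)\kappa^1(S^{-2}y_1)\,y_0\,\kappa^2(S^{-1}y_2) \otimes_A \kappa^1(S^{-1}y_2) = 1 \otimes_A y \quad\text{in } B \otimes_A B. \]
Both are Sweedler computations driven by \eqref{kappa(hk)'}, the comodule identities \eqref{kappa^1-kappa^2_0-kappa^2_1'} and \eqref{kappa^1_0-kappa^2-kappa^1_1'}, and the counit relation \eqref{m-kappa=vep}, together with the facts that $\rho$ is an algebra map (so $\rho(yb) = \rho(y)\rho(b)$), that $S^{-2}$ is an algebra and coalgebra homomorphism, and that $S^{-1}$ reverses products and comultiplication. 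I expect (ii) to be the main obstacle: the formula entangles $S^{-2}$ on the first coaction leg of $y$ with $S^{-1}$ on the second, and with the coaction of $y$ itself, so one must expand the relevant coactions using the (entangled) colinearity identities for the translation map and repeatedly apply \eqref{m-kappa=vep} to collapse the $\kappa$-terms. A convenient route is to apply the injective Galois map $\beta$ to the left-hand side and reduce it to $\rho(y)$ using \eqref{kappa^1_0-kappa^2-kappa^1_1'} and \eqref{m-kappa=vep}, exploiting that $\beta(1 \otimes_A y) = \rho(y)$.
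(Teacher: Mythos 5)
Your architecture coincides with the paper's proof: the same $\Phi$ and $\Psi$, transport of the evident bimodule structure on $(B \otimes_A B)\otimes_H V$ along $\Phi$, the centralizer argument for $A$-balancing (indeed your observation that $\sum\kappa^2(h)\kappa^1(h)$ centralizes $A$ is the precise statement; the paper loosely calls it central in $B$), the triviality of $\Phi\Psi=\id$, and injectivity of the Galois map as the engine for the hard identity. Your reduction of $\Psi\Phi=\id$ to
\[ \sum \kappa^2(S^{-2}y_1)\kappa^1(S^{-2}y_1)\,y_0\,\kappa^2(S^{-1}y_2) \otimes_A \kappa^1(S^{-1}y_2) = 1 \otimes_A y \]
is also correct, and your toolkit for the $H$-balancing (i) matches what the paper actually uses there, namely \eqref{kappa^1_0-kappa^2-kappa^1_1'} and \eqref{kappa(hk)'}.

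The gap is in your plan for proving the displayed identity: contracting over $H$ \emph{first} and then applying $\beta$ does not close with the identities you cite. Applying $\beta$ and using \eqref{kappa^1_0-kappa^2-kappa^1_1'} on the $\kappa(S^{-1}y_2)$ pair turns the left side into $\sum \kappa^2(S^{-2}y_1)\kappa^1(S^{-2}y_1)\, y_0\, \kappa^2(S^{-1}y_2)\kappa^1(S^{-1}y_2) \otimes y_3$, i.e.\ $y_0$ sandwiched between two ``wrong-order'' products $\kappa^2(\cdot)\kappa^1(\cdot)$. The counit relation \eqref{m-kappa=vep} evaluates only products in the order $\kappa^1(\cdot)\kappa^2(\cdot)$ and never applies to these; and the colinearity identities \eqref{kappa^1-kappa^2_0-kappa^2_1'}, \eqref{kappa^1_0-kappa^2-kappa^1_1'} can dissolve such a product only against a free adjacent $H$-leg, which for the factor $\kappa^2(S^{-2}y_1)\kappa^1(S^{-2}y_1)$ no longer exists once the leg it would pair with has been consumed by the $H$-action on $V$. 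This is exactly why the paper proves its intermediate identity \eqref{eq-1} in $B \otimes_A B \otimes H$ --- with the leg $y_2$ still free --- by applying $\beta \otimes \id_H$: there \eqref{kappa^1_0-kappa^2-kappa^1_1'} converts the coaction of $\kappa^1(S^{-2}y_2)$, entangled with $\kappa^2(S^{-2}y_2)$, into the product $\kappa^2\kappa^1$ together with the free factor $S^{-1}y_2$, the antipode axiom collapses $(S^{-1}y_2)y_1$ to $\vep(y_1)$, and $\beta(u \otimes_A 1) = u \otimes 1$ makes the other side trivial. Only \emph{after} \eqref{eq-1} does one move $S^{-2}y_1$ and $S^{-1}y_2$ across $\otimes_H$ and collapse $(S^{-2}y_1)(S^{-1}y_2)$ to $\vep(y_1)$. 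So your reduction is fine, but to complete it you must reorder: establish the identity before contracting over $H$ (i.e.\ prove \eqref{eq-1}), and then deduce both your identity in $B\otimes_A B$ and $\Psi\Phi = \id$ from it; as stated, your final step would stall.
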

\begin{proof}
	First, we demonstrate that $\Phi$ is well defined.
	Given $\kappa^2(h) \otimes \kappa^1(h) \in (B \otimes B)^A$, it is inferred that $\kappa^2(h)\kappa^1(h)$ is an element of the center of $B$.
	Then we have
	\begin{align*}
		\Phi(x \otimes_A ay \otimes_H v) & = \sum x \kappa^2(S^{-2}y_1) \kappa^1(S^{-2}y_1) ay_0  \otimes S^{-1}y_2 \rhu v \\
		& = \sum xa \kappa^2(S^{-2}y_1) \kappa^1(S^{-2}y_1) y_0  \otimes S^{-1}y_2 \rhu v \\
		& = \Phi(x \otimes_A ay \otimes_H v)
	\end{align*}
	for any $a \in A$.
	For any $h \in H$, it holds that
	\begin{align*}
		& \Phi\big( (x \otimes_A y) \lhu h \otimes_H v \big) \\
		& \xlongequal{\eqref{B-otimes_A-B-right-H-module}} \Phi\big( \sum x \kappa^2(h) \otimes_A \kappa^1(h) y \otimes_H v \big) \\
		& \xlongequal{\eqref{kappa^1_0-kappa^2-kappa^1_1'}} \sum x\kappa^2(h_3) \kappa^2\big( (S^{-1}h_2) (S^{-2}y_1) \big) \kappa^1\big( (S^{-1}h_2) (S^{-2}y_1) \big) \kappa^1(h_3) y_0  \otimes (h_1 S^{-1}y_2) \rhu v \\
		& \xlongequal{\eqref{kappa(hk)'}} \sum x \kappa^2(S^{-2}y_1) \kappa^1(S^{-2}y_1) y_0  \otimes h \rhu ( S^{-1}y_2 \rhu v) \\
		& = \Phi\big(x \otimes_A y \otimes_H (h \rhu v) \big).
	\end{align*}
	
	Next, we show that $\Psi: B \otimes V \to (B \otimes_A B) \otimes_H V, x \otimes v \mapsto x \otimes_A 1 \otimes_H v$ is the inverse of $\Phi$ as $\kk$-linear maps.
	One can easily to check that $\Phi\Psi = \id_{B \otimes V}$.
	Note that
	\begin{align*}
		& \quad (\beta \otimes \id_H) \big( \sum x \kappa^2(S^{-2}y_1) \otimes_A \kappa^1(S^{-2}y_1) y_0 \otimes y_2 \big) \\
		& =  \sum x \kappa^2(S^{-2}y_2) \kappa^1(S^{-2}y_2)_0 y_0 \otimes \kappa^1(S^{-2}y_2)_1y_1 \otimes y_3 \\
		& \xlongequal[]{\eqref{kappa^1_0-kappa^2-kappa^1_1'}} \sum x \kappa^2(S^{-2}y_3) \kappa^1(S^{-2}y_3) y_0 \otimes (S^{-1}y_2) y_1 \otimes y_4 \\
		& = \sum x \kappa^2(S^{-2}y_1) \kappa^1(S^{-2}y_1) y_0 \otimes 1 \otimes y_2 \\
		& = (\beta \otimes \id_H) \big( \sum x \kappa^2(S^{-2}y_1) \kappa^1(S^{-2}y_1) y_0 \otimes 1 \otimes y_2 \big).
	\end{align*}
	Since the canonical map $\beta$ is bijective, it follows that
	\begin{eqnarray}\label{eq-1}
		\begin{split}
			& \sum x \kappa^2(S^{-2}y_1) \kappa^1(S^{-2}y_1) y_0 \otimes_A 1 \otimes y_2 \\
			= &\sum x \kappa^2(S^{-2}y_1) \otimes_A \kappa^1(S^{-2}y_1) y_0 \otimes y_2.
		\end{split}
	\end{eqnarray}
	 On the other hand, we can verify that
	\begin{align*}
		\Psi\Phi(x \otimes y \otimes_H v) & = \sum x \kappa^2(S^{-2}y_1) \kappa^1(S^{-2}y_1) y_0 \otimes_A 1 \otimes_H S^{-1}y_2 \rhu v \\
		& \xlongequal[]{\eqref{eq-1}} \sum x \kappa^2(S^{-2}y_1) \otimes_A \kappa^1(S^{-2}y_1) y_0 \otimes_H S^{-1}y_2 \rhu v \\
		& \xlongequal[]{\eqref{B-otimes_A-B-right-H-module}} \sum x \otimes_A y_0 \otimes_H S^{-2}y_1 \rhu (S^{-1}y_2 \rhu v) \\
		& = x \otimes_A y \otimes_H v.
	\end{align*}
	Hence $\Psi\Phi = \id_{(B \otimes_A B) \otimes_H V}$.
	So $B \otimes V$ is a $B$-$B$-bimodule via
	\begin{align*}
		b' (x \otimes v) b := & \Phi\big( b' \Psi(x \otimes v) b \big) = \Phi\big( b' (x \otimes_A 1 \otimes v) b \big) = \Phi( b'x \otimes b \otimes_H v ) \\
		= & \sum b'x\kappa^2(S^{-2}b_1) \kappa^1(S^{-2}b_1) b_0 \otimes S^{-1}b_2 \rhu v,
	\end{align*}
	and $\Phi$ is an isomorphism of $B$-$B$-bimodules.
\end{proof}

The following lemma follows immediately from Lemma \ref{B-B-V-mod-str}.

\begin{lem}\label{B-otimes-B-faithfully-flat}
	The right $H$-module $B \otimes_A B$ is faithfully flat.
\end{lem}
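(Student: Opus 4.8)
The plan is to deduce this directly from the natural isomorphism established in Lemma \ref{B-B-V-mod-str}, thereby reducing faithful flatness over $H$ to the tautological faithful flatness of a nonzero vector space over the base field $\kk$.

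First I would recall that, by Lemma \ref{B-B-V-mod-str}, for every left $H$-module $V$ there is an isomorphism $\Phi_V: (B \otimes_A B) \otimes_H V \cong B \otimes V$, natural in $V$. In other words, the functor $(B \otimes_A B) \otimes_H -: {_H\!\M} \To \M_{\kk}$ is naturally isomorphic to the composite of the forgetful functor ${_H\!\M} \To \M_{\kk}$ with $B \otimes -$, the latter being tensoring over the base field with the vector space $B$. This is the only place where the substantive computation of the previous lemma enters.

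From this, flatness is immediate: the forgetful functor is exact, and tensoring over a field is exact, so the composite $B \otimes -$ applied after forgetting—hence $(B \otimes_A B) \otimes_H -$ via $\Phi$—is exact. For the faithful part, I would note that $B$ is a nonzero $\kk$-vector space, since it is a unital algebra, so for any nonzero left $H$-module $V$ the space $B \otimes V$ is nonzero, whence $(B \otimes_A B) \otimes_H V \neq 0$. Thus $(B \otimes_A B) \otimes_H -$ is both exact and carries nonzero modules to nonzero modules, which is precisely the assertion that $B \otimes_A B$ is faithfully flat as a right $H$-module.

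There is no genuine obstacle here: all the real work has already been done in Lemma \ref{B-B-V-mod-str}, and the single point requiring care is that the isomorphism $\Phi_V$ be natural in $V$, as this naturality is exactly what permits the reduction to a statement about the base field $\kk$.
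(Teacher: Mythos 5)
Your proof is correct and is precisely the argument the paper intends: the paper simply remarks that the lemma ``follows immediately from Lemma \ref{B-B-V-mod-str},'' and your write-up fills in exactly that reasoning, using the natural isomorphism $(B \otimes_A B) \otimes_H V \cong B \otimes V$ to reduce exactness and faithfulness to tensoring with the nonzero vector space $B$ over $\kk$. No gaps; the naturality you flag indeed holds, as it is visible from the explicit formula for $\Phi$.
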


\begin{thm}\label{main-thm-A-finite-dim}
	Let $H$ be a skew Calabi--Yau algebra of dimension $d$, and $B$ be an $H$-Galois extension of a strongly separable algebra $A$..
	Then $B$ is skew Calabi--Yau of dimension $d$ with a Nakayama automorphism $\mu$, which is defined by
	\[ \mu(b) = \sum \kappa^2(S^{-2}b_1) \kappa^1(S^{-2}b_1) b_0 \chi(Sb_2), \]
	where $\chi: H \to \kk$ is given by $\Ext^d_H(\kk, H) \cong {_{\chi}\!\kk}$. 
\end{thm}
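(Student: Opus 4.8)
The plan is to compute $\RHom_{B^e}(B,B^e)$ head-on through Stefan's spectral sequence and then read off the Nakayama automorphism from the explicit $B$-bimodule structure furnished by Lemma \ref{B-B-V-mod-str}. The first step is to exploit what strong separability buys. A strongly separable $\kk$-algebra is in particular separable, hence finite-dimensional and semisimple; thus every $A$-module is projective, so $A\subseteq B$ is automatically flat on both sides (indeed faithfully flat, since $A\hookrightarrow B$ is unital), and $A$ is projective as an $A^e$-module. The symmetric separability idempotent shows moreover that $A$ is Calabi--Yau of dimension $0$, i.e. $\RHom_{A^e}(A,A^e)\cong A$ in degree $0$. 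Because $A$ is $A^e$-projective, $\Hom_{A^e}(A,-)$ is exact, so for every $B^e$-module the complex $\RHom_{A^e}(A,B^e)$ is concentrated in degree $0$ and equals $(B^e)^A$, with the Hopf-bimodule structure of Lemma \ref{B-otimes-H-mod-str} and Proposition \ref{RHom-prop}(1). Corollary \ref{H-Galois-spectral-sequence-for-B^e} then collapses to
\[ \RHom_{B^e}(B,B^e)\cong\RHom_{H}(\kk,(B^e)^A) \]
in $\D(B^e)$, where $(B^e)^A$ carries its Ulbrich--Miyashita right $H$-module structure.

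Next I would evaluate the right-hand side. Since $H$ is skew Calabi--Yau, the trivial module $\kk$ is perfect over $H$ (Theorem \ref{Hopf-VdB<->sCY-thm}), so the argument of Proposition \ref{RHom-prop}(2), applied with $\kk$ as the perfect object over $H$, gives a functorial isomorphism $\RHom_{H}(\kk,N)\cong N\Lt_{H}\RHom_{H}(\kk,H)$, together with $\RHom_H(\kk,H)\cong{_{\chi}\!\kk}[-d]$. To turn the derived tensor product into an ordinary one I would use Lemma \ref{strongly-separable-lem} to identify $(B^e)^A$, as a right $H$-module, with $B\otimes_A B$ equipped with the action \eqref{B-otimes_A-B-right-H-module}; this module is faithfully flat over $H$ by Lemma \ref{B-otimes-B-faithfully-flat}, so $\Tor^H_{>0}$ vanishes and
\[ \RHom_{B^e}(B,B^e)\cong (B\otimes_A B)\otimes_{H}{_{\chi}\!\kk}\,[-d] \]
with its induced $B$-bimodule structure. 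In particular $\Ext^i_{B^e}(B,B^e)=0$ for $i\neq d$.

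Finally I would extract the bimodule and the automorphism. Taking $V={_{\chi}\!\kk}$ in Lemma \ref{B-B-V-mod-str} yields a $B$-bimodule isomorphism $(B\otimes_A B)\otimes_H{_{\chi}\!\kk}\cong B\otimes{_{\chi}\!\kk}$, whose left action is the regular one and whose right action, evaluated on $1\otimes 1$, is $1\cdot b=\sum\kappa^2(S^{-2}b_1)\kappa^1(S^{-2}b_1)b_0\,\chi(Sb_2)$. Hence $\Ext^d_{B^e}(B,B^e)\cong B_{\mu}$ for the map $\mu$ defined by this formula, and $\omega:=B\otimes{_{\chi}\!\kk}$ is isomorphic to $B$ as a left $B$-module while being finitely generated projective on both sides. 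Since $B$ is homologically smooth by Proposition \ref{homological-smooth-prop}, Proposition \ref{VdB-dual-module-prop} then applies and shows that $B$ is skew Calabi--Yau of dimension $d$ with Nakayama automorphism $\mu$.

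The main obstacle is the middle step: faithfully matching the Ulbrich--Miyashita right $H$-module structure on $(B^e)^A$ with the action \eqref{B-otimes_A-B-right-H-module} on $B\otimes_A B$ through Aguiar's isomorphism $\tau$, and keeping track of the $H^e$-comodule data so that nothing is lost upon tensoring with ${_{\chi}\!\kk}$ over $H$. The concluding identification via Lemma \ref{B-B-V-mod-str} is a delicate Sweedler computation in which the various antipode powers $S^{-2}$, $S^{-1}$ and $S$ and the integral $\chi$ interact, and it is precisely there that the exact antipode powers appearing in the stated formula for $\mu$ must be verified.
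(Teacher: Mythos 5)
Your computation of $\RHom_{B^e}(B,B^e)$ is essentially identical to the paper's proof: collapse Stefan's spectral sequence using the $A^e$-projectivity of $A$, identify $\Hom_{A^e}(A,B^e)$ with $B\otimes_A B$ via Lemma \ref{strongly-separable-lem}, replace $\Lt_H$ by $\otimes_H$ using Lemma \ref{B-otimes-B-faithfully-flat}, and read off the bimodule $(B\otimes_A B)\otimes_H{_{\chi}\!\kk}\cong B\otimes{_{\chi}\!\kk}\cong B_{\mu}$ from Lemma \ref{B-B-V-mod-str}. (The discrepancy between $\chi(S^{-1}b_2)$, which is what Lemma \ref{B-B-V-mod-str} literally produces, and $\chi(Sb_2)$ is harmless, since $\chi\circ S=\chi\circ S^{-1}$, both being the convolution inverse of the algebra map $\chi$; the paper makes the same observation.)

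The genuine gap is in your concluding step. To invoke Proposition \ref{VdB-dual-module-prop} you assert that $\omega=B\otimes{_{\chi}\!\kk}$ is finitely generated projective \emph{on both sides}. On the left it is free of rank one, but on the right $\omega\cong B_{\mu}$ where, at this point of the argument, $\mu$ is only known to be an algebra \emph{endomorphism} (this is all that Lemma \ref{invertible to outer automorphism} gives from a rank-one free left module). For a non-bijective endomorphism, $B_{\mu}$ need not be projective as a right $B$-module (take $B=\kk[x]$ and $\mu(x)=0$: the right module $B_{\mu}$ is torsion), so right projectivity is not free — it is essentially equivalent to the invertibility of $\mu$ that you are trying to establish, and asserting it begs the question. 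The paper avoids this circle by first applying Theorem \ref{sCY-for-ff-H-ext}: since you have already shown that $B$ is faithfully flat over $A$ and that the strongly separable algebra $A$ is Calabi--Yau of dimension $0$, that theorem yields that $B$ is skew Calabi--Yau of dimension $d$ \emph{before} any explicit formula is computed; then $\Ext^d_{B^e}(B,B^e)\cong B_{\mu}$ is an invertible bimodule, and Lemma \ref{invertible to outer automorphism} upgrades the endomorphism $\mu$ to an automorphism, hence a Nakayama automorphism. Alternatively, you could run the mirror-image computation (the left-handed analogues of Lemma \ref{strongly-separable-lem} and Lemma \ref{B-B-V-mod-str}) to show $\omega$ is also free of rank one as a right $B$-module, after which Proposition \ref{VdB-dual-module-prop} does apply. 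Either patch is short, but as written the final step does not go through.
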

\begin{proof}
	Since $A$ is separable, $A$ is a direct summand of $B$ as an $A$-module.
	Hence, $B$ is a faithfully flat $A$-module.
	By Theorem \ref{sCY-for-ff-H-ext}, $B$ is a skew Calabi--Yau algebra of dimension $d$.
	Consider the following isomorphisms:
	\begin{align*}
		& \quad \RHom_{B^e}(B, B^e) & \\
		& \cong \RHom_{H}(\kk, \RHom_{A^e}(A, B^e)) & \text{ by Theorem \ref{H-Galois-spectral-sequence}} \\
		& \cong \RHom_{A^e}(A, B^e) \Lt_H \RHom_{H}(\kk, H) & \\
		& \cong \Hom_{A^e}(A, B^e) \Lt_H \RHom_{H}(\kk, H) & \text{ since $A$ is (strongly) separable} \\
		& \cong (B \otimes_A B) \Lt_H \RHom_{H}(\kk, H) & \text{ by Lemma \ref{strongly-separable-lem} } \\
		& \cong (B \otimes_A B) \Lt_H \Ext^d_H(\kk, H)[-d] & \text{ since $H$ is skew Calabi--Yau}\\
		& \cong (B \otimes_A B) \otimes_H \Ext^d_H(\kk, H)[-d] & \text{ by Lemma \ref{B-otimes-B-faithfully-flat}} \\
		& \cong B \otimes \Ext^d_H(\kk, H)[-d] & \text{ by Lemma \ref{B-B-V-mod-str}.}
	\end{align*}
	Thus, there there is an algebra endomorphism $\mu$ of $B$ such that $B \otimes \Ext^d_H(\kk, H) \cong B_{\mu}$ as $B$-$B$-bimodules, where $\mu$ is given by
	$$\mu(b) = \sum \kappa^2(S^{-2}b_1) \kappa^1(S^{-2}b_1) b_0 \chi(S^{-1}b_2) = \sum \kappa^2(S^{-2}b_1) \kappa^1(S^{-2}b_1) b_0 \chi(Sb_2)$$
	for all $b \in B$.
	Since $B$ is a skew Calabi--Yau algebra, it follows that $B_{\mu}$ is an invertible $B$-$B$-bimodule.
	This implies that $\mu$ is an automorphism.
	Then $\mu$ is a Nakayama automorphism of $B$.
\end{proof}

\section{Hopf bimodules associated to a faithfully flat Hopf Galois extension}\label{Hopf-bimod-cats-section}

Throughout this section, $H$ is a Hopf algebra with a bijective antipode $S$, and $A \subseteq B$ is a faithfully flat $H$-Galois extension.

\subsection{On structural theorems for Hopf bimodule categories}

%

Let $R$ and $T$ be two right $H$-comodule algebras.
Put $L(R, T, H) := (R \otimes T)^{co H}$.
We will abuse notation by using simple tensors $s \otimes t$ instead of arbitrary elements of $R \otimes T$, and note for later use that
\begin{equation}\label{L-defn}
	\begin{split}
		L(R, T, H) = & \{ r \otimes t \; | \, \sum r_0 \otimes t_0 \otimes r_1t_1 = r \otimes t \otimes 1 \} \\
		= & \{ r \otimes t \; | \, \sum r_0 \otimes t \otimes Sr_1 = \sum r \otimes t_0 \otimes t_1 \},
	\end{split}
\end{equation}
as in \cite[Lemma 3.1]{Sch1990}.

It is straightforward to confirm that $L(R, T, H)$ is a subalgebra of $R \otimes T^{op}$, with the multiplication defined as $(r \otimes t)(r' \otimes t') = rr' \otimes t't$.
In general, $L(R, T, H)$ does not form a subalgebra of $R \otimes T$.
Obviously, $R^{co H} \otimes (T^{co H})^{op}$ is contained within $L(R, T, H)$ as a subalgebra.

Let us revisit Schauenburg's structure theorem for relative Hopf bimodules which is a generalization of Theorem \ref{faithful-flat-Hopf-Galois-extension-thm}.

\begin{thm}\cite[Theorem 3.3]{Sch1998}\label{ff-Hopf-Galois-ext-Hopf-bimod-cat-str-thm'}
	Let $H$ be a Hopf algebra and $B$ a right faithfully flat $H$-Galois extension of $A := B^{co H}$.
	Let $R$ be a right $H$-comodule algebra, and put $L:= L(R, B, H)$.
	Then we have quasi-inverse category equivalences
	$${_R\!\M^H_B} \To {_L\!\M}, \;\; N \mapsto N^{co H}, \qquad  {_L\!\M} \To {_R\!\M^H_B}, \;\; M \mapsto M \otimes_A B,$$
	where
	\begin{enumerate}
		\item the $L$-module structure of $N^{co H}$ is defined by
		\[ l \vtr n = r n b, \qquad \forall \; l = r \otimes b \in L \subseteq R \otimes B^{op};\]
		\item for $M \in {_L\!\M}$ we use $\vtr$ to denote the module structure, $N$ is a right $A$-module via $ma = (1 \otimes a) \vtr m$, and the right $B$-module and $H$-comodule structures on $M \otimes_A B$ are those induced by the right tensorand, and the left $R$-module stucture is given by
		$$r(m \otimes_A b) = \sum \big( r_0 \otimes \kappa^1(r_1) \big) \vtr m \otimes_A \kappa^2(r_1) b.$$
	\end{enumerate}
\end{thm}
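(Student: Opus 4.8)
The plan is to erect this equivalence on top of the fundamental equivalence $\M^H_B \cong \M_A$ supplied by Theorem \ref{faithful-flat-Hopf-Galois-extension-thm}(2), whose quasi-inverse functors are $(-)^{co H}$ and $-\otimes_A B$. The only new content is to track how a \emph{compatible left $R$-action} on an object of $\M^H_B$ transports, across this equivalence, to a left $L$-action on the coinvariants, and conversely; since the underlying plain equivalence and its naturality are already available, I only need to check that the extra structure matches. Throughout, every computation is driven by the translation-map identities \eqref{kappa(hk)}--\eqref{m-kappa=vep} and the two descriptions of $L$ in \eqref{L-defn}.

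First I would treat the functor $N \mapsto N^{co H}$. Given $N \in {_R\!\M^H_B}$ and $l = r \otimes b \in L$, set $l \vtr n := rnb$ for $n \in N^{co H}$. To see $rnb \in N^{co H}$, apply the coaction: since $n$ is coinvariant and the left $R$- and right $B$-actions are $H$-colinear, $\rho(rnb) = \sum r_0 n b_0 \otimes r_1 b_1$, and feeding the first description of $L$ in \eqref{L-defn} through the map $r' \otimes b' \mapsto r'nb'$ forces this to equal $rnb \otimes 1$. Associativity and unitality of $\vtr$ are immediate from the multiplication $(r\otimes b)(r'\otimes b') = rr'\otimes b'b$ on $L \subseteq R \otimes B^{op}$, as $(r\otimes b)\vtr\big((r'\otimes b')\vtr n\big) = r(r'nb')b = (rr')n(b'b)$.

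Next I would treat $M \mapsto M \otimes_A B$. Restricting the $L$-action along $a \mapsto 1 \otimes a$, which lies in $L$ because $a \in A = B^{co H}$, makes $M$ a right $A$-module, so $M \otimes_A B$ is defined and sits in $\M^H_B$ with the right structures carried by the second tensorand. The left $R$-action is the stated formula, and three points must be verified. (i) The element $\sum r_0 \otimes \kappa^1(r_1) \otimes_A \kappa^2(r_1)$ lies in $L \otimes_A B$, so that $\vtr$ may be applied to its first two legs; this follows by computing the $(R\otimes B)$-coaction on $\sum r_0 \otimes \kappa^1(r_1)$ and invoking the colinearity identity \eqref{kappa^1_0-kappa^2-kappa^1_1} for the left leg of $\kappa$ together with the antipode axiom, which cancels the excess $H$-degrees produced by the $R$-coaction. (ii) The formula is balanced over $A$: using $(1\otimes a')(r_0 \otimes u) = r_0 \otimes ua'$ in $L$ and \eqref{akappa(h)=kappa(h)a} one moves $A$ across the middle tensor, so $r(ma\otimes_A b) = r(m\otimes_A ab)$. (iii) $M \otimes_A B \in {_R\!\M^H_B}$: associativity of the $R$-action uses the cocycle identity \eqref{kappa(hk)}, the unit uses $\kappa(1) = 1 \otimes_A 1$, right $B$-linearity is transparent since $B$ touches only the last tensorand, and $H$-colinearity is exactly \eqref{kappa^1-kappa^2_0-kappa^2_1} after expanding both sides and reindexing by coassociativity.

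Finally, to conclude that the two functors are quasi-inverse, I would invoke the natural isomorphisms $N \cong N^{co H}\otimes_A B$ and $(M\otimes_A B)^{co H}\cong M$ already furnished by Theorem \ref{faithful-flat-Hopf-Galois-extension-thm}, and check that they are respectively left $R$-linear and left $L$-linear for the actions just built; using \eqref{b_0-kappa(b_1)} to evaluate the relevant coinvariants, both checks reduce to the generating identities above. The main obstacle is clearly steps (i)--(iii) of the second functor: unlike the forward direction, the $R$-action on $M \otimes_A B$ is not the restriction of an evident action but is assembled from the translation map, so every module and compatibility axiom must be teased out of the $\kappa$-identities, and the delicate part is keeping the Sweedler bookkeeping straight, especially the interaction between the $R$-coaction on $r$ and the colinearity of $\kappa^1$ and $\kappa^2$.
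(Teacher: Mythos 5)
The paper offers no proof of this statement at all: it is quoted directly from Schauenburg \cite[Theorem 3.3]{Sch1998}, whose own argument runs through faithfully flat descent in a more abstract setting. So your proposal has to be measured against that, and it is a legitimate, genuinely different route: you bootstrap the plain equivalence $\M^H_B \cong \M_A$ of Theorem \ref{faithful-flat-Hopf-Galois-extension-thm}(2) and transport the extra one-sided structures by hand through the translation-map identities. Your verifications are the right ones, and they in fact parallel computations the paper itself carries out later in the proof of Lemma \ref{mod-str-on-Hom(M, A)-lem}: coinvariance of $\sum r_0 \otimes \kappa^1(r_1) \otimes_A \kappa^2(r_1)$ via \eqref{kappa^1_0-kappa^2-kappa^1_1} and the antipode axiom, balancedness via \eqref{akappa(h)=kappa(h)a}, associativity via \eqref{kappa(hk)}, colinearity via \eqref{kappa^1-kappa^2_0-kappa^2_1}. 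One refinement you should make explicit: ``lies in $L \otimes_A B$'' means the element is coinvariant in $R \otimes (B \otimes_A B)$, and flatness of $B$ over $A$ identifies those coinvariants with $L \otimes_A B$; this also furnishes the well-defined pairing $(L \otimes_A B) \otimes M \to M \otimes_A B$, $(l \otimes_A v) \otimes m \mapsto (l \vtr m) \otimes_A v$, through which your action formula must be routed, since $\vtr$ is defined only on $L$ and naive substitutions inside expressions containing $\vtr$ are otherwise illegal.

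The only real thin spot is your last step. The $R$-linearity of the counit $N^{co H} \otimes_A B \to N$, $n \otimes_A b \mapsto nb$, is a one-line check from \eqref{m-kappa=vep}. But the $L$-linearity of the unit $\eta_M : M \to (M \otimes_A B)^{co H}$, $m \mapsto m \otimes_A 1$, is not supplied by Theorem \ref{faithful-flat-Hopf-Galois-extension-thm} (which only gives bijectivity), and \eqref{b_0-kappa(b_1)} is not the identity that settles it. What does settle it: it suffices to prove, for $r \otimes b \in L$, the identity
\[ \sum \big( r_0 \otimes \kappa^1(r_1) \big) \otimes_A \kappa^2(r_1)\, b \;=\; (r \otimes b) \otimes_A 1 \qquad \text{in } L \otimes_A B, \]
and then apply the pairing with $m$ to both sides. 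To prove it, embed $L \otimes_A B$ into $R \otimes (B \otimes_A B)$ (flatness again) and apply the bijection $\id_R \otimes \beta$: using \eqref{kappa^1-kappa^2_0-kappa^2_1} and \eqref{m-kappa=vep} the left side becomes $\sum r_0 \otimes b_0 \otimes r_1 b_1$, the right side becomes $r \otimes b \otimes 1$, and their equality is exactly the defining condition \eqref{L-defn} of $L$. With this supplement your outline is a complete and correct proof.
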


\begin{thm}\cite[Theorem 3.3]{Sch1998}\label{ff-Hopf-Galois-ext-Hopf-bimod-cat-str-thm}
	Let $R$ be a right $H$-comodule algebra, and put $L:= L(B, R, H)$.
	Then we have quasi-inverse category equivalences
	$${_B\!\M^H_R} \To {_L\!\M}, \;\; N \mapsto N^{co H}, \qquad {_L\!\M} \To {_B\!\M^H_R}, \;\; M \mapsto B \otimes_A M,$$
	where
	\begin{enumerate}
		\item the $L$-module structure of $N^{co H}$ is defined by
		\[ l \vtr n = b n r, \qquad \forall \; l = b \otimes r \in L \subseteq B \otimes R^{op};\]
		\item for \textcolor{red}{$M \in {_L\!\M}$} we use $\vtr$ to denote the module structure, $M$ is a left $A$-module via $am = (a \otimes 1) \vtr m$, and the left $B$-module and $H$-comodule structures on $B \otimes_A M$ are those induced by the left tensorand, and the right $R$-module stucture is given by
		\begin{equation}\label{H-mod-on-B-otimes-M}
			(b \otimes_A m)r = \sum b \kappa^1(S^{-1}r_1) \otimes_A (\kappa^2(S^{-1}r_1) \otimes r_0) \vtr m.
		\end{equation}
	\end{enumerate}
\end{thm}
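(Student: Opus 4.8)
The plan is to deduce the theorem from its mirror Theorem~\ref{ff-Hopf-Galois-ext-Hopf-bimod-cat-str-thm'} by passing to opposite algebras. Since $S$ is bijective, $H^{op}$ is again a Hopf algebra (with antipode $S^{-1}$) and, as recorded earlier in the paper, $A^{op} \subseteq B^{op}$ is a faithfully flat $H^{op}$-Galois extension whose translation map is $\kappa_{B^{op}}(h) = \sum \kappa^2(S^{-1}h) \otimes_{A^{op}} \kappa^1(S^{-1}h)$, while $R^{op}$ is a right $H^{op}$-comodule algebra. I would therefore apply Theorem~\ref{ff-Hopf-Galois-ext-Hopf-bimod-cat-str-thm'} to the datum $(R^{op}, A^{op} \subseteq B^{op})$ over $H^{op}$, obtaining an equivalence ${_{R^{op}}\!\M^{H^{op}}_{B^{op}}} \cong {_{L''}\!\M}$ with $L'' := L(R^{op}, B^{op}, H^{op})$, and then translate it back.

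The translation rests on a dictionary. A left $B$-module is a right $B^{op}$-module and a right $R$-module is a left $R^{op}$-module, while a right $H$-comodule is literally the same datum as a right $H^{op}$-comodule because $H$ and $H^{op}$ have the same underlying coalgebra; the relative Hopf bimodule axiom $\rho(bnr) = \sum b_0 n_0 r_0 \otimes b_1 n_1 r_1$ is visibly symmetric under this exchange, so ${_B\!\M^H_R}$ and ${_{R^{op}}\!\M^{H^{op}}_{B^{op}}}$ coincide and $N^{co H} = N^{co H^{op}}$. On the algebra side the flip $b \otimes r \mapsto r \otimes b$ carries $L = L(B, R, H)$ onto $L''$; the defining coinvariance conditions \eqref{L-defn} correspond under the flip, and one checks directly that the flip intertwines the two multiplications, so it is an algebra isomorphism along which ${_{L''}\!\M} \cong {_L\!\M}$. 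Under these identifications the functors $N \mapsto N^{co H}$ and $M \mapsto M \otimes_{A^{op}} B^{op} \cong B \otimes_A M$ of Theorem~\ref{ff-Hopf-Galois-ext-Hopf-bimod-cat-str-thm'} become the ones asserted here, and its right $A^{op}$-action $ma = (1 \otimes a)\vtr m$ becomes the stated left $A$-action $am = (a \otimes 1)\vtr m$.

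It remains to match the module-structure formulas. Feeding $\kappa_{B^{op}}$ into the left $R^{op}$-action of Theorem~\ref{ff-Hopf-Galois-ext-Hopf-bimod-cat-str-thm'}(2), the action $r(m \otimes_{A^{op}} b) = \sum (r_0 \otimes \kappa_{B^{op}}^1(r_1)) \vtr m \otimes_{A^{op}} \kappa_{B^{op}}^2(r_1) b$ becomes $\sum (r_0 \otimes \kappa^2(S^{-1}r_1)) \vtr m \otimes_{A^{op}} \kappa^1(S^{-1}r_1) b$; transporting this through $M \otimes_{A^{op}} B^{op} \cong B \otimes_A M$ and the flip $r_0 \otimes \kappa^2(S^{-1}r_1) \mapsto \kappa^2(S^{-1}r_1) \otimes r_0$ produces exactly the right $R$-action \eqref{H-mod-on-B-otimes-M}. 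A pleasant feature of this route is that well-definedness---that $\kappa^2(S^{-1}r_1) \otimes r_0$ lies in $L$ and that the formula descends to $\otimes_A$---is inherited from Theorem~\ref{ff-Hopf-Galois-ext-Hopf-bimod-cat-str-thm'} rather than reproved, so the main obstacle is purely organizational: threading the comodule identification, the algebra isomorphism $L \cong L''$, and the $S^{-1}$-twist through consistently in the nested Sweedler notation. Should one instead want a self-contained argument, Theorem~\ref{faithful-flat-Hopf-Galois-extension-thm}(3) already yields $N \cong B \otimes_A N^{co H}$ in ${_B\!\M^H}$, and the $L$-action $(b \otimes r) \vtr n = bnr$ on $N^{co H}$ is well defined precisely because $b \otimes r \in L$; one then verifies that \eqref{H-mod-on-B-otimes-M} is the unique right $R$-action making $B \otimes_A M$ a Hopf bimodule, and there the coinvariance computation via \eqref{kappa^1-kappa^2_0-kappa^2_1} and \eqref{kappa^1_0-kappa^2-kappa^1_1} becomes the genuine crux.
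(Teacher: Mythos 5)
Your proposal is correct, but there is nothing in the paper to compare it against: the paper gives no proof of this statement at all, quoting both it and its mirror Theorem~\ref{ff-Hopf-Galois-ext-Hopf-bimod-cat-str-thm'} from the same source \cite[Theorem 3.3]{Sch1998}. Your opposite-algebra reduction is precisely the observation that makes the single citation cover both formulations, and every ingredient you use is already available in the paper: $H^{op}$ is a Hopf algebra with antipode $S^{-1}$, $A^{op} \subseteq B^{op}$ is a faithfully flat $H^{op}$-Galois extension whose translation map $\kappa_{B^{op}}(h) = \sum \kappa^2(S^{-1}h) \otimes_{A^{op}} \kappa^1(S^{-1}h)$ is computed just before \eqref{translation-map-B^e-eq}, and the paper itself invokes the same op-trick elsewhere (part (2) of the unlabelled lemma on projectivity of $L(R,B,H)$, and Lemma~\ref{L(R, T, H)-lem}). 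The two points carrying the real weight in your argument both check out: the categories ${_B\!\M^H_R}$ and ${_{R^{op}}\!\M^{H^{op}}_{B^{op}}}$ literally coincide because the compatibility axiom $\rho(bnr) = \sum b_0n_0r_0 \otimes b_1n_1r_1$ is symmetric under the exchange, and the flip $b \otimes r \mapsto r \otimes b$ is an honest algebra isomorphism $L(B,R,H) \to L(R^{op},B^{op},H^{op})$ — both products transport to $r'r \otimes bb'$ — so no anti-isomorphism or $S^2$-twist intervenes (in contrast to Lemma~\ref{L(R, T, H)-lem}(2), where flipping within the same Hopf algebra does cost a twist). Transporting the left $R^{op}$-action of Theorem~\ref{ff-Hopf-Galois-ext-Hopf-bimod-cat-str-thm'}(2) through $\kappa_{B^{op}}$ and this flip then yields exactly \eqref{H-mod-on-B-otimes-M}, and the identifications of the two functors, the $A$-action $am = (a \otimes 1) \vtr m$, and the $L$-action $bnr$ on coinvariants all follow as you describe. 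The one thing your write-up leaves implicit is the routine verification that $R^{op}$ and $B^{op}$ are right $H^{op}$-comodule algebras under the unchanged coaction maps (the algebra-map axiom for $\rho$ flips correctly on both sides); this is immediate but is the kind of statement worth one line.
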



The following lemmas will be required subsequently.

\begin{lem}
	Let $H$ be a Hopf algebra, $B$ a right faithfully flat $H$-Galois extension of $A := B^{co H}$, and $R$ a right $H$-comodule algebra.
	Then
	\begin{enumerate}
		\item $L(B, R, H)$ is projective as a left $A$-module, and
		\item $L(R, B, H)$ is projective as a right $A$-module.
	\end{enumerate}
\end{lem}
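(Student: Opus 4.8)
The plan is to deduce both statements from Schauenburg's structure theorem (Theorems \ref{ff-Hopf-Galois-ext-Hopf-bimod-cat-str-thm} and \ref{ff-Hopf-Galois-ext-Hopf-bimod-cat-str-thm'}) together with faithfully flat descent of projectivity. I would only write part (1) in detail, since part (2) is entirely symmetric: one simply interchanges the roles of the two tensorands and uses Theorem \ref{ff-Hopf-Galois-ext-Hopf-bimod-cat-str-thm'} in place of Theorem \ref{ff-Hopf-Galois-ext-Hopf-bimod-cat-str-thm}.

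First I would observe that $B \otimes R$, equipped with the left $B$-action on the first tensorand, the right $R$-action on the second, and the codiagonal coaction $\rho(b \otimes r) = \sum b_0 \otimes r_0 \otimes b_1 r_1$, is an object of ${_B\!\M^H_R}$; this is immediate from the fact that $B$ and $R$ are comodule algebras, so that the tensor coaction is multiplicative and hence a bimodule map. By the defining description \eqref{L-defn}, its coinvariants are $(B \otimes R)^{co H} = L(B, R, H) =: L$, carrying the left $A$-action $a \cdot (b \otimes r) = ab \otimes r$ induced by the embedding $a \mapsto a \otimes 1$ of $A$ into $L$, which is precisely the left $A$-module structure appearing in Theorem \ref{ff-Hopf-Galois-ext-Hopf-bimod-cat-str-thm}.

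Next I would apply Theorem \ref{ff-Hopf-Galois-ext-Hopf-bimod-cat-str-thm} with $N = B \otimes R$. Since $(-)^{co H}$ and $B \otimes_A -$ are quasi-inverse equivalences between ${_L\!\M}$ and ${_B\!\M^H_R}$, the counit of the equivalence gives an isomorphism $B \otimes_A L = B \otimes_A N^{co H} \cong N = B \otimes R$ in ${_B\!\M^H_R}$, and in particular an isomorphism of left $B$-modules. As $B \otimes R$ is manifestly free as a left $B$-module, $B \otimes_A L$ is a projective left $B$-module. Finally, since $A \subseteq B$ is a faithfully flat $H$-Galois extension (Theorem \ref{faithful-flat-Hopf-Galois-extension-thm}), $B$ is faithfully flat as a right $A$-module, so base change along $A \to B$ both preserves and reflects projectivity; by faithfully flat descent of projectivity (Raynaud--Gruson), the projectivity of $B \otimes_A L$ over $B$ forces $L$ to be projective as a left $A$-module. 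For part (2) one argues identically, taking $N = R \otimes B \in {_R\!\M^H_B}$, obtaining $L(R,B,H) \otimes_A B \cong R \otimes B$ free as a right $B$-module with the right $A$-action $(r \otimes b)\cdot a = r \otimes ba$, and then descending along $B$ as a faithfully flat left $A$-module.

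I expect the main obstacle to be precisely the descent step. One must invoke that \emph{projectivity}, not merely flatness, descends along faithfully flat ring maps, and this must hold for modules that need not be finitely generated: $L$ is typically infinitely generated over $A$ when $R$ is infinite-dimensional, so the elementary fibre arguments available in the finitely generated case do not apply, and the genuine input is the Raynaud--Gruson theory of Mittag-Leffler modules. By contrast, the remaining verifications---that $B \otimes R$ and $R \otimes B$ lie in the respective relative Hopf bimodule categories, and that the structure equivalence identifies their coinvariants with $L$ bearing the stated one-sided $A$-action---are routine bookkeeping directly from the definitions.
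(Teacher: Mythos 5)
Your preliminary steps are all correct: $B \otimes R$ with the codiagonal coaction is an object of ${_B\!\M^H_R}$, its coinvariants are $L := L(B,R,H)$ with the left $A$-action $a \cdot (b \otimes r) = ab \otimes r$, and Theorem \ref{ff-Hopf-Galois-ext-Hopf-bimod-cat-str-thm} gives $B \otimes_A L \cong B \otimes R$, which is free as a left $B$-module. The gap is precisely the step you yourself single out as the crux: faithfully flat descent of projectivity. The Raynaud--Gruson theorem in the form you need --- $B$ faithfully flat as a right $A$-module, $B \otimes_A L$ projective over $B$, conclude $L$ projective over $A$ --- is a theorem about \emph{commutative} rings. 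Raynaud and Gruson's original claim was for general rings, but their proof contained a gap, and the repaired proofs in the literature (Gruson; later Perry) localize at primes and are intrinsically commutative. For arbitrary noncommutative algebras, which $A$ and $B$ are here, descent of flatness and of the Mittag--Leffler property does survive, but the third ingredient of the Raynaud--Gruson criterion (decomposition into countably generated summands) is exactly where the commutative localization arguments enter, and descent of projectivity is not a citable result in this generality. So your concluding step rests on a theorem that is not available, and the lemma is not proved.

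The paper avoids descent entirely by using the one piece of structure your argument never touches: condition (6) of Theorem \ref{faithful-flat-Hopf-Galois-extension-thm} (equivariant projectivity). There is a right $H$-colinear, left $A$-linear map $g: B \to A \otimes B$ splitting the multiplication $m: A \otimes B \to B$. Then $g \otimes \id_R$ splits $m \otimes \id_R: A \otimes B \otimes R \to B \otimes R$, colinearly and $A$-linearly; applying the additive functor $(-)^{co H}$ (which preserves split injections with colinear splittings) and noting that the coaction on $A \otimes B \otimes R$ does not involve the $A$-tensorand, so that $(A \otimes B \otimes R)^{co H} = A \otimes L$, exhibits $L$ as a direct summand of the free left $A$-module $A \otimes L$. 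Part (2) then follows by applying this to the faithfully flat $H^{op}$-Galois extension $A^{op} \subseteq B^{op}$. Your proof can be repaired by substituting this splitting argument for the descent step; no descent theory, and indeed no use of the structure theorem, is needed.
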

\begin{proof}
	(1) By Theorem \ref{faithful-flat-Hopf-Galois-extension-thm}, a right $H$-colinear and left $A$-linear map $g: B \to A \otimes B$ exists, satisfying $m\circ g = \id_B$ where $m: A \otimes B \to B, a \otimes b \mapsto ab$.
	Consequently, a right $H$-colinear and left $A$-linear map $g \otimes \id_R: B \otimes R \to A \otimes B \otimes R$ is induced, with the property that $(m \otimes \id_R) \circ (g \otimes \id_R) = \id_{B \otimes R}$.
	This yields $L(B, R, H) = (B \otimes R)^{co H}$ as a direct summand of the free $A$-module $(A \otimes B \otimes R)^{co H} = A \otimes L(B, R, H)$.
	Therefore, $L(B, R, H)$ is a projective left $A$-module.
	
	(2) Applying (1) to the faithfully flat $H^{op}$-Galois extension $A^{op} \subseteq B^{op}$, we deduce that $L(R, B, H)$ which equals $L(B^{op}, R^{op}, H^{op})$, is projective as a left $A^{op}$-module, and consequently, as a right $A$-module.
\end{proof}

\begin{lem}\label{mod-str-on-Hom(M, A)-lem}
	Let $H$ be a Hopf algebra, $B$ a right faithfully flat $H$-Galois extension of $A := B^{co H}$, and $R$ a right $H$-comodule algebra. 
	\begin{enumerate}
		\item If $M$ is a left $L(R, B, H)$-module, then $\Hom_{A}(M, A)$ is a left $L(B, R, H)$-module, defined by
		\begin{equation}\label{mod-str-on-Hom(M, A)}
			\big( (b \otimes r) \vtr f) (m) = \sum b f\big( (r_0 \otimes \kappa^1(r_1)) \vtr m \big) \kappa^2(r_1),
		\end{equation}
		for any $b \otimes r \in L(B, R, H)$, $f \in \Hom_{A}(M, A)$ and $m \in M$.
		\item For any left $L(R, B, H)$-module $M$ and $N \in {_B\!\M_B^H}$, $\Hom_{A}(M, N)$ is a Hopf bimodule in ${_B\!\M_R^H}$, where the right $R$-module is given by
		\begin{equation}\label{R-mod-str-on-Hom(M, N)}
			(fr) (m) = \sum f\big( (r_0 \otimes \kappa^1(r_1)) \vtr m \big) \kappa^2(r_1),
		\end{equation}
		for any $b \otimes r \in L(B, R, H)$, $f \in \Hom_{A}(M, N)$ and $m \in M$.
		\item The canonical map
		\[ \Phi: N \otimes_{A} \Hom_{A}(M, A) \To \Hom_{A}(M, N), \qquad n \otimes_{A} f \mapsto \big( m \mapsto nf(m) \big)\]
		is a $B$-$R$-bimodule morphism.
	\end{enumerate}
\end{lem}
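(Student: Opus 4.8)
The plan is to regard all three parts as reflections, on coinvariants, of one natural construction. Write $\hat M := M \otimes_A B$; by Theorem~\ref{ff-Hopf-Galois-ext-Hopf-bimod-cat-str-thm'} this is the Hopf bimodule in ${_R\!\M^H_B}$ attached to the $L(R,B,H)$-module $M$, with $\hat M^{co H} = M$ and right $A$-action $ma = (1 \otimes a) \vtr m$. Tensor--hom adjunction over $B$ furnishes a natural isomorphism $\Hom_A(M, N) \cong \Hom_B(\hat M, N)$ for every right $B$-module $N$, and the formulas \eqref{mod-str-on-Hom(M, A)} and \eqref{R-mod-str-on-Hom(M, N)} are precisely the translations, through this adjunction and through $\kappa(h) = \beta^{-1}(1 \otimes h)$, of the structures induced by the left $R$-action on $\hat M$ and by the bimodule/comodule structure on the target. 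I would nonetheless verify everything directly from the explicit formulas, since part (1) is the computational core on which (2) and (3) rest.

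For part (1) three things must be checked. First, that $(b \otimes r) \vtr f$ again takes values in $A = B^{co H}$: one rewrites the summand using the second form of the coinvariance condition \eqref{L-defn} to undivide the $R$-coaction of $r$, applies $\rho$, uses that $f$ is $A$-valued together with \eqref{kappa^1-kappa^2_0-kappa^2_1} to move the comultiplication onto $Sb_1$, and finally collapses the resulting $H$-tail by the antipode and counit axioms, leaving $\rho\big((b\otimes r)\vtr f\,(m)\big) = (b\otimes r)\vtr f\,(m) \otimes 1$. Second, right $A$-linearity: here $ma = (1 \otimes a) \vtr m$, the product rule in $L(R,B,H) \subseteq R \otimes B^{op}$, the right $A$-linearity of $f$, and the commutation identity \eqref{akappa(h)=kappa(h)a} combine to move $a$ from the argument of $f$ to the right of $\kappa^2(r_1)$. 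Third, that $\vtr$ respects the unit (using $\kappa(1) = 1 \otimes_A 1$) and the multiplication $(b \otimes r)(b' \otimes r') = bb' \otimes r'r$ of $L(B,R,H)$; the left factors $bb'$ pass through trivially, and associativity in the $R^{op}$-slot is exactly where \eqref{kappa(hk)} and the comodule-algebra axiom $\rho_R(r'r) = \sum r'_0 r_0 \otimes r'_1 r_1$ are matched against the product rule of $L(R,B,H)$.

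Part (2) reuses these computations. The left $B$-action is the evident $(bf)(m) = b\,f(m)$ inherited from $N$, and the right $R$-action \eqref{R-mod-str-on-Hom(M, N)} obeys the module axioms by the same $\kappa$-manipulations as in part (1) (indeed \eqref{mod-str-on-Hom(M, A)} is \eqref{R-mod-str-on-Hom(M, N)} followed by left multiplication by $b$). The one genuinely new point is the right $H$-comodule structure and its compatibility with the two actions: since $M$ carries no finiteness hypothesis there is no naive pointwise coaction on $\Hom_A(M,N)$, and I would instead invoke Schauenburg's Theorem~\ref{ff-Hopf-Galois-ext-Hopf-bimod-cat-str-thm}, under which $\Hom_A(M,N) \cong \Hom_B(\hat M, N)$ is identified with an object of ${_B\!\M^H_R}$ carrying exactly the stated left $B$- and right $R$-actions on coinvariants.

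For part (3) one checks that $\Phi$ is well defined over $\otimes_A$ (both $na \otimes_A f$ and $n \otimes_A (a \cdot f)$ send $m$ to $n\,a\,f(m)$, using the left $A$-action $(a\cdot f)(m) = a f(m)$ coming from $A \otimes 1 \subseteq L(B,R,H)$), that it is left $B$-linear (immediate, as $B$ acts through $n$ on both sides), and that it is right $R$-linear. For the last, the right $R$-action on $N \otimes_A \Hom_A(M,A) = N \otimes_B (B \otimes_A \Hom_A(M,A))$ is read off from \eqref{H-mod-on-B-otimes-M} applied to the Schauenburg Hopf bimodule $B \otimes_A \Hom_A(M,A)$; expanding $\Phi\big((n \otimes_A f)r\big)$ with \eqref{H-mod-on-B-otimes-M} and \eqref{mod-str-on-Hom(M, A)}, the adjacent factor $\kappa^1(S^{-1}r_1)\kappa^2(S^{-1}r_1)$ collapses to $\varepsilon(r_1)$ by \eqref{m-kappa=vep}, and a counit/coassociativity cleanup returns $\sum n\,f\big((r_0 \otimes \kappa^1(r_1)) \vtr m\big)\kappa^2(r_1) = \big(\Phi(n \otimes_A f)\,r\big)(m)$, which is \eqref{R-mod-str-on-Hom(M, N)}. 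The main obstacle throughout is the Sweedler bookkeeping --- keeping the nested coactions of $r$ and $b$ aligned so that the coinvariance conditions \eqref{L-defn} and the translation-map identities \eqref{kappa(hk)}--\eqref{m-kappa=vep} apply in the right order --- together with the conceptual point, in part (2), of producing the $H$-comodule structure where no pointwise coaction is available.
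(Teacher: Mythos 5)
Your proposal follows the paper's proof in all essentials. Part (1) is the same direct verification: coinvariance of the output from \eqref{L-defn} together with \eqref{kappa^1-kappa^2_0-kappa^2_1}, right $A$-linearity from \eqref{akappa(h)=kappa(h)a}, and multiplicativity of the action from \eqref{kappa(hk)} (your bookkeeping uses the second form of \eqref{L-defn} at the start rather than the first form at the end, which is an equivalent rearrangement). Part (2)'s module axioms are checked by the same manipulations, and part (3) is handled exactly as in the paper: the Hopf bimodule structure on $N \otimes_A \Hom_A(M,A) \cong N \otimes_B \big(B \otimes_A \Hom_A(M,A)\big)$ is imported via Theorem \ref{ff-Hopf-Galois-ext-Hopf-bimod-cat-str-thm}, left $B$-linearity of $\Phi$ is immediate, and right $R$-linearity follows by expanding with \eqref{H-mod-on-B-otimes-M} and \eqref{mod-str-on-Hom(M, A)} and collapsing $\kappa^1(S^{-1}r_1)\kappa^2(S^{-1}r_1)$ by \eqref{m-kappa=vep}, landing on \eqref{R-mod-str-on-Hom(M, N)}.

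The one point where you diverge is the $H$-comodule structure asserted in part (2). Your appeal to Theorem \ref{ff-Hopf-Galois-ext-Hopf-bimod-cat-str-thm} is not by itself a construction: that equivalence produces objects of ${_B\!\M^H_R}$ of the form $B \otimes_A W$ for $W \in {_{L(B,R,H)}\!\M}$, and the adjunction $\Hom_A(M,N) \cong \Hom_B(M \otimes_A B, N)$ does not exhibit the Hom-space in that form, so no coaction is actually obtained without extra hypotheses (finite generation of $M$ as in Lemma \ref{lem-0}, or $\Phi$ being an isomorphism so that the structure on $N \otimes_A \Hom_A(M,A)$ can be transported). It is only fair to note, however, that the paper's own proof of (2) has the same omission --- it verifies only the $R$-action axioms and never constructs the coaction --- and that only the bimodule structure is used downstream (its enveloping-algebra analogue, Lemma \ref{mod-str-on-Hom(M, A^e)-lem}(2), claims only a $B^e$-$H$-bimodule). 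So your write-up matches the argument the paper actually gives, and your explicit flagging of where the coaction problem lies is, if anything, clearer than the original.
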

\begin{proof}
	(1)
	For any $r \in R$, we have
	\begin{align*}
		& \sum r_0 \otimes \kappa^1(r_2)_0 \otimes_A  \kappa^2(r_2) \otimes r_1 \kappa^1(r_2)_1 \\
		& \xlongequal{\eqref{kappa^1_0-kappa^2-kappa^1_1}} \sum r_0 \otimes \kappa^1(r_3) \otimes_A \kappa^2(r_3) \otimes r_1 Sr_2 \\
		& = \sum r_0 \otimes \kappa^1(r_1) \otimes_A \kappa^2(r_1) \otimes 1,
	\end{align*}
	which implies $\sum r_0 \otimes \kappa^1(r_1) \otimes_A  \kappa^2(r_1)  \in L(R, B, H) \otimes_A B$.
	
	For any $b \otimes r \in L(B, R, H)$, $f \in \Hom_{A}(M, A)$ and $m \in M$, we assert that
	\[ \big( (b \otimes r) \vtr f) (m) = \sum b f\big( (r_0 \otimes \kappa^1(r_1)) \vtr m \big) \kappa^2(r_1) \in A \subseteq B. \]
	This is shown by the equality
	\begin{align*}
		& \big( (b \otimes r) \vtr f) (m)_0 \otimes \big( (b \otimes r) \vtr f) (m)_1 \\
		& = \sum b_0 f\Big( \big( r_0 \otimes \kappa^1(r_1) \big) \vtr m \Big) \kappa^2(r_1)_0 \otimes b_1 \kappa^2(r_1)_0 \\
		& \xlongequal{\eqref{kappa^1-kappa^2_0-kappa^2_1}} \sum b_0 f\Big( \big( r_0 \otimes \kappa^1(r_1) \big) \vtr m \Big) \kappa^2(r_1) \otimes b_1 r_2 \\
		& = \sum b f\Big( \big( r_0 \otimes \kappa^1(r_1) \big) \vtr m \Big) \kappa^2(r_1) \otimes 1 & \text{ since } b \otimes r \in (B \otimes R)^{co H},
	\end{align*}
	implying $\sum b f\Big( \big( r_0 \otimes \kappa^1(r_1) \big) \vtr m \Big) \kappa^2(r_1) \in A$.
	
	Given $a \in A$, we have
	\begin{align*}
		\big( (b \otimes r) \vtr f \big) (ma) & = b f\Big( \big( r_0 \otimes \kappa^1(r_1) \big) \vtr (ma) \Big) \kappa^2(r_1) \\
		& = b f\Big( \big( r_0 \otimes a\kappa^1(r_1) \big) \vtr m \Big) \kappa^2(r_1) \\
		& \xlongequal{\eqref{akappa(h)=kappa(h)a}} b f\Big( \big( r_0 \otimes \kappa^1(r_1) \big) \vtr m \Big) \kappa^2(r_1) a \\
		& = \big( (b \otimes r) \vtr f \big) (m) a,
	\end{align*}
	demonstrating that $(b \otimes r) \vtr f $ is $A$-linear.
	
	For any $b \otimes r, b' \otimes r' \in L(B, R, H)$, we calculate
	\begin{align*}
		 & \quad \Big( (b \otimes r) \vtr \big( (b' \otimes r') \vtr f \big) \Big) (m) \\
		 & = b \big( (b' \otimes r') f \big) \Big( \big( r_0 \otimes \kappa^1(r_1) \big) \vtr m \Big)  \kappa^2(r_1) \\
		& = b b' f \Big( \big( r'_0 \otimes \kappa^1(r'_1) \big) \vtr \big( r_0 \otimes \kappa^1(r_1) \vtr m \big) \Big) \kappa^2(r'_1) \kappa^2(r_1) \\
		& = b b' f \Big( \big( r'_0r_0 \otimes \kappa^1(r_1)\kappa^1(r'_1) \big) \vtr m \Big) \kappa^2(r'_1) \kappa^2(r_1) \\
		& \xlongequal{\eqref{kappa(hk)}} b b' f \Big( \big( r'_0r_0 \otimes \kappa^1(r'_1r_1) \big) \vtr m \Big) \kappa^2(r'_1 r_1) \\
		& = \big( (b b' \otimes r' r) \vtr f \big) (m).
	\end{align*}
	verifying that the module structure \eqref{mod-str-on-Hom(M, A)} is well defined.
	
	(2)
	Given any $f \in \Hom_{A}(M, N)$, $r \in R$, $m \in M$ and $a \in A$.
	\begin{align*}
		(fr) (ma) & = f\Big( \big( r_0 \otimes \kappa^1(r_1) \big) \vtr (ma) \Big) \kappa^2(r_1) \\
		& = f\Big( \big( r_0 \otimes a\kappa^1(r_1) \big) \vtr m \Big) \kappa^2(r_1) \\
		& \xlongequal{\eqref{akappa(h)=kappa(h)a}} f\Big( \big( r_0 \otimes \kappa^1(r_1) \big) \vtr m \Big) \kappa^2(r_1) a \\
		& = (fr) (m) a,
	\end{align*}
	showing that $fr$ is $A$-linear.
	
	For any $r, r' \in R$, we find
	\begin{align*}
		& \quad \big( (fr) r' \big) (m) \\
		& = (fr) \Big( \big( r'_0 \otimes \kappa^1(r'_1) \big) \vtr m \Big) \kappa^2(r'_1) \\
		& = f \Big( \big( r_0 \otimes \kappa^1(r_1) \big) \vtr \big( r'_0 \otimes \kappa^1(r'_1) \vtr m \big) \Big) \kappa^2(r_1) \kappa^2(r'_1) \\
		& = f \Big( \big( r_0r'_0 \otimes \kappa^1(r'_1)\kappa^1(r_1) \big) \vtr m \Big) \kappa^2(r_1) \kappa^2(r'_1) \\
		& \xlongequal{\eqref{kappa(hk)}} f \Big( \big( r_0r'_0 \otimes \kappa^1(r_1r'_1) \big) \vtr m \Big) \kappa^2(r_1 r'_1) \\
		& = \big( f(rr') \big) (m).
	\end{align*}
	confirming that the module structure \eqref{R-mod-str-on-Hom(M, N)} is well defined.
	
	(3)
	Since $\Hom_{A}(M, A)$ is a left $L(B, R, H)$-module, $B \otimes_{A} \Hom_{A}(M, A)$ is a Hopf bimodule in ${_{B}\!\M_R^{H}}$ by Theorem \ref{ff-Hopf-Galois-ext-Hopf-bimod-cat-str-thm}.
	Consequently, $N \otimes_{A} \Hom_{A}(M, A) \cong N \otimes_{B} B \otimes_{A} \Hom_{A}(M, A) $ is indeed a Hopf bimodule in ${_{B}\!\M_R^{H}}$.
	
	It is evident that $\Phi$ is a left $B$-module morphism.
	To demonstrate that $\Phi$ is also a right $R$-module morphism, we calculate
	\begin{align*}
		& \quad \Phi\big((n \otimes_{A} f) r\big) (m) \\
		& \xlongequal{\eqref{H-mod-on-B-otimes-M}} \Phi\Big( \sum n \kappa^1(S^{-1}r_1) \otimes_A \big( \kappa^2(S^{-1}r_1) \otimes r_0 \big) \vtr f \Big) (m) \\
		& = \sum n \kappa^1(S^{-1}r_1) \Big( \big( \kappa^2(S^{-1}r_1) \otimes r_0 \big) \vtr f \Big) (m) \\
		& \xlongequal{\eqref{mod-str-on-Hom(M, A)}} \sum n \kappa^1(S^{-1}r_2) \kappa^2(S^{-1}r_2) f\Big( \big( r_0 \otimes \kappa^1(r_1) \big) \vtr m \Big) \kappa^2(r_1) \\
		& \xlongequal{\eqref{m-kappa=vep}} \sum n f\Big( \big( r_0 \otimes \kappa^1(r_1) \big) \vtr m \Big) \kappa^2(r_1) \\
		& \xlongequal{\eqref{R-mod-str-on-Hom(M, N)}} \big( \Phi(n \otimes_{A^e} f) r \big) (m).
	\end{align*}
	This shows that $\Phi$ is indeed right $R$-linear.
\end{proof}

\subsection{Some subalgebras $\Lambda_i$ of $B^e$}

To further study categories $_{B^e}\!\M^{H^e}_H$ and $_H\!\M^{H^e}_{B^e}$, we now introduce a class of algebras $\Lambda_i$.

Let $(R, \rho)$ be a right $H$-comodule algebra and let $\sigma: H \to K$ be a Hopf algebra map.
Then $R$ also has a $K$-comodule structure via $(\id_R \otimes \sigma) \circ \rho$ such that $R$ is also a right $K$-comodule algebra, denoted by $R^{\sigma}$.
It is obvious that $S^2$ is a Hopf algebra automorphism of $H$.

\begin{lem}\label{L(R, T, H)-lem}
	Let $R$ and $T$ be two right $H$-comodule algebras. 
	\begin{enumerate}
		\item For any Hopf automorphism $\sigma$ of $H$, $L(R, T, H) = L(R^{\sigma}, T^{\sigma}, H)$.
		\item $L(R, T, H) \cong L(T, R^{S^2}, H)^{op}$.
	\end{enumerate}
\end{lem}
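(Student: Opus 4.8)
The plan is to handle the two parts separately: part (1) is essentially formal, while part (2) rests on a single computation tracking the antipode.

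For part (1), I would first record that the diagonal $H$-coaction on $R^{\sigma} \otimes T^{\sigma}$ sends $r \otimes t$ to $\sum r_0 \otimes t_0 \otimes \sigma(r_1)\sigma(t_1) = (\id \otimes \id \otimes \sigma)\big(\sum r_0 \otimes t_0 \otimes r_1 t_1\big)$, the last equality using that $\sigma$ is an algebra map. Thus the coaction on $R^{\sigma} \otimes T^{\sigma}$ is obtained from the one on $R \otimes T$ by postcomposing with $\id \otimes \id \otimes \sigma$. Since $\sigma$ is a Hopf algebra automorphism, $\id \otimes \id \otimes \sigma$ is a linear bijection with $\sigma(1) = 1$, so for $x \in R \otimes T$ the coinvariance condition $\rho^{\sigma}(x) = x \otimes 1$ holds if and only if $\rho(x) = x \otimes 1$. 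Hence the two coinvariant subspaces coincide. As the algebra structure of $L(-,-,H)$ is inherited from $R \otimes T^{op}$ and does not see the comodule structure, this identifies them as subalgebras, giving $L(R, T, H) = L(R^{\sigma}, T^{\sigma}, H)$.

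For part (2), my candidate map is the flip $\phi\colon r \otimes t \mapsto t \otimes r$. The first task is to show that $\phi$ carries $L(R, T, H)$ bijectively onto $L(T, R^{S^2}, H)$ at the level of underlying spaces, for which I would use the second characterization in \eqref{L-defn}. Starting from the defining relation $\sum r_0 \otimes t \otimes Sr_1 = \sum r \otimes t_0 \otimes t_1$ for $r \otimes t \in L(R, T, H)$, I would apply the flip on the first two tensor legs and then $\id \otimes \id \otimes S$; both operations are invertible since $S$ is bijective, and the result is exactly the relation $\sum t_0 \otimes r \otimes St_1 = \sum t \otimes r_0 \otimes S^2 r_1$ characterizing $t \otimes r \in L(T, R^{S^2}, H)$. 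The twist $S^2$ surfaces precisely because the second tensorand now carries the $S^2$-coaction $r \mapsto \sum r_0 \otimes S^2 r_1$. This is the sole genuine computation, and the antipode bookkeeping in it is where I expect the one real difficulty to lie; everything else is formal.

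With the set-level bijection established, it remains to verify anti-multiplicativity. Using the product $(r \otimes t)(r' \otimes t') = rr' \otimes t't$ on $L(R, T, H) \subseteq R \otimes T^{op}$, and the analogous product on $L(T, R^{S^2}, H)$, I would compute $\phi\big((r \otimes t)(r' \otimes t')\big) = \phi(rr' \otimes t't) = t't \otimes rr'$ and compare it with the product of $\phi(r \otimes t)$ and $\phi(r' \otimes t')$ formed in $L(T, R^{S^2}, H)^{op}$, namely $(t' \otimes r')(t \otimes r) = t't \otimes rr'$. These agree, and $\phi(1 \otimes 1) = 1 \otimes 1$, so $\phi$ is a unital algebra isomorphism $L(R, T, H) \stackrel{\cong}{\To} L(T, R^{S^2}, H)^{op}$. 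In summary, the main obstacle is confined to the antipode computation in the bijection step of part (2); the multiplicative compatibility and all of part (1) are routine.
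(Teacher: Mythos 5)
Your proof is correct, and for part (2) --- the substantive half --- it is the same argument as the paper's: the flip $r \otimes t \mapsto t \otimes r$, matched against the second characterization in \eqref{L-defn}, with bijectivity of $S$ converting one defining relation into the other. The paper applies $S^{-1}$ to the relation defining $L(T, R^{S^2}, H)$ where you apply $S$ to the relation defining $L(R,T,H)$; this is the same check run in the opposite direction. You additionally verify anti-multiplicativity of the flip explicitly, which the paper leaves implicit. The only genuine divergence is in part (1): the paper stays with the antipode characterization of \eqref{L-defn} and rewrites its third tensor leg using $\sigma S = S\sigma$, whereas you argue directly from the definition $L(R,T,H) = (R \otimes T)^{co H}$, observing that the diagonal coaction on $R^{\sigma} \otimes T^{\sigma}$ is $(\id \otimes \id \otimes \sigma)$ postcomposed with the one on $R \otimes T$ (using that $\sigma$ is an algebra map), so that bijectivity of $\sigma$ together with $\sigma(1)=1$ makes the two coinvariance conditions equivalent. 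Your version is marginally more elementary --- it never invokes the compatibility of $\sigma$ with the antipode --- but both are short formal arguments yielding the same equality of subalgebras.
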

\begin{proof}
	(1)
	From \eqref{L-defn}, we deduce that
	\begin{align*}
		L(R, T, H) = & \{ r \otimes t \; | \, \sum r_0 \otimes t \otimes Sr_1 = \sum r \otimes t_0 \otimes t_1 \} \\
		= & \{ r \otimes t \; | \, \sum r_0 \otimes t \otimes \sigma S r_1 = \sum r \otimes t_0 \otimes \sigma t_1 \} \\
		= & \{ r \otimes t \; | \, \sum r_0 \otimes t \otimes S \sigma r_1 = \sum r \otimes t_0 \otimes \sigma t_1 \} \\
		= & L(R^{\sigma}, T^{\sigma}, H).
	\end{align*}
	
	(2)
	According to \eqref{L-defn},
	\[ L(R, T, H) = \{ r \otimes t \; | \, \sum r_0 \otimes t \otimes Sr_1 = \sum r \otimes t_0 \otimes t_1 \} \]
	and
	\[ L(T, R^{S^2}, H) = \{ t \otimes r \; | \, \sum t_0 \otimes r \otimes St_1 = \sum t \otimes r_0 \otimes S^2r_1 \}. \]
	Since $S$ is bijective, we have $L(T, R^{S^2}, H) = \{ t \otimes r \; | \, \sum t_0 \otimes r \otimes t_1 = \sum t \otimes r_0 \otimes Sr_1 \}$.
	Thus, the map
	\[ L(R, T, H) \To L(T, R^{S^2}, H)^{op}, \qquad r \otimes t \mapsto t \otimes r \]
	constitutes an algebra isomorphism.
\end{proof}

Recall that $H$ is a right $H^e$-comodule algebra via \eqref{H-right-H^e-comod-alg}.
For any $t \otimes r \otimes h \in L(T \otimes R^{op}, H, H^e) \subseteq T \otimes R^{op} \otimes H$, then
\begin{equation}\label{T-R-H-eq}
	\sum t_0 \otimes r_0 \otimes h_2 \otimes t_1h_3 \otimes (Sh_1)r_1 = t \otimes r \otimes h \otimes 1 \otimes 1.
\end{equation}
Hence
\begin{align*}
	t \otimes r \otimes h & = \sum t_0 \otimes r_0 \otimes \vep(h_2) \otimes (St_1) t_2 h_3 \otimes \vep((Sh_1)r_1) \\
	& \xlongequal{\eqref{T-R-H-eq}} \sum t_0 \otimes r \otimes \vep(h) \otimes St_1 \otimes \vep(1) \\
	& = \vep(h) \sum t_0 \otimes r \otimes  St_1,
\end{align*}
and 
\begin{align*}
	t \otimes r \otimes h & = \sum t_0 \otimes r_0 \otimes \vep(h_2) \otimes \vep(t_2 h_3) \otimes (S^{-2}r_1)S^{-1}((Sh_1)r_2) \\
	& \xlongequal{\eqref{T-R-H-eq}} \sum t \otimes r_0 \otimes \vep(h) \otimes \vep(1) \otimes S^{-2}r_1 \\
	& = \vep(h) \sum t \otimes r_0 \otimes  S^{-2}r_1.
\end{align*}
It follows from \eqref{L-defn} that $\vep(h) t \otimes r \in L(T, R^{S^{-2}}, H) = L(T^{S^2}, R, H)$.
This gives an algebra isomorphism $L(T \otimes R^{op}, H, H^e) \to L(T^{S^2}, R, H)$ defined by $t \otimes r \otimes h \mapsto \vep(h) t \otimes r$, and the inverse map is given by $t \otimes r \mapsto \sum t_0 \otimes r \otimes St_1$.

Similarly, there is an algebra isomorphism $L(H, R \otimes T^{op}, H^e) \to L(T, R, H)$ defined by $h \otimes r \otimes t \mapsto \vep(h) t \otimes r$, and the inverse map is given by $t \otimes r \mapsto \sum t_1 \otimes r \otimes t_0$.
For any $h \otimes r \otimes t \in L(H, R \otimes T^{op}, H^e) \subseteq H \otimes R \otimes T^{op}$, then
\[ \sum h_2 \otimes r_0 \otimes t_0 \otimes h_3r_1 \otimes t_1Sh_1 = h \otimes r \otimes t \otimes 1 \otimes 1. \]
Hence
\begin{align*}
	 r \otimes t \otimes h = & \sum \vep(h_2) \otimes r_0 \otimes t_0 \otimes h_3r_2S^{-1}r_1 \otimes \vep(t_1Sh_1) \\
	= & \sum \vep(h) \otimes r_0 \otimes t \otimes S^{-1}r_1 \otimes \vep(1) \\
	= & \vep(h) \sum r_0 \otimes t \otimes  S^{-1}r_1,
\end{align*}
and 
\begin{align*}
	r \otimes t \otimes h = & \sum \vep(h_2) \otimes r_0 \otimes t_0 \otimes \vep(h_3r_1) \otimes S^{-1}(t_2Sh_1)t_1 \\
	= & \sum \vep(h) \otimes r \otimes t_0 \otimes \vep(1) \otimes t_1 \\
	= & \vep(h) \sum r \otimes t_0 \otimes  t_1.
\end{align*}
It follows from \eqref{L-defn} that $\vep(h) t \otimes r \in L(T, R, H)$.

\begin{defn}\label{Lambda_i-defn}
	Define $\Lambda_i := L(B^{S^{2i}}, B, H)$ for  $i \in \Z$ and let $\Lambda := \Lambda_0$.
\end{defn}

For any  $i \in \Z$, by Lemma \ref{L(R, T, H)-lem},
\[ \Lambda_i = L(B^{S^{2i}}, B, H) \cong L(B, B^{S^{2(i+1)}}, H)^{op} \cong L(B^{S^{-2(i+1)}}, B, H)^{op} = ({\Lambda_{-i-1}})^{op}. \]
It is evident that $A^e \subseteq \Lambda_i$ implying $A^e$ is a subalgebra of $\Lambda_i$.
Note that $A$ can be seen as a left $\Lambda_i$-module via $(x \otimes y) \vtr a = xay$ for any $a \in A$ and $x \otimes y \in \Lambda_i$.

According to the explanation preceding Definition \ref{Lambda_i-defn}, we have the following result.

\begin{lem}\label{Lambda-equi-lem}
	\begin{enumerate}
		\item[(1)] There is an algebra isomorphism $\Lambda_i \stackrel{\cong}{\to} L(B^{S^{2(i-1)}} \otimes B^{op}, H, H^e)$ defined by $x \otimes y \mapsto \sum x_0 \otimes y \otimes Sx_1$.
		\item[(2)] There is an algebra isomorphism $\Lambda_i \stackrel{\cong}{\to} L(H, B \otimes (B^{S^{2i}})^{op}, H^e)$ defined by $x \otimes y \mapsto \sum x_1 \otimes y \otimes x_0$.
	\end{enumerate}
\end{lem}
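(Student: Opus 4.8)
The plan is to recognize that this lemma is simply the specialization of the two general algebra isomorphisms constructed in the paragraphs immediately preceding Definition \ref{Lambda_i-defn}, so that no fresh computation is required. Recall that those paragraphs produced, for arbitrary right $H$-comodule algebras $R$ and $T$, an algebra isomorphism
\[ L(T \otimes R^{op}, H, H^e) \stackrel{\cong}{\To} L(T^{S^2}, R, H), \qquad t \otimes r \otimes h \mapsto \vep(h) t \otimes r, \]
with inverse $t \otimes r \mapsto \sum t_0 \otimes r \otimes St_1$, and a second isomorphism
\[ L(H, R \otimes T^{op}, H^e) \stackrel{\cong}{\To} L(T, R, H), \qquad h \otimes r \otimes t \mapsto \vep(h) t \otimes r, \]
with inverse $t \otimes r \mapsto \sum t_1 \otimes r \otimes t_0$, where $H$ carries the $H^e$-comodule structure \eqref{H-right-H^e-comod-alg}. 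Both parts of the lemma are obtained by choosing $R$ and $T$ so that the right-hand side becomes $\Lambda_i = L(B^{S^{2i}}, B, H)$, and then reading off the inverse map.

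For part (1) I would set $T = B^{S^{2(i-1)}}$ and $R = B$ in the first isomorphism. Since $R^{\sigma}$ denotes $B$ with coaction twisted by $\sigma$, twists compose as $(R^{\sigma})^{\tau} = R^{\tau\sigma}$, so $(B^{S^{2(i-1)}})^{S^2} = B^{S^{2i}}$ and hence $L(T^{S^2}, R, H) = L(B^{S^{2i}}, B, H) = \Lambda_i$, while $T \otimes R^{op} = B^{S^{2(i-1)}} \otimes B^{op}$. The inverse of the first isomorphism then reads $x \otimes y \mapsto \sum x_0 \otimes y \otimes Sx_1$, which is exactly the asserted map $\Lambda_i \stackrel{\cong}{\To} L(B^{S^{2(i-1)}} \otimes B^{op}, H, H^e)$. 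For part (2) I would instead set $T = B^{S^{2i}}$ and $R = B$ in the second isomorphism; then $L(T, R, H) = \Lambda_i$ directly, $R \otimes T^{op} = B \otimes (B^{S^{2i}})^{op}$, and the inverse specializes to $x \otimes y \mapsto \sum x_1 \otimes y \otimes x_0$, as claimed.

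The substantive verifications — that each assignment is multiplicative for the twisted multiplication $(r \otimes t)(r' \otimes t') = rr' \otimes t't$ on $L$, that it lands in the stated $L$-algebra via the coinvariance condition \eqref{L-defn} (as in the derivation of \eqref{T-R-H-eq}), and that it is a two-sided inverse to $\vep(h) t \otimes r \mapsto \cdots$ — are all inherited verbatim from the general constructions, so here they reduce to checking that the substitutions are legitimate. The only point demanding genuine care, and the step I would treat as the (modest) main obstacle, is the bookkeeping of the comodule twists: one must be explicit about which twisted coaction the Sweedler symbol $x_0 \otimes x_1$ refers to in each formula. In part (1) the element $x$ is read through the coaction of $T = B^{S^{2(i-1)}}$ (so $x_1$ already absorbs the twist $S^{2(i-1)}$), whereas in part (2) it is read through the coaction of $T = B^{S^{2i}}$; conflating the two would shift the outcome by a power of $S^2$. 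As a consistency check I would finally confirm that the two identifications are compatible with the already-recorded isomorphism $\Lambda_i \cong (\Lambda_{-i-1})^{op}$ coming from Lemma \ref{L(R, T, H)-lem}.
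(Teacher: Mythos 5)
Your proposal is correct and takes essentially the same route as the paper: the paper's entire proof is the sentence ``According to the explanation preceding Definition \ref{Lambda_i-defn}, we have the following result,'' i.e.\ precisely the specialization $T = B^{S^{2(i-1)}}, R = B$ (for part (1)) and $T = B^{S^{2i}}, R = B$ (for part (2)) of the two general isomorphisms you invoke, reading off the recorded inverse maps. Your explicit bookkeeping of which twisted coaction the Sweedler notation $x_0 \otimes x_1$ refers to is a correct clarification of a point the paper leaves implicit (and which is invisible in the cases $i=1$ and $i=0$ actually used later, in Theorems \ref{HG-bimod-str-thm-1} and \ref{HG-bimod-str-thm-2}).
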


The subsequent theorems are consequences of Theorem \ref{ff-Hopf-Galois-ext-Hopf-bimod-cat-str-thm'} and \ref{ff-Hopf-Galois-ext-Hopf-bimod-cat-str-thm}.

\begin{thm}\label{HG-bimod-str-thm-1}
	Let $H$ be a Hopf algebra and $B$ a right faithfully flat $H$-Galois extension of $A := B^{co H}$.
	Then we have quasi-inverse category equivalences
	\[ {_{B^e}\!\M^{H^e}_H} \To {_{\Lambda_1}\!\M}, \;\; N \mapsto N^{co H^e}, \qquad {_{\Lambda_1}\!\M} \To {_{B^e}\!\M^{H^e}_H}, \;\; M \mapsto B^e \otimes_{A^e} M, \]
	where
	\begin{enumerate}
		\item the $\Lambda_1$-module structure of $N^{co H^e}$ is defined by
		\[ \lambda \vtr n = \sum (x_0 \otimes y) n (Sx_1), \qquad \forall \; \lambda = x \otimes y \in \Lambda_1 \subseteq B^e;\]
		\item for $M \in {_{\Lambda_1}\!\M}$ we use $\vtr$ to denote the module structure, $M$ is a left $A^e$-module via $(a \otimes a')m = (a \otimes a' \otimes 1) \vtr m$, and the left $B^e$-module and $H^e$-comodule structures on $B^e \otimes_{A^e} M$ are those induced by the left tensorand, and the right $H$-module stucture is given by
		\begin{equation}\label{H-mod-on-B^e-otimes-M}
			(x \otimes y \otimes_{A^e} m) h = \sum x \kappa^1(S^{-1}h_2) \otimes \kappa^2(Sh_1)y \otimes_{A^e} \big( \kappa^2(S^{-1}h_2) \otimes \kappa^1(Sh_1) \big) \vtr m.
		\end{equation}
	\end{enumerate}
\end{thm}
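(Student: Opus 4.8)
The plan is to realise $_{B^e}\!\M^{H^e}_H$ as an instance of Schauenburg's structure theorem (Theorem \ref{ff-Hopf-Galois-ext-Hopf-bimod-cat-str-thm}) and then to translate the resulting module category $_L\!\M$ into $\Lambda_1$-modules. Concretely, I would take the Hopf algebra of that theorem to be $H^e$, the faithfully flat Hopf Galois extension to be $A^e \subseteq B^e$, and the auxiliary right $H^e$-comodule algebra to be $R = H$ with the coaction \eqref{H-right-H^e-comod-alg}. The extension $A^e \subseteq B^e$ is $H^e$-Galois with translation map $\kappa_{B^e}$ given by \eqref{translation-map-B^e-eq}, and it is right faithfully flat because $B$ is faithfully flat over $A$ on both sides; moreover the antipode of $H^e = H \otimes H^{op}$ is $S \otimes S^{-1}$, which is bijective. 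With these choices the category $_B\!\M^H_R$ of Theorem \ref{ff-Hopf-Galois-ext-Hopf-bimod-cat-str-thm} is exactly $_{B^e}\!\M^{H^e}_H$, the functor $N \mapsto N^{co H}$ becomes $N \mapsto N^{co H^e}$, and $M \mapsto B \otimes_A M$ becomes $M \mapsto B^e \otimes_{A^e} M$, so the quasi-inverse equivalences hold with $_L\!\M$ for $L := L(B^e, H, H^e)$.

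Next I would identify $L(B^e, H, H^e)$ with $\Lambda_1$. This is precisely the case $i = 1$ of Lemma \ref{Lambda-equi-lem}(1): the map $\psi : \Lambda_1 \stackrel{\cong}{\To} L(B^{S^0} \otimes B^{op}, H, H^e) = L(B^e, H, H^e)$, $x \otimes y \mapsto \sum x_0 \otimes y \otimes Sx_1$, is an algebra isomorphism whose inverse $\phi$ sends $t \otimes r \otimes h \mapsto \vep(h)\, t \otimes r$. The two formulas in the statement then arise by transporting the $L$-module structures of Theorem \ref{ff-Hopf-Galois-ext-Hopf-bimod-cat-str-thm} along $\psi$ and $\phi$. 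For part (1), an element $\lambda = x \otimes y \in \Lambda_1$ is sent by $\psi$ to $\sum (x_0 \otimes y) \otimes Sx_1 \in L \subseteq B^e \otimes H^{op}$; feeding this into the $L$-action $l \vtr n = bnr$ yields $\lambda \vtr n = \sum (x_0 \otimes y)\, n\, (Sx_1)$, as claimed. The left $A^e$-module structure in part (2) is immediate, since $a \otimes a' \in A^e$ corresponds under $\psi$ to $(a \otimes a') \otimes 1$ (because $\rho(a) = a \otimes 1$), while the left $B^e$-module and $H^e$-comodule structures on $B^e \otimes_{A^e} M$ are inherited from the left tensorand.

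The substantive step is to derive the right $H$-action \eqref{H-mod-on-B^e-otimes-M} from the general formula \eqref{H-mod-on-B-otimes-M}. Here I would write $r = h$, so that $r_0 = h_2$ and $r_1 = h_3 \otimes Sh_1$ by \eqref{H-right-H^e-comod-alg}, compute $S_{H^e}^{-1}(h_3 \otimes Sh_1) = S^{-1}h_3 \otimes S^2 h_1$ using $S_{H^e}^{-1} = S^{-1} \otimes S$, and expand $\kappa_{B^e}$ via \eqref{translation-map-B^e-eq} to get $\kappa_{B^e}^1(S^{-1}h_3 \otimes S^2 h_1) = \kappa^1(S^{-1}h_3) \otimes \kappa^2(Sh_1)$ together with $\kappa_{B^e}^2(S^{-1}h_3 \otimes S^2 h_1) = \kappa^2(S^{-1}h_3) \otimes \kappa^1(Sh_1)$. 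Substituting into \eqref{H-mod-on-B-otimes-M} produces the $B^e$-tensorand $x\kappa^1(S^{-1}h_3) \otimes \kappa^2(Sh_1)y$ and the $L$-action element $(\kappa^2(S^{-1}h_3) \otimes \kappa^1(Sh_1)) \otimes h_2$; applying $\phi$ to the latter contracts the middle leg through $\vep(h_2)$, and coassociativity $\sum h_1 \otimes \vep(h_2)h_3 = \sum h_1 \otimes h_2$ relabels the surviving leg $h_3$ as $h_2$ throughout, giving exactly \eqref{H-mod-on-B^e-otimes-M}.

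I expect the bookkeeping in this last computation to be the main obstacle: one must handle the $H^e$-coaction on $H$, the inverse antipode $S^{-1} \otimes S$ of $H^e$, and the explicit form of $\kappa_{B^e}$ simultaneously, and then check that the counit contraction coming from the isomorphism $L(B^e, H, H^e) \cong \Lambda_1$ consistently lowers the three-fold coproduct $h_1 \otimes h_2 \otimes h_3$ to the two-fold coproduct appearing in the stated formula. Everything else---well-definedness of the actions, functoriality, and the quasi-inverse property of the two functors---is inherited directly from Theorem \ref{ff-Hopf-Galois-ext-Hopf-bimod-cat-str-thm} together with the algebra isomorphism of Lemma \ref{Lambda-equi-lem}(1).
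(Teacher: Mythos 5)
Your proposal is correct and follows essentially the same route as the paper's own proof: the paper likewise applies Theorem \ref{ff-Hopf-Galois-ext-Hopf-bimod-cat-str-thm} to the $H^e$-Galois extension $A^e \subseteq B^e$ with auxiliary comodule algebra $R = H$, invokes Lemma \ref{Lambda-equi-lem} to identify $L(B^e, H, H^e)$ with $\Lambda_1$ via $x \otimes y \otimes h \mapsto \vep(h)\, x \otimes y$, and derives \eqref{H-mod-on-B^e-otimes-M} by the very same substitution you describe, using $S^{-1}_{H^e} = S^{-1} \otimes S$, the coaction \eqref{H-right-H^e-comod-alg}, the formula \eqref{translation-map-B^e-eq} for $\kappa_{B^e}$, and the counit contraction that relabels $h_3$ as $h_2$. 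Your bookkeeping in the key computation matches the paper's displayed calculation step for step, so there is nothing to add.
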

\begin{proof}
	Recall that $A^e \subseteq B^e$ is an $H^e$-Galois extension.
	The conclusion is then drawn from Theorem \ref{ff-Hopf-Galois-ext-Hopf-bimod-cat-str-thm} and Lemma \ref{Lambda-equi-lem}.
	For the reader’s benefit, we will examine the $H$-module structure on $B^e \otimes_{A^e} M$.
	The isomorphism $L(B^e, H, H^e) \to \Lambda_1, x \otimes y \otimes h \mapsto \vep(h) x \otimes y$ as stated in Lemma \ref{Lambda-equi-lem}, indicates that $M$ is also endowed with a left $L(B^e, H, H^e)$-module structure.
	We denote this module structure by $\vtr$ as well.
	Thus, our notation leads to the expression $(x \otimes y \otimes h) \vtr m = \vep(h) (x \otimes y) \vtr m$.
	Given $x, y \in B$, $h \in H$ and $m \in M$, we compute
	\begin{align*}
		& (x \otimes y \otimes_{A^e} m) h \\
		& \xlongequal{\eqref{H-mod-on-B-otimes-M}} \sum (x \otimes y) \kappa^1_{B^e}\big( S^{-1}_{H^e}(h_3 \otimes Sh_1) \big) \otimes_{A^e} \Big( \kappa^2_{B^e}\big( S^{-1}_{H^e}(h_3 \otimes Sh_1) \big) \otimes h_2 \Big) \vtr m \\
		& = \sum (x \otimes y) \kappa^1_{B^e}(S^{-1}h_3 \otimes S^2h_1) \otimes_{A^e} \big( \kappa^2_{B^e}(S^{-1}h_3 \otimes S^2h_1) \otimes h_2 \big) \vtr m \\
		& \xlongequal{\eqref{translation-map-B^e-eq}} \sum (x \otimes y) \big( \kappa^1(S^{-1}h_3) \otimes \kappa^2(Sh_1) \big) \otimes_{A^e} \big( \kappa^2(S^{-1}h_3) \otimes \kappa^1(Sh_1) \otimes h_2 \big) \vtr m \\
		& = \sum x \kappa^1(S^{-1}h_2) \otimes \kappa^2(Sh_1)y \otimes_{A^e} \big( \kappa^2(S^{-1}h_2) \otimes \kappa^1(Sh_1) \big) \vtr m.
	\end{align*}
\end{proof}

\begin{thm}\label{HG-bimod-str-thm-2}
	Let $H$ be a Hopf algebra and $B$ a right faithfully flat $H$-Galois extension of $A := B^{co H}$.
	Then we have quasi-inverse category equivalences
	\[ {_H\!\M^{H^e}_{B^e}} \To {_{\Lambda}\!\M}, \;\; N \mapsto N^{co H^e}, \qquad  {_{\Lambda}\!\M} \To {_H\!\M^{H^e}_{B^e}}, \;\; M \mapsto M \otimes_{A^e} B^e, \]
	where
	\begin{enumerate}
		\item the $\Lambda$-module structure of $N^{co H^e}$ is defined by
		\[ \lambda \vtr n = \sum x_1 n (y \otimes x_0), \qquad \forall \; \lambda = x \otimes y \in \Lambda \subseteq B^e;\]
		\item for $M \in {_{\Lambda}\!\M}$ we use $\vtr$ to denote the module structure, $M$ is a right $A^e$-module via $m(a \otimes a') = (a \otimes a) \vtr m$, and the right $B^e$-module and $H^e$-comodule structures on $M \otimes_{A^e} B^e$ are those induced by the right tensorand, and the left $H$-module stucture is given by
		\[ h(m \otimes_{A^e} x \otimes y) = \sum \big(\kappa^2(h_1) \otimes \kappa^1(h_2) \big) \vtr m \otimes_{A^e} \kappa^2(h_2) x \otimes y \kappa^1(h_1). \]
	\end{enumerate}
\end{thm}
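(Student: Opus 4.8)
The plan is to follow the proof of Theorem \ref{HG-bimod-str-thm-1} essentially verbatim, but with the roles of the two Schauenburg theorems interchanged: here the category carries $H$ on the left and $B^e$ on the right, so I would invoke Theorem \ref{ff-Hopf-Galois-ext-Hopf-bimod-cat-str-thm'} rather than Theorem \ref{ff-Hopf-Galois-ext-Hopf-bimod-cat-str-thm}. Recall that $A^e \subseteq B^e$ is a right faithfully flat $H^e$-Galois extension and that $H$ is a right $H^e$-comodule algebra via \eqref{H-right-H^e-comod-alg}. Applying Theorem \ref{ff-Hopf-Galois-ext-Hopf-bimod-cat-str-thm'} with $H$ replaced by $H^e$, $B$ by $B^e$, $A$ by $A^e$, and the comodule algebra $R$ taken to be $H$ gives quasi-inverse equivalences between ${_H\!\M^{H^e}_{B^e}}$ and ${_{L}\!\M}$, where $L := L(H, B^e, H^e)$, realized by $N \mapsto N^{co H^e}$ and $M \mapsto M \otimes_{A^e} B^e$. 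By Lemma \ref{Lambda-equi-lem}(2) there is an algebra isomorphism $\Lambda \to L$, $x \otimes y \mapsto \sum x_1 \otimes y \otimes x_0$; transporting the $L$-module structures along this isomorphism turns ${_L\!\M}$ into ${_\Lambda\!\M}$ and produces the two functors in the statement.

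It then remains to translate the structures supplied by Theorem \ref{ff-Hopf-Galois-ext-Hopf-bimod-cat-str-thm'} into the asserted formulas. Part (1) is immediate: the rule $l \vtr n = rnb$ for $l = r \otimes b \in L \subseteq H \otimes (B^e)^{op}$, applied to the image $\sum x_1 \otimes y \otimes x_0$ of $\lambda = x \otimes y \in \Lambda$, gives $\lambda \vtr n = \sum x_1 n (y \otimes x_0)$. The right $A^e$- and $B^e$-actions and the $H^e$-coaction on $M \otimes_{A^e} B^e$ are those induced by the right tensorand $B^e$, the $A^e$-action coming from the embedding $A^e \subseteq \Lambda$; these require only a routine identification.

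The one genuine computation, and the step I expect to require the most care, is the explicit left $H$-module structure on $M \otimes_{A^e} B^e$. By Theorem \ref{ff-Hopf-Galois-ext-Hopf-bimod-cat-str-thm'}(2), writing the $H^e$-coaction of \eqref{H-right-H^e-comod-alg} as $\rho_H(h) = \sum h_2 \otimes (h_3 \otimes Sh_1)$, the action reads $h(m \otimes_{A^e} b) = \sum \big( h_2 \otimes \kappa^1_{B^e}(h_3 \otimes Sh_1) \big) \vtr m \otimes_{A^e} \kappa^2_{B^e}(h_3 \otimes Sh_1)\, b$. I would expand $\kappa_{B^e}$ by \eqref{translation-map-B^e-eq}, which yields $\kappa^1_{B^e}(h_3 \otimes Sh_1) = \kappa^1(h_3) \otimes \kappa^2(h_1)$ and $\kappa^2_{B^e}(h_3 \otimes Sh_1) = \kappa^2(h_3) \otimes \kappa^1(h_1)$, thereby reducing everything to the translation map $\kappa$ of $A \subseteq B$. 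Pushing the element $h_2 \otimes (\kappa^1(h_3) \otimes \kappa^2(h_1))$ through the isomorphism $L \to \Lambda$ of Lemma \ref{Lambda-equi-lem}(2) (which sends $h \otimes r \otimes t$ to $\vep(h)\, t \otimes r$), using $\vep(h_2)$ to contract the middle leg, and multiplying out $\kappa^2_{B^e}(h_3 \otimes Sh_1)\, b$ in $B^e = B \otimes B^{op}$, the whole expression should collapse to
\[ h(m \otimes_{A^e} x \otimes y) = \sum \big(\kappa^2(h_1) \otimes \kappa^1(h_2) \big) \vtr m \otimes_{A^e} \kappa^2(h_2) x \otimes y \kappa^1(h_1), \]
as claimed. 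The only real obstacle is the bookkeeping of the Sweedler indices and the antipodes through the successive uses of \eqref{translation-map-B^e-eq} and \eqref{H-right-H^e-comod-alg}; no idea beyond that already deployed for Theorem \ref{HG-bimod-str-thm-1} is needed, and indeed this proof is the mirror image of that one.
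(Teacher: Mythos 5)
Your proposal is correct and is exactly the argument the paper intends: its proof of this theorem consists of the single remark that it ``parallels that of Theorem \ref{HG-bimod-str-thm-1}'', and the parallel argument is precisely what you spell out — invoke Theorem \ref{ff-Hopf-Galois-ext-Hopf-bimod-cat-str-thm'} for the $H^e$-Galois extension $A^e \subseteq B^e$ with $R = H$, identify $L(H, B^e, H^e)$ with $\Lambda$ via Lemma \ref{Lambda-equi-lem}(2), and unwind $\kappa_{B^e}$ through \eqref{translation-map-B^e-eq} and \eqref{H-right-H^e-comod-alg} to obtain the stated $H$-action. Your Sweedler bookkeeping checks out (the counit $\vep(h_2)$ contraction and the multiplication $(\kappa^2(h_2) \otimes \kappa^1(h_1))(x \otimes y) = \kappa^2(h_2)x \otimes y\kappa^1(h_1)$ in $B \otimes B^{op}$ yield exactly the claimed formula), so nothing is missing.
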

\begin{proof}
	The proof parallels that of Theorem \ref{HG-bimod-str-thm-1}.
\end{proof}
The subsequent lemma is a direct consequence of Lemma \ref{mod-str-on-Hom(M, A)-lem}, Lemma \ref{Lambda-equi-lem}, Theorem \ref{HG-bimod-str-thm-1} and Theorem \ref{HG-bimod-str-thm-2}.

\begin{lem}\label{mod-str-on-Hom(M, A^e)-lem}
	Let $H$ be a Hopf algebra and $B$ a right faithfully flat $H$-Galois extension of $A := B^{co H}$. 
	\begin{enumerate}
		\item For any left $\Lambda$-module $M$, $\Hom_{A^e}(M, A^e)$ is a left $\Lambda_{1}$-module, defined by
		\begin{equation}\label{mod-str-on-Hom(M, A^e)}
			\begin{split}
				& \big( (x \otimes y) \vtr f \big) (m) \\
				& = \sum (x_0 \otimes y) f\big( \kappa^2(Sx_2) \otimes (\kappa^1(Sx_1)) \vtr m \big) \big( \kappa^2 (Sx_1) \otimes \kappa^1(Sx_2) \big),
			\end{split}
		\end{equation}
		for any $x \otimes y \in \Lambda$, $f \in \Hom_{A^e}(M, A^e)$ and $m \in M$.
		\item For any $M \in {_{\Lambda}\!\M}$ and $N \in {_{B^e}\!\M_{B^e}^{H^e}}$, $\Hom_{A^e}(M, N)$ is a $B^e$-$H$-bimodule, where the right $H$-module is given by
		\begin{equation}\label{mod-str-on-Hom_{A^e}(M, N)}
			(fh)(m) = \sum f \Big( \big( \kappa^2(h_1) \otimes \kappa^1(h_2) \big) \vtr m \Big) \big( \kappa^2(h_2) \otimes \kappa^1(h_1) \big)
		\end{equation}
		for any $f \in \Hom_{A^e}(M, N)$, $h \in H$ and $m \in M$.
		\item The canonical map
		\[ \Phi: N \otimes_{A^e} \Hom_{A^e}(M, A^e) \To \Hom_{A^e}(M, N), \qquad n \otimes_{A^e} f \mapsto \big( m \mapsto nf(m) \big)\]
		is a $B^e$-$H$-bimodule morphism.
	\end{enumerate}
\end{lem}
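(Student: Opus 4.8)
The plan is to deduce the entire lemma by specializing Lemma \ref{mod-str-on-Hom(M, A)-lem} to the faithfully flat $H^e$-Galois extension $A^e \subseteq B^e$, with the auxiliary comodule algebra taken to be $R = H$ equipped with the right $H^e$-comodule structure \eqref{H-right-H^e-comod-alg} and with translation map $\kappa_{B^e}$ given by \eqref{translation-map-B^e-eq}. Under this substitution, Lemma \ref{mod-str-on-Hom(M, A)-lem} asserts that $\Hom_{A^e}(M, A^e)$ is a left $L(B^e, H, H^e)$-module whenever $M$ is a left $L(H, B^e, H^e)$-module, that $\Hom_{A^e}(M, N)$ is a Hopf bimodule in ${_{B^e}\!\M^{H^e}_H}$ for $N \in {_{B^e}\!\M^{H^e}_{B^e}}$, and that $\Phi$ is a morphism of such objects. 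To convert this into the stated form I first invoke Lemma \ref{Lambda-equi-lem}: part (2) with $i=0$ supplies an algebra isomorphism $\Lambda \cong L(H, B^e, H^e)$, so that left $\Lambda$-modules coincide with left $L(H, B^e, H^e)$-modules, and part (1) with $i=1$ supplies $\Lambda_1 \cong L(B^e, H, H^e)$, so that left $L(B^e, H, H^e)$-modules coincide with left $\Lambda_1$-modules. With these identifications all three structural assertions are immediate, and only the two displayed formulas \eqref{mod-str-on-Hom(M, A^e)} and \eqref{mod-str-on-Hom_{A^e}(M, N)} require verification.

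The second and main step is to rewrite the general formulas \eqref{mod-str-on-Hom(M, A)} and \eqref{R-mod-str-on-Hom(M, N)} explicitly in terms of the translation map $\kappa$ of $B$. For \eqref{mod-str-on-Hom_{A^e}(M, N)} I take $r = h \in H$, so that $r_0 = h_2$ and $r_1 = h_3 \otimes Sh_1$ by \eqref{H-right-H^e-comod-alg}; unwinding $\kappa_{B^e}(r_1)$ via \eqref{translation-map-B^e-eq} gives $\kappa^1_{B^e}(r_1) = \kappa^1(h_3) \otimes \kappa^2(h_1)$ and $\kappa^2_{B^e}(r_1) = \kappa^2(h_3) \otimes \kappa^1(h_1)$. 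Feeding these into \eqref{R-mod-str-on-Hom(M, N)} and applying the isomorphism $L(H, B^e, H^e) \to \Lambda$, $\; h \otimes r \otimes t \mapsto \vep(h)\, t \otimes r$ recorded before Definition \ref{Lambda_i-defn}, the factor $\vep(h_2)$ collapses the middle coproduct component and, after relabelling, produces precisely $(fh)(m) = \sum f\big((\kappa^2(h_1) \otimes \kappa^1(h_2)) \vtr m\big)\big(\kappa^2(h_2) \otimes \kappa^1(h_1)\big)$. The verification of \eqref{mod-str-on-Hom(M, A^e)} is analogous: an element $x \otimes y \in \Lambda_1$ corresponds to $\sum x_0 \otimes y \otimes Sx_1 \in L(B^e, H, H^e)$, so here $r = Sx_1$, and using that $S$ is an anti-coalgebra map one computes $r_0 = Sx_2$ and $r_1 = Sx_1 \otimes S^2 x_3$; substituting \eqref{translation-map-B^e-eq} and again collapsing via the $\Lambda$-identification and the counit recovers the stated formula. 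Part (3) is then immediate from Lemma \ref{mod-str-on-Hom(M, A)-lem}(3).

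The only genuine difficulty is the Sweedler-index bookkeeping in this second step. One must carefully distinguish the coaction components of $x$ (or coproduct components of $h$) that are genuinely tied together by a single application of $\kappa$—and hence survive into both the argument of $f$ and the trailing $B^e$-factor—from those that are annihilated by a counit after passing through the isomorphisms of Lemma \ref{Lambda-equi-lem}, all while correctly threading $S$, $S^{-1}$ and $S^2$ through $\kappa_{B^e}$ and $\rho_H$. No new ingredient beyond the translation-map identities \eqref{kappa^1-kappa^2_0-kappa^2_1}--\eqref{m-kappa=vep} is required; once the indices are aligned the two formulas drop out directly.
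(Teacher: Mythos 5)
Your proposal is correct and takes essentially the same route as the paper, which derives this lemma precisely as a direct consequence of Lemma \ref{mod-str-on-Hom(M, A)-lem}, Lemma \ref{Lambda-equi-lem} and Theorems \ref{HG-bimod-str-thm-1}--\ref{HG-bimod-str-thm-2}, i.e.\ by specializing the general statement to the $H^e$-Galois extension $A^e \subseteq B^e$ with auxiliary comodule algebra $R = H$ carrying the coaction \eqref{H-right-H^e-comod-alg}. Your unwinding of $\kappa_{B^e}$ via \eqref{translation-map-B^e-eq} and of the identifications $\Lambda \cong L(H, B^e, H^e)$, $\Lambda_1 \cong L(B^e, H, H^e)$ correctly reproduces both displayed formulas \eqref{mod-str-on-Hom(M, A^e)} and \eqref{mod-str-on-Hom_{A^e}(M, N)}, which is exactly the bookkeeping the paper leaves implicit.
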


For any $N \in  {_{B^e}\!\M_{B^e}^{H^e}}$ and $f \in \Hom_{A^e}(A, N)$, we have
\begin{align*}
	(fh)(1_A) & = \sum f \Big( \big( \kappa^2(h_1) \otimes \kappa^1(h_2) \big) \vtr 1_A \Big) \big( \kappa^2(h_2) \otimes \kappa^1(h_1) \big) \\
	& = \sum \kappa^1(h_1) f \big( \kappa^2(h_1) 1_A \kappa^1(h_2) \big) \kappa^2(h_2) \\
	& \;\; \text{ (since } \sum \kappa^1(h_1) \otimes_A \kappa^2(h_1) \kappa^1(h_2) \otimes_A \kappa^2(h_2) \in B \otimes_A A \otimes_A B) \\
	& = \sum \kappa^1(h_1) f(1_A)  \kappa^2(h_1) \kappa^1(h_2)\kappa^2(h_2) \\
	& \xlongequal{\eqref{m-kappa=vep}} \sum \kappa^1(h) f(1_A)  \kappa^2(h) \\
	& \xlongequal{\eqref{H-mod-on-M^A}} f(1_A) \lhu h.
\end{align*}
Consequently, the right $H$-module structure on $\Hom_{A^e}(A, N) \cong N^A$, as defined in \eqref{mod-str-on-Hom_{A^e}(M, N)}, aligns with that in \eqref{H-mod-on-M^A}.

\begin{prop}\label{B^e-otimes_{A^e}-Ext^i_{A^e}(M, A^e)-prop}
	Let $H$ be a Hopf algebra and $B$ a right faithfully flat $H$-Galois extension of $A := B^{co H}$. 
	\begin{enumerate}
		\item For any $M \in {_{\Lambda}\!\M}$ and $i > 0$, $\Ext^i_{A^e}(M, A^e)$ is a left $\Lambda_1$-module.
		\item If $M$ is perfect as an $A^e$-complex, then $\Ext^i_{A^e}(M, B^e) \cong B^e \otimes_{A^e} \Ext^i_{A^e}(M, A^e)$ as $B^e$-$H$-bimodules, where the right $H$-module on $B^e \otimes_{A^e} \Ext^i_{A^e}(M, A^e)$ is given by \eqref{H-mod-on-B^e-otimes-M}.
	\end{enumerate}
\end{prop}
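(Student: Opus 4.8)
The plan is to deduce both parts from a single projective resolution of $M$ in the module category ${_{\Lambda}\!\M}$, and then transport the pointwise structures of Lemma~\ref{mod-str-on-Hom(M, A^e)-lem} to cohomology. The only thing that makes this legitimate is that such a resolution \emph{also} computes $\Ext_{A^e}$; this rests on $\Lambda$ being projective as a \emph{left} $A^e$-module. So first I would record this projectivity. Since $A \subseteq B$ is faithfully flat on both sides, Theorem~\ref{faithful-flat-Hopf-Galois-extension-thm} gives a right $H$-colinear, left $A$-linear section $s\colon B \to A \otimes B$ of the multiplication $A \otimes B \to B$, and, applying the same statement to $A^{op} \subseteq B^{op}$, a right $H$-colinear, right $A$-linear section $s'\colon B \to B \otimes A$ of $B \otimes A \to B$. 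The map $s \otimes s'\colon B \otimes B \to A \otimes (B \otimes B) \otimes A$ is then $H$-colinear for the diagonal coaction and is split by multiplication, so on coinvariants it realizes $\Lambda = (B \otimes B)^{co H}$ as a direct summand of the free left $A^e$-module $A \otimes \Lambda \otimes A$. Here $A^e = A \otimes A^{op}$ acts by $a \otimes a' \mapsto (x \otimes y \mapsto ax \otimes ya')$, which is exactly the restriction of the left $\Lambda$-action, so $\Lambda$, and hence every projective left $\Lambda$-module, is projective as a left $A^e$-module. (This is the projectivity lemma preceding Lemma~\ref{mod-str-on-Hom(M, A^e)-lem}, now carried out two-sidedly for the faithfully flat $H^e$-Galois extension $A^e \subseteq B^e$.)

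For part (1), I would choose a projective resolution $P_\bullet \to M$ in ${_{\Lambda}\!\M}$. By the projectivity just established, $P_\bullet$ restricts to a projective resolution of $M$ in ${_{A^e}\!\M}$, so $\Ext^i_{A^e}(M, A^e) = \mathrm{H}^i(\Hom_{A^e}(P_\bullet, A^e))$. Lemma~\ref{mod-str-on-Hom(M, A^e)-lem}(1) makes each $\Hom_{A^e}(P_j, A^e)$ a left $\Lambda_1$-module via \eqref{mod-str-on-Hom(M, A^e)}, and naturality of that formula in the first variable shows the differentials are $\Lambda_1$-linear. Thus $\Hom_{A^e}(P_\bullet, A^e)$ is a complex in ${_{\Lambda_1}\!\M}$, and its cohomology $\Ext^i_{A^e}(M, A^e)$ inherits a left $\Lambda_1$-module structure, independent of the resolution by the standard comparison argument.

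For part (2), I would feed the same resolution into the canonical maps of Lemma~\ref{mod-str-on-Hom(M, A^e)-lem}(3) with $N = B^e \in {_{B^e}\!\M^{H^e}_{B^e}}$, obtaining a morphism of complexes
$$\Phi_\bullet\colon B^e \otimes_{A^e} \Hom_{A^e}(P_\bullet, A^e) \To \Hom_{A^e}(P_\bullet, B^e)$$
in ${_{B^e}\!\M^{H^e}_H}$, whose target computes $\Ext^i_{A^e}(M, B^e)$. By Proposition~\ref{RHom-prop}(2) with $X = B^e$, together with the perfectness of $M$ over $A^e$, the underlying morphism $B^e \Lt_{A^e} \RHom_{A^e}(M, A^e) \to \RHom_{A^e}(M, B^e)$ is an isomorphism in $\D({_{B^e}\!\M^{H^e}})$, so $\Phi_\bullet$ is a quasi-isomorphism. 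Since $B^e$ is faithfully flat over $A^e$, the functor $B^e \otimes_{A^e} -$ commutes with cohomology, and passing to $\mathrm{H}^i$ yields $\Ext^i_{A^e}(M, B^e) \cong B^e \otimes_{A^e} \Ext^i_{A^e}(M, A^e)$ as $B^e$-$H$-bimodules. The right $H$-action on the target is exactly \eqref{H-mod-on-B^e-otimes-M}, since that is the structure the equivalence of Theorem~\ref{HG-bimod-str-thm-1} attaches to $B^e \otimes_{A^e}(-)$ applied to the $\Lambda_1$-module of part (1).

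I expect two genuinely non-formal points. The first is matching the \emph{side} of the $A^e$-projectivity to the variance of $\Ext_{A^e}(M, -)$: a left $\Lambda$-module must be resolved by modules projective as \emph{left} $A^e$-modules, and the one-sided projectivity lemma applied to $A^e \subseteq B^e$ only yields this directly for $\Lambda_1$, which is why $\Lambda$ requires the two-sided splitting argument above. The second, and the harder point, is the quasi-isomorphism in part (2): the termwise maps $\Phi_j$ are generally \emph{not} isomorphisms, because a projective $\Lambda$-module need not be finitely generated over $A^e$; hence the isomorphism cannot be verified degreewise and must be obtained in the derived category from the perfectness of $M$ via Proposition~\ref{RHom-prop}(2).
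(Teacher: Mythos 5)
Your proposal is correct, and its skeleton coincides with the paper's own proof: resolve $M$ by projective left $\Lambda$-modules, let Lemma \ref{mod-str-on-Hom(M, A^e)-lem} supply the $\Lambda_1$-structure on $\Hom_{A^e}(P_\bullet, A^e)$ and the comparison map $\Phi_\bullet$, and use perfectness of $M$ over $A^e$ to conclude that $\Phi_\bullet$ is a quasi-isomorphism. The differences lie in how you discharge the two supporting facts, and in both places your version is more explicit than the paper's. For the projectivity: the paper merely asserts that ``$\Lambda$ is a projective $A^e$-module''; its one-sided projectivity lemma applied to $A^e\subseteq B^e$ gives \emph{left} $A^e$-projectivity of $\Lambda_1\cong L(B^e,H,H^e)$ but only \emph{right} $A^e$-projectivity of $\Lambda\cong L(H,B^e,H^e)$, and the left-sided statement that the restriction-of-resolutions argument actually needs must be extracted either by your two-sided splitting or by applying that lemma to the twisted extension $A^e\subseteq B^{S^{-2}}\otimes B^{op}$ via Lemma \ref{Lambda-equi-lem}(1) with $i=0$; your direct argument settles this cleanly. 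For the quasi-isomorphism: the paper's phrase that $P_\bullet$ ``can be selected as a bounded complex of finitely generated projective $A^e$-modules'' cannot be read literally (as you note, a projective $\Lambda$-module is almost never finitely generated over $A^e$), and what it encodes is exactly the derived-category comparison you run through Proposition \ref{RHom-prop}(2). One small overstatement on your side: Lemma \ref{mod-str-on-Hom(M, A^e)-lem}(3) makes $\Phi_\bullet$ a morphism of $B^e$-$H$-bimodules only, not a morphism in ${_{B^e}\!\M^{H^e}_H}$ --- for non-finitely-generated $P_j$ the target $\Hom_{A^e}(P_j,B^e)$ need not even be a rational $H^e$-comodule (compare Lemma \ref{lem-0}) --- but since the proposition only claims a $B^e$-$H$-bimodule isomorphism, nothing is lost.
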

\begin{proof}
	(1)
	Taking a projective resolution $P_{\bullet}$ of the $\Lambda$-module $M$, by Lamma \ref{mod-str-on-Hom(M, A^e)-lem} (1), $\Hom_{A^e}(P_{\bullet}, A^e)$ is a complex of $\Lambda_1$-modules.
	Since $\Lambda$ is a projective $A^e$-module, $\Ext^i_{A^e}(M, A^e) \cong \mathrm{H}^i(\Hom_{A^e}(P_{\bullet}, A^e))$.
	Thus, $\Ext^i_{A^e}(M, A^e)$ is a left $\Lambda_1$-module.
	
	(2)
	Note that the canonical chain map $\Phi_{\bullet}: B^e \otimes_{A^e} \Hom_{A^e}(P_{\bullet}, A^e) \to \Hom_{A^e}(P_{\bullet}, B^e)$ is both $B^e$-linear and $H$-linear, as per \ref{mod-str-on-Hom(M, A^e)-lem} (3).
	Given that $M$ is perfect, $P_{\bullet}$ can be selected as a bounded complex of finitely generated projective $A^e$-modules.
	Hence, $\Phi_{\bullet}$ is a quasi-isomorphism.
	Taking homologies yields the desired result.
\end{proof}

\section{Homological determinats and Nakayama automotphisms for crossed products}\label{Nak-aut-cleft-ext-section}

Let $H$ be a Hopf algebra and $A$ be an algebra.
Suppose that $H$ {\it measures} $A$, that is, there is a $\kk$-linear map $H \otimes A \to A$, given by $h \otimes a \mapsto h \cdot a$, such that
$$h \cdot 1 = \vep(h)1, \;\; \text{ and } \;\; h \cdot (ab) = \sum (h_1 \cdot a)(h_2 \cdot b),$$
for all $h \in H$, $a, b \in A$.
Let $\sigma$ be a convolution invertible map in $\Hom(H \otimes H, A)$, meaning there exists $\sigma^{-1} \in \Hom(H \otimes H, A)$ satisfying $\sum \sigma(h_1, k_1) \sigma^{-1}(h_2, k_2) = \vep(hk)$ for all $h, k \in H$.
Let $A\#_{\sigma}H$ be the (in general nonassociative) algebra whose underlying vector space is $A \otimes H$ with multiplication
$$(a\#h)(b\#k) = \sum a(h_1 \cdot b)\sigma(h_2, k_1) \# h_3k_2$$
for all $a, b \in A$, $h, k \in H$.
The element $a \otimes h$ will usually be written $a \# h$.
The algebra $A\#_{\sigma}H$ is called {\it crossed product} of $A$ with $H$ if it is associative with $1\#1$ as identity element.
Recall from \cite{BCM1986} and \cite{DT1986} that $A\#_{\sigma}H$ is a crossed product if and only if the following two conditions are satisfied:
\begin{enumerate}
	\item $A$ is a twisted $H$-module. That is, $1 \cdot a = a$, for all $a \in A$, and
	\begin{equation}\label{h-k-a}
		h \cdot (k \cdot a) = \sum \sigma(h_1, k_1)(h_2k_2 \cdot a)\sigma^{-1}(h_3, k_3)
	\end{equation}
	for all $h, k \in H$, $a \in A$.
	\item $\sigma$ is normal cocycle. That is, $\sigma(h, 1) = \sigma(1, h) = \vep(h)1$, for all $h \in H$, and
	\begin{equation}\label{cocycle-eq}
		\sum \big( h_1 \cdot \sigma(k_1, l_1) \big) \sigma(h_2, k_2l_2) = \sum \sigma(h_1, k_1) \sigma(h_2k_2, l)
	\end{equation}
	for all $h, k, l \in H$.
\end{enumerate}

Deriving from \eqref{cocycle-eq}, we have the following identities for all $h, k, l \in H$:
\begin{eqnarray}\label{cocycle-eq-1}
	\sum \sigma^{-1}(h_1, k_1) \big( h_2 \cdot \sigma(k_2, l) \big) = \sum \sigma(h_1k_1, l_1) \sigma^{-1}(h_2, k_2l_2),
\end{eqnarray}
\begin{eqnarray}\label{cocycle-eq-3}
	\sum \sigma^{-1}(h_1k_1, l) \sigma^{-1}(h_2, k_2) = \sum \sigma^{-1}\big(h_1, k_1l_1 \big) \big( h_2 \cdot \sigma^{-1}(k_2, l_2) \big),
\end{eqnarray}

Clearly, each crossed product $A\#_{\sigma}H$ is a right $H$-comodule algebra such that $A \subseteq A\#_{\sigma}H$ is a faithfully flat $H$-Galois extension.

Let $B$ be a right $H$-comodule algebra. The extension $A(:= B^{co H}) \subset B$ is termed $H$-{\it cleft} if there exists a convolution invertible right $H$-comodule map $\gamma: H \to B$.

On one hand, consider the crossed product $A\#_{\sigma}H$.
Define the map $\gamma: H \to A\#_{\sigma}H$ by $\gamma(h) = 1\#h$, which is convolution invertible with the inverse given by
\begin{equation}\label{inverse-of-gamma}
	\gamma^{-1}(h) = \sum \sigma^{-1}(Sh_2, h_3) \# Sh_1,
\end{equation}
Thus, $A \subseteq A\#_{\sigma}H$ forms an $H$-cleft extension, as shown in Proposition 7.2.7 of \cite{Mon1993}.

On the other hand, every $H$-cleft extension $A \subseteq B$ is a crossed product of $A$ with $H$, where the action is defined by
\begin{equation}\label{H-cdot-A}
	h \cdot a = \sum \gamma(h_1) a \gamma^{-1}(h_2), \,\,\, \forall \, a \in A, h \in H,
\end{equation}
and the convolution invertible map $\sigma : H \otimes H \to A$ is given by
\begin{equation}\label{sigma-gamma}
	\sigma(h, k) = \sum \gamma(h_1)\gamma(k_1)\gamma^{-1}(h_2k_2), \,\,\, \forall \, h,k \in H,
\end{equation}
This result is established in Proposition 7.2.3 \cite{Mon1993}.

Given a cleft $H$-extension $A \subseteq B$ and a convolution invertible right $H$-comodule map $\gamma: H \to B$, for any $h \in H$, the following equality holds:
\begin{align*}
	\beta\big( \sum \gamma^{-1}(h_1) \otimes \gamma(h_2) \big) & = \beta\big( \sum \sigma^{-1}(Sh_2, h_3) \# Sh_1 \otimes 1 \# h_4 \big) \\
	& = \sum \sigma^{-1}(Sh_3, h_4) \sigma(Sh_2, h_5) \# (Sh_1) h_6 \otimes h_7 \\
	& = 1 \# h.
\end{align*}
Consequently, the translation map for the extension is given by
\begin{equation}\label{translation-map-for-cleft-ext}
	\kappa(h) = \sum \kappa^{1}(h) \otimes \kappa^2(h) = \sum \gamma^{-1}(h_1) \otimes \gamma(h_2). 
\end{equation}


\begin{lem}
	Let $B$ be a crossed product $A\#_{\sigma} H$.
	Then the algebra $\Lambda_i$, as defined in Definition \ref{Lambda_i-defn}, is identified with $\{ \sum x \# h_1 \otimes y \# S^{2i+1}h_2 \mid x, y \in A, h \in H \} \subseteq B^e$.
\end{lem}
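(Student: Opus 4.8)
The plan is to reduce the statement to a purely Hopf-algebraic computation by exploiting that $B = A\#_{\sigma}H$ is, as a right $H$-comodule, nothing but $A \otimes H$ with $A$ carrying the trivial coaction and $H$ the regular one. First I would record that the comodule algebra structure on the crossed product is $\rho(a\#h) = \sum a\# h_1 \otimes h_2$, so that the linear bijection $a\#h \mapsto a \otimes h$ identifies $B$ with $A \otimes H$ as right $H$-comodules, and likewise $B^{S^{2i}}$ with $A \otimes H^{S^{2i}}$, where $H^{S^{2i}}$ denotes $H$ with coaction $h \mapsto \sum h_1 \otimes S^{2i}(h_2)$. Recall that the twist $S^{2i}$ affects only the comodule structure and not the multiplication, so $B^{S^{2i}} = B$ as algebras and $\Lambda_i \subseteq B^{S^{2i}} \otimes B^{op} = B^e$. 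Since $A$ carries the trivial coaction, trivial comodule tensorands split off the coinvariants; reordering the two middle factors then gives
\[ \Lambda_i = (B^{S^{2i}} \otimes B)^{co H} \cong A \otimes A \otimes (H^{S^{2i}} \otimes H)^{co H}, \]
where $x \otimes y \otimes \xi$ corresponds to the element of $B^{S^{2i}} \otimes B$ obtained by inserting $x$ and $y$ into the two $A$-slots of $\xi$. Thus it suffices to prove that $(H^{S^{2i}} \otimes H)^{co H} = \{ \sum h_1 \otimes S^{2i+1} h_2 \mid h \in H \}$.

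To compute this last space I would use a straightening isomorphism analogous to the one in the fundamental theorem of Hopf modules. Concretely, I claim that
\[ \Gamma: H^{S^{2i}} \otimes H \To H \otimes H, \qquad u \otimes v \mapsto \sum u_1 \otimes S^{2i}(u_2) v, \]
is an isomorphism of right $H$-comodules from the diagonal comodule $H^{S^{2i}} \otimes H$ onto $H \otimes H$ with first tensorand trivial and second regular, with inverse $w \otimes v \mapsto \sum w_1 \otimes S^{2i+1}(w_2) v$. Checking that $\Gamma$ is colinear is a direct Sweedler computation, and the two composites collapse to the identity by means of the antipode axioms $\sum u_1 S(u_2) = \vep(u)1 = \sum S(u_1)u_2$, together with the facts that $S^{2i}$ is an algebra map and $S^{2i+1}$ an anti-coalgebra map. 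Since the coinvariants of $H \otimes H$ with trivial first factor are exactly $H \otimes \kk 1$, applying $\Gamma^{-1}$ yields
\[ (H^{S^{2i}} \otimes H)^{co H} = \Gamma^{-1}(H \otimes 1) = \Big\{ \sum h_1 \otimes S^{2i+1} h_2 \;\Big|\; h \in H \Big\}. \]

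Transporting this back through the identification of the first paragraph gives
\[ \Lambda_i = \Big\{ \sum x\# h_1 \otimes y\# S^{2i+1} h_2 \;\Big|\; x, y \in A,\ h \in H \Big\} \subseteq B^e, \]
as claimed. The hard part will be the second paragraph: one must verify carefully that $\Gamma$ respects the \emph{twisted} diagonal coaction and track the powers of $S$ precisely, so that $\Gamma^{-1}$ produces $S^{2i+1}$ rather than some neighbouring power. As an independent sanity check I would also confirm the forward inclusion directly, showing via the characterization \eqref{L-defn} that each $\sum h_1 \otimes S^{2i+1} h_2$ is coinvariant for the diagonal $H$-coaction on $B^{S^{2i}} \otimes B$; this computation again hinges on $S^{2i+1}$ being anti-comultiplicative and on the antipode axioms, and it corroborates the equality obtained from the comodule isomorphism.
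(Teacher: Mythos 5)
Your proposal is correct, but it is organized quite differently from the paper's own proof, which is a short, direct element computation: the paper notes the inclusion $\{\sum x\#h_1 \otimes y\#S^{2i+1}h_2\} \subseteq \Lambda_i$ as evident, and for the converse takes a (generic) element $x\#h \otimes y\#k \in \Lambda_i$, writes out the defining coinvariance relation $\sum x\#h_1 \otimes y\#k_1 \otimes (S^{2i}h_2)k_2 = x\#h \otimes y\#k \otimes 1$, multiplies the $H$-leg by $S^{2i+1}h_2$, and collapses via the antipode axiom and multiplicativity of $S^{2i}$ to get $x\#h \otimes y\#k = \vep(k)\sum x\#h_1 \otimes y\#S^{2i+1}h_2$. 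You instead make the argument structural: you use that as comodules $B \cong A \otimes H$ (trivial tensor regular), split the trivial tensorands out of the coinvariants to reduce to $(H^{S^{2i}} \otimes H)^{co H}$, and compute that space by your untwisting isomorphism $\Gamma$ --- which is exactly the standard isomorphism $M \otimes H \cong M_{\mathrm{triv}} \otimes H_{\mathrm{reg}}$, $m \otimes h \mapsto \sum m_0 \otimes m_1h$, specialized to $M = H^{S^{2i}}$, with inverse $m \otimes h \mapsto \sum m_0 \otimes S(m_1)h$. All the steps you outline do go through ($S^{2i}$ is an algebra and coalgebra map for every $i \in \Z$ since $S$ is bijective, and $\Gamma$, $\Gamma^{-1}$ compose to the identity by the antipode axioms), so there is no gap; note only that the set in the statement must be read, per the paper's standing abuse of notation, as the image of the linear map $A \otimes A \otimes H \to B^e$, $x \otimes y \otimes h \mapsto \sum x\#h_1 \otimes y\#S^{2i+1}h_2$, which is exactly what your identification produces. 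In the end both proofs hinge on the same Sweedler manipulation (your $\Gamma^{-1}$ encodes the paper's multiplication-by-$S^{2i+1}h_2$ trick), but the trade-off is clear: the paper's computation is shorter and self-contained, while your route is more conceptual --- it explains \emph{why} the power $S^{2i+1}$ appears, isolates the content in a statement purely about $H$, and would generalize to coinvariants of any pair of comodule algebras of the form (vector space) $\otimes$ $H$ with twisted regular coactions.
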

\begin{proof}
	The inclusion $\{ \sum x \# h_1 \otimes y \# S^{2i+1}h_2 \mid x, y \in A, h \in H \} \subseteq \Lambda_i$ is evident. For any $x \# h \otimes y \# k \in \Lambda_i$, the definition of $\Lambda_i$ yields
	\[ \sum x \# h_1 \otimes y \# k_1 \otimes (S^{2i}h_2) k_2 = x \# h \otimes y \# k \otimes 1. \]
	Thus, we have
	\begin{align*}
		x \# h \otimes y \# k \otimes S^{2i+1}h_2 & = \sum x \# h_1 \otimes y \# k_1 \otimes (S^{2i+1}h_2) (S^{2i}h_3) k_2 \\
		& = \sum x \# h \otimes y \# k_1 \otimes k_2.
	\end{align*}
	Thus, $x \# h \otimes y \# k = x \# h \otimes y \# \vep(k_1) k_2 = x \# h \otimes y \# \vep(k) S^{2i+1}h_2$.
	This implies $\Lambda_i \subseteq \{ \sum x \# h_1 \otimes y \# S^{2i+1}h_2 \mid x, y \in A, h \in H \}$.
	Therefore, the result follows.
\end{proof}

Smash products are a specialized form of cleft extensions. Consider a right $H$-module algebra $A$ with action denoted by ``$\cdot$" and a cocycle $\sigma(h, k) = \vep(h)\vep(k)$ for all $h, k \in H$.
In this setting, the crossed product $A \#_{\sigma} H$ constitutes a smash product, denoted $A\#H$.
Let $\Delta_i$ be the algebra whose underlying vector space is $A^e \otimes H$, and whose multiplication is defined by
\[ (x \otimes y \otimes h)(x' \otimes y' \otimes h') = \sum x(h_1 \cdot x') \otimes (S^{2i}h_3 \cdot y') y \otimes h_2h'. \]
The algebra $\Delta_i$, introduced in \cite{Kay2007} and \cite{ LMeu2019}, has been utilized to examine the Calabi--Yau property of smash products, as detailed in \cite{LWZ2012}, \cite{RRZ2014}, \cite{LMeu2019}.

Subsequently, we will show that $\Lambda_i$ is essentially $\Delta_i$ in the context of smash products.

\begin{lem}
	The map $F: \Delta_{i} \to \Lambda_i$ defined by
	\[ x \otimes y \otimes h \mapsto \sum x \# h_1 \otimes S^{2i+1}h_3 \cdot y \# S^{2i+1}h_2 \]
	is an algebra isomorphism.
\end{lem}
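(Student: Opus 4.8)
The plan is to prove the two halves of the statement separately: first that $F$ is a bijective $\kk$-linear map \emph{onto} $\Lambda_i$ (which also settles well-definedness), and then that it is multiplicative. For the linear half I would factor $F$ through the \emph{untwisted} comparison map
\[ \bar F\colon A^e\otimes H\To B^e,\qquad x\otimes y\otimes h\mapsto \sum (x\#h_1)\otimes(y\#S^{2i+1}h_2), \]
together with the $\kk$-linear endomorphism $\theta$ of $A^e\otimes H$ given by $\theta(x\otimes y\otimes h)=\sum x\otimes(S^{2i+1}h_2\cdot y)\otimes h_1$. A one-line re-indexing of the coproduct (split $h$ as $h_1\otimes h_2$ in $\theta$, then split the surviving $h_1$ under $\bar F$) shows that $F=\bar F\circ\theta$. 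By the preceding lemma the image of $\bar F$ is exactly $\Lambda_i$, and $\bar F$ is injective because it admits the left inverse $a\otimes g\otimes b\otimes k\mapsto \varepsilon(k)\,(a\otimes b\otimes g)$, using $\varepsilon\circ S^{2i+1}=\varepsilon$. Thus $\bar F$ is a linear isomorphism $A^e\otimes H\xrightarrow{\sim}\Lambda_i$.

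It then remains to see that $\theta$ is a linear automorphism of $A^e\otimes H$; its inverse is $\theta^{-1}(x\otimes y\otimes h)=\sum x\otimes(S^{2i}h_2\cdot y)\otimes h_1$. To verify $\theta^{-1}\circ\theta=\id$ I would compute, after the usual re-indexing, the operator $(S^{2i}h_2)(S^{2i+1}h_3)$ acting on $y$; since $2i+1$ is odd and $2i$ is even, $S^{2i}$ is an algebra map while $S^{2i+1}=S^{2i}\circ S$, so this equals $S^{2i}(h_2\,Sh_3)$, and the antipode identity $\sum c_1 S(c_2)=\varepsilon(c)1$ collapses the adjacent pair to leave $y\otimes h$; the reverse composite $\theta\circ\theta^{-1}=\id$ is symmetric, using $\sum S(c_1)c_2=\varepsilon(c)1$. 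Combining, $F=\bar F\circ\theta$ is a linear isomorphism onto $\Lambda_i$, so in particular $F(x\otimes y\otimes h)\in\Lambda_i$ for all arguments.

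The multiplicativity of $F$ is the main obstacle. Here I would compute both sides directly in $B^e=B\otimes B^{op}$, recalling that the product in $\Lambda_i$ multiplies first tensor legs in $B$ and second tensor legs in the \emph{opposite} order. On the first legs the computation is just the smash-product rule $(a\#g)(b\#k)=\sum a(g_1\cdot b)\#g_2 k$, which reproduces the factor $\sum x(h_1\cdot x')\#\,\cdots$ appearing in the $\Delta_i$-multiplication. The delicate point is the second ($B^{op}$) leg: because the second legs $\sum(S^{2i+1}h_3\cdot y)\#S^{2i+1}h_2$ and $\sum(S^{2i+1}h'_3\cdot y')\#S^{2i+1}h'_2$ are multiplied in reverse, the anti-comultiplicativity of the odd power $S^{2i+1}$ (so that $\Delta(S^{2i+1}h)=\sum S^{2i+1}h_2\otimes S^{2i+1}h_1$) interacts with the smash action $\kappa$-component to drop exactly one application of $S$, turning an $S^{2i+1}$ into the $S^{2i}$ that governs the term $(S^{2i}h_3\cdot y')\,y$ in the $\Delta_i$-product. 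I expect this reconciliation — carried out with the module-algebra axiom $g\cdot(ab)=\sum(g_1\cdot a)(g_2\cdot b)$, the counit/antipode identities, and careful Sweedler bookkeeping — to be the only genuinely lengthy step; once the two legs are matched, equality of $F\big((x\otimes y\otimes h)(x'\otimes y'\otimes h')\big)$ and $F(x\otimes y\otimes h)\,F(x'\otimes y'\otimes h')$ follows, and together with the first two paragraphs this establishes that $F$ is an algebra isomorphism.
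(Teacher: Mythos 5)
Your first two paragraphs are correct and complete. The factorization $F=\bar F\circ\theta$, the left inverse $a\#g\otimes b\#k\mapsto \varepsilon(k)\,(a\otimes b\otimes g)$ showing $\bar F$ is a linear isomorphism onto $\Lambda_i$, and the verification that $\theta$ and $\theta^{-1}$ are mutually inverse (via $S^{2i}(h_2\,Sh_3)$, resp. $S^{2i}\big((Sh_2)h_3\big)$, collapsing to $\varepsilon$) all check out. This is a slightly more structured route to bijectivity than the paper, which simply exhibits the inverse map $\sum x\#h_1\otimes y\#S^{2i+1}h_2\mapsto \sum x\otimes(S^{2i}h_2\cdot y)\otimes h_1$ and calls $F$ ``evidently bijective''; note that your composite $\theta^{-1}\circ\bar F^{-1}$ is exactly that map, so the two arguments agree in substance.

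The genuine gap is your third paragraph: multiplicativity of $F$ is the entire content of the paper's proof --- a several-line Sweedler computation --- and you do not carry it out. You name the right ingredients (smash-product rule on the first legs, reversed multiplication on the $B^{op}$ legs, anti-multiplicativity and anti-comultiplicativity of the odd power $S^{2i+1}$, the module-algebra axiom) and you correctly locate the delicate point, namely reconciling the odd powers $S^{2i+1}$ in $F$ with the even power $S^{2i}$ in the $\Delta_i$-product. But ``I expect this reconciliation \dots to follow'' is a prediction, not a proof, and your description of the mechanism (``drop exactly one application of $S$'') is too vague to substitute for it. The precise mechanism is this: expanding $S^{2i+1}(h_4h'_3)\cdot\big((S^{2i}h_5\cdot y')\,y\big)$ by the module-algebra axiom and anti-comultiplicativity produces the factor $S^{2i+1}(h_5)\,S^{2i}(h_6)=S^{2i}\big((Sh_5)h_6\big)$, which collapses to $\varepsilon(h_5)1$ by the antipode axiom; what remains, after splitting $S^{2i+1}(h_4h'_3)=(S^{2i+1}h'_3)(S^{2i+1}h_4)$ by anti-multiplicativity, is $\sum (S^{2i+1}h'_4\cdot y')\big((S^{2i+1}h'_3)(S^{2i+1}h_4)\cdot y\big)\#(S^{2i+1}h'_2)(S^{2i+1}h_3)$, which is exactly the second leg of $F(x\otimes y\otimes h)\,F(x'\otimes y'\otimes h')$ computed in $B^{op}$. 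Until this computation (or an equivalent one) is written out, the claim that $F$ is an algebra map --- and hence the lemma --- is not established.
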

\begin{proof}
	The map $F$ is evidently bijective, with the inverse map given by
	\[ \sum x \# h_1 \otimes y \# S^{2i+1}h_2 \mapsto \sum x \otimes (S^{2i}h_2 \cdot y) \otimes h_1. \]
	To show that $F$ is an algebra isomorphism, we verify that it preserves the algebra structure. We have
	\begin{align*}
		& F\big( (x \otimes y \otimes h)(x' \otimes y' \otimes h') \big) \\
		& = \varphi\big( \sum x(h_1 \cdot x') \otimes (S^{2i}h_3 \cdot y') y \otimes h_2h' \big) \\
		& = \sum x(h_1 \cdot x') \# h_2h'_1 \otimes S^{2i+1}(h_4{h'}_3) \cdot \big( (S^{2i}h_5 \cdot y') y \big) \# S^{2i+1}(h_3{h'}_2) \\
		& = \sum x (h_1 \cdot {x'}) \# h_2{h'}_1 \otimes (S^{2i+1}{h'}_4 \cdot {y'}) \big( S^{2i+1}(h_4{h'}_3) \cdot y \big) \# S^{2i+1}(h_3{h'}_2) \\
		& = \sum x (h_1 \cdot {x'}) \# h_2{h'}_1 \otimes (S^{2i+1}{h'}_4 \cdot {y'}) \big( (S^{2i+1}{h'}_3) (S^{2i+1}h_4) \cdot y \big) \# (S^{2i+1}{h'}_2)(S^{2i+1}h_3) \\
		& = (\sum x \# h_1 \otimes S^{2i+1}h_3 \cdot y \# S^{2i+1}h_2) (\sum {x'} \# {h'}_1 \otimes S^{2i+1}{h'}_3 \cdot {y'} \# S^{2i+1}{h'}_2 ) \\
		& = F(x \otimes y \otimes h) F(x' \otimes y' \otimes h'),
	\end{align*}
	which demonstrates that $F$ is an algebra isomorphism.
\end{proof}

Subsequently, we consider $B$ as a crossed product $A\#_{\sigma}H$ with the comodule map $\gamma: H \to A$ defined by $\gamma(h) = 1\#h$.

\begin{lem}
	For any $h, k \in H$,
	\begin{equation}\label{gamma^{-1}-gamma^{-1}}
		\gamma^{-1}(k) \gamma^{-1}(h) = \gamma^{-1}(h_1k_1) \big( \sigma^{-1}(h_2, k_2) \# 1 \big).
	\end{equation}
\end{lem}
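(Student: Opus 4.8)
The plan is to avoid grinding through the explicit formula \eqref{inverse-of-gamma} together with the cocycle identities \eqref{cocycle-eq-1} and \eqref{cocycle-eq-3}, and instead to argue inside the convolution algebra $\Hom(H \otimes H, B)$, where $H \otimes H$ carries its tensor-coalgebra structure and the convolution unit is $h \otimes k \mapsto \varepsilon(h)\varepsilon(k)1_B$. I introduce four maps: $P(h \otimes k) = \gamma(h)\gamma(k)$, $Q(h \otimes k) = \gamma^{-1}(k)\gamma^{-1}(h)$, $\Gamma(h \otimes k) = \gamma(hk)$, and $\Sigma(h \otimes k) = \sigma(h,k)\#1$. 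The right-hand side of \eqref{gamma^{-1}-gamma^{-1}} is exactly $\sum \gamma^{-1}(h_1k_1)(\sigma^{-1}(h_2,k_2)\#1)$, so it suffices to compute the convolution inverse $P^{-1}$ in two different ways: one evaluation will produce $Q$, the other the claimed right-hand side.

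First I would show $Q = P^{-1}$. Using only convolution invertibility of $\gamma$, that is $\sum \gamma(h_1)\gamma^{-1}(h_2) = \sum \gamma^{-1}(h_1)\gamma(h_2) = \varepsilon(h)1_B$, one computes $(P * Q)(h \otimes k) = \sum \gamma(h_1)\gamma(k_1)\gamma^{-1}(k_2)\gamma^{-1}(h_2)$; collapsing the inner pair via $\sum \gamma(k_1)\gamma^{-1}(k_2) = \varepsilon(k)1_B$ and then the outer pair yields $\varepsilon(h)\varepsilon(k)1_B$, and symmetrically $Q * P$ is the convolution unit. Hence $Q$ is the two-sided convolution inverse of $P$.

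Next I would factor $P$. The defining relation \eqref{sigma-gamma} reads $\sum \gamma(h_1)\gamma(k_1)\gamma^{-1}(h_2k_2) = \sigma(h,k)$, which is precisely the statement $P * \Gamma^{-1} = \Sigma$, where $\Gamma^{-1}(h \otimes k) = \gamma^{-1}(hk)$; the latter is the convolution inverse of $\Gamma$ because $\mu\colon H \otimes H \to H$ is a coalgebra map and $\gamma * \gamma^{-1} = \eta\varepsilon$. Multiplying on the right by $\Gamma$ gives $P = \Sigma * \Gamma$, hence $P^{-1} = \Gamma^{-1} * \Sigma^{-1}$. Since the inclusion $A \hookrightarrow B$, $a \mapsto a\#1$, is an injective algebra map, it carries $\sigma^{-1}$ to the convolution inverse of $\Sigma$, so $\Sigma^{-1}(h \otimes k) = \sigma^{-1}(h,k)\#1$. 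Evaluating, $P^{-1}(h \otimes k) = \sum \gamma^{-1}(h_1k_1)(\sigma^{-1}(h_2,k_2)\#1)$, and comparison with $Q = P^{-1}$ yields \eqref{gamma^{-1}-gamma^{-1}}.

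The computations are short; the only delicate point is the bookkeeping of convolution inverses, namely that inversion reverses the order (so $P = \Sigma * \Gamma$ forces $P^{-1} = \Gamma^{-1} * \Sigma^{-1}$, not $\Sigma^{-1} * \Gamma^{-1}$), together with the two auxiliary facts that $\mu$ is a coalgebra map (giving $\Gamma^{-1} = \gamma^{-1}\circ\mu$) and that $A\#1$ is a subalgebra of $B$ isomorphic to $A$ (giving $\Sigma^{-1}(h\otimes k) = \sigma^{-1}(h,k)\#1$). This is the main thing to verify carefully; everything else reduces to the two elementary convolution collapses described above.
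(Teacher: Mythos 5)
Your proof is correct, but it takes a genuinely different route from the paper's. The paper proves \eqref{gamma^{-1}-gamma^{-1}} by direct Sweedler calculus: it substitutes the explicit formula \eqref{inverse-of-gamma} for $\gamma^{-1}$, expands the crossed-product multiplication, and applies the cocycle identity \eqref{cocycle-eq-3} twice to regroup everything into the right-hand side. You never touch \eqref{inverse-of-gamma} or the cocycle identities; instead you work in the convolution algebra $\Hom(H \otimes H, B)$, show that $Q(h \otimes k) = \gamma^{-1}(k)\gamma^{-1}(h)$ is the two-sided convolution inverse of $P(h \otimes k) = \gamma(h)\gamma(k)$ (your two collapses are exactly right), factor $P = \Sigma * \Gamma$, and conclude $Q = P^{-1} = \Gamma^{-1} * \Sigma^{-1}$, which is the claimed identity. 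All the auxiliary facts you flag do hold: $\mu$ being a coalgebra map gives $\Gamma^{-1} = \gamma^{-1} \circ \mu$, and $a \mapsto a \# 1$ being an algebra embedding gives $\Sigma^{-1}(h \otimes k) = \sigma^{-1}(h,k) \# 1$. The only point worth making explicit is your appeal to \eqref{sigma-gamma}: in the paper that formula is the \emph{definition} of the cocycle attached to a general cleft extension, so for the crossed product $A \#_{\sigma} H$ with $\gamma(h) = 1 \# h$ you should verify that it recovers the original $\sigma$, i.e.\ that $\gamma(h)\gamma(k) = \sum \sigma(h_1,k_1) \# h_2k_2$; this is a one-line consequence of the multiplication rule, $h \cdot 1 = \varepsilon(h)1$, and normality of $\sigma$ (and is precisely the identity the paper itself invokes in the proof of Lemma \ref{pi-lemma}(3)), so it is not a gap, just a step to write down. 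What your route buys is brevity and a conceptual explanation: the identity is forced by anti-multiplicativity of convolution inversion, with the cocycle conditions hidden inside the associativity of $B$. What the paper's route buys is a self-contained computation of the same Sweedler-calculus type used throughout Section \ref{Nak-aut-cleft-ext-section}, which simultaneously cross-checks the explicit formula \eqref{inverse-of-gamma} against the cocycle identities on which the later proofs (Lemma \ref{omega_B=Botimes_AU}, Theorem \ref{Nak-of-cros-prod}) depend heavily.
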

\begin{proof}
	We proceed with the calculation as follows: 
	\begin{align*}
		& \gamma^{-1}(k) \gamma^{-1}(h) \\
		& \xlongequal{\eqref{inverse-of-gamma}} \sum \big( \sigma^{-1}(Sk_2, k_3) \# Sk_1 \big) \big( \sigma^{-1}(Sh_2, h_3) \# Sh_1 \big) \\
		& = \sum \sigma^{-1}(Sk_4, k_5) \big( Sk_3 \cdot \sigma^{-1}(Sh_3, h_4) \big) \sigma(Sk_2, Sh_2) \# (Sk_1) (Sh_1) \\
		& = \sum \sigma^{-1}(Sk_5, k_6) \sigma^{-1}\big( Sk_4, (Sh_4) h_5 \big) \big( Sk_3 \cdot \sigma^{-1}(Sh_3, h_6) \big) \sigma(Sk_2, Sh_2) \# S(h_1k_1) \\
		& \xlongequal{\eqref{cocycle-eq-3}} \sum \sigma^{-1}(Sk_5, k_6) \sigma^{-1}\big( (Sk_4) (Sh_4), h_5 \big) \sigma^{-1}(Sk_3, Sh_3) \sigma(Sk_2, Sh_2) \# S(h_1k_1) \\
		& = \sum \sigma^{-1}\big( S(h_3k_3) h_4, k_4 \big) \sigma^{-1}\big( S(h_2k_2), h_5 \big) \# S(h_1k_1) \\
		& \xlongequal{\eqref{cocycle-eq-3}} \sum \sigma^{-1}\big( S(h_3k_3), h_4k_4 \big) \big( S(h_2k_2) \cdot \sigma^{-1}(h_5, k_5) \big) \# S(h_1k_1) \\
		& = \sum \Big( \sigma^{-1}\big( S(h_2k_2), h_3k_3 \big) \# S(h_1k_1) \Big) \big( \sigma^{-1}(h_4, k_4) \# 1 \big) \\
		& = \gamma^{-1}(h_1k_1) \big( \sigma^{-1}(h_2, k_2) \# 1 \big).
	\end{align*}
	This completes the proof.
\end{proof}

In general, $H$ cannot inherently be a subalgebra $\Lambda_i$ unless $\sigma$ is trivial.
However, there exists an embedding map $\pi_i: H \to \Lambda_i$ defined by $h \mapsto \sum \gamma(h_1) \otimes \gamma^{-1}(S^{2i}h_2)$.

The following properties hold for this map.

\begin{lem}\label{pi-lemma}
	For any $h, k \in H$, in $\Lambda_i$,
	\begin{enumerate}
		\item[(1)] $\pi_i(1) = 1 \otimes 1$,
		\item[(2)] $\pi_i(h) (a \# 1 \otimes a' \# 1) = \sum (h_1 \cdot a \# 1 \otimes S^{2i}h_3 \cdot a' \# 1) \pi_i(h_2)$,
		\item[(3)] $\pi_i(h)\pi_i(k) = \sum \big( \sigma(h_1, k_1) \# 1 \otimes \sigma^{-1}(S^{2i}h_3, S^{2i}k_3) \# 1 \big) \pi_i(h_2k_2)$.
	\end{enumerate}
\end{lem}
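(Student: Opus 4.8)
The plan is to verify all three identities by direct computation inside $B^e=B\otimes B^{\op}$, where the multiplication reads $(x\otimes y)(x'\otimes y')=xx'\otimes y'y$ (the second tensorand being taken in $B^{\op}$), using throughout the explicit description $\pi_i(h)=\sum\gamma(h_1)\otimes\gamma^{-1}(S^{2i}h_2)$ together with $\gamma(h)=1\#h$. Part (1) is immediate: since $\sigma$ is a normal cocycle, \eqref{inverse-of-gamma} gives $\gamma^{-1}(1)=\sigma^{-1}(1,1)\#1=1\#1$, and $S^{2i}1=1$, so $\pi_i(1)=\gamma(1)\otimes\gamma^{-1}(1)=1\otimes1$.

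Before treating (2) and (3) I would isolate three auxiliary facts. First, the crossed-product multiplication together with normality of $\sigma$ gives the product rule $\gamma(h)\gamma(k)=\sum(\sigma(h_1,k_1)\#1)\,\gamma(h_2k_2)$. Second, combining \eqref{H-cdot-A} with the convolution-inverse relations $\sum\gamma(u_1)\gamma^{-1}(u_2)=\vep(u)1=\sum\gamma^{-1}(u_1)\gamma(u_2)$ yields the commutation rule
\[
a\,\gamma^{-1}(u)=\sum\gamma^{-1}(u_1)\,(u_2\cdot a),\qquad a\in A,\ u\in H,
\]
obtained by substituting $u_2\cdot a=\sum\gamma(u_2)a\gamma^{-1}(u_3)$ and then collapsing $\sum\gamma^{-1}(u_1)\gamma(u_2)\otimes u_3=1\otimes u$. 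Third, $S^{2i}$ is a Hopf algebra automorphism, so it respects both the product ($S^{2i}(h_2k_2)=S^{2i}h_2\,S^{2i}k_2$) and the coproduct ($\Delta S^{2i}=(S^{2i}\otimes S^{2i})\Delta$); this is what permits \eqref{gamma^{-1}-gamma^{-1}} to be applied with $S^{2i}$-twisted arguments.

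For (2) I would expand the left-hand side $\pi_i(h)(a\#1\otimes a'\#1)$ in $B^e$: the first tensorand becomes $\sum(h_1\cdot a)\#h_2$ (using $(1\#h)(a\#1)=\sum(h_1\cdot a)\#h_2$ and $(x\#1)(1\#g)=x\#g$), while the second becomes $\sum(a'\#1)\gamma^{-1}(S^{2i}h_3)$. Expanding the right-hand side $\sum(h_1\cdot a\#1\otimes S^{2i}h_3\cdot a'\#1)\pi_i(h_2)$ the same way produces the identical first tensorand $\sum(h_1\cdot a)\#h_2$ and, in the second slot, $\sum\gamma^{-1}(S^{2i}h_3)(S^{2i}h_4\cdot a')$. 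The two second-slot expressions coincide by the commutation rule, applied with $u=S^{2i}(\,\cdot\,)$ and the coalgebra-map property of $S^{2i}$.

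For (3), the core of the lemma, I would compute
\[
\pi_i(h)\pi_i(k)=\sum\gamma(h_1)\gamma(k_1)\otimes\gamma^{-1}(S^{2i}k_2)\,\gamma^{-1}(S^{2i}h_2),
\]
the reversed order in the second slot being forced by the $B^{\op}$-structure. The product rule turns the first tensorand into $\sum(\sigma(h_1,k_1)\#1)\gamma(h_2k_2)$, and \eqref{gamma^{-1}-gamma^{-1}}, applied to the $S^{2i}$-twisted arguments and simplified via the Hopf-automorphism property of $S^{2i}$, turns the second into $\sum\gamma^{-1}(S^{2i}(h_3k_3))(\sigma^{-1}(S^{2i}h_4,S^{2i}k_4)\#1)$. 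On the other side, expanding $\sum(\sigma(h_1,k_1)\#1\otimes\sigma^{-1}(S^{2i}h_4,S^{2i}k_4)\#1)\,\pi_i(h_2k_2)$ in $B^e$—again with the order-reversal in the second slot—produces precisely these two tensorands, so the two sides agree after a coassociativity relabelling. I expect the main obstacle to be purely the Sweedler bookkeeping in (3): keeping the four coproduct legs of $h$ and of $k$ aligned across the two tensor slots, and being consistent about the order-reversal in the $B^{\op}$-component when invoking \eqref{gamma^{-1}-gamma^{-1}}. No cocycle identity beyond those already absorbed into the product rule and into \eqref{gamma^{-1}-gamma^{-1}} should be required.
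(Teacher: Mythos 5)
Your proposal is correct and follows essentially the same route as the paper: direct computation in $B^e$ with the $B^{\op}$ order-reversal, using the crossed-product multiplication with normality of $\sigma$, the identity \eqref{H-cdot-A} together with the convolution-inverse relations (your ``commutation rule'' is exactly what the paper's insertion of $\gamma^{-1}(h_2)\gamma(h_3)$-type factors implements), the identity \eqref{gamma^{-1}-gamma^{-1}} for the second slot in (3), and the fact that $S^{2i}$ is a Hopf algebra automorphism to align the twisted coproduct legs.
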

\begin{proof}
	(1) This follows from $\gamma(1_H) = 1_B$.
	
	(2)
	 We compute
	\begin{align*}
		\pi_i(h) (a \otimes a') = & \big( \sum \gamma(h_1) \otimes \gamma^{-1}(S^{2i}h_2) \big) (a \otimes a') \\
		= & \sum \gamma(h_1) a \otimes a' \gamma^{-1}(S^{2i}h_2) \\
		= & \sum \gamma(h_1) a \gamma^{-1}(h_2) \gamma(h_3) \otimes \gamma^{-1}(S^{2i}h_4) \gamma(S^{2i}h_5) a' \gamma^{-1}(S^{2i}h_6) \\
		= & \sum \big( \gamma(h_1) a \gamma^{-1}(h_2) \otimes \gamma(S^{2i}h_5) a' \gamma^{-1}(S^{2i}h_6) \big) \big( \gamma(h_3) \otimes \gamma^{-1}(S^{2i}h_4) \big) \\
		= & \sum (h_1 \cdot a \otimes S^{2i}h_3 \cdot a') \pi_i(h_2).
	\end{align*}
	
	(3)
	Using $\gamma(h)\gamma(k) = (1 \# h)(1 \# k) = \sum \sigma(h_1, k_1) \gamma(h_2k_2)$, and 
	\eqref{gamma^{-1}-gamma^{-1}}, we get
	\begin{align*}
		\pi_i(h) \pi_i(k) & = \big( \sum \gamma(h_1) \otimes \gamma^{-1}(S^{2i}h_2) \big) \big( \sum \gamma(k_1) \otimes \gamma^{-1}(S^{2i}k_2) \big) \\
		& = \sum \gamma(h_1) \gamma(k_1) \otimes \gamma^{-1}(S^{2i}k_2) \gamma^{-1}(S^{2i}h_2) \\
		& \xlongequal{\eqref{gamma^{-1}-gamma^{-1}}} \sum \big( \sigma(h_1, k_1) \# 1\big) \gamma(h_2k_2) \otimes \gamma^{-1}(S^{2i}(h_3k_3)) \big( \sigma^{-1}(S^{2i}h_4, S^{2i}k_4) \# 1 \big) \\
		 & =\sum \big( \sigma(h_1, k_1) \# 1 \otimes \sigma^{-1}(S^{2i}h_3, S^{2i}k_3) \# 1 \big) \pi_i(h_2k_2).
	\end{align*}
	The proof is completed.
\end{proof}

For any $M \in {_{\Lambda_1}\!\M}$, $h \in H$ and $m \in M$, we define
\begin{equation}\label{h-cdot-m-defn}
	h \cdot m := \pi_1(h) m = \big( \sum \gamma(h_1) \otimes \gamma^{-1}(S^2h_2) \big) \vtr m.
\end{equation}

Observe that $M$ is generally not an $H$-module under the action defined above.
Building upon Lemma \ref{pi-lemma}, we establish the following:

\begin{lem}\label{equi-bimod}
	For all $h, k \in H$, $a, a' \in A$, and $m \in M$,
	\begin{equation}\label{1_H-cdot-u}
		1_H \cdot m = m,
	\end{equation}
	\begin{equation}\label{h->aua'}
		h \cdot (ama') = \sum (h_1 \cdot a) (h_2 \cdot m) (S^2h_2 \cdot a'),
	\end{equation}
	\begin{equation}\label{h-cdot-k-cdot-u}
		h \cdot (k \cdot m) = \sum \sigma(h_1, k_1) (h_2k_2 \cdot m) \sigma^{-1}(S^2h_3, S^2k_3).
	\end{equation}
\end{lem}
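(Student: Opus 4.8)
The plan is to reduce all three identities to the structural relations for $\pi_1$ already recorded in Lemma \ref{pi-lemma}, exploiting that by definition \eqref{h-cdot-m-defn} the action $h \cdot m$ is simply the $\Lambda_1$-module action $\pi_1(h) \vtr m$. Associativity of that action then converts each composite expression into a single element of $\Lambda_1$ acting on $m$. The one preliminary point to fix is how the ``bimodule'' factors are read: for $a, a' \in A$ we interpret $ama'$ as $(a \# 1 \otimes a' \# 1) \vtr m$, the action of the subalgebra $A^e \subseteq \Lambda_1 \subseteq B^e$, and more generally any left factor $b$ and right factor $b'$ from $A$ act through $(b \# 1 \otimes b' \# 1) \vtr m = bmb'$, using the $B^{op}$-convention on the second tensor slot so that $(b_1 \otimes b'_1)(b_2 \otimes b'_2) = b_1 b_2 \otimes b'_2 b'_1$. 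Once this convention is in place, no genuinely new computation is required: all of the effort has already been absorbed into Lemma \ref{pi-lemma}.

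Identity \eqref{1_H-cdot-u} is immediate, since Lemma \ref{pi-lemma}(1) gives $\pi_1(1) = 1 \otimes 1$, the unit of $B^e$, so that $1_H \cdot m = (1 \otimes 1) \vtr m = m$. For \eqref{h->aua'} I would first rewrite $h \cdot (ama') = \pi_1(h) \vtr \big( (a \# 1 \otimes a' \# 1) \vtr m \big) = \big( \pi_1(h)(a \# 1 \otimes a' \# 1) \big) \vtr m$, and then substitute Lemma \ref{pi-lemma}(2) with $i = 1$, which expands $\pi_1(h)(a \# 1 \otimes a' \# 1)$ as $\sum (h_1 \cdot a \# 1 \otimes S^2 h_3 \cdot a' \# 1)\,\pi_1(h_2)$. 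Because the leading factor lies in $A^e$, acting on $m$ distributes it as a bimodule factor around $\pi_1(h_2) \vtr m = h_2 \cdot m$, yielding $\sum (h_1 \cdot a)(h_2 \cdot m)(S^2 h_3 \cdot a')$, which is \eqref{h->aua'} (the third Sweedler index of $h$ appearing on the right-hand factor after the threefold coproduct).

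Identity \eqref{h-cdot-k-cdot-u} follows in exactly the same spirit, now through the multiplicativity-up-to-cocycle relation: one has $h \cdot (k \cdot m) = \pi_1(h) \vtr (\pi_1(k) \vtr m) = (\pi_1(h)\pi_1(k)) \vtr m$, and Lemma \ref{pi-lemma}(3) with $i = 1$ expands $\pi_1(h)\pi_1(k)$ as $\sum \big( \sigma(h_1,k_1) \# 1 \otimes \sigma^{-1}(S^2 h_3, S^2 k_3) \# 1 \big)\,\pi_1(h_2 k_2)$; the $A^e$-factor then distributes around $\pi_1(h_2 k_2) \vtr m = (h_2 k_2) \cdot m$ to give precisely $\sum \sigma(h_1,k_1)(h_2 k_2 \cdot m)\sigma^{-1}(S^2 h_3, S^2 k_3)$. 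I do not anticipate a conceptual obstacle here; the only thing demanding care is the bookkeeping, namely keeping the opposite-algebra convention straight so that the factors $S^2 h_3 \cdot a'$ and $\sigma^{-1}(S^2 h_3, S^2 k_3)$ land on the correct (right-hand) side of $m$, and tracking the Sweedler indices of $h$ and $k$ consistently through the threefold coproducts supplied by Lemma \ref{pi-lemma}.
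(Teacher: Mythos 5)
Your proposal is correct and is exactly the argument the paper intends: the paper states Lemma \ref{equi-bimod} without proof, prefaced only by ``Building upon Lemma \ref{pi-lemma},'' and your reduction via $h\cdot m=\pi_1(h)\vtr m$, associativity of the $\Lambda_1$-action, and Lemma \ref{pi-lemma} (1)--(3) with $i=1$ is precisely that derivation, including the correct $B^e=B\otimes B^{op}$ multiplication convention and the $A^e$-bimodule reading $ama'=(a\#1\otimes a'\#1)\vtr m$. You also rightly read the paper's $S^2h_2\cdot a'$ in \eqref{h->aua'} as a typo for $S^2h_3\cdot a'$, which is what Lemma \ref{pi-lemma}(2) actually yields.
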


\subsection{Homological determinant}

Inspired by the Nakayama automorphism’s expression for smash products of skew Calabi--Yau algebras via homological determinants, we introduce the concept of homological determinants for crossed products.

Let $A$ be a skew Calabi--Yau algebra of dimension $d$ with a Nakayama automorphism $\mu_A$.
Hence there exists a generator $e$ of the $A^e$-module $\Ext^d_{A^e}(A, A^e)(\cong A_{\mu_A})$ such that $ea = \mu_A(a)e$ for any $a \in A$.
The generator $e$ will be referred to a {\it $\mu_A$-twisted volume} on $A$.
As established in Proposition \ref{B^e-otimes_{A^e}-Ext^i_{A^e}(M, A^e)-prop}, $\Ext^d_{A^e}(A, A^e)$ is a $\Lambda_1$-module. Since $e$ also generates $\Ext^d_{A^e}(A, A^e)$ as a free left $A$-module of rank one, there exists a $\kk$-linear map $\varphi: H \to A$ defined by
\begin{equation}\label{varphi-defn}
	h \cdot e = \pi_1(h) e = \varphi(h)e.
\end{equation}

Then we can deduce the subsequent outcome from Lemma \ref{equi-bimod}.

\begin{lem}
	For any $h, k \in H$, $a \in A$, then
	\begin{equation}\label{varphi-eq0}
		\varphi(1_H) = 1_A,
	\end{equation}
	\begin{equation}\label{varphi-eq1}
		\sum \big( h_1 \cdot \mu_A(a) \big) \varphi(h_2) = \sum \varphi(h_1) \mu_A(S^2h_2 \cdot a),
	\end{equation}
	\begin{equation}\label{varphi-eq2}
		\sum \big( h_1 \cdot \varphi(k) \big)\varphi(h_2) = \sum \sigma(h_1, k_1) \varphi(h_2k_2) \mu_A\big( \sigma^{-1}(S^2h_3, S^2k_3) \big).
	\end{equation}
\end{lem}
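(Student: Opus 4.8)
The plan is to read off all three identities from a single principle: evaluate a suitable expression of the form $h\cdot(\cdots e\cdots)$ in two different ways, obtaining in each case an element of $A$ times the free generator $e$, and then cancel $e$. Throughout I use the two relations attached to $e$, namely the twisted-volume relation $ea=\mu_A(a)e$ and the relation $h\cdot e=\varphi(h)e$ from \eqref{varphi-defn}, feeding them into the equivariance identities \eqref{1_H-cdot-u}, \eqref{h->aua'} and \eqref{h-cdot-k-cdot-u} of Lemma \ref{equi-bimod}. Since $e$ generates $\Ext^d_{A^e}(A,A^e)$ as a free left $A$-module of rank one, an equality $xe=ye$ forces $x=y$ in $A$, and this is exactly what converts each two-sided computation into the claimed identity.

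Identity \eqref{varphi-eq0} is immediate: applying \eqref{1_H-cdot-u} to $m=e$ gives $1_H\cdot e=e=1_Ae$, while by definition $1_H\cdot e=\varphi(1_H)e$, so freeness yields $\varphi(1_H)=1_A$. For \eqref{varphi-eq1} I compute $h\cdot(ea)$ twice. Writing $ea=\mu_A(a)\cdot e\cdot 1_A$ and applying \eqref{h->aua'}, the rightmost factor becomes $S^2h_3\cdot 1_A=\vep(h_3)1_A$ and is absorbed by the counit, leaving $\sum(h_1\cdot\mu_A(a))\varphi(h_2)\,e$. Writing instead $ea=1_A\cdot e\cdot a$ and applying \eqref{h->aua'} gives $\sum(h_1\cdot e)(S^2h_2\cdot a)$, where the twisted-volume relation is then used to push $S^2h_2\cdot a$ to the left of $e$, producing $\sum\varphi(h_1)\mu_A(S^2h_2\cdot a)\,e$. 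Equating the two expressions and cancelling $e$ gives \eqref{varphi-eq1}.

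For \eqref{varphi-eq2} I compute $h\cdot(k\cdot e)$ twice. Iterating the action via $k\cdot e=\varphi(k)e$ and then applying \eqref{h->aua'} to $\varphi(k)\cdot e\cdot 1_A$ gives $\sum(h_1\cdot\varphi(k))\varphi(h_2)\,e$. On the other hand, the twisted-module identity \eqref{h-cdot-k-cdot-u} gives $\sum\sigma(h_1,k_1)(h_2k_2\cdot e)\sigma^{-1}(S^2h_3,S^2k_3)$; substituting $h_2k_2\cdot e=\varphi(h_2k_2)e$ and moving the right factor $\sigma^{-1}(S^2h_3,S^2k_3)$ across $e$ by the twisted-volume relation produces $\sum\sigma(h_1,k_1)\varphi(h_2k_2)\mu_A\big(\sigma^{-1}(S^2h_3,S^2k_3)\big)\,e$. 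Cancelling $e$ yields \eqref{varphi-eq2}.

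There is no conceptual obstacle; the work is entirely bookkeeping. The points demanding care are the asymmetry of the $S^2$-twist carried by the right-hand factors in \eqref{h->aua'} and \eqref{h-cdot-k-cdot-u}, and the systematic conversion of every \emph{right} multiplication by an element of $A$ into a $\mu_A$-twisted \emph{left} multiplication through $ea=\mu_A(a)e$ before the generator $e$ is cancelled. Keeping the Sweedler indices aligned between the two evaluations in each part is the only place where an error could plausibly arise.
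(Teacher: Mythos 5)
Your proposal is correct and is essentially the paper's own argument: both proofs act on the twisted volume $e$ using \eqref{1_H-cdot-u}, \eqref{h->aua'} and \eqref{h-cdot-k-cdot-u}, rewrite every right multiplication by an element of $A$ as a $\mu_A$-twisted left multiplication via $ea=\mu_A(a)e$, and then cancel the free generator $e$. The only cosmetic difference is that you organize each identity as evaluating $h\cdot(ea)$, respectively $h\cdot(k\cdot e)$, in two ways, whereas the paper writes the same steps as a single chain of equalities from the left-hand side to the right-hand side.
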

\begin{proof}
	(1) Note that $e$ is a generator of a free left $A$-module $\Ext^d_{A^e}(A, A^e)$.
	We have $\varphi(1_H) = 1_A$ because $e = 1_H \cdot e = \varphi(1_H) e$ by \eqref{1_H-cdot-u}.
	
	(2) We compute 
	\begin{align*}
		\sum \big( h_1 \cdot \mu_A(a) \big) \varphi(h_2) e & \xlongequal{\eqref{varphi-defn}} \sum \big( h_1 \cdot \mu_A(a) \big) (h_2 \cdot e) \\
		& \xlongequal{\eqref{h->aua'}} h \cdot \big( \mu_A(a) e \big) = h \cdot (e a) \\
		& \xlongequal{\eqref{h->aua'}} \sum ( h_1 \cdot e ) (S^2h_2 \cdot a) \\
		& \xlongequal{\eqref{varphi-defn}} \sum \varphi(h_1) e (S^2h_2 \cdot a) \\
		& = \sum \varphi(h_1) \mu_A(S^2h_2 \cdot a) e
	\end{align*}
	implies that \eqref{varphi-eq1} holds.
	
	(3) We calculate 
	\begin{align*}
		\sum \big( h_1 \cdot \varphi(k) \big) \varphi(h_2) e & \xlongequal{\eqref{varphi-defn}} \sum \big( h_1 \cdot \varphi(k) \big) (h_2 \cdot e) \\
		& \xlongequal{\eqref{h->aua'}} h \cdot (\varphi(k) e) \xlongequal{\eqref{varphi-defn}} h \cdot (k \cdot e) \\
		& \xlongequal[]{\eqref{h-cdot-k-cdot-u}} \sum \sigma(h_1, k_1) \big( (h_2k_2) \cdot e \big) \sigma^{-1}(S^2h_3, S^2k_3) \\
		& \xlongequal{\eqref{varphi-defn}} \sum \sigma(h_1, k_1)\varphi(h_2k_2) \mu_A\big( \sigma^{-1}(S^2h_3, S^2k_3) \big) e
	\end{align*}
	showing that \eqref{varphi-eq2} is valid.
	This completes the proof.
\end{proof}

We now demonstrate that $\varphi$ is convolution invertible, with its inverse, referred to as the homological determinant of the $H$-action on $A$.

\begin{lem}\label{varphi-*-invertible}
	The map $\varphi$ defined by \eqref{varphi-defn} is convolution invertible with the convolution inverse $\psi$ which is defined by
	\begin{equation}\label{Hdet-definition}
		h \mapsto \sum \sigma^{-1}(h_{5}, S^{-1}h_4) h_{6}\cdot \Big( \varphi(S^{-1}h_3) \mu_A\big( \sigma(Sh_2, S^2h_1) \big) \Big).
	\end{equation}
\end{lem}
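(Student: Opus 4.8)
The plan is to show that $\psi$, as defined in \eqref{Hdet-definition}, is a two-sided convolution inverse of $\varphi$ in the convolution algebra $(\Hom(H,A),*)$, whose unit is $\eta\varepsilon$ with $\eta\colon \kk \to A$ the unit of $A$. Since inverses in a monoid are unique, it suffices to verify $\psi * \varphi = \eta\varepsilon$ and $\varphi * \psi = \eta\varepsilon$; the two computations are symmetric, so I would carry out $\psi * \varphi = \eta\varepsilon$ in full and indicate the other. The only inputs needed are the three relations \eqref{varphi-eq0}, \eqref{varphi-eq1}, \eqref{varphi-eq2} for $\varphi$, the cocycle identities \eqref{cocycle-eq-1} and \eqref{cocycle-eq-3}, the twisted-module relation \eqref{h-k-a}, the measuring axioms $h\cdot 1 = \varepsilon(h)1$ and $h\cdot(ab)=\sum (h_1\cdot a)(h_2\cdot b)$, the fact that $\mu_A$ is an algebra automorphism, and the antipode axioms for $S$ and $S^{-1}$.

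Writing out $(\psi * \varphi)(h) = \sum \psi(h_1)\varphi(h_2)$ and using the measuring axiom to distribute the action inside \eqref{Hdet-definition} over the product $\varphi(S^{-1}h_\bullet)\mu_A(\sigma(Sh_\bullet, S^2h_\bullet))$, one obtains a single Sweedler expression of the shape $\sum \sigma^{-1}(h_\bullet, S^{-1}h_\bullet)\,(h_\bullet\cdot\varphi(S^{-1}h_\bullet))\,(h_\bullet\cdot\mu_A(\sigma(\cdots)))\,\varphi(h_\bullet)$. The first move is to apply \eqref{varphi-eq1} to the rightmost pair $(h_\bullet\cdot\mu_A(\sigma(\cdots)))\,\varphi(h_\bullet)$, rewriting it as $\varphi(h_\bullet)\,\mu_A(S^2h_\bullet\cdot\sigma(\cdots))$; this makes the factors $(h_\bullet\cdot\varphi(S^{-1}h_\bullet))$ and $\varphi(h_\bullet)$ adjacent. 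The second move is to apply the twisted multiplicativity \eqref{varphi-eq2} to this adjacent pair, with acting element a run of legs of $h$ and with $k = S^{-1}h_\bullet$, replacing the product by $\sum \sigma(h_\bullet,S^{-1}h_\bullet)\,\varphi(h_\bullet\, S^{-1}(h_\bullet))\,\mu_A(\sigma^{-1}(\cdots))$, where $h_\bullet\, S^{-1}(h_\bullet)$ denotes a product taken in $H$.

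The crucial simplification is that the argument of the inner $\varphi$ is now a product in $H$ of the form $\sum h_\bullet\, S^{-1}(h_\bullet)$, which by an antipode axiom (e.g. $\sum h_2 S^{-1}(h_1) = \varepsilon(h)1$) collapses to a counit; combined with $\varphi(1)=1$ from \eqref{varphi-eq0}, the inner $\varphi$ disappears entirely. What remains is a purely cocycle-theoretic expression built from $\sigma^{-1}(h_\bullet, S^{-1}h_\bullet)$, the two factors $\mu_A(\sigma(\cdots))$ and $\mu_A(\sigma^{-1}(\cdots))$ produced above, and the $\sigma$ coming from \eqref{varphi-eq2}, all threaded together by a residual twisted action $h_\bullet\cdot(-)$. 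I would then telescope these using \eqref{cocycle-eq-1} and \eqref{cocycle-eq-3}, pushing the surviving actions through products by \eqref{h-k-a}, until every $\sigma$/$\sigma^{-1}$ pair cancels and the expression reduces to $\varepsilon(h)1_A$.

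The main obstacle is precisely this final telescoping: the expression carries up to eight interacting legs of $\Delta$, three antipode variants $S$, $S^{-1}$, $S^2$, and four $\sigma^{\pm 1}$ factors woven through the twisted action, and the cancellations occur only for one specific grouping. Getting the order of applications of \eqref{cocycle-eq-1}, \eqref{cocycle-eq-3} and \eqref{h-k-a} correct — so that the antipode legs pair off at the same moment the $\sigma$-factors collapse — is the delicate point, and it is likely cleanest to isolate the needed identity $\sum \sigma^{-1}(h_1, S^{-1}h_2)(\cdots)$ as an auxiliary lemma before assembling the full computation.
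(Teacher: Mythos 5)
Your outline of $(\psi * \varphi)(h) = \vep(h)1_A$ reproduces the paper's computation almost move for move: distribute the action by the measuring axiom, apply \eqref{varphi-eq1} to slide the $\mu_A\big(\sigma(\cdots)\big)$ factor past the bare $\varphi$, apply \eqref{varphi-eq2} to the resulting adjacent pair, collapse the inner $\varphi$ to $\varphi(1_H)=1_A$ via the antipode and counit together with \eqref{varphi-eq0}, then finish with multiplicativity of $\mu_A$ and a single application of \eqref{cocycle-eq-1}. (In this direction neither \eqref{cocycle-eq-3} nor \eqref{h-k-a} is actually needed, and the final telescoping is shorter than you fear: after \eqref{varphi-eq2} the cocycle pair and the argument of the inner $\varphi$ collapse simultaneously, and only one cocycle identity remains to be used.) So for that half your plan is sound and coincides with the paper's proof.

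The gap is the claim that the other half, $\varphi * \psi = \vep 1_A$, is ``symmetric'' and can merely be indicated. It cannot be skipped: the convolution algebra $\Hom(H,A)$ is noncommutative, so a one-sided inverse does not establish the lemma, and a mirror image of your first computation stalls at the very first step. In $(\varphi*\psi)(h)=\sum\varphi(h_1)\psi(h_2)$ the bare factor $\varphi(h_1)$ stands to the \emph{left} of the action $h_\bullet\cdot(\cdots)$, whereas both \eqref{varphi-eq1} and \eqref{varphi-eq2} apply only to products of the shape $(h_1\cdot X)\,\varphi(h_2)$, with the acted-upon factor on the left and the bare $\varphi$ on the right. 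The paper's second computation therefore opens with an extra, asymmetric move: using the antipode, the counit, and the twisted-module law \eqref{h-k-a}, it rewrites $\varphi(h_1)$ as a term $\sigma^{-1}(\cdots)\sigma(\cdots)\big((h_\bullet S^{-1}h_\bullet)\cdot\varphi(h_1)\big)\sigma^{-1}(\cdots)$ sandwiched between cocycle factors, which \eqref{h-k-a} then contracts to an iterated action $h_\bullet\cdot\big(S^{-1}h_\bullet\cdot\varphi(h_1)\big)$; only at that point does \eqref{varphi-eq2} become applicable (with acting element $S^{-1}h_\bullet$ and second argument $h_1$), and the rest proceeds in parallel with the first computation. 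Your proposal contains neither this insertion trick nor any substitute for it, and without it the harder of the two verifications does not go through.
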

\begin{proof}
	For the convolution product $(\psi * \varphi)(h)$, we have
	\begin{align*}
		& = \sum \psi(h_1) \varphi(h_2) \\
		& = \sum \sigma^{-1}(h_{5}, S^{-1}h_4) h_{6} \cdot \Big( \varphi(S^{-1}h_3) \mu_A\big( \sigma(Sh_2, S^2h_1) \big) \Big) \varphi(h_7) \\
		& = \sum \sigma^{-1}(h_{5}, S^{-1}h_4) \big( h_{6} \cdot \varphi(S^{-1}h_3) \big) \Big( h_7 \cdot  \mu_A \big( \sigma(Sh_2, S^2h_1) \big) \Big) \varphi(h_8) \\
		& \xlongequal[]{\eqref{varphi-eq1}} \sum \sigma^{-1}(h_{5}, S^{-1}h_4) \big( h_{6} \cdot \varphi(S^{-1}h_3) \big) \varphi(h_7) \mu_A\big( S^2h_8 \cdot \sigma(Sh_2, S^2h_1) \big) \\
		& \xlongequal[]{\eqref{varphi-eq2}} \sum \sigma^{-1}(h_{7}, S^{-1}h_6) \sigma(h_8, S^{-1}h_5) \varphi\big( h_9(S^{-1}h_4) \big) \mu_A\big( \sigma^{-1}(S^2h_{10}, Sh_3) \big) \\
		& \qquad \qquad \mu_A\big( S^2h_{11} \cdot \sigma(Sh_2, S^2h_1) \big) \\
		& = \sum \varphi(1_H) \mu_A\big( \sigma^{-1}(S^2h_{4}, Sh_3) \big) \mu_A\big( S^2h_{5} \cdot \sigma(Sh_2, S^2h_1) \big) \\
		& \xlongequal[]{\eqref{varphi-eq0}} \sum \mu_A\Big( \sigma^{-1}(S^2h_{4}, Sh_3) \big( S^2h_{5} \cdot \sigma(Sh_2, S^2h_1) \big) \Big) \\
		& \xlongequal[]{\eqref{cocycle-eq-1}} \sum \mu_A \Big(\sigma\big( (S^2h_5)(Sh_4), S^2h_1 \big) \sigma^{-1}\big( S^2h_6, (Sh_3)(S^2h_2) \big) \Big) \\
		& = \sum \mu_A \big(\sigma(1, S^2h_1) \sigma^{-1}( S^2h_2, 1) \big) \\
		& = \vep(h).
	\end{align*}
	Analogously, for $(\varphi * \psi)(h)$
	\begin{align*}
		& = \sum \varphi(h_1) \psi(h_2) \\
		& = \sum \varphi(h_1) \sigma^{-1}(h_{6}, S^{-1}h_5) h_{7} \cdot \Big( \varphi(S^{-1}h_4) \mu_A\big( \sigma(Sh_3, S^2h_2) \big) \Big) \\
		& = \sum \sigma^{-1}(h_9, S^{-1}h_8) \sigma(h_{10}, S^{-1}h_7) \big( h_{11}S^{-1}h_6 \cdot \varphi(h_1) \big) \sigma^{-1}(h_{12}, S^{-1}h_5) \\
		& \qquad h_{13} \cdot \Big( \varphi(S^{-1}h_4) \mu_A\big( \sigma(Sh_3, S^2h_2) \big) \Big) \\
		& \xlongequal[]{\eqref{h-k-a}} \sum \sigma^{-1}(h_{7}, S^{-1}h_6)  h_{8} \cdot \big( S^{-1}h_5 \cdot \varphi(h_1) \big) h_{9} \cdot \Big( \varphi(S^{-1}h_4) \mu_A\big( \sigma(Sh_3, S^2h_2) \big) \Big) \\
		& = \sum \sigma^{-1}(h_{7}, S^{-1}h_6)  h_{8} \cdot \Big( \big( S^{-1}h_5 \cdot \varphi(h_1) \big) \varphi(S^{-1}h_4) \mu_A\big( \sigma(Sh_3, S^2h_2) \big) \Big) \\
		& \xlongequal[]{\eqref{varphi-eq2}} \sum \sigma^{-1}(h_{10}, S^{-1}h_9) \\
		& \qquad \quad h_{11} \cdot \Big( \big( \sigma(S^{-1}h_8, h_1) \varphi\big( (S^{-1}h_7) h_2 \big) \mu_A\big( \sigma^{-1}(Sh_6, S^{2}h_3) \big) \mu_A \big( \sigma(Sh_5, S^2h_4) \big) \Big) \\
		& = \sum \sigma^{-1}(h_{4}, S^{-1}h_3) h_{5} \cdot \big( \sigma(S^{-1}h_2, h_1) \varphi(1_H) \big) \\
		& \xlongequal{\eqref{varphi-eq0}} \sum \sigma^{-1}(h_{4}, S^{-1}h_3)  h_{5} \cdot \sigma(S^{-1}h_2, h_1) \\
		& \xlongequal[]{\eqref{cocycle-eq-1}} \sum \sigma\big( h_{5}(S^{-1}h_4), h_1 \big) \sigma^{-1}\big( h_6, (S^{-1}h_3) h_2 \big) \\
		& = \sum \sigma(1, h_1) \sigma^{-1}(h_2, 1) \\
		& = \vep(h).
	\end{align*}
	Thus, $\psi$ is the convolution inverse of $\varphi$.
\end{proof}

\begin{defn}\label{Hdet-defn}
	The $\kk$-linear map $\psi: H \to A$ from Lemma \ref{varphi-*-invertible} is termed the {\it homological determinant} of the $H$-action on $A=B^{coH}$ with respect to the $\mu_A$-twisted volume $e$, and is denoted by $\Hdet$.
\end{defn}

Given a crossed product $B:= A\#_{\sigma} H$ of $A$ with $H$, if $A$ is a right $H$-module algebra under the action denoted by ``$\cdot$", then the homological determinant defined as in Definition \ref{Hdet-defn}, coincides with that described in \cite{KKZ2009, LMeu2019}.
For instance, $A$ is always an $H$-module algebra when $H$ is cocommutative and $\sigma(H \otimes H) \subseteq Z(A)$, where $Z(A)$ is the center of $A$.

\subsection{Nakayama automorphisms of crossed products}

In this subsection, we will always assume that $B$ is a crossed product $A\#_{\sigma}H$ with the comodule map $\gamma: H \to A$ defined by $\gamma(h) = 1\#h$.

\begin{lem}
	For any $M \in {_{\Lambda_1}\!\M}$, $B \otimes_A M \otimes H$ is a Hopf bimodule in ${_{B^e}\!\M^{H^e}_{H}}$, where the right $H$-module structure on $B \otimes_A M \otimes H$ is given by
	\begin{equation}\label{H-mod-on-B-otimes-M-otimes-H}
		(b \otimes_A m \otimes h) \leftharpoondown k = b \otimes_A m \otimes (Sk)h,
	\end{equation}
	the left $B^e$-module structure on $B \otimes_A M \otimes H$ is given by
	\begin{equation}\label{BMH-B^e-mod}
		(x \otimes y) \big( b \otimes_{A} m \otimes h \big) = \sum xb \gamma^{-1}(S^{-3}y_3) \otimes_A (S^{-3}y_2 \cdot m) \gamma(S^{-1}y_1)y_0 \otimes hy_4,
	\end{equation}
	the $H^e$-comodule structure on $B \otimes_A M \otimes H$ is given by
	\begin{equation}\label{BMH-H^e-comod}
		\rho(b \otimes_A m \otimes h) = \sum b_0 \otimes_A m \otimes h_2 \otimes b_1(S^{-1}h_1) \otimes h_3
	\end{equation}
	for any $b, x, y \in B$, $h, k \in H$ and $m \in M$.
\end{lem}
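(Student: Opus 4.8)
The plan is not to verify the three structure maps of $B\otimes_A M\otimes H$ from scratch, but to realise this space as a transported copy of the Hopf bimodule $B^e\otimes_{A^e}M$ supplied by Theorem \ref{HG-bimod-str-thm-1}. For $M\in{}_{\Lambda_1}\!\M$ that theorem already places $B^e\otimes_{A^e}M$ in ${}_{B^e}\!\M^{H^e}_H$, with left $B^e$-module and $H^e$-comodule structures induced by the left tensorand and right $H$-action \eqref{H-mod-on-B^e-otimes-M}. It therefore suffices to exhibit a $\kk$-linear isomorphism $\Xi\colon B\otimes_A M\otimes H\xrightarrow{\ \cong\ }B^e\otimes_{A^e}M$ and to check that $\Xi$ carries these structures to \eqref{H-mod-on-B-otimes-M-otimes-H}, \eqref{BMH-B^e-mod} and \eqref{BMH-H^e-comod} respectively; the Hopf-bimodule axioms (associativity, coassociativity, and the two compatibilities) are then inherited automatically from Theorem \ref{HG-bimod-str-thm-1} and need no separate proof.

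First I would fix $\Xi$. Because $A\subseteq B$ is cleft, $B$ is free as a left $A$-module on the section $\gamma$, so the standard identification $B^e\otimes_{A^e}M\cong B\otimes_A M\otimes_A B$, $(x\otimes y)\otimes_{A^e}m\mapsto x\otimes_A m\otimes_A y$, collapses the last tensorand to a copy of $H$. The correct trivialisation is the one obtained after twisting this $H$-factor through the translation map \eqref{translation-map-for-cleft-ext} and the antipode; I would write $\Xi$ explicitly in terms of $\gamma$, $\gamma^{-1}$ and $S$, using \eqref{inverse-of-gamma} and the convolution identity $\gamma\ast\gamma^{-1}=\varepsilon$ to see it is a well-defined bijection, and pin the twist down precisely by demanding that $\Xi$ intertwine the coactions.

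With $\Xi$ in hand, the comodule and the right-module statements are the transparent checks. The coaction \eqref{BMH-H^e-comod} follows the same pattern as the ``$(-)\otimes H$'' construction of Lemma \ref{Hopf-bimod-cat-str-lem}: writing $\rho_N(b\otimes_A m)=\sum b_0\otimes_A m\otimes b_1$ for the residual $H$-coaction coming from $B$, the formula $\sum b_0\otimes_A m\otimes h_2\otimes b_1(S^{-1}h_1)\otimes h_3$ is exactly the image under $\Xi$ of the $B^e$-coaction of $B^e\otimes_{A^e}M$, once one uses the comodule-map identity $\rho(\gamma^{-1}(k))=\sum\gamma^{-1}(k_2)\otimes Sk_1$ and matches Sweedler indices; coassociativity and counitality are then free. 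The right $H$-action \eqref{H-mod-on-B-otimes-M-otimes-H} is genuinely elementary: it is the right regular action on the $H$-tensorand twisted by the antipode, so $(b\otimes_A m\otimes(Sk)h)\leftharpoondown k'=b\otimes_A m\otimes S(kk')h$ is a module action at once, and I would only have to confirm that it coincides with the pushforward of \eqref{H-mod-on-B^e-otimes-M}.

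The heart of the proof, and the step I expect to be the main obstacle, is the left $B^e$-module structure \eqref{BMH-B^e-mod}. The left factor $x$ acts trivially as $xb$, but the $B^{op}$-factor $y$ acts through its full iterated coaction $y_0,\dots,y_4$, the antipode powers $S^{-1}$ and $S^{-3}$, both $\gamma$ and $\gamma^{-1}$, the $\Lambda_1$-operation $S^{-3}y_2\cdot m$ from \eqref{h-cdot-m-defn}, and the internal multiplication of $B$; moreover the appearance of $hy_4$ shows that this action does not respect the naive tensor splitting, which is precisely the coupling forced by the Hopf-module compatibility. Transporting the left-tensorand $B^e$-action through $\Xi$ and normalising the result back into the canonical form $b'\otimes_A m'\otimes h'$ is a long Sweedler computation: I would carry it out using the translation-map identities \eqref{kappa(hk)}--\eqref{m-kappa=vep}, the cocycle relations \eqref{h-k-a}, \eqref{cocycle-eq-1} and \eqref{cocycle-eq-3}, and the bimodule rules of Lemma \ref{equi-bimod}, and I would verify associativity against the $B^{op}$-multiplication $(x\otimes y)(x'\otimes y')=xx'\otimes y'y$. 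Once \eqref{BMH-B^e-mod} is matched, the remaining colinearity and $H$-equivariance of the $B^e$-action transfer through $\Xi$ from Theorem \ref{HG-bimod-str-thm-1}, completing the argument.
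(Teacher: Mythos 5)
Your approach is sound but genuinely different from the paper's. The paper proves this lemma by direct verification of the module, comodule, and compatibility axioms for the three displayed formulas (it records this as straightforward and omits the details, the associativity of \eqref{BMH-B^e-mod} being exactly what Lemma \ref{equi-bimod} supplies), and only \emph{afterwards}, in Lemma \ref{omega_B=Botimes_AU}, constructs the isomorphism $F: B^e \otimes_{A^e} M \to B \otimes_A M \otimes H$ in ${_{B^e}\!\M^{H^e}_{H}}$ --- a statement that presupposes the present lemma. You invert that order: build the bijection first, then obtain the axioms by transport of structure from Theorem \ref{HG-bimod-str-thm-1}. This is legitimate, provided you construct $\Xi$ from scratch rather than citing the later lemma (which would be circular), and it has the real merit of proving both lemmas at once. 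Two cautions, though. First, the required $\Xi$ is not merely a twist of the $H$-tensorand: the paper's inverse map is $b \otimes_A m \otimes h \mapsto \sum b\gamma^{-1}(S^{-2}h_1) \otimes \gamma(h_3) \otimes_{A^e} S^{-2}h_2 \cdot m$, which also acts on $M$ through the $\Lambda_1$-action \eqref{h-cdot-m-defn}; and its bijectivity is not a consequence of convolution invertibility alone --- the paper's check that $F$ and $G$ are mutually inverse runs through \eqref{gamma^{-1}-gamma^{-1}}, \eqref{h-cdot-k-cdot-u}, \eqref{h->aua'} and \eqref{1_H-cdot-u}. Second, transport of structure saves nothing computationally: the three intertwining identities you must verify (for the $H$-action \eqref{H-mod-on-B-otimes-M-otimes-H}, the $B^e$-action \eqref{BMH-B^e-mod}, and the $H^e$-coaction \eqref{BMH-H^e-comod}) are precisely the Sweedler computations the paper performs to prove $F$ is a morphism in ${_{B^e}\!\M^{H^e}_{H}}$, so the total burden matches the paper's, merely reorganized. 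In short: the paper buys independence of the two statements with routine axiom checks; you buy the axioms for free at the price of doing the next lemma's work up front.
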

\begin{proof}
	The proof is straightforward and thus omitted.
\end{proof}

Recall that for any $M \in {_{\Lambda_1}\!\M}$, by Theorem \ref{HG-bimod-str-thm-1}, $B^e \otimes_{A^e} M$ is a Hopf bimodule in ${_{B^e}\!\M^{H^e}_{H}}$, and the right $H$-module structure is given by
\begin{equation}\label{H-mod-on-B^e-otimes-M-1}
	\begin{split}
		& (x \otimes y \otimes_{A^e} m) h \\
		& \xlongequal{\eqref{H-mod-on-B^e-otimes-M}} \sum x \kappa^1(S^{-1}h_2) \otimes \kappa^2(Sh_1)y \otimes_{A^e} \big( \kappa^2(S^{-1}h_2) \otimes \kappa^1(Sh_1) \big) \vtr m \\
		& \xlongequal{\eqref{translation-map-for-cleft-ext}} \sum x \gamma^{-1}(S^{-1}h_4) \otimes \gamma(Sh_1)y \otimes_{A^e} \big( \gamma(S^{-1}h_3) \otimes \gamma^{-1}(Sh_2) \big) \vtr m \\
		& \xlongequal{\eqref{h-cdot-m-defn}} \sum x \gamma^{-1}(S^{-1}h_3) \otimes \gamma(Sh_1)y \otimes_{A^e} S^{-1}h_2 \cdot m
	\end{split}
\end{equation}
for any $x, y \in B$, $h \in H$ and $m \in M$.

\begin{lem}\label{omega_B=Botimes_AU}
	For any $M \in {_{\Lambda_1}\!\M}$, there is an isomorphism
	\[ F: B^e \otimes_{A^e} M \stackrel{\cong}{\To} B \otimes_A M \otimes H \]
	in ${_{B^e}\!\M^{H^e}_{H}}$, which is defined by 
	\begin{equation}
		F(x \otimes y \otimes_{A^e} m) = \sum x\gamma^{-1}(S^{-3}y_3) \otimes_A (S^{-3}y_2 \cdot m) \gamma(S^{-1}y_1)y_0 \otimes y_4.
	\end{equation}
	where $x, y \in B$, and $m \in M$.
\end{lem}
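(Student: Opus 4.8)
The plan is to recognize $F$ as nothing but the left $B^e$-action \eqref{BMH-B^e-mod} applied to the distinguished element $1 \otimes_A m \otimes 1$, and then to exploit this to reduce every compatibility to a check on the generators $1 \otimes 1 \otimes_{A^e} m$. First I would record the identity $F(x \otimes y \otimes_{A^e} m) = (x \otimes y)\cdot(1 \otimes_A m \otimes 1)$, which is immediate from \eqref{BMH-B^e-mod} once one notes $\rho(1)=1\otimes 1$ and $\gamma(1_H)=\gamma^{-1}(1_H)=1$. From this two things follow almost for free. Well-definedness of $F$ (the $A^e$-balancing) amounts to the single identity $(a \otimes a')\cdot(1 \otimes_A m \otimes 1) = 1 \otimes_A (a\otimes a'\otimes 1)\vtr m \otimes 1$ for $a,a'\in A$, where on the right the restricted $A^e$-action on the $\Lambda_1$-module $M$ is that of Theorem \ref{HG-bimod-str-thm-1}; this is a direct evaluation of \eqref{BMH-B^e-mod} using that $a,a'$ are coinvariant. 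Left $B^e$-linearity of $F$ is then automatic, since $F((\xi\eta)\otimes_{A^e}m)=(\xi\eta)\cdot(1\otimes_A m\otimes 1)=\xi\cdot F(\eta\otimes_{A^e}m)$ by associativity of the $B^e$-action.

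Next I would dispose of the $H^e$-colinearity. The element $1 \otimes_A m \otimes 1$ is $H^e$-coinvariant (evaluate \eqref{BMH-H^e-comod} at $b=1$, $h=1_H$), and $1\otimes 1\otimes_{A^e}m$ is coinvariant for the left-tensorand coaction on the source. Since in both $B^e\otimes_{A^e}M$ and $B\otimes_A M\otimes H$ the $H^e$-coaction is compatible with the left $B^e$-action (both lie in ${_{B^e}\!\M^{H^e}_H}$), the coaction of an arbitrary element is determined by the comodule-algebra coaction $\rho_{B^e}$ acting on the coaction of a coinvariant generator. Hence $F$, being $B^e$-linear and sending the coinvariant generator $1\otimes1\otimes_{A^e}m$ to the coinvariant element $1\otimes_A m\otimes 1$, automatically intertwines the two coactions.

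The heart of the matter is right $H$-linearity, and this is where the real work lies. The key preliminary observation is that in both modules the right $H$-action commutes with the left $B^e$-action: in the target this is transparent from \eqref{H-mod-on-B-otimes-M-otimes-H} and \eqref{BMH-B^e-mod} (one action prepends $Sk$ to the $H$-tail, the other appends on the opposite side), and in the source it follows from \eqref{H-mod-on-B^e-otimes-M-1}, where the $h$-action touches only the ends of $x$ and $y$ that the left $B^e$-action does not. Together with left $B^e$-linearity of $F$, this reduces the desired identity to the single generator case $\zeta=1\otimes 1\otimes_{A^e}m$, namely $\sum \gamma^{-1}(S^{-1}h_3)\,\gamma^{-1}(S^{-3}y_3)\otimes_A \big(S^{-3}y_2\cdot(S^{-1}h_2\cdot m)\big)\gamma(S^{-1}y_1)y_0\otimes y_4 = 1\otimes_A m\otimes Sh$, where $y=\gamma(Sh_1)$. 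I would substitute the iterated coaction of $y=\gamma(Sh_1)$, using that $\gamma$ is a right $H$-comodule map and that $S$ is anti-comultiplicative, so that $y_0=\gamma(Sh_{(\cdot)})$ while $y_1,\dots,y_4$ are the remaining reversed legs $Sh_{(\cdot)}$. The products $\gamma(S^{-1}y_1)y_0$ and $\gamma^{-1}(S^{-1}h_3)\gamma^{-1}(S^{-3}y_3)$ are then collapsed by the crossed-product relations $\gamma(h)\gamma(k)=\sum\sigma(h_1,k_1)\gamma(h_2k_2)$ and \eqref{gamma^{-1}-gamma^{-1}}, producing cocycle values in $A$; the factor $S^{-3}y_2\cdot(S^{-1}h_2\cdot m)$ is rewritten by the twisted-module relation \eqref{h-cdot-k-cdot-u} of Lemma \ref{equi-bimod}; and the resulting $\sigma/\sigma^{-1}$ strings telescope via the cocycle identities \eqref{cocycle-eq-1}, \eqref{cocycle-eq-3} together with $\sum h_1 Sh_2=\vep(h)1$ and \eqref{m-kappa=vep}, the $\otimes_A$-balancing being used to pass the $A$-valued cocycle factors across the tensor. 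I expect this telescoping Sweedler computation, with its many antipode powers and comultiplication legs to keep aligned, to be the principal obstacle.

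Finally, for bijectivity I would avoid writing an explicit inverse and instead invoke the equivalence of Theorem \ref{HG-bimod-str-thm-1}. Having shown $F$ is a morphism in ${_{B^e}\!\M^{H^e}_H}$, I apply the quasi-inverse $(-)^{co H^e}$. On the source the coinvariants are exactly $\{1\otimes 1\otimes_{A^e}m\}\cong M$, and $F$ carries them to $\{1\otimes_A m\otimes 1\}$. A short analysis of \eqref{BMH-H^e-comod} shows that the $H^e$-coinvariants of $B\otimes_A M\otimes H$ are precisely $\{1\otimes_A m\otimes 1\}$, with $m\mapsto 1\otimes_A m\otimes 1$ a bijection onto them. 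Hence $F^{co H^e}$ is an isomorphism of $\Lambda_1$-modules, and since $M\mapsto B^e\otimes_{A^e}M$ is an equivalence of categories, $F$ itself is an isomorphism in ${_{B^e}\!\M^{H^e}_H}$, as claimed.
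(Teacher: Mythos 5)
Your proposal is correct, and it takes a genuinely different route from the paper's proof. The paper proceeds entirely by hand: it checks well-definedness, right $H$-linearity, left $B^e$-linearity and $H^e$-colinearity of $F$ by long Sweedler computations on general elements $x \otimes y \otimes_{A^e} m$, and it proves bijectivity by exhibiting an explicit inverse $G(b \otimes_A m \otimes h) = \sum b\gamma^{-1}(S^{-2}h_1) \otimes \gamma(h_3) \otimes_{A^e} S^{-2}h_2 \cdot m$ and verifying $FG = \id$ and $GF = \id$ by two further long computations. Your observation that $F(x \otimes y \otimes_{A^e} m) = (x \otimes y)\cdot(1 \otimes_A m \otimes 1)$ collapses most of this: granting the preceding lemma that \eqref{BMH-B^e-mod}--\eqref{BMH-H^e-comod} define an object of ${_{B^e}\!\M^{H^e}_{H}}$ (a fact the paper's proof also takes as given), well-definedness, $B^e$-linearity and $H^e$-colinearity become formal, and right $H$-linearity correctly reduces, via the commutation of the two actions, to the single generator identity $F\big((1 \otimes 1 \otimes_{A^e} m)h\big) = 1 \otimes_A m \otimes Sh$, which is precisely the paper's computation specialized at $x = y = 1$ and does succumb to the identities you cite (\eqref{gamma^{-1}-gamma^{-1}}, \eqref{h-cdot-k-cdot-u}, the cocycle relations). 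Your bijectivity argument is also sound and cleaner than the paper's: since $(-)^{co H^e}$ is an equivalence by Theorem \ref{HG-bimod-str-thm-1}, it reflects isomorphisms, and the coinvariants of the target are identified in two steps --- first the $H$-tail must be trivial, then $(B \otimes_A M)^{co H} \cong M$ by faithfully flat descent, i.e.\ Theorem \ref{faithful-flat-Hopf-Galois-extension-thm}(3), applicable because crossed products are faithfully flat $H$-Galois extensions. What the paper's approach buys is complete explicitness (including a formula for the inverse); what yours buys is brevity and a clean separation of the formal content from the one irreducible Sweedler computation --- which you flag but leave unexecuted, and which is the only substantive work remaining in your plan.
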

\begin{proof}
	Note that $M$ is an $A$-$A$-bimodule since $A^e$ is a subalgebra of $\Lambda_1$.
	For any $b := a \# h \in B$,
	\begin{align*}
		\sum \gamma(S^{-1}b_1) b_0 & = \sum (1 \# S^{-1}h_2) (a \# h_1) = \sum (S^{-1}h_5 \cdot a) \sigma(S^{-1}h_4, h_1) \# (S^{-1}h_3)h_2 \\
		& = \sum (S^{-1}h_3 \cdot a) \sigma(S^{-1}h_2, h_1) \# 1 \in A.
	\end{align*}
	Thus, for any $x, y \in B$, and $m \in M$, the expression
	\[ \sum x\gamma^{-1}(S^{-3}y_3) \otimes_A (S^{-3}y_2 \cdot m) \gamma(S^{-1}y_1)y_0 \otimes y_4 \]
	is unambiguous.
	For any $a, a \in A$, we have
	\begin{align*}
		& F(xa \otimes a'y \otimes_{A^e} m) \\
		& = \sum xa\gamma^{-1}(S^{-3}y_3) \otimes_A (S^{-3}y_2 \cdot m) \gamma(S^{-1}y_1)a'y_0 \otimes y_4 \\
		& = \sum x\gamma^{-1}(S^{-3}y_7) \gamma^{-1}(S^{-3}y_6)a\gamma^{-1}(S^{-3}y_5) \otimes_A \\
		& \qquad (S^{-3}y_4 \cdot m) \gamma(S^{-1}y_3)a'\gamma^{-1}(S^{-1}y_2)\gamma(S^{-1}y_1)y_0 \otimes y_8 \\
		& \xlongequal{\eqref{H-cdot-A}} \sum x\gamma^{-1}(S^{-3}y_5) (S^{-3}y_4 \cdot a) \otimes_A (S^{-3}y_3 \cdot m) (S^{-1}y_2 \cdot a')\gamma(S^{-1}y_1)y_0 \otimes y_6 \\
		& = \sum x\gamma^{-1}(S^{-3}y_5) \otimes_A (S^{-3}y_4 \cdot a) (S^{-3}y_3 \cdot m) (S^{-1}y_2 \cdot a')\gamma(S^{-1}y_1)y_0 \otimes y_6\\
		& \xlongequal{\eqref{h->aua'}} \sum x\gamma^{-1}(S^{-3}y_3) \otimes_A \big( S^{-3}y_2 \cdot (ama') \big) \gamma(S^{-1}y_1)y_0 \otimes y_4 \\
		& = F(x \otimes y \otimes_{A^e} ama').
	\end{align*}
	Therefore, $F$ is well defined.
	
	Next, we verify that $F$ is a morphism in ${_{B^e}\!\M^{H^e}_{H}}$.
	To show that $F$ is a right $H$-module morphism, we calculate
	\begin{align*}
		& F\big( (x \otimes y \otimes_{A^e} m) h \big) \\
		& \xlongequal{\eqref{H-mod-on-B^e-otimes-M-1}} F\big( \sum x \gamma^{-1}(S^{-1}h_3) \otimes \gamma(Sh_1) y \otimes_{A^e} S^{-1}h_2 \cdot m \big) \\
		& = \sum x \gamma^{-1}(S^{-1}h_7) \gamma^{-1}\Big( S^{-3}\big( (Sh_{2}) y_3 \big) \Big) \otimes_A S^{-3}\big( (Sh_{3}) y_2 \big) \cdot (S^{-1}h_6 \cdot m) \\
		& \qquad \gamma\Big( S^{-1}\big( (Sh_{4}) y_1 \big) \Big) \gamma(Sh_5) y_0 \otimes (Sh_1) y_4 \\
		& = \sum x \gamma^{-1}(S^{-1}h_7) \gamma^{-1}\big( (S^{-3}y_3) (S^{-2}h_{2}) \big) \otimes_A \big( (S^{-3}y_2) (S^{-2}h_{3}) \big) \cdot (S^{-1}h_6 \cdot m) \\
		& \qquad \gamma\big( (S^{-1}y_1) h_{4} \big) \gamma(Sh_5) y_0 \otimes (Sh_1) y_4 \\
		& \xlongequal{\eqref{h-cdot-k-cdot-u}} \sum x \gamma^{-1}(S^{-1}h_{11}) \gamma^{-1}\big( (S^{-3}y_5) (S^{-2}h_{2}) \big) \otimes_A \sigma\big( (S^{-3}y_4) (S^{-2}h_{3}), S^{-1}h_{10} \big)  \\
		& \qquad \quad \big( (S^{-3}y_3) (S^{-2}h_{4}) (S^{-1}h_9)  \cdot m \big) \sigma^{-1}\big( (S^{-1}y_2) h_{5}, Sh_{8} \big) \gamma\big( (S^{-1}y_1) h_{6} \big) \gamma(Sh_7) y_0 \\
		& \qquad \quad \otimes (Sh_1) y_6 \\
		& \xlongequal{\eqref{gamma^{-1}-gamma^{-1}}} \sum x \gamma^{-1}\big( (S^{-3}y_3) (S^{-2}h_{2}) (S^{-1}h_{7})\big) \otimes_A \big( (S^{-3}y_2) (S^{-2}h_{3}) (S^{-1}h_6) \cdot m \big) \\
		& \qquad \quad \gamma\big( (S^{-1}y_1) h_{4} (Sh_{5}) \big) y_0 \otimes (Sh_1) y_4 \\
		& = \sum x\gamma^{-1}(S^{-3}y_3) \otimes_A (S^{-3}y_2 \cdot m) \gamma(S^{-1}y_1)y_0 \otimes (Sh) y_4 \\
		& \xlongequal{\eqref{H-mod-on-B-otimes-M-otimes-H}} F(x \otimes y \otimes_{A^e} m) \leftharpoondown h.
	\end{align*}
	
	To show that $F$ is a left $B^e$-module morphism, we compute
	\begin{align*}
		& F\big( (b' \otimes b) (x \otimes y \otimes_{A^e} m) \big) \\
		& = F\big( (b'x \otimes yb \otimes_{A^e} m) \big) \\
		& = \sum b'x \gamma^{-1}(S^{-3}(y_3b_3)) \otimes_A (S^{-3}(y_2b_2) \cdot m) \gamma(S^{-1}(y_1b_1)) y_0 b_0 \otimes y_4b_4 \\
		& \xlongequal{\eqref{gamma^{-1}-gamma^{-1}}} \sum b'x \gamma^{-1}(S^{-3}y_5) \gamma^{-1}(S^{-3}b_5) \sigma(S^{-3}b_4, S^{-3}y_4) \otimes_A (S^{-3}(y_3b_3) \cdot m) \\
		& \qquad \quad \sigma^{-1}(S^{-1}b_2, S^{-1}y_2) \gamma(S^{-1}b_1) \gamma(S^{-1}y_1) y_0b_0 \otimes y_6b_6 \\
		& \xlongequal{\eqref{h-cdot-k-cdot-u}} \sum b'x \gamma^{-1}(S^{-3}y_3)\gamma^{-1}(S^{-3}b_5) \otimes_A \big( S^{-3}b_4 \cdot (S^{-3}y_2 \cdot m) \big) \\
		& \qquad \quad \gamma(S^{-1}b_3) \gamma(S^{-1}y_1) y_0 \gamma^{-1}(S^{-1}b_2) \gamma(S^{-1}b_1) b_0 \otimes y_4b_6 \\
		& \xlongequal{\eqref{h->aua'}} \sum b'x \gamma^{-1}(S^{-3}y_3)\gamma^{-1}(S^{-3}b_3) \otimes_A S^{-3}b_2 \cdot \big( (S^{-3}y_2 \cdot m) \gamma(S^{-1}y_1) y_0 \big) \\
		& \qquad \quad \gamma(S^{-1}b_1)  b_0 \otimes y_4b_4 \\
		& \xlongequal{\eqref{BMH-B^e-mod}} (b' \otimes b) F(x \otimes y \otimes_{A^e} m).
	\end{align*}
	
	To show that $F$ is an $H^e$-comodule morphism, we calculate
	\begin{align*}
		& (F \otimes \id_{H^e}) \rho(x \otimes y \otimes_{A^e} m) \\
		& = F(\sum x_0 \otimes y_0 \otimes_{A^e} m) \otimes x_1 \otimes y_1 \\
		& = \sum x_0 \gamma^{-1}(S^{-3}y_3) \otimes_A (S^{-3}y_2 \cdot m) \gamma(S^{-1}y_1)y_0 \otimes y_4 \otimes x_1 \otimes y_5 \\
		& = \sum x_0 \gamma^{-1}(S^{-3}y_3) \otimes_A (S^{-3}y_2 \cdot m) \gamma(S^{-1}y_1)y_0 \otimes y_6 \otimes x_1 (S^{-2}y_4) (S^{-1}y_5) \otimes y_7 \\
		& \xlongequal{\eqref{BMH-H^e-comod}} \rho \big( \sum x_0 \gamma^{-1}(S^{-3}y_3) \otimes_A (S^{-3}y_2 \cdot m) \gamma(S^{-1}y_1)y_0 \otimes y_4 \big) \\
		& = \rho F(x \otimes y \otimes_{A^e} m).
	\end{align*}
	This implies that $F$ is a morphism in ${_{B^e}\!\M^{H^e}_{H}}$.
	
	A well-defined map $G: B \otimes_A M \otimes H \to B^e \otimes_{A^e} M$ is given by
	\[ b \otimes_A m \otimes h \mapsto \sum b \gamma^{-1}(S^{-2}h_1) \otimes \gamma(h_3) \otimes_{A^e} S^{-2}h_2 \cdot m. \]
	We demonstrate that $G$ is the inverse of $F$ by computing
	\begin{align*}
		& FG(b \otimes_A m \otimes h) \\
		& = F\big( \sum b \gamma^{-1}(S^{-2}h_1) \otimes \gamma(h_3) \otimes_{A^e} S^{-2}h_2 \cdot m \big) \\
		& = \sum b \gamma^{-1}(S^{-2}h_1) \gamma^{-1}(S^{-3}h_6) \otimes_A \big( S^{-3}h_5 \cdot (S^{-2}h_2 \cdot m) \big) \gamma(S^{-1}h_4) \gamma(h_3) \otimes h_7 \\
		& \xlongequal{\eqref{gamma^{-1}-gamma^{-1}}} \sum b \gamma^{-1}\big( (S^{-3}h_{10}) (S^{-2}h_1) \big) \sigma(S^{-3}h_9, S^{-2}h_2) \otimes_A \big( S^{-3}h_8 \cdot (S^{-2}h_3 \cdot m) \big) \\
		& \qquad \quad  \sigma(S^{-1}h_7, h_4) \gamma\big( (S^{-1}h_6) h_5 \big) \otimes h_{11} \\
		& = \sum b \gamma^{-1}\big( (S^{-3}h_{8}) (S^{-2}h_1) \big) \otimes_A \sigma(S^{-3}h_7, S^{-2}h_2) \big( S^{-3}h_6 \cdot (S^{-2}h_3 \cdot m) \big) \\
		& \qquad \quad \sigma(S^{-1}h_5, h_4) \otimes h_{9} \\
		& \xlongequal{\eqref{h-cdot-k-cdot-u}} \sum b \gamma^{-1}\big( (S^{-3}h_{4}) (S^{-2}h_1) \big) \otimes_A \big( (S^{-3}h_3) (S^{-2}h_2)  \big) \cdot m \otimes h_{5} \\
		& = b \otimes_A (1_H \cdot m) \otimes h \\
		& \xlongequal{\eqref{1_H-cdot-u}} b \otimes_A m \otimes h,
	\end{align*}
	and
	\begin{align*}
		& GF(x \otimes y \otimes_{A^e} m) \\
		& = G\big( \sum x\gamma^{-1}(S^{-3}y_3) \otimes_A (S^{-3}y_2 \cdot m) \gamma(S^{-1}y_1)y_0 \otimes y_4 \big) \\
		& = \sum x\gamma^{-1}(S^{-3}y_3) \gamma^{-1}(S^{-2}y_4) \otimes \gamma(y_6) \otimes_{A^e} S^{-2}y_5 \cdot \big( (S^{-3}y_2 \cdot m) \gamma(S^{-1}y_1)y_0 \big) \\
		& \xlongequal{\eqref{h->aua'}} \sum x\gamma^{-1}(S^{-3}y_3) \gamma^{-1}(S^{-2}y_4) \otimes \gamma(y_7) \otimes_{A^e} \big( S^{-2}y_5 \cdot (S^{-3}y_2 \cdot m) \big) \\
		& \qquad \quad y_6 \cdot \big( \gamma(S^{-1}y_1)y_0 \big) \\
		& = \sum x\gamma^{-1}(S^{-3}y_3) \gamma^{-1}(S^{-2}y_4) \otimes \Big( y_6 \cdot \big( \gamma(S^{-1}y_1)y_0 \big) \Big) \gamma(y_7) \otimes_{A^e}  \\
		& \qquad \big( S^{-2}y_5 \cdot (S^{-3}y_2 \cdot m) \big) \\
		& = \sum x\gamma^{-1}(S^{-3}y_3) \gamma^{-1}(S^{-2}y_4) \otimes \gamma(y_6) \gamma(S^{-1}y_1)y_0 \gamma^{-1}(y_7) \gamma(y_8) \otimes_{A^e} \\
		& \qquad \quad \big( S^{-2}y_5 \cdot (S^{-3}y_2 \cdot m) \big) \\
		& \xlongequal{\eqref{h-cdot-k-cdot-u}} \sum x\gamma^{-1}(S^{-3}y_5) \gamma^{-1}(S^{-2}y_6) \otimes \gamma(y_{10}) \gamma(S^{-1}y_1)y_0 \otimes_{A^e} \\
		& \qquad \quad \sigma(S^{-2}y_7, S^{-3}y_4) \big( (S^{-2}y_8) (S^{-3}y_3) \cdot m \big) \sigma^{-1}(y_9, S^{-1}y_2) \\
		& = \sum x\gamma^{-1}(S^{-3}y_5) \gamma^{-1}(S^{-2}y_6) \sigma(S^{-2}y_7, S^{-3}y_4) \otimes \sigma^{-1}(y_9, S^{-1}y_2) \gamma(y_{10}) \gamma(S^{-1}y_1)y_0 \\
		& \qquad \otimes_{A^e} \big( (S^{-2}y_8) (S^{-3}y_3) \cdot m \big)  \\
		& \xlongequal{\eqref{gamma^{-1}-gamma^{-1}}} \sum x\gamma^{-1}\big( (S^{-2}y_4) (S^{-3}y_3) \big) \otimes \gamma\big( y_{6} (S^{-1}y_1) \big) y_0 \otimes_{A^e} \big( (S^{-2}y_5) (S^{-3}y_2) \cdot m \big) \\
		& = \sum  x \otimes y \otimes_{A^e} (1_H \cdot m) \\
		& \xlongequal{\eqref{1_H-cdot-u}} \sum  x \otimes y \otimes_{A^e} m.
	\end{align*}
	This completes the proof.
\end{proof}

The Nakayama automorphisms of the crossed product $A \#_{\sigma} H$ are determined by those of $A$ and $H$, along with the homological determinant of the $H$-action on $A$.

\begin{thm}\label{Nak-of-cros-prod}
	Let $H$ be a skew Calabi--Yau algebra of dimension $n$, $B = A \#_{\sigma} H$ be a crossed product of $A$ with $H$.
	If $A$ is skew Calabi--Yau of dimension $d$ with a Nakayama automorphism $\mu_A$, then $B$ is also a skew Calabi--Yau algebra of dimension $n+d$ with a Nakayama automorphism $\mu_B$ which is defined by
	$$\mu_B(a\#h) = \sum \mu_A(a) \Hdet(S^{-2}h_1) \# S^{-2}h_{2} \chi(Sh_{3}),$$
	where $\Hdet$ is the homological determinant of the $H$-action on $A$ with respect to a $\mu_A$-twisted volume as defined in Definition \ref{Hdet-defn} and $\chi: H \to \kk$ is given by $\Ext^n_H(\kk, H) \cong {_{\chi}\!\kk}$. 
\end{thm}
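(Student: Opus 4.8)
The plan is to leave the skew Calabi--Yau conclusion itself to Theorem \ref{sCY-for-ff-H-ext}: since $A\subseteq B=A\#_{\sigma}H$ is a faithfully flat $H$-Galois extension with translation map $\kappa(h)=\sum\gamma^{-1}(h_1)\otimes\gamma(h_2)$ by \eqref{translation-map-for-cleft-ext}, $B$ is skew Calabi--Yau of dimension $n+d$, so that $\Ext^{n+d}_{B^e}(B,B^e)\cong B_{\mu_B}$ for some automorphism $\mu_B$ determined up to an inner automorphism. All the work is to produce one explicit such $\mu_B$ by computing this bimodule.

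First I would assemble the identification of $\Ext^{n+d}_{B^e}(B,B^e)$ as a $B$-$B$-bimodule. Since $A$ is skew Calabi--Yau of dimension $d$, $\RHom_{A^e}(A,A^e)\cong A_{\mu_A}[-d]$, and $A_{\mu_A}=\Ext^d_{A^e}(A,A^e)$ carries its left $\Lambda_1$-module structure from Proposition \ref{B^e-otimes_{A^e}-Ext^i_{A^e}(M, A^e)-prop}(1). As $A$ is homologically smooth, part (2) of the same proposition together with Lemma \ref{omega_B=Botimes_AU} gives, in $\D({_{B^e}\!\M^{H^e}_H})$,
\[
\RHom_{A^e}(A,B^e)\cong\big(B\otimes_A A_{\mu_A}\otimes H\big)[-d],
\]
with left $B^e$-structure \eqref{BMH-B^e-mod} and right $H$-structure $\leftharpoondown$ of \eqref{H-mod-on-B-otimes-M-otimes-H}. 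Feeding this into Stefan's spectral sequence in the form of Corollary \ref{H-Galois-spectral-sequence-for-B^e}, and using that $H$ is skew Calabi--Yau so that $\RHom_H(\kk,H)\cong\Ext^n_H(\kk,H)[-n]\cong{_{\chi}\!\kk}[-n]$, I would observe that the $\leftharpoondown$-module $(H,\leftharpoondown)$ is free of rank one (the map $S^{-1}$ identifies it with the right regular module). Hence $\RHom_H(\kk,B\otimes_A A_{\mu_A}\otimes H)$ is concentrated in degree $n$, the spectral sequence collapses with no extension problems, and
\[
\Ext^{n+d}_{B^e}(B,B^e)\cong\Ext^n_H\big(\kk,\,B\otimes_A A_{\mu_A}\otimes H\big)
\]
as $B$-$B$-bimodules, a free left $B$-module of rank one.

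Next I would pin down a generator and read off the right action. Let $e$ be a $\mu_A$-twisted volume, i.e.\ a generator of $A_{\mu_A}$ with $ea=\mu_A(a)e$, and let $\theta$ generate $\Ext^n_H(\kk,H)\cong{_{\chi}\!\kk}$, so that left multiplication by $g\in H$ acts on $\theta$ as $\chi(g)$. The class $\xi$ of $1\otimes_A e\otimes\theta$ generates the bimodule freely on the left, and the left $B$-action is ordinary left multiplication on the first tensorand. It then remains to compute $\xi\cdot(a\#h)$ via \eqref{BMH-B^e-mod} and to exhibit it in the form $\mu_B(a\#h)\,\xi$. Three mechanisms combine: the $A$-part $a$ is transported through $A_{\mu_A}$ by $ea=\mu_A(a)e$, producing the factor $\mu_A(a)$; the factors $\gamma^{\pm1}(S^{-i}h_j)$ act on $e$ through the embedding $\pi_1\colon H\to\Lambda_1$ and the defining relation $\pi_1(h)e=\varphi(h)e$ of \eqref{varphi-defn}, and after passing to the free generator and inverting they contribute $\Hdet=\varphi^{-1}$ (Lemma \ref{varphi-*-invertible}, Definition \ref{Hdet-defn}) evaluated at $S^{-2}h_1$; and the free $H$-tensorand, once $\Ext^n_H(\kk,-)$ is applied, contributes $\chi$ precomposed with an antipode power, which after reconciling $S^{-1}$, $S^{-2}$ and $S^{-3}$ through the antipode and comodule axioms becomes $\sum S^{-2}h_2\,\chi(Sh_3)=\mu_H(h_2)$, matching the Nakayama automorphism of $H$ from Theorem \ref{Hopf-VdB<->sCY-thm}. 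Assembling these yields
\[
\mu_B(a\#h)=\sum\mu_A(a)\,\Hdet(S^{-2}h_1)\#S^{-2}h_2\,\chi(Sh_3),
\]
and $\mu_B$ is automatically an algebra automorphism because $B_{\mu_B}$ is an invertible bimodule (Lemma \ref{invertible to outer automorphism}), hence a Nakayama automorphism of $B$.

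The main obstacle is precisely this last computation: one must track the right $B$-action of \eqref{BMH-B^e-mod} simultaneously through all three tensorands as the free $H$-factor is collapsed by $\Ext^n_H(\kk,-)$, disentangling the $\mu_A$-twist on $A_{\mu_A}$, the $\pi_1$-action producing $\varphi$ (and thus $\Hdet$), and the homological integral $\chi$ of $H$. The bookkeeping hinges on the Sweedler-level identities for the crossed product --- the cocycle relations \eqref{cocycle-eq-1}, \eqref{cocycle-eq-3}, the product formula \eqref{gamma^{-1}-gamma^{-1}}, and the twisted-module identity \eqref{h-cdot-k-cdot-u} --- and on the antipode calculus needed to convert the various negative powers of $S$ appearing in \eqref{BMH-B^e-mod} into the single power $S^{-2}$ and the integral weight $\chi(S-)$ seen in the final formula.
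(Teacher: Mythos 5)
Your proposal follows essentially the same route as the paper's own proof: Theorem \ref{sCY-for-ff-H-ext} for the skew Calabi--Yau property, then the same chain of bimodule identifications via Corollary \ref{H-Galois-spectral-sequence-for-B^e}, Proposition \ref{B^e-otimes_{A^e}-Ext^i_{A^e}(M, A^e)-prop} and Lemma \ref{omega_B=Botimes_AU}, the generator $1 \otimes_A e$ governed by $\pi_1(h)e = \varphi(h)e$ with $\Hdet$ entering as the convolution inverse of $\varphi$, and the closing invertibility argument making $\mu_B$ an automorphism. The only difference is that you leave the concluding Sweedler computation as a sketch (the paper carries it out in full, using \eqref{inverse-of-gamma}, \eqref{varphi-eq1}, \eqref{h-k-a} and \eqref{Hdet-definition}), but you correctly identify the three mechanisms and the identities on which the bookkeeping rests, so the plan matches the paper's argument.
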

\begin{proof}
	Observe that $B$ is a faithfully flat left $A$-module. According to Theorem \ref{sCY-for-ff-H-ext}, $B$ is a skew Calabi--Yau algebra of dimension $n+d$.
	Let $\gamma: H \to B$ be the map defined by $h \mapsto 1 \# h$.
	We have the following $B^e$-module isomorphisms
	\begin{align*}
		& \quad \Ext^{d+n}_{B^e}(B, B^e) & \\
		& \cong \Ext^n_{H}(\kk, \Ext^d_{A^e}(A, B^e)) & \text{ by Corollary \ref{H-Galois-spectral-sequence-for-B^e}} \\
		& \cong \Ext^d_{A^e}(A, B^e) \otimes_H \Ext^n_{H}(\kk, H) \\
		& \cong (B^e \otimes_{A^e} \Ext^d_{A^e}(A, A^e)) \otimes_H \Ext^n_{H}(\kk, H) & \text{ by Proposition \ref{B^e-otimes_{A^e}-Ext^i_{A^e}(M, A^e)-prop} } \\
		& \cong B \otimes_A \Ext^d_{A^e}(A, A^e) \otimes H \otimes_H \Ext^n_{H}(\kk, H) & \text{ by Lemma \ref{omega_B=Botimes_AU} } \\
		& \cong B \otimes_A \Ext^d_{A^e}(A, A^e) \otimes H \otimes_H {_{\chi}\!\kk} \\
		& \xlongequal{\eqref{H-mod-on-B-otimes-M-otimes-H}} B \otimes_A \Ext^d_{A^e}(A, A^e){_{\chi S^{-1}}},
	\end{align*}
	where the $B^e$-module structure on $B \otimes_A \Ext^d_{A^e}(A, A^e)_{\chi S^{-1}}$ is given by
	\begin{align*}
		& \quad (x \otimes y) ( b \otimes_{A} m) \\
		& = \sum xb \gamma^{-1}(S^{-3}y_3) \otimes_A (S^{-3}y_2 \cdot m) \gamma(S^{-1}y_1)y_0 \chi\big( S^{-1}(hy_4) \big) \\
		& = \sum xb \gamma^{-1}(S^{-3}y_3) \otimes_A (S^{-3}y_2 \cdot m) \gamma(S^{-1}y_1)y_0 \chi\big( S(hy_4) \big)
	\end{align*}
	for any $x, y, b \in B$ and $m \in \Ext^d_{A^e}(A, A^e)$.
	Let $e$ be a $\mu_A$-twisted volume on $A$.
	Then $1 \otimes e$ is a generator of $B \otimes_A \Ext^d_{A^e}(A, A^e)_{\chi S^{-1}}$ as a left $B$-module.
	We obtain
	\begin{align*}
		(1 \otimes e) b & = \sum \gamma^{-1}(S^{-3}b_3) \otimes_A (S^{-3}b_2 \cdot e)\gamma(S^{-1}b_1)b_0 \chi(Sb_4) \\
		& = \sum \chi(Sb_4) \gamma^{-1}(S^{-3}b_3) \otimes_A \varphi(S^{-3}b_2) \mu_A\big( \gamma(S^{-1}b_1)b_0 \big) e \\
		& = \sum \chi(Sb_4) \gamma^{-1}(S^{-3}b_3) \varphi(S^{-3}b_2) \mu_A\big( \gamma(S^{-1}b_1)b_0 \big) \otimes_A e.
	\end{align*}
	Hence, $\Ext_{B^e}^{d+n}(B, B^e)$ is isomorphic to $B_{\mu_B}$ as $B$-$B$-bimodules, where $\mu_B$ is an algebra morphism of $B$ defined by
	\[ \mu_B(b) = \sum \chi(Sb_4) \gamma^{-1}(S^{-3}b_3) \varphi(S^{-3}b_2) \mu_A\big( \gamma(S^{-1}b_1)b_0 \big). \]
	
	Then the Nakayama automorphism $\mu_B$ be represented by the homological determinant $\Hdet$ and Nakayama automorphism $\mu_A$ and $\mu_H$ as follow:
	\begin{align*}
		& \quad \mu_B(a \# h) \\
		& = \sum \chi(Sh_5) \gamma^{-1}(S^{-3}h_4) \varphi(S^{-3}h_3) \mu_A\big( \gamma(S^{-1}h_2)(a\#h_1) \big) \\
		& \xlongequal[]{\eqref{inverse-of-gamma}} \sum \chi(Sh_7) \big( \sigma^{-1}(S^{-2}h_5, S^{-3}h_4)\#S^{-2}h_6 \big) \big( \varphi(S^{-3}h_3)\#1 \big) \mu_A\big( (1\#S^{-1}h_2) (a\#h_1) \big) \\
		& = \sum \sigma^{-1}(S^{-2}h_6, S^{-3}h_5) S^{-2}h_7 \cdot \Big( \varphi(S^{-3}h_4) \mu_A\big( (S^{-1}h_3 \cdot a) \sigma(S^{-1}h_2, h_1) \big) \Big) \\
		& \qquad \# S^{-2}h_{8} \chi(Sh_{9}) \\
		& = \sum \sigma^{-1}(S^{-2}h_6, S^{-3}h_5) S^{-2}h_7 \cdot \Big( \varphi(S^{-3}h_4) \mu_A(S^{-1}h_3 \cdot a) \mu_A\big( \sigma(S^{-1}h_2, h_1) \big) \Big) \\
		& \qquad \# S^{-2}h_{8} \chi(Sh_{9}) \\
		& \xlongequal[]{\eqref{varphi-eq1}} \sum \sigma^{-1}(S^{-2}h_6, S^{-3}h_5) S^{-2}h_7 \cdot \Big( \big( S^{-3}h_4 \cdot \mu_A(a) \big) \varphi(S^{-3}h_3) \mu_A\big( \sigma(S^{-1}h_2, h_1) \big) \Big) \\
		& \qquad \quad \# S^{-2}h_{8} \chi(Sh_{9}) \\
		& = \sum \sigma^{-1}(S^{-2}h_6, S^{-3}h_5) S^{-2}h_7 \cdot \big( S^{-3}h_4 \cdot \mu_A(a) \big)  \\
		& \qquad S^{-2}h_8 \cdot \Big( \varphi(S^{-3}h_3) \mu_A\big( \sigma(S^{-1}h_2, h_1) \big) \Big) \# S^{-2}h_{9} \chi(Sh_{10}) \\
		& \xlongequal[]{\eqref{h-k-a}} \sum \sigma^{-1}(S^{-2}h_8, S^{-3}h_7) \sigma(S^{-2}h_9, S^{-3}h_6) \big( (S^{-2}h_{10}) (S^{-3}h_5) \big) \cdot \mu_A(a) \\
		& \qquad \quad \sigma^{-1}(S^{-2}h_{11}, S^{-3}h_4) S^{-2}h_{12}\cdot \Big( \varphi(S^{-3}h_3) \mu_A\big( \sigma(S^{-1}h_2, h_1) \big) \Big) \# S^{-2}h_{13} \chi(Sh_{14}) \\
		& = \sum \mu_A(a) \sigma^{-1}(S^{-2}h_{5}, S^{-3}h_4) S^{-2}h_{6} \cdot \Big( \varphi(S^{-3}h_3) \mu_A\big( \sigma(S^{-1}h_2, h_1) \big) \Big) \# S^{-2}h_{7} \chi(Sh_{8}) \\
		& \xlongequal[]{\eqref{Hdet-definition}} \sum \mu_A(a) \Hdet(S^{-2}h_1) \# S^{-2}h_{2} \chi(Sh_{3}). 
	\end{align*}

	Since $\Ext_{B^e}^{d+n}(B, B^e)$ is an invertible $B$-$B$-bimodule, it follows that $\mu_B$ is an isomorphism.
	Thus, we arrive at the desired conclusion.
\end{proof}


In the following, let's consider the Galois extension over a connected Hopf algebra, where the coradical has dimension one.

\begin{lem}\cite[Proposition 1.3]{Bel2000}\label{connected-Hopf-Galois}
	Let $B$ be a right comodule algebra over a connected Hopf algebra $H$.
	Then the following statements are equivalent.
	
	(1) $B^{co H} \subseteq B$ is a cleft extension.
	
	(2) $B^{co H} \subseteq B$ is a faithfully flat $H$-Galois extension.
	
	(3) There is a total integral $\gamma: H \to B$, that is, $\gamma$ is $H$-colinear and $\gamma(1) = 1$.
\end{lem}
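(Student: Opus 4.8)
The plan is to prove the cycle of implications $(1) \Rightarrow (2) \Rightarrow (3) \Rightarrow (1)$, observing at the outset that the connectedness hypothesis on $H$ enters only in the last implication: the first two hold for an arbitrary Hopf algebra with bijective antipode, and it is precisely the failure of $(2)/(3) \Rightarrow (1)$ in general that connectedness repairs.

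For $(1) \Rightarrow (2)$ I would invoke the structure theory of cleft extensions recalled earlier in this section: every $H$-cleft extension $A \subseteq B$ with $A = B^{co H}$ is isomorphic to a crossed product $A \#_{\sigma} H$ (Proposition 7.2.3 of \cite{Mon1993}), and each crossed product is a faithfully flat $H$-Galois extension, as noted above. Hence $(1)$ gives $(2)$ at once. For $(2) \Rightarrow (3)$ I would read off from Theorem \ref{faithful-flat-Hopf-Galois-extension-thm} that a faithfully flat $H$-Galois extension satisfies condition $(1)$ of that theorem; in particular $B$ is injective as a right $H$-comodule. By Doi's theorem on total integrals (see \cite{Mon1993}), a right $H$-comodule algebra is injective as an $H$-comodule if and only if it admits a total integral, and this yields the required $H$-colinear map $\gamma \colon H \to B$ with $\gamma(1) = 1$.

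The substance of the lemma is $(3) \Rightarrow (1)$, where connectedness is indispensable: given a total integral $\gamma$, one must upgrade it to a cleaving map by showing it is convolution invertible (it is already $H$-colinear, so no further colinearity of the inverse is needed). Writing $u\varepsilon$ for the convolution unit of $\Hom(H,B)$ and setting $g := u\varepsilon - \gamma$, the normalization $\gamma(1) = 1$ gives $g(1) = 0$, so $g$ vanishes on the one-dimensional coradical $H_0 = \kk 1$. I would then exploit the coradical filtration $H_0 \subseteq H_1 \subseteq \cdots$: from $\Delta(H_m) \subseteq \sum_{i+j=m} H_i \otimes H_j$, iterating the coproduct shows that every term of the $n$-fold iterated coproduct $\Delta^{(n)}(h)$ with $h \in H_n$ has at least one tensor factor in $H_0$ (by pigeonhole, $n+1$ nonnegative indices summing to $n$), so that $g^{*(n+1)}$ vanishes on $H_n$. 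Consequently the geometric series $\gamma^{-1} := \sum_{k \geq 0} g^{*k}$ is well defined, being a finite sum on each filtration step, and the telescoping identity
\[ \gamma * \gamma^{-1} = (u\varepsilon - g) * \sum_{k \geq 0} g^{*k} = \sum_{k \geq 0} g^{*k} - \sum_{k \geq 0} g^{*(k+1)} = u\varepsilon = \gamma^{-1} * \gamma \]
exhibits a two-sided convolution inverse, so $A \subseteq B$ is cleft.

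I expect the main obstacle to lie in this last step, specifically in verifying the local nilpotency $g^{*(n+1)}|_{H_n} = 0$ via the coradical filtration — this is exactly the point at which $H_0 = \kk 1$ is used and where the argument would break for a non-connected $H$ — together with the bookkeeping needed to confirm that $\sum_{k \geq 0} g^{*k}$ defines an honest $\kk$-linear map $H \to B$ rather than a merely formal series. The remaining implications are routine once the cited structural results ($\text{cleft} \Leftrightarrow \text{crossed product}$, faithfully flat Galois $\Leftrightarrow$ comodule injectivity, and injectivity $\Leftrightarrow$ total integral) are assembled in the order above.
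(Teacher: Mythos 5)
Your proof is correct, but there is nothing in the paper to compare it against: the paper states this lemma as a quotation of \cite[Proposition 1.3]{Bel2000} and gives no proof of its own, so your self-contained argument is necessarily a different (and welcome) route. Your three implications are each sound. For (1)$\Rightarrow$(2) you correctly assemble facts already recorded in the paper: cleft extensions are crossed products $A\#_{\sigma}H$ by \cite[Proposition 7.2.3]{Mon1993}, and crossed products are faithfully flat $H$-Galois. For (2)$\Rightarrow$(3), condition (1) of Theorem \ref{faithful-flat-Hopf-Galois-extension-thm} gives injectivity of $B$ as a right $H$-comodule, and then a colinear retraction $\Phi$ of the coaction $\rho$ yields the total integral $\gamma(h)=\Phi(1\otimes h)$ with $\gamma(1)=\Phi(\rho(1))=1$; the bijectivity of the antipode required by that theorem is automatic for connected $H$ (and is a standing hypothesis of the paper anyway). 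The crux (3)$\Rightarrow$(1) is exactly where connectedness enters, and your local-nilpotency argument is the standard and correct one: with $g := 1_B\varepsilon - \gamma$ one has $g|_{H_0}=0$ since $H_0=\kk 1$, the coradical filtration satisfies $\Delta^{(n)}(H_n)\subseteq \sum H_{i_1}\otimes\cdots\otimes H_{i_{n+1}}$ with $i_1+\cdots+i_{n+1}=n$, so by pigeonhole some tensor factor lies in $H_0$ and hence $g^{*(n+1)}|_{H_n}=0$; consequently $\sum_{k\geq 0} g^{*k}$ is an honest linear map (a finite sum on each $H_n$) and the telescoping identity exhibits it as a two-sided convolution inverse of $\gamma$, which is already $H$-colinear and therefore a cleaving map in the sense of the paper's definition of cleft. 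The only stylistic caveat is that your appeal to ``Doi's theorem'' in (2)$\Rightarrow$(3) is stronger than needed; the one direction you use (injective comodule $\Rightarrow$ total integral) is the elementary splitting argument just described, so it would be cleaner to state that argument directly rather than cite the full equivalence.
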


\begin{cor}\label{CY-connected-Hopf-cleft-extension}
	Assume that $\kk$ is of characteristic zero.
	Let $H$ be a connected Hopf algebra of finite GK dimension $d$, and $B$ be a cleft $H$-Galois extension of $\kk$.
	Then $B$ is a skew Calabi--Yau algebra of dimension $d$.
	Moreover, $B$ is Calabi--Yau if and only if $H$ is Calabi--Yau.
\end{cor}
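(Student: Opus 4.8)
The plan is to exploit that here the coinvariant ring is the ground field, $A:=B^{co H}=\kk$, and to feed the paper's two main theorems with this choice. First I would record that $\kk$ is strongly separable over itself and is skew Calabi--Yau of dimension $0$ with Nakayama automorphism $\id_\kk$. Since $H$ is connected of finite Gelfand--Kirillov dimension $d$ over a field of characteristic zero, it is skew Calabi--Yau of dimension $d$ by the cited result of \cite{Zho2024}, and by Lemma~\ref{connected-Hopf-Galois} the cleftness of $\kk\subseteq B$ is equivalent to $B$ being a faithfully flat $H$-Galois extension; in particular $B$ is a crossed product $\kk\#_\sigma H$. Theorem~\ref{sCY-for-ff-H-ext}(2) then gives immediately that $B$ is skew Calabi--Yau of dimension $0+d=d$, which is the first assertion.

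For the equivalence I would read off a Nakayama automorphism of $B$ from Theorem~\ref{Nak-of-cros-prod}. With $A=\kk$ the $H$-action on $A$ is $h\cdot\lambda=\varepsilon(h)\lambda$ (by \eqref{H-cdot-A} and $\gamma\ast\gamma^{-1}=\varepsilon$), and a direct tracing of Definition~\ref{Hdet-defn} through $\pi_1$ on the rank-one module $\Ext^{0}_{\kk^{e}}(\kk,\kk^{e})\cong\kk$ shows that the homological determinant of this action is the counit, $\Hdet=\varepsilon$. Substituting $\mu_A=\id$ and $\Hdet=\varepsilon$ into the formula of Theorem~\ref{Nak-of-cros-prod} and reindexing collapses it to
\[ \mu_B(\lambda\#h)=\sum \lambda\,(1\#S^{-2}h_1)\,\chi(Sh_2),\qquad \lambda\in\kk,\ h\in H. \]
Comparing with $\mu_H(h)=\sum S^{-2}h_1\chi(Sh_2)$ from Theorem~\ref{Hopf-VdB<->sCY-thm} and using that $\gamma=(1\#-)\colon H\to B$ is a linear isomorphism, this reads $\mu_B\circ\gamma=\gamma\circ\mu_H$; hence $\mu_B=\id_B$ if and only if $\mu_H=\id_H$.

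It remains to pass from ``Nakayama automorphism is inner'' to ``Nakayama automorphism is the identity'', and this is exactly where the hypotheses \emph{connected} and \emph{characteristic zero} are used. The structure theory of connected Hopf algebras of finite Gelfand--Kirillov dimension in characteristic zero shows that the associated graded of $H$ with respect to the coradical filtration is a commutative polynomial ring; hence $H$ is a filtered domain with $H^\times=\kk^\times$. Transporting that filtration along $\gamma$ makes $B=\kk\#_\sigma H$ a filtered algebra whose associated graded coincides with that of $H$ (the cocycle $\sigma$ is $\kk$-valued and contributes only in lower filtration degree), so $B$ is likewise a domain with $B^\times=\kk^\times$. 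Consequently every inner automorphism of $B$, resp. of $H$, is the identity, so $B$ is Calabi--Yau iff $\mu_B=\id_B$ and $H$ is Calabi--Yau iff $\mu_H=\id_H$; combined with the previous paragraph this gives $B$ Calabi--Yau $\iff H$ Calabi--Yau.

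The main obstacle I anticipate is the honest verification of the two computational inputs: that $\Hdet=\varepsilon$ follows from Definition~\ref{Hdet-defn} and \eqref{Hdet-definition} once $A=\kk$ is inserted (the action of $\pi_1(h)$ on the one-dimensional dualizing module must be shown to be scalar multiplication by $\varepsilon(h)$), and that $\gr B\cong\gr H$ so that units of $B$ are scalars. Everything else is formal specialization of the general theorems to $A=\kk$, with the only genuinely new work being this reduction of ``inner'' to ``identity'' via the connected, characteristic-zero structure theory.
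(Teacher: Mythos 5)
Your proposal is correct and follows essentially the same route as the paper's own proof: specialize Theorem \ref{Nak-of-cros-prod} (equivalently Theorem \ref{sCY-for-ff-H-ext}) to $A=\kk$ with $\Hdet=\varepsilon$ and $\mu_A=\id$ to get $\mu_B\circ\gamma=\gamma\circ\mu_H$, then use the coradical filtration to show $\gr H$ and $\gr B\cong\gr H$ are connected graded domains, so all units are scalars and ``Calabi--Yau'' reduces to ``Nakayama automorphism equals the identity'' on both sides. The only cosmetic differences are that the paper cites the iterated-Ore-extension form of the structure theorem (\cite[Theorem B]{ZSL2020}) rather than commutativity of $\gr H$, and asserts $\Hdet=\varepsilon$ without the verification you flag; both points lead to the same argument.
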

\begin{proof}
	By \cite[Proposition 6.6]{Zho2024}, 
	$H$ is skew Calabi--Yau of dimension $d$.
	Let $\mu_H$ denote the Nakayama automorphism of $H$ as defined in Theorem \ref{Hopf-VdB<->sCY-thm}.
	Given that $B^{co H} = \kk$, the homological determinant is trivial, that is, $\Hdet = \vep$.
	Consequently, Theorem \ref{Nak-of-cros-prod} implies that $B$ is a skew Calabi--Yau algebra of dimension $d$ with a Nakayama automorphism $\mu_B$ defined as
	\[ \mu_B(h) = \sum 1 \# S^{-2}h_{1} \chi(Sh_{2}) = 1 \# \mu_H(h) \in \kk \#_{\sigma} H. \]
	
	Consider the coradical filtration $\{ H_i \}_{i \in \N}$ of the connected Hopf algebra $H$, and let $\gr H$ denote the associated graded algebra $\bigoplus\limits_{i \in \N} H_{i}/H_{i-1}$ with $H_{-1} = 0$.
	Since $\gr H$ is a connected graded Hopf algebra, it is an iterated Ore extension of $\kk$ by \cite[Theorem B]{ZSL2020}, hence a connected graded domain. This indicates that every invertible element of $H$ lies in $H_0 = \kk$.
	Therefore, $H$ is Calabi--Yau if and only if $\mu_H$ is the identity map on $H$.
	Define $\gamma: H \to B$ by $h \mapsto 1 \# h$.
	It is clear that $\{ \gamma(H_i) \}_{i \in \N}$ forms an algebra filtration of $B$.
	As $\Delta(H_n) \subseteq \sum\limits_{i=0}^n H_i \otimes H_{n-i}$, it follows that $\Delta(h) - 1 \otimes h \in H_n \otimes H_{n-1}$ for any $h \in H_n$.
	Consequently,
	\[ \gamma(h) \gamma(k) - \gamma(hk) = \sum \sigma(h_1, k_1) \# h_2k_2 - 1 \# hk \in \gamma(H_{i+j-1}) \]
	for any $h \in H_i \setminus H_{i-1}$ and $k \in H_j \setminus H_{j-1}$.
	Consequently, the associated graded algebra $\gr B$ is also a connected graded domain.
	So every invertible element of $B$ also lies in $\kk$.
	This implies that $B$ is Calabi--Yau if and only if $\mu_B$ is the identity map on $B$, establishing the second assertion.
\end{proof}


	Let $(\mathfrak{g}, [-, -])$ be a finite dimensional Lie algebra, and $f \in Z^2(\mathfrak{g}, \kk)$ be a 2-cocycle, satisfying 
	$$f(x, x) = 0 \;\; \text{ and } \;\; f(x, [y, z]) + f(y, [z, x]) + f(z, [x, y]) = 0$$
	for all $x, y, z \in \mathfrak{g}$.
	The Sridharan enveloping algebra of $\mathfrak{g}$ with respect to $f$ is defined to be the associative algebra
	$$\mathcal{U}_f(\mathfrak{g}) := T\mathfrak{g} / \langle x \otimes y - y \otimes x - [x, y] - f(x, y) \mid x, y \in \mathfrak{g} \rangle.$$
	There exists a canonical $\kk$-linear map $\overline{(-)}: \mathfrak{g} \to \mathcal{U}(\mathfrak{g}), \; x \mapsto \overline{x}$ such that $\mathcal{U}_f(\mathfrak{g})$ is a $\mathcal{U}(\mathfrak{g})$-comodule algebra via
	$$\rho: \mathcal{U}_f(\mathfrak{g}) \To \mathcal{U}_f(\mathfrak{g}) \otimes \mathcal{U}(\mathfrak{g}), \qquad \overline{x} \mapsto \overline{x} \otimes 1 + 1 \otimes x, \text{ for all } x \in \mathfrak{g}.$$
	Given a basis $\{ x_1, x_2, \dots, x_d \}$ of $\mathfrak{g}$, a basis for $\mathcal{U}_f(\mathfrak{g})$ is given by $\{ 1 \}$ and all monomials $\overline{x}_{i_1} \overline{x}_{i_2} \cdots \overline{x}_{i_n}$ with $ i_1 \leq i_2 \leq \dots \leq i_n$.
	Since the map
	\[ \gamma: \mathcal{U}(\mathfrak{g}) \To \mathcal{U}_f(\mathfrak{g}), \qquad x_{i_1} x_{i_2} \cdots x_{i_n} \mapsto \overline{x}_{i_1} \overline{x}_{i_2} \cdots \overline{x}_{i_n} \]
	is an $H$-comodule map,
	Lemma \ref{connected-Hopf-Galois} implies that $\gamma$ is convolution invertible, hence, $\kk \subseteq \mathcal{U}_f(\mathfrak{g})$ is a cleft extension \cite{JS2006}.
	Observe that $\mathcal{U}(\mathfrak{g})$ is a skew Calabi--Yau algebra of dimension $\dim \mathfrak{g}$, as stated in \cite[Theorem A]{Yek2000}.
	Moreover, $\mathcal{U}(\mathfrak{g})$ is Calabi--Yau if and only if $\mathfrak{g}$ is unimodular, which means $\tr(\mathrm{ad}_{\mathfrak{g}}(x)) = 0$ for all $x \in \mathfrak{g}$.
	Consequently, Corollary \ref{CY-connected-Hopf-cleft-extension} yields the following conclusion.
	
\begin{cor}\cite[Theorem 5.3]{HVOZ2010}\cite[Corollary 1.1]{WZ2013}
	The Sridharan enveloping algebra $\mathcal{U}_f(\mathfrak{g})$ is Calabi--Yau if and only if the universal enveloping algebra $\mathcal{U}(\mathfrak{g})$ is Calabi--Yau if and only if $\mathfrak{g}$ is unimodular.
\end{cor}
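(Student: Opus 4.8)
The plan is to reduce the entire statement to a single application of Corollary \ref{CY-connected-Hopf-cleft-extension}, combined with Yekutieli's characterization of when $\mathcal{U}(\mathfrak{g})$ is Calabi--Yau. First I would set $H := \mathcal{U}(\mathfrak{g})$ and $B := \mathcal{U}_f(\mathfrak{g})$, and record that $H$ is a connected Hopf algebra whose GK dimension equals $\dim\mathfrak{g}$. This follows from the PBW theorem: the associated graded of $\mathcal{U}(\mathfrak{g})$ with respect to the standard filtration is the polynomial ring $\kk[x_1,\dots,x_d]$ with $d = \dim\mathfrak{g}$, so $\GKdim H = d$, which is finite precisely because $\mathfrak{g}$ is finite dimensional.

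Next I would confirm that $\kk \subseteq B$ is a cleft $H$-Galois extension with $B^{co H} = \kk$. The coaction $\rho(\overline{x}) = \overline{x}\otimes 1 + 1\otimes x$ together with the Sridharan basis $\{1\}\cup\{\overline{x}_{i_1}\cdots\overline{x}_{i_n} : i_1\le\cdots\le i_n\}$ shows that the coinvariant subalgebra is exactly $\kk$, and the comodule map $\gamma$ exhibited before the statement is convolution invertible by Lemma \ref{connected-Hopf-Galois}. Hence $\kk \subseteq B$ is cleft, and in particular a faithfully flat $H$-Galois extension of $\kk$.

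With these hypotheses verified, and working over a field of characteristic zero as required to invoke the earlier result, Corollary \ref{CY-connected-Hopf-cleft-extension} applies verbatim with base algebra $\kk$: it gives that $B = \mathcal{U}_f(\mathfrak{g})$ is skew Calabi--Yau of dimension $d$ and that $B$ is Calabi--Yau if and only if $H = \mathcal{U}(\mathfrak{g})$ is Calabi--Yau. This is the first equivalence. For the second equivalence I would cite the result recorded immediately before the statement: by \cite[Theorem A]{Yek2000}, $\mathcal{U}(\mathfrak{g})$ is skew Calabi--Yau of dimension $\dim\mathfrak{g}$ and is Calabi--Yau precisely when $\mathfrak{g}$ is unimodular, i.e. $\tr(\mathrm{ad}_{\mathfrak{g}}(x)) = 0$ for all $x\in\mathfrak{g}$. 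Chaining the two equivalences yields the claimed triple equivalence.

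The proof presents no serious obstacle, since all the substantive work is already packaged into Corollary \ref{CY-connected-Hopf-cleft-extension} and its reliance on the explicit Nakayama automorphism computed in Theorem \ref{Nak-of-cros-prod}. The only points requiring genuine, though routine, care are the identification $B^{co H} = \kk$ and the finiteness of $\GKdim H$, both of which are immediate consequences of the explicit Sridharan basis; everything else is a matter of matching hypotheses and invoking the cited results.
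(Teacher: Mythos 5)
Your proposal is correct and follows essentially the same route as the paper: verify that $\kk \subseteq \mathcal{U}_f(\mathfrak{g})$ is a cleft extension of the connected Hopf algebra $\mathcal{U}(\mathfrak{g})$ (via the explicit comodule map $\gamma$ and Lemma \ref{connected-Hopf-Galois}), apply Corollary \ref{CY-connected-Hopf-cleft-extension} to get that $\mathcal{U}_f(\mathfrak{g})$ is Calabi--Yau if and only if $\mathcal{U}(\mathfrak{g})$ is, and then conclude with Yekutieli's result that $\mathcal{U}(\mathfrak{g})$ is Calabi--Yau if and only if $\mathfrak{g}$ is unimodular. Your extra checks (that $B^{co H} = \kk$ from the Sridharan basis and that $\GKdim \mathcal{U}(\mathfrak{g}) = \dim \mathfrak{g} < \infty$ by PBW) are exactly the hypotheses the paper leaves implicit, so nothing is missing.
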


\section*{Acknowledgments}
The author thanks the referee for the valuable comments and suggestions.
The author is very grateful to Professor Liyu Liu for his helpful suggestion.
The research work was supported by the the NSFC (project 12301052) and Fundamental Research Funds for the Central Universities (project 2022110884). 


\bibliographystyle{siam}
\bibliography{References}

\end{document}